\newtheorem{thm}{Theorem}[section]
\newtheorem{lem}[thm]{Lemma}
\newtheorem{prop}[thm]{Proposition}
\newtheorem{cor}[thm]{Corollary}
\theoremstyle{remark}
\newtheorem{rmk}[thm]{Remark}
\newtheorem{df}[thm]{Definition}
\newtheorem{note}[thm]{Notation}
\newtheorem{ex}[thm]{Example}
\newtheorem{qu}{Question}
\newcommand{\NN}{\mathbb{N}}
\newcommand{\R}[1]{\mathbb{R}^{#1}}
\newcommand{\weak}{\rightharpoonup}
\newcommand{\La}{\langle}
\newcommand{\Ra}{\rangle}
\newcommand{\Hess}{\mathrm{Hess}\,}
\newcommand{\eps}{\varepsilon}
\newcommand{\Ric}{\mathrm{Ric}}
\newcommand{\Vol}{\text{Vol}}
\renewcommand\div{\mathrm{div}}
\newcommand{\Tr}{\text{tr}}
\newcommand{\ptau}{\partial_\tau}
\newcommand{\Lip}{\mathrm{Lip}}
\newcommand{\tr}{\text{tr}}
\renewcommand{\Tilde}{\widetilde}
\newcommand{\cD}{\mathcal{D}}
\newcommand{\cE}{\mathcal{E}}
\newcommand{\cF}{\mathcal{F}}
\newcommand{\cL}{\mathcal{L}}
\newcommand{\cP}{\mathcal{P}}
\newcommand{\cS}{\mathcal{S}}
\newcommand{\cW}{\mathcal{W}}
\title{On a parabolic curvature lower bound generalizing Ricci flows}
\author{Marco Flaim and Erik Hupp}
\date{}
\begin{document}

\maketitle
\begin{abstract}
Optimal transport plays a major role in the study of manifolds with Ricci curvature bounded below. A number of results in this setting have been extended to super Ricci
flows, revealing a unified approach to analysis on Ricci nonnegative manifolds and Ricci
flows. However we observe that the monotonicity of Perelman's functionals ($\cF$, $\cW$, reduced volume), which hold true for Ricci flows and Ricci nonnegative manifolds, cannot be strictly generalized to super Ricci flows. In 2010 Buzano introduced a condition which still generalizes Ricci flows and Ricci nonnegative manifolds, and on which Perelman's monotonicities do hold. We provide characterizations of this condition using optimal transport methods and understand it heuristically as Ricci non-negativity of the space-time. This interpretation is consistent with its equivalence to Ricci non-negativity on Perelman's infinite dimensional manifold.

More precisely, we prove that for smooth evolutions of Riemannian manifolds, this condition is equivalent to a Bochner inequality (resembling Perelman's Harnack inequality but for the forward heat flow), a gradient estimate for the heat flow, a Wasserstein contraction along the adjoint heat flow, the convexity of a modified entropy along Wasserstein geodesics, and an Evolutionary Variational Inequality (EVI). The optimal transport statements use Perelman's $L$ distance as cost, as first studied on Ricci flows by Topping and by Lott. As in the setting of manifolds with Ricci curvature bounded from below, we also consider dimensionally improved and weighted versions of these conditions. The dimensional Bochner inequality and all gradient estimates for the forward heat equation, along with the EVIs, appear to be new even for general solutions to the Ricci flow, and are related to the Hamiltonian perspective on the $L$ distance. Most of our proofs do not use tensor calculus or Jacobi fields, suggesting the possibility of future extensions to more singular settings.
\end{abstract}

\setcounter{tocdepth}{2}
\tableofcontents

\section{Introduction}
Ricci flow, introduced by Hamilton in \cite{Hamilton82}, has had a central role in the development of geometric analysis in the last 30 years. Part of its strength relies on its parabolic nature, serving as a kind of intrinsic heat flow for Riemannian metrics. It is interesting then to understand which properties are only valid for Ricci flows (and even characterize Ricci flows), and which ones can be extended to a more general class of evolutions. In particular, a number of functional inequalities that hold on Ricci flow resemble inequalities that hold on manifolds of nonnegative Ricci curvature, and in fact are equivalent to this curvature lower bound. One might therefore aim to prove the equivalence of the functional inequalities on the Ricci flow side and identify the underlying ``curvature lower bound''.

As a first generalization one might consider the super Ricci flow condition, and indeed when one studies the (adjoint) heat equation on super Ricci flows one recovers several properties that hold for the heat equation on Ricci nonnegative manifolds. A stronger inequality that generalizes Ricci flow was introduced by Buzano via an inhomogeneous tensor inequality dubbed the ``$\mathcal{D}$-condition'', which recovers many of the powerful monotone quantities that have been exploited in the study of Ricci flows. The aim of this work is to characterize this condition by a number of coarse metric-measure inequalities that have strict analogs in the static, Ricci nonnegative setting.

\subsection{Super Ricci flows}
We start recalling the properties of super Ricci flows (SRFs). This setting was first considered by McCann and Topping \cite{mccanntopp} and was then studied in a non-smooth setting by Sturm and Kopfer--Sturm \cite{sturmsrf, KopStu18}. It is also central to Bamler's theory of $\mathbb{F}$-convergence, where it forms a precompact family of metric flows \cite{BamlerEntropy,BamlerCompactness}. Consider a time dependent Riemannian manifold $(M,g_t)_{t\in I}$ for some interval $I \subseteq \R{}$, we consider on it:
\begin{itemize}
    \item The heat equation (forward heat equation for functions) with associated semigroup $P_{t,s}$, $s<t$, acting on functions on $M$:
\begin{align}
    \partial_t P_{t,s}u =\Delta_t P_{t,s}u.
\end{align}

    \item The conjugate heat equation (backward heat equation for measures) with semigroup $\hat P_{t,s}$ acting on measures on $M$
    \begin{align}
    \partial_{s}\hat P_{t,s}\mu = -\Delta_{s} \hat P_{t,s}\mu.
\end{align}
    In terms of densities, if $\hat P_{t,s}\mu=u_s\,dV_{g_s} =: (P^*_{t,s} u_t)\,dV_s$, then
\begin{align}\label{eq:adj-heat-eq}
    \partial_t u_t = -\Delta_t u_t + S_t u_t
\end{align}
where $S=-\frac{1}{2}\tr(\partial_tg)$ represents (minus) the derivative of the volume form.
\end{itemize} 

\begin{thm}[\cite{mccanntopp,sturmsrf}]\label{mccanntopping}
    Consider a smooth, closed time-dependent Riemannian manifold $(M,g_t)_{t\in I}$, then the following are equivalent
    \begin{enumerate}
        \item $(M,g_t)$ is a super Ricci flow: 
        \begin{align}\label{eq:SRFcond}
            \partial_t g_t \geq -2\Ric(g_t).
        \end{align}
        \item The Bochner formula holds: for any heat flow
        \begin{align}
            (\partial_t-\Delta_t)|P_{t,s}u|^2\leq 0\,.
        \end{align}
        \item The following gradient estimate holds for any heat flow
        \begin{align}
            |\nabla P_{t,s}u|^2_t \leq P_{t,s}|\nabla u|_s^2\,.
        \end{align}
        \item The following Wasserstein contraction holds for any two conjugate heat flows
        \begin{align}
            W_s(\hat{P}_{t,s}\mu, \hat{P}_{t,s}\nu) \leq W_t(\mu,\nu),
        \end{align}
        where $W_t$ is the 2-Wasserstein distance induced by the distance induced by $g_t$.
        \item The following dynamic convexity of the entropy holds: For any $t$, for any $W_t$ geodesic $(\mu_a)_{a\in[0,1]}$ in the domain of the entropy $\cE_t$, 
        \begin{align}\label{eq:SRFEntconv}
            \partial_a^+\cE_t(\mu_a)\big|_{a=1^-} - \partial_a^-\cE_t(\mu_a)\big|_{a=0^+} \geq -\frac{1}{2}\partial_t^-W_{t}^2(\mu_0,\mu_1),
        \end{align}
        where $\cE_t(\mu)=\int\rho\log\rho \,dV_t $ when $\mu=\rho \,dV_t$, and $+\infty$ otherwise.
        \end{enumerate}
\end{thm}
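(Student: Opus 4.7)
The plan is to split the argument into a PDE core $(1) \Leftrightarrow (2) \Leftrightarrow (3)$ and an optimal-transport shell $(3) \Leftrightarrow (4) \Leftrightarrow (5)$, with the gradient estimate $(3)$ acting as the bridge. The starting point will be a time-dependent Bochner identity: for a heat flow $f_t := P_{t,s}u$, expanding $\partial_t g_t^{ij} = -g^{ik}g^{jl}\partial_t g_{kl}$ and applying the classical Bochner formula on each time slice yields
\begin{equation*}
(\partial_t - \Delta_t)|\nabla f_t|_t^2 = -(\partial_t g_t + 2\Ric_{g_t})(\nabla f_t, \nabla f_t) - 2|\Hess_t f_t|^2,
\end{equation*}
so $(1) \Rightarrow (2)$ is immediate. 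For $(2) \Rightarrow (1)$ I will evaluate at $t = s^+$ and choose $u$ locally linear at a fixed $x_0$ in normal coordinates (so $\Hess u(x_0) = 0$) with prescribed $\nabla u(x_0)$, which forces $\partial_t g_t + 2\Ric_{g_t} \geq 0$ pointwise. The passage $(2) \Rightarrow (3)$ is the standard interpolation: using $\partial_r P_{t,r} = -P_{t,r}\Delta_r$ obtained from differentiating $P_{t,r}P_{r,s} = P_{t,s}$, the function $\phi(r) := P_{t,r}(|\nabla P_{r,s}u|_r^2)$ for $r \in [s,t]$ satisfies
\begin{equation*}
\phi'(r) = P_{t,r}\!\left[(\partial_r - \Delta_r)|\nabla P_{r,s}u|_r^2\right] \leq 0,
\end{equation*}
so that $\phi(s) \geq \phi(t)$, which is exactly $(3)$. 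Differentiating $(3)$ at $t = s^+$ reproduces $(1)$ via the same Bochner identity applied on the right-hand side.

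For $(3) \Leftrightarrow (4)$ I plan to invoke Kuwada's duality argument adapted to the time-dependent setting. Introducing the time-dependent Hopf--Lax semigroup $Q_a^s$ solving $\partial_a Q_a^s\phi = -\tfrac{1}{2}|\nabla Q_a^s \phi|_s^2$ together with the adjoint relation $\int P_{t,s}\phi\,d\mu = \int \phi\,d\hat P_{t,s}\mu$, the $W_2$-Kantorovich duality recasts the Wasserstein contraction $(4)$ as the pointwise inequality $P_{t,s}(Q_1^s\phi) \leq Q_1^t(P_{t,s}\phi)$. The latter will reduce by direct differentiation to showing that $a \mapsto P_{t,s}Q_a^s\phi$ is a subsolution of the Hamilton--Jacobi equation at time $t$, which is exactly what $(3)$ provides when applied along the Hopf--Lax evolution. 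The converse is extracted by first variations of $(4)$ around pairs of Dirac masses together with the explicit form of the backward heat kernel.

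Finally, $(4) \Leftrightarrow (5)$ will follow the Otto-calculus correspondence between EVI-type contraction and geodesic convexity of the entropy, adapted to the moving metric-measure structure $(\cP(M), W_t, \cE_t)$. The key identification is that the adjoint heat flow acts as the gradient flow of $\cE_t$ in $(\cP(M), W_t)$, where the $S_t$ source term in \eqref{eq:adj-heat-eq} precisely encodes the evolution of the volume measure; $(4)$ then unfolds into the dynamic convexity \eqref{eq:SRFEntconv} along $W_t$-geodesics via a Wasserstein chain-rule, with the term $-\tfrac{1}{2}\partial_t^- W_t^2(\mu_0,\mu_1)$ arising from the time-variation of the base metric. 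The reverse direction extracts an EVI from $(5)$ in the style of Daneri--Savaré. I expect the hardest steps to be the bookkeeping in these last two equivalences: Kuwada's duality must compare gradients in two different metrics $g_s \neq g_t$ interacting with the time-dependent Hopf--Lax and heat semigroups, and the EVI/convexity correspondence requires a precise first-order expansion of $W_t^2$ in $t$ in which the $S_t$ correction from \eqref{eq:adj-heat-eq} is what matches the constants on both sides of \eqref{eq:SRFEntconv}.
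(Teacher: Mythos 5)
The paper does not prove Theorem~\ref{mccanntopping}; it is cited from \cite{mccanntopp,sturmsrf} as background. Your plan, however, mirrors almost exactly the strategy the paper uses to establish its own Theorems~\ref{thm:L0charact} and~\ref{thm:L-charact} (the $\cD$-condition analogs), so it is worth comparing against those.

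The core $(1)\Leftrightarrow(2)\Leftrightarrow(3)$ and the Kuwada duality $(3)\Rightarrow(4)$ are sound and identical in spirit to Propositions~\ref{prop:DcondiffBochner}, \ref{prop:L0gradestimate}, and~\ref{prop:GEtoWCL0}/Lemma~\ref{lem:heat-flow-preserves-HJ} specialized to the quadratic case $\cS=0$. Your Bochner identity and the interpolation along $r\mapsto P_{t,r}|\nabla P_{r,s}u|_r^2$ are correct, and the reduction of $(4)$ to the Hopf--Lax comparison $P_{t,s}Q_1^s\phi\leq Q_1^t P_{t,s}\phi$ via the subsolution property is exactly right. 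Your description of $(4)\Rightarrow(3)$ ("first variations around Diracs") is thin: the clean argument (used by the paper in Proposition~\ref{prop:L0-contraction-to-GE}) lifts a curve of measures $r\mapsto\hat P_{t,s}\delta_{\bar\gamma_r}$ to a dynamic plan via Lemma~\ref{lem:dynamiclifting} and differentiates; a genuine one-parameter variation of two Diracs only gives you $W_1$/Lipschitz information, not the $L^2$ gradient estimate, so you need the Benamou--Brenier/Lisini representation to get Jensen working in the right direction.

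The weak link is $(4)\Leftrightarrow(5)$. The paper never proves the implication $(\mathrm{WC})\Rightarrow(\mathrm{EC})$ directly; it closes the loop as $(\mathrm{WC})\Rightarrow(\mathrm{GE})\Rightarrow(\mathrm{EVI})\Rightarrow(\mathrm{EC})$ (Propositions~\ref{prop:L0-contraction-to-GE}, \ref{prop:L0GE-to-EVI}, \ref{prop:L0-entropy-cvx}), and separately $(\mathrm{EC})\Rightarrow(\mathrm{WC})$ (Proposition~\ref{prop:L0-cvxty-implies-contr/EVI}, using Topping's one-sided derivative estimates from Lemma~\ref{lem:Top-1-sided-est}). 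Your proposed "Wasserstein chain rule" for $(4)\Rightarrow(5)$ would need to be made precise; I do not see how to get the second-difference of $\cE_t$ along a geodesic directly out of contraction without passing through the gradient estimate and an EVI. You also misattribute the direction $(5)\Rightarrow(4)$: Daneri--Savar\'e gives $\mathrm{EVI}\Rightarrow$ convexity, not the reverse (and the reverse fails outside the smooth setting). In the smooth case $(5)\Rightarrow(4)$ is done by differentiating the Kantorovich-potential pairing along the adjoint heat flow and using the semiconcavity of the potentials — no EVI required — which is precisely the route the paper takes.
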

When $\partial_t g=0$, it recovers the celebrated result for manifolds with nonnegative Ricci, see \cite{sturmvonrenesse}, later extended in the theory of $\mathsf{RCD}(0,\infty)$ spaces, see \cite{AmbGigSav15}. Indeed, while the differential inequality \eqref{eq:SRFcond} uses regularity of the metric tensor, the other conditions (in particular \eqref{eq:SRFEntconv}) can be formulated on metric measure spaces, see \cite{MR2237206,MR2237207,MR2480619,AGS1,AGS2,AmbGigSav15} for the static setting and \cite{sturmsrf,KopStu18} for the dynamic setting. In \cite{SturmUpper}, \cite{erbar2025synthetic} the notion of Ricci upper bounds and sub Ricci flows for metric measure spaces is studied by means of optimal transport, while in \cite{naber2013characterizations,HaslhoferNaber18Bochner,HaslhoferNaber18RF,HKN22,Kennedy23BochnerRF} by functional inequalities on the path space.

\subsection{Perelman's monotone functionals}
In \cite{perelman2002entropy}, Perelman showed that a number of functionals (such as the $\mathcal{F}$ energy, the $\mathcal{W}$ entropy, and the reduced volume) are monotone along any Ricci flow coupled with the conjugate heat equation, which was a crucial step in the resolution of Poincaré conjecture. For the sake of brevity, we recall here only the $\mathcal{W}$-entropy of $\mu=u\,dV_{g_\tau}$ at time $\tau$. Let $R_\tau$ denote the scalar curvature of $g_\tau$, then

\begin{align}
    \mathcal{W}(\mu):=\int_M \tau \left[(|\nabla \log u|^2_\tau + R_\tau ) -\log(u) \right] d\mu + \frac{n}{2}\log(4\pi \tau) - n.
\end{align}

This monotonicity is closely related to functional inequalities involving the $L_-$ distance: for a curve $\gamma:[\sigma,\tau] \to M$, parametrized by a positive backward time variable $ T -t =:\eta \in [\sigma,\tau]\subseteq (T - I) \cap [0,\infty)$ for some choice of $T\in\R{}$, define
\begin{align}
    \mathcal{L}_-(\gamma) = \int_\sigma^\tau \sqrt{\eta} \left(|\dot \gamma|_\eta^2 + R_\eta(\gamma_\eta)\right) d\eta\,,
\end{align}
and $L_-^{\sigma,\tau}(x,y)$ by classical minimization among paths connecting $x$ and $y$.

Using this cost to define a Wasserstein distance between probability measures, Topping \cite{Top09} proved the following result.
\begin{thm}
    On a time-reversed Ricci flow $(M,g_\tau)_{\tau\in[0,T]}$, two conjugate heat flows satisfy the following
        \begin{equation}
         \sqrt{\alpha}W_{L_-^{\alpha \sigma,\alpha \tau}}(\hat{P}_{\sigma,\alpha \sigma}\mu,\hat{P}_{\tau,\alpha \tau}\nu) \leq W_{L_-^{\sigma,\tau}}(\mu,\nu) + \frac{n}{2}(\alpha-1)\left(\sqrt{\tau} - \sqrt{\sigma}\right)
        \end{equation}
    for all $0<\sigma<\tau < T$, $1 < \alpha\leq T/\tau$. Here $W_{L^{\sigma,\tau}_-}(\cdot,\cdot)$ is the Wasserstein distance induced by $L_-^{\sigma,\tau}$.
\end{thm}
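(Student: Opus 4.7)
The natural strategy is Kantorovich duality paired with the adjointness between the conjugate heat flow on measures and the (forward) heat flow on functions. By duality,
\[
\sqrt{\alpha}\,W_{L_-^{\alpha\sigma,\alpha\tau}}(\hat{P}_{\sigma,\alpha\sigma}\mu,\hat{P}_{\tau,\alpha\tau}\nu)
= \sup_{(\varphi,\psi)}\left[\int\varphi\,d\hat{P}_{\sigma,\alpha\sigma}\mu + \int\psi\,d\hat{P}_{\tau,\alpha\tau}\nu\right],
\]
where the supremum runs over continuous pairs with $\varphi(x)+\psi(y)\leq \sqrt{\alpha}\,L_-^{\alpha\sigma,\alpha\tau}(x,y)$ for all $x,y\in M$. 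The adjointness of $\hat P$ lets us rewrite each integral as $\int\bar\varphi\,d\mu$, respectively $\int\bar\psi\,d\nu$, where $\bar\varphi,\bar\psi$ are obtained from $\varphi,\psi$ by evolving the heat equation for functions from $\tau=\alpha\sigma,\alpha\tau$ back to $\tau=\sigma,\tau$.

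With this reduction, the theorem follows once we establish the pointwise dual inequality
\[
\bar\varphi(x) + \bar\psi(y) \leq L_-^{\sigma,\tau}(x,y) + \tfrac{n}{2}(\alpha-1)(\sqrt{\tau}-\sqrt{\sigma}),
\qquad \forall\, x,y\in M,
\]
since integrating against any coupling of $\mu,\nu$ and optimising gives the claim. To prove this pointwise inequality I would fix an $L$-minimizer $\gamma:[\sigma,\tau]\to M$ realising $L_-^{\sigma,\tau}(x,y)$ and test against the parabolically rescaled curve $\widetilde\gamma(\eta):=\gamma(\eta/\alpha)$ on $[\alpha\sigma,\alpha\tau]$. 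A change of variables $\eta=\alpha s$ in $\cL_-(\widetilde\gamma)=\int_{\alpha\sigma}^{\alpha\tau}\sqrt{\eta}\,(|\dot{\widetilde\gamma}|^2+R)\,d\eta$ produces the scaling factor $\sqrt{\alpha}$. The evolution of $\bar\varphi,\bar\psi$ along $\gamma$ is then controlled by the Hamilton--Jacobi subsolution property dual to the $L$-cost, which on Ricci flow is precisely Perelman's Harnack inequality for the forward heat flow. Its dimensional defect $n/(2\tau)$, integrated against $\sqrt{\eta}\,d\eta$ along the reparametrized path, yields exactly the additive error $\frac{n}{2}(\alpha-1)(\sqrt{\tau}-\sqrt{\sigma})$.

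The main obstacle is the regularity of the Kantorovich potentials: they are typically only Lipschitz, so Perelman's Harnack inequality must be applied either in a viscosity sense along the $L$-geodesic, or after smoothing $\varphi,\psi$ and passing to the limit by semicontinuity. A secondary subtlety is ensuring that the admissibility constraint $\varphi\oplus\psi\leq\sqrt{\alpha}\,L_-^{\alpha\sigma,\alpha\tau}$ on the evolved side converts cleanly under the reparametrization into the desired constraint on $\bar\varphi,\bar\psi$; this is essentially a Hopf--Lax semigroup statement for the $L$-cost and needs the uniqueness of $L$-minimisers away from a cut locus. The shape of the additive error $\frac{n}{2}(\alpha-1)(\sqrt{\tau}-\sqrt{\sigma})$ should be recognised as the manifestation of the $\tau^{n/2}$ volume correction in Perelman's reduced volume and $\cW$-entropy, reflecting the same underlying spacetime curvature--dimension geometry on Ricci flow.
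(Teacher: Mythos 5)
Your high-level strategy --- Kantorovich duality, pull back to the forward heat flow on functions by adjointness, then establish a pointwise inequality saying the evolved potentials are still dominated by the $L_-$ cost --- is not Topping's original argument (which runs through second variation of $L$-geodesics and Jacobi fields on the transport plan). It is, however, essentially the Kuwada-duality route by which this paper reproves and generalizes the result (Proposition \ref{prop:WL-contraction} together with Proposition \ref{prop:heat-flow-preserves-L_-HJ}), so the plan is coherent and would work. Your observations about the parabolic reparametrization producing $\sqrt{\alpha}$ and about the regularity of Kantorovich potentials are both legitimate; the latter is handled in the paper via the Hopf--Lax semigroup and the refined a.e.\ Hamilton--Jacobi equation of Proposition \ref{prop:fiber-quadratic-HJ}.

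The genuine gap is in the key input: you cite ``Perelman's Harnack inequality for the forward heat flow,'' but no such statement exists in Perelman's work. Perelman's differential Harnack is for solutions of the \emph{conjugate} (adjoint) heat equation, a point the paper makes explicitly. What you actually need is the forward-heat-flow gradient estimate \eqref{eq:GEL-} (equivalently the Hamilton--Jacobi preservation of Proposition \ref{prop:heat-flow-preserves-L_-HJ}), and the paper emphasizes that the underlying Bochner inequality \eqref{eq:BochnerL-} ``appears to be new even for solutions to the Ricci flow.'' So you cannot invoke it as a citation; you would have to prove it. On a Ricci flow the derivation is clean --- $\cD\equiv0$ turns the Bochner identity of Proposition \ref{prop:L-Bochner} into an equality, and integrating the resulting parabolic inequality gives the gradient estimate with the dimensional defect $\tfrac{n}{2}(\tau-\sigma)$ --- but that Bochner computation is the heart of the argument, not an available black box. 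Once that is supplied, the rest of your outline (Young's inequality along the $L_-$-geodesic plus the rescaled Hamilton--Jacobi equation) assembles correctly into the stated error $\tfrac{n}{2}(\alpha-1)(\sqrt{\tau}-\sqrt{\sigma})$.
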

Moreover, such Wasserstein monotonicity implies the monotonicity of the $\mathcal{W}$-entropy, also by \cite{Top09}. Similar results were obtained by Lott \cite{Lot09} using the $L_0$ distance and $\mathcal{F}$ energy, with different techniques (Eulerian point of view).

Shortly after Perelman's work, Ni \cite{Ni04} noticed that similar monotonicity formulas hold also on static manifolds with $\Ric\geq0$. Moreover, the result of Topping is closely related (see Remark \ref{rmk:Kuwada-contraction}) to a Wasserstein contraction statement that holds on $\Ric\geq0$ manifolds for different times:
\begin{align}
    W^2(\hat {P}_s\mu,\hat{P}_t\nu) \leq W^2(\mu,\nu) + 2n (\sqrt{t}-\sqrt{s})^2,
\end{align}
where $W$ is the 2-Wasserstein distance induced by $d_g$, see for example \cite{kuwadaspace}. In the context of $\mathsf{RCD}$ spaces the monotonicity of $\cW$ was obtained in \cite{JianZhang16, KuwadaLi, brena2025perelman}.

Summing up, such monotonicity results hold on Ricci flows and on manifolds with nonnegative Ricci curvature: hence it might be natural to guess that they hold on super Ricci flows. This is however not true in general, see \eqref{eq:F-round-spheres} and the surrounding discussion.

\subsection{The \texorpdfstring{$\cD$}{D}-condition}\label{ss:D-condition}
In \cite{mullermonot} Buzano introduced the following condition. Let $(M,g_\tau)_{\tau \in I}$ be a time dependent Riemannian manifold evolving smoothly with
\begin{equation}
    \frac{\partial}{\partial \tau} g_\tau = 2 \mathcal{S}_\tau
\end{equation}
for some $\mathcal{S}_\tau$ symmetric bilinear form with trace $\text{tr}_\tau(\mathcal{S}_\tau)=S_\tau$. We define
\begin{equation}\label{Mullercond}
    \mathcal{D}(X):=-\partial_\tau S - \Delta S - 2|\mathcal{S}|^2 + 4(\div \mathcal{S})(X) - 2 \langle\nabla S, X\rangle + 2 \Ric(X,X)- 2\mathcal{S}(X,X)
\end{equation}
for any vector field $X$ on $M$, at some time $\tau$. The key assumption is that 
\begin{equation} \mathcal{D}(X)\geq0 \ \ \ \text{for any vector field $X$ and any time $\tau$},
\end{equation}
and we will denote it by ``$\mathcal{D}$-condition'' or $\mathcal{D}\geq0$. Note that when we consider the flow in the forward direction, we define $\partial_t g=-2\mathcal{S}$ and 
\begin{align}
    \mathcal{D}(X)=\partial_t S - \Delta S - 2|\mathcal{S}|^2 + 4(\div \mathcal{S})(X) - 2 \langle\nabla S, X\rangle + 2 \Ric(X,X)- 2\mathcal{S}(X,X)\,.
\end{align}

In \cite{mullermonot} Buzano showed that the condition $\mathcal{D}\geq0$ is sufficient for the monotonicity of reduced volume to hold for a generalized $L_-$ distance. Later Huang \cite{Huang} noticed that Topping's $L_-$-Wasserstein monotonicity, and hence also the monotonicity of the $\mathcal{W}$ entropy, holds under $\mathcal{D}\geq0$.

\paragraph{Flows satisfying $\mathcal{D}\geq0$.}
On static manifolds the tensor becomes $\mathcal{D} = 2\Ric$, hence the $\mathcal{D}$ condition is equivalent to having nonnegative Ricci curvature. 

If $(M,g_\tau)$ is a (time-reversed) Ricci flow we have $\mathcal{D}\equiv0$. We will provide evidence that $\cD \geq 0$ might serve as a more accurate uniformization of Ricci flows and Ricci nonnegative manifolds than the super Ricci flow condition, at least in the smooth setting.

Other, more exotic examples of flows satisfying $\mathcal{D}\geq0$ are provided by Buzano \cite{mullermonot}. Note that in some of these examples the linear terms $4(\div \mathcal{S})(X) - 2 \langle\nabla S, X\rangle$ do not vanish.

\paragraph{$\mathcal{D}\geq0$ implies being a SRF.}
In order to understand it better, one can view the quantity $\mathcal{D}$ as the sum of three addenda: the first three terms are related to the evolution of the scalar curvature along the Ricci flow, the fourth and the fifth are a sort of Bianchi relation (they cancel if $\mathcal{S}=\Ric$ by the twice contracted Bianchi identity) and the last two terms correspond to the super Ricci flow:
\begin{equation}\label{eq:Dcondition}
    -\partial_\tau S - \Delta S - 2|\mathcal{S}|^2 \qquad \qquad 4(\div \mathcal{S})(X) - 2 \langle\nabla S, X\rangle \qquad \qquad 2 \Ric(X,X)- 2\mathcal{S}(X,X).
\end{equation}

Note that the three addenda have different homogeneity with respect to scaling the vector field $X$. Scaling $X$ is also equivalent to parabolically rescaling $\Tilde{g}(\tau,x)=\lambda g(\frac{\tau}{\lambda},x)$. This allows the conditions to be decoupled and implies that \textit{the first and the last addendum have to be nonnegative by themselves}: in particular, every flow satisfying this condition must be a super Ricci flow. 

\paragraph{Examples of SRF not satisfying $\mathcal{D}\geq0$.}\label{ss:super-Ric-flow-bad}
Note that this is a second-order condition in time, since we are taking the derivative of $S$.

We have said that \[\{\{\text{Ricci flows}\} \cup \{\text{static with }\Ric\geq 0\}\}\subseteq \{\cD\geq0\}\subseteq\{\text{super Ricci flows}\}.\]
Now we note that the inclusions above are strict. Indeed, consider a flow of round spheres $(\mathbb{S}^n,r^2(t)g_{\mathbb{S}^n})_{t \in I}$. Then it is a: \begin{itemize}
        \item Ricci flow if $\partial_t r^2(t) = -2(n-1)$;
        \item super Ricci flow if $\partial_t r^2(t) \geq -2(n-1)$;
        \item flow satisfying $\mathcal{D}\geq0$ if \[\begin{cases}
            \partial_t r^2(t) \geq -2(n-1) \ \ \ &(\text{SRF term} )\\
            \partial^2_tr^2(t) \leq 0 \ \ \ &(\text{from } \partial_t S - \Delta S - 2|\mathcal{S}|^2\geq0)
        \end{cases}\,.\]
    \end{itemize}

Staying with this example, one can also check that the monotonicity of the $\cF$ and $\cW$ functionals along the adjoint heat flow is a property of smooth families of Riemannian manifolds $(M^n,g_t)_{t \in I}$ satisfying $\cD \geq 0$ (originally due to \cite{Huang}, see also Section \ref{s:FandW}), but not of super Ricci flows. For the definition of these functionals, we direct the reader to Section \ref{s:FandW}. 

Indeed, on such a flow, $\mu_t\equiv \mu_0 := dV_{g_{t_0}}/\Vol(\mathbb{S}^n, g_{t_0})$ is a solution of the adjoint heat equation, and we have
\begin{align}\label{eq:F-round-spheres}
    \partial_t\mathcal{F}(\mu_t) = \frac{n}{2r^2(t)}\left(-\partial_t^2 r^2(t) + \frac{(\partial_t r^2(t))^2}{r^2(t)}\right)\,,
\end{align}
hence the super Ricci flow condition does not suffice for its monotonicity. In this case the homogeneity $0$ term of the $\mathcal{D}$ condition is in fact sufficient for the monotonicity of $\mathcal{F}$, but in the general case the non-negativity of this term alone is not sufficient (on a static space this term is always zero, but $\cF$ might not be monotone).

It is worth noting that in \cite{XDLi12, LiLi15} it is shown that $\cW$ and $\cF$ have a monotonicity property on $(M,g_t,\mathfrak{m}_t=e^{-f_t}dV_t)$ satisfying the weighted super-Ricci flow condition and with $f_t$ evolving so that $\mathfrak{m}_t$ is fixed. Note that in their case, $\cF$ and $\cW$ are non-decreasing along a weighted {\it forward} heat flow (which preserves probability measures by time-independence of $\mathfrak{m}_t$).

\subsection{Main results}
In this work we show geometric and functional characterizations of the $\mathcal{D}$ condition, in the spirit of the characterization of Ricci lower curvature bounds \cite{sturmvonrenesse} and of super Ricci flows, \cite{mccanntopp,KopStu18}. In fact there are a few possible characterizations with varying sensitivity to the dimension $\dim M$, formulated either in terms of the $L_0$ distance or in terms of the $L_-$ distance (also in terms of $L_+$, see Subsection \ref{ss:L+}). The $L_0$ distance is a superficially simpler version of $L_-$ and is defined by minimizing the action 
\begin{align}
    \mathcal{L}_0(\gamma) = \int_s^t \left( |\dot\gamma|_r^2 + S_r(\gamma_r)\right) dr\,.
\end{align}
Its properties are related to the monotonicity of the $\mathcal{F}$-functional (as the $L_-$ is related to the $\mathcal{W}$-entropy). Moreover, while $L_-$ is used to detect properties of shrinking solitons (hence more suitable for the study of singularities), the $L_0$ is designed for steady solitons, and $L_+$ for expanding ones. We report here the $L_0$ and $L_-$-based characterizations. In the first case the results and the proofs are notationally lighter and it is easier to grasp the general ideas, but the second case might be more relevant for applications (as it is associated to the monotonicity of the parabolic scale-invariant functional $\cW$).

\begin{note}
All nonstandard notation in the following two Theorems (\ref{thm:L0charact}, \ref{thm:L-charact}) is defined in the preliminary Section \ref{s:prelims}.
\end{note}
\begin{thm}\label{thm:L0charact}
    Given $(M,g_t)_{t\in I}$ a smooth time-dependent Riemannian manifold, the following are equivalent:
    \begin{enumerate}
        \item $\mathcal{D}\geq 0$;
        \item \textbf{Bochner inequality}: for every $v:M\to\mathbb{\R{}}$ smooth and times $[s,t] \subseteq I$,
        \begin{align} \label{eq:BochnerL0}
            (\partial_t - \Delta_t) \left(|\nabla P_{t,s}v|_t^2 - 2\Delta_t P_{t,s}v  -S_t\right) \leq 0\,;\tag{B\({}_{L_0}\)}
        \end{align}
        \item \textbf{Gradient estimate}: for every $v$ smooth and times $[s,t]\subseteq I$,
        \begin{align} \label{eq:GEL0}
            |\nabla P_{t,s}v|_t^2 - 2\Delta_t P_{t,s}v - S_t \leq P_{t,s}(|\nabla v|_s^2 - 2\Delta_s v - S_s)\,; \tag{GE\({}_{L_0}\)}
        \end{align}
        \item \textbf{Wasserstein contraction}: for every probability measures $\mu,\nu\in \cP(M)$, times $[s, t] \subseteq I$, and every flow time $0 < h < \sup I - t$
        \begin{align} \label{eq:WCL0}
            W_{L_0^{s,t}}(\hat{P}_{s+h,s}\mu,\hat{P}_{t+h,t}\nu) \leq W_{L_0^{s+h,t+h}}(\mu,\nu)\,, \tag{WC\({}_{L_0}\)}
        \end{align}
        where $W_{L_0^{s,t}}$ is the Wasserstein distance with respect to the cost induced by $\mathcal{L}_0$;
        \item \textbf{Entropy convexity}: for every $\mu_s,\mu_t\in P(M)$ and times $[s,t] \subseteq I$, there is a $W_{L_0^{s,t}}$-geodesic $(\mu_r)_{r\in[s,t]}$ (equivalently, for all such curves) such that \noeqref{eq:ECL0}
        \begin{align}\label{eq:ECL0}
            r\mapsto \mathcal{E}_{r} (\mu_r) - \mathcal{E}_{s} (\mu_s) - W_{L_0^{s,r}}(\mu_s,\mu_r) \ \ \text{is convex;}\tag{EC\({}_{L_0}\)}
        \end{align}
        \item \textbf{$\mathrm{EVI}$ characterization}: for every $\mu,\nu \in \cP(M)$ and times $[s,t]\subseteq I$, the heat flow $\hat P_{t,s}\mu$ satisfies the $\mathrm{EVI}$s
        \begin{align}
            -\partial_{a^-}^- W_{L_0^{s,a}}(\mu,\hat{P}_{t,a}\nu) & \leq \frac{1}{a-s}\left(\cE(\mu) - \cE(\hat{P}_{t,a}\nu) + W_{L_0^{s,a}}(\mu, \hat{P}_{t,a}\nu)\right) \,, \qquad \forall a \in (s, t]\,.\\
            -\partial_{b^-}^-W_{L_0^{b,t}}( \hat{P}_{s,b}\mu, \nu) & \leq \frac{1}{t - b}\left(\cE(\nu) - \cE(\hat{P}_{s,b}\mu) - W_{L_0^{b, t}}( \hat{P}_{s,b}\mu, \nu)\right) \,, \qquad\forall b \in (\inf I,s]\,.\label{eq:EVIL0}\tag{EVI\({}_{L_0}\)}
        \end{align}
        where $\partial_{t^-}^-$ is the Dini derivative, see \ref{Dini-def}. 
    \end{enumerate}
\end{thm}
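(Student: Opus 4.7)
My plan is to close the cycle $(1) \Rightarrow (2) \Rightarrow (3) \Rightarrow (4) \Rightarrow (1)$, and then derive (5) and (6) from this loop by standard optimal-transport technology. The structure mirrors the super Ricci flow characterization of Theorem \ref{mccanntopping}, with the role of the kinetic energy $|\nabla v|^2$ replaced by the \emph{Harnack functional}
\begin{equation*}
    \mathcal{H}_t(v) := |\nabla v|_t^2 - 2\Delta_t v - S_t.
\end{equation*}
The crux of $(1) \Leftrightarrow (2)$ should be a pointwise tensorial identity: for $u = P_{t,s}v$ on a metric evolving with $\partial_t g_t = -2\cS_t$, combining the classical Bochner formula for $|\nabla u|_t^2$, the evolution of $\Delta_t$ under $\partial_t g = -2\cS$, and the pointwise evolution of $S$, I expect
\begin{equation*}
    (\partial_t - \Delta_t)\,\mathcal{H}_t(u) \;=\; -2\bigl|\nabla^2 u + \cS_t\bigr|_t^{\,2} \;-\; \cD(\nabla u).
\end{equation*}
The forward direction is then immediate. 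For the converse, at any $(t_0,x_0)$ and any vector $X$ I choose $v$ to be a second-order polynomial in $g_{t_0}$-normal coordinates around $x_0$ with $\nabla v(x_0)=X$ and $\nabla^2 v(x_0)=-\cS_{t_0}(x_0)$; letting $s\to t_0$ in \eqref{eq:BochnerL0} kills the Hessian-square term and isolates $\cD(X)\geq 0$ at $(t_0,x_0)$.

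The step $(2) \Leftrightarrow (3)$ is the usual commutator observation applied to $\Phi_r := \mathcal{H}_r(P_{r,s}v)$: since $\tfrac{d}{dr}P_{t,r}\Phi_r = P_{t,r}(\partial_r - \Delta_r)\Phi_r$, integrating \eqref{eq:BochnerL0} over $r\in[s,t]$ yields \eqref{eq:GEL0}, and a first-order expansion of \eqref{eq:GEL0} in $s$ at $s=t$ recovers \eqref{eq:BochnerL0}. For $(3) \Rightarrow (4)$, I adapt the Kuwada duality \cite{kuwadaspace} to the time-dependent $L_0$ cost in the spirit of \cite{Top09,Lot09,mccanntopp}: taking optimal Kantorovich potentials for $W_{L_0^{s+h,t+h}}(\mu,\nu)$, evolving them by the $L_0$-Hopf--Lax semigroup, and pairing them against the two conjugate heat flows, the gradient estimate (3) translates term by term into the contraction \eqref{eq:WCL0}. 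The reverse $(4) \Rightarrow (1)$ is obtained by localizing: test (4) on smoothed Dirac masses $\mu \approx \delta_{x_0}$ and $\nu \approx \delta_{\exp_{x_0}(\eps X)}$ at time $t_0$, let the flow time $h\downarrow 0$, and expand both sides of \eqref{eq:WCL0} to second order in $(h,\eps)$; the leading non-vacuous order reproduces $\cD(X)\geq 0$ at $(t_0,x_0)$.

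To finish, (5) follows from (4) by the displacement-monotonicity argument of McCann--Topping: the contraction \eqref{eq:WCL0} evaluated between nearby times along an $L_0^{s,t}$-Wasserstein geodesic $(\mu_r)_{r\in[s,t]}$ controls the second difference of $r \mapsto \cE_r(\mu_r)$, yielding the convexity of $r\mapsto\cE_r(\mu_r)-\cE_s(\mu_s)-W_{L_0^{s,r}}(\mu_s,\mu_r)$. The EVI characterization (6) then follows by combining (3) (used to linearize the action of the adjoint heat semigroup in the ``inner'' slot) with (5) (used to control the entropy contribution), packaging ``contraction $+$ entropy convexity $\Rightarrow$ EVI'' in the time-dependent setting \emph{à la} \cite{AGS2,AmbGigSav15,KopStu18}, with Dini derivatives accommodating the weaker regularity in $a,b$. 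The main obstacle I foresee is the Kuwada-type step $(3)\Leftrightarrow(4)$: in the static and super Ricci flow cases the Hopf--Lax semigroup is first order and pairs cleanly with $|\nabla v|^2$, whereas here the ``gradient'' $\mathcal{H}_t$ carries the second-order correction $-2\Delta_t v - S_t$, encoding the interaction between $\cL_0$ and the evolving volume form $dV_{g_t}$. Making the duality rigorous for singular $\mu,\nu\in\cP(M)$---regularizing Kantorovich potentials, obtaining quantitative Lipschitz bounds along the forward evolution of $g_t$, and passing to the limit by compactness---is where I expect the bulk of the conceptual work to sit.
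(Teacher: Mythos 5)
Your cycle $(1)\Rightarrow(2)\Rightarrow(3)\Rightarrow(4)$ matches the paper's architecture closely: the Bochner identity with $\cD$ as curvature term (Proposition~\ref{prop:DcondiffBochner}, modulo the allowable sign change $v\mapsto -v$), the semigroup interpolation between (2) and (3), and the Kuwada-type duality for $(3)\Rightarrow(4)$. Two issues stand out, one logical and one technical.

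\textbf{Logical gap: the equivalence is not closed.} You prove (1)--(4) equivalent and then claim to ``derive (5) and (6) from this loop.'' But the theorem asserts all six conditions are \emph{equivalent}, which requires implications \emph{from} (5) and (6) back into the loop, and you never address these. The paper does so by routing $(\mathrm{GE})\Rightarrow(\mathrm{EVI})\Rightarrow(\mathrm{EC})\Rightarrow(\mathrm{WC})$ (Propositions~\ref{prop:L0GE-to-EVI}, \ref{prop:L0-entropy-cvx}, \ref{prop:L0-cvxty-implies-contr/EVI}), which makes (5) and (6) genuine nodes of the cycle rather than terminal consequences. Relatedly, your claimed direction $(4)\Rightarrow(5)$ (``the contraction controls the second difference of $r\mapsto\cE_r(\mu_r)$'') is not the McCann--Topping argument: in that literature one passes from the pointwise curvature bound (or an $\mathrm{EVI}$) to entropy convexity, not directly from a Wasserstein contraction to convexity along geodesics. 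The paper in fact proves the \emph{opposite} implication $(\mathrm{EC})\Rightarrow(\mathrm{WC})$ (Proposition~\ref{prop:L0-cvxty-implies-contr/EVI}), using Topping's one-sided entropy-derivative estimates along $W_{L_0}$-geodesics (Lemma~\ref{lem:Top-1-sided-est}); there is no clean direct argument for the reverse without passing through an $\mathrm{EVI}$ or the infinitesimal condition.

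\textbf{Technical gap: $(4)\Rightarrow(1)$ by Taylor expansion of Dirac masses.} The paper instead proves $(\mathrm{WC})\Rightarrow(\mathrm{GE})$ (Proposition~\ref{prop:L0-contraction-to-GE}) by lifting a curve of conjugate heat flows of Dirac masses to a dynamic plan (Lemma~\ref{lem:dynamiclifting}), differentiating the $W_{L_0}$ metric speed, and using Young's inequality before sending $t\to s$; this isolates the full gradient estimate, not merely the infinitesimal curvature condition. Your proposed double expansion in $(h,\eps)$ faces the difficulty that $\cD(X)$ is \emph{inhomogeneous} in $X$ (degrees $0$, $1$, $2$), so the leading behavior as $\eps\to 0$ first recovers only the homogeneity-$2$ super Ricci flow term $2(\Ric-\cS)(X,X)$; extracting the degree-$0$ piece $\partial_t S-\Delta S-2|\cS|^2$ requires second-order time information that a ``leading non-vacuous order'' expansion does not obviously isolate. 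The gradient estimate is precisely the intermediate object that packages this correctly, and bypassing it is not merely an expository choice.
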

In the case of the $L_-$ characterization it is more natural to use a backwards time variable.
\begin{thm}\label{thm:L-charact}
    Given $(M^n,g_\tau)_{\tau\in I}$ a smooth time-dependent Riemannian manifold of dimension $\dim M = n$ and $I \subseteq (0,\infty)$, the following are equivalent:
    \begin{enumerate}
        \item $\mathcal{D}\geq 0$;
        \item \textbf{Bochner inequality}: for every $v:M\to\mathbb{\R{}}$ smooth and times $[\sigma,\tau] \subseteq I$,
        \begin{align}\label{eq:BochnerL-}
            (-\partial_\sigma - \Delta)\left( |\nabla P_{\sigma,\tau}v|_\sigma ^2 - 2\sigma\Delta_\sigma P_{\sigma,\tau}v - \sigma^2 S_\sigma + \frac{n}{2}\sigma \right) \leq 0\,;\tag{B\({}_{L_-}\)}
        \end{align}
        \item \textbf{Gradient estimate}: for every $v$ smooth and times $[\sigma,\tau] \subseteq I$,
        \begin{align}\label{eq:GEL-}
            |\nabla P_{\sigma,\tau}v|_\sigma^2-2\sigma \Delta_\sigma P_{\sigma,\tau}v-\sigma^2S_{\sigma} \leq P_{\sigma,\tau}\left(|\nabla v|_\tau ^2 - 2\tau\Delta_\tau v - \tau ^2S_{\tau}\right) + \frac{n}{2}(\tau-\sigma)\,; \tag{GE\({}_{L_-}\)}
        \end{align}
        \item \textbf{Wasserstein contraction}: for every probability measures $\mu,\nu \in \cP(M)$, times $[\sigma,\tau] \subseteq I$, and flow factor $1 < \alpha < \sup I/\tau$,
        \begin{align} \label{eq:WCL-}
            W_{\tilde L_-^{\alpha\sigma,\alpha\tau}}(\hat P_{\sigma,\alpha\sigma}\mu, \hat P_{\tau,\alpha\tau}\nu) \leq W_{\tilde L_-^{\sigma,\tau}}(\mu,\nu) + (\sqrt{\tau}-\sqrt{\sigma})^2\frac{n}{2}(\alpha-1)\,, \tag{WC\({}_{L_-}\)}
        \end{align}
        where $W_{\tilde L_-^{\sigma,\tau}}$ is the Wasserstein distance with respect to the cost induced by $(\sqrt\tau-\sqrt\sigma)\cL_-$;
        \item \textbf{Entropy convexity}: 
        for every $\mu_\sigma,\mu_\tau\in \cP(M)$ and times $[\sigma,\tau] \subseteq I$, there is a $W_{L_-^{\sigma,\tau}}$-geodesic $(\mu_\eta)_{\eta\in[\sigma,\tau]}$ (equivalently, for all such curves) such that 
        \begin{align}
            \eta\mapsto \cE_\eta(\mu_\eta) - \cE_\sigma(\mu_\sigma) + \frac{1}{\sqrt{\eta}}W_{L_-^{\sigma,\eta}}(\mu_{\sigma},\mu_{\eta}) + \frac{n}{2}\ln(\eta/\sigma)\ \ \text{is convex in the variable $\eta^{-1/2}$} \tag{EC\({}_{L_-}\) }\label{eq:ECL-}
        \end{align}    
        in the sense of \eqref{eq:L-minus-entropy-convexity};
        \item \textbf{$\mathrm{EVI}$ characterization}: for every $\mu,\nu \in \cP(M)$ and times $[\sigma,\tau]\subseteq I$, the heat flow $\hat P_{\sigma,\tau}\mu$ satisfies the $\mathrm{EVI}$s
        \begin{align}
            \eta\cdot\partial_{\eta^+}^+ W_{L_-^{\eta,\tau}}(\hat{P}_{\sigma,\eta}[\nu], \mu) &\leq \frac{1}{2(\eta^{-1/2} - \tau^{-1/2})} \left[\cE(\mu) -\cE(\hat{P}_{\sigma,\eta}[\nu]) + \frac{W_{L_-^{\eta,\tau}}(\hat{P}_{\sigma,\eta}[\nu], \mu)}{\sqrt{\tau}} \right] \\
            &\qquad+\frac{n\ln(\tau/\eta)}{4(\eta^{-1/2} - \tau^{-1/2})} - \frac{n\sqrt{\eta}}{2}\,, \qquad \forall\eta \in [\sigma,\tau)\,.\\
            \varsigma\cdot\partial_{\varsigma^+}^+ W_{L_-^{\sigma,\varsigma}}(\nu,\hat{P}_{\tau,\varsigma}[\mu]) &\leq \frac{1}{2(\sigma^{-1/2} - \varsigma^{-1/2})} \left[\cE(\nu) - \cE(\hat{P}_{\tau,\varsigma}[\mu])  - \frac{W_{L_-^{\sigma,\varsigma}}(\nu,\hat{P}_{\tau,\varsigma}[\mu])}{\sqrt{\sigma}}\right]\\
            &\qquad+ \frac{n\ln(\sigma/\varsigma)}{4(\sigma^{-1/2} - \varsigma^{-1/2})} + \frac{n\sqrt{\varsigma}}{2}\,, \qquad \forall \varsigma \in [\tau,\sup I)\,.\label{eq:EVIL-}\tag{EVI\({}_{L_-}\)}\\
    \end{align}
    \end{enumerate}
\end{thm}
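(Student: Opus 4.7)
The plan is to establish all equivalences through two intertwining chains: the analytic chain $(1) \Leftrightarrow (2) \Leftrightarrow (3)$ via a Bochner-type identity and parabolic duality, and the transport chain $(3) \Leftrightarrow (4) \Leftrightarrow (5) \Leftrightarrow (6)$ via a Kuwada-type duality and the time-dependent gradient-flow framework of Ambrosio--Gigli--Savar\'e and Kopfer--Sturm. The approach parallels the $L_0$ case of Theorem \ref{thm:L0charact}, with the principal novelty being the rigid bookkeeping of the weight $\sqrt{\sigma}$ and of the dimensional constant $n/2$ at each step.

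For $(1)\Leftrightarrow(2)$, set $u:=P_{\sigma,\tau}v$ and consider the Harnack-type quantity
\[
F_\sigma := |\nabla u|_\sigma^2 - 2\sigma \Delta_\sigma u - \sigma^2 S_\sigma + \tfrac{n}{2}\sigma.
\]
Using $\partial_\sigma u = -\Delta u$, $\partial_\sigma g = 2\mathcal{S}$, the commutator $[\partial_\sigma,\Delta]$, and the evolution equation for $S$, a direct computation paralleling Buzano \cite{mullermonot} produces an identity of the schematic form
\[
(-\partial_\sigma-\Delta_\sigma) F_\sigma = -2\sigma\,\cD_\sigma(-\nabla u) - 2\sigma\,|T(\nabla^2 u, \mathcal{S}_\sigma, \sigma^{-1}g_\sigma)|_\sigma^2,
\]
where $T$ is a traceless-type tensor whose squared norm exactly absorbs the correction $n\sigma/2$. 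Both terms on the right are nonpositive under $\cD\geq 0$, giving (B$_{L_-}$); the converse follows since $v(x)$, $\nabla v(x)$, $\nabla^2 v(x)$ may be prescribed independently at any fixed spacetime point, which suffices to extract $\cD(X)\geq 0$ for arbitrary $X$.

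The equivalence $(2)\Leftrightarrow(3)$ is parabolic duality: since $F$ is a subsolution of the backward heat equation, integrating against the conjugate heat kernel $\hat P_{\tau,\sigma}\delta_x$ yields $F_\sigma(x) \leq P_{\sigma,\tau}(F_\tau)(x)$, and after collecting the constant $\tfrac{n}{2}(\tau-\sigma)$ this is precisely (GE$_{L_-}$); the reverse direction follows by differentiating at $\sigma=\tau$. For $(3)\Leftrightarrow(4)$ we use a Kuwada-type argument: the Kantorovich formula
\[
W_{\tilde L_-^{\sigma,\tau}}(\mu,\nu)=\sup_\phi\Bigl\{\int\phi^c\,d\nu-\int\phi\,d\mu\Bigr\}
\]
reduces the contraction to propagating a $c$-concave potential $\phi$ along the adjoint heat flow coupled with the parabolic rescaling $\eta\mapsto\alpha\eta$. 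The Hamilton--Jacobi equation associated with the Lagrangian $\sqrt{\eta}(|\dot\gamma|^2+S)$ translates, after a Hopf--Lax verification, into a pointwise inequality on $|\nabla P^*\phi|_\eta^2 - 2\eta\Delta P^*\phi - \eta^2 S_\eta$ that is equivalent to (GE$_{L_-}$); reversing this computation and specializing (WC$_{L_-}$) to Dirac masses recovers the gradient estimate. Finally, $(4)\Leftrightarrow(5)\Leftrightarrow(6)$ follow by adapting the time-dependent gradient-flow theory: the Wasserstein contraction (WC$_{L_-}$) is equivalent to (EVI$_{L_-}$) via a Wasserstein-derivative computation along the adjoint heat flow, and the two EVIs of (6) are equivalent to the geodesic convexity (EC$_{L_-}$) of the corrected entropy by the standard metric-geometry argument.

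The main obstacle will be the step $(3)\Leftrightarrow(4)$. Unlike in the $L_0$ case, the cost $\mathcal{L}_-$ has a time-dependent integrand $\sqrt{\eta}$, so the associated Hamilton--Jacobi semigroup is non-autonomous and its Hopf--Lax formula must be derived carefully along the parabolic rescaling. Moreover the dimensional error $\tfrac{n}{2}(\sqrt{\tau}-\sqrt{\sigma})^2(\alpha-1)$ in (WC$_{L_-}$) is sharp and cannot be absorbed; tracking it through the duality requires exactly the traced-Hessian piece of the Bochner identity, and it is at this point that the distinction between $\cD\geq 0$ and the strictly weaker super Ricci flow condition becomes essential.
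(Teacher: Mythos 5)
The analytic chain $(1)\Leftrightarrow(2)\Leftrightarrow(3)$ and the Kuwada-duality step $(3)\Leftrightarrow(4)$ match the paper's approach, modulo a small bookkeeping slip: the Bochner remainder in Proposition \ref{prop:L-Bochner} is $-2\left|\sigma\cS - \tfrac{g}{2} + \Hess v\right|^2 - \sigma^2\cD(\nabla v/\sigma)$, so the argument of $\cD$ is $\nabla v/\sigma$ and the coefficient is $\sigma^2$, not $2\sigma\,\cD(-\nabla u)$; since $\cD$ is inhomogeneous in its vector argument this rescaling is exactly what lets one recover $\cD\geq 0$ pointwise from the Bochner inequality, and a schematic form that hides it hides the crux of $(2)\Rightarrow(1)$. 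The complete square also includes the $\tfrac{g}{2}$ identity component, whose squared norm $\tfrac{n}{4}$ precisely produces the $+\tfrac{n}{2}\sigma$ correction, rather than a ``traceless-type'' tensor.

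The genuine gap is in the final chain ``$(4)\Leftrightarrow(5)\Leftrightarrow(6)$.'' You assert that \eqref{eq:WCL-} is equivalent to \eqref{eq:EVIL-} ``via a Wasserstein-derivative computation'' and that \eqref{eq:EVIL-} is equivalent to \eqref{eq:ECL-} ``by the standard metric-geometry argument.'' Neither of these is a two-sided equivalence that can be had so cheaply. Summing the two $\mathrm{EVI}$s does yield the contraction \eqref{eq:WCL-} (this is Remark \ref{rmk:EVItoWC}), and the Daneri--Savar\'e argument does give $\mathrm{EVI}\Rightarrow\mathrm{EC}$ (Proposition \ref{prop:L-minus-entropy-cvx}). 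But both converses are false at that level of generality: in the static theory an $\mathrm{EVI}$ gradient flow is strictly stronger than geodesic convexity of the entropy (the $\mathsf{RCD}$ vs.\ $\mathsf{CD}$ distinction), and Wasserstein contraction is likewise weaker than $\mathrm{EVI}$. The paper does not prove $(4)\Rightarrow(6)$ or $(5)\Rightarrow(6)$ directly; it closes the circle by $(5)\Rightarrow(4)$ (Proposition \ref{prop:L-minus-convexity-implies-contraction}), $(4)\Rightarrow(3)$, and then the hard implication $(3)\Rightarrow(6)$ via the action-entropy estimate (Lemma \ref{lem:L-minus-action-entropy-est} and Proposition \ref{prop:L-GE-to-EVI}), which smooths the Wasserstein geodesic by flowing for a short time and then extracts an $\mathrm{EVI}$ from the gradient estimate after a careful $O((\sqrt{\alpha}-1)^2)$ error analysis. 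This step is the technical heart of the transport side of the proof and does not appear in your sketch. Your identification of the non-autonomous Hamilton--Jacobi semigroup for $\cL_-$ as a delicate point is correct, but the deeper missing ingredient is recognizing that the equivalences $(4)$--$(6)$ must be established as a cycle of one-way implications anchored at the gradient estimate, not as pairwise equivalences.
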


We now briefly comment the characterization.
\begin{enumerate}
    \item The $\mathcal{D}$ condition was already discussed in Subsection \ref{ss:D-condition}.
    \item The Bochner formula \eqref{eq:BochnerL0} was already stated in \cite[Lem.~6.88]{rf1}, $\alpha=0$. The $L_-$-adapted Bochner formula \eqref{eq:BochnerL-} appears to be new even for solutions to the Ricci flow, and is an equality for shrinking solitons for an appropriate choice of $v$. They are also reminiscent of Perelman's Harnack inequality, \cite[Eq.~9.1]{perelman2002entropy}. However, the latter is for solutions of the \textit{adjoint} heat flow, while the one we consider is for the {\it forward} heat flow. One could use Perelman's for the characterization, but we are interested in \eqref{eq:BochnerL0} and \eqref{eq:BochnerL-} because they are related to gradient estimates for the forward heat equation and hence to Wasserstein contraction. It seems plausible that the Bochner inequalities for the forward and adjoint heat flows are equivalent by some formal (without using tensor calculus) identity exploiting duality, but we have not explored that direction here.

    \item The gradient estimates seem to be new, even for the Ricci flow. Note that the Laplacian term is linear (can be substituted with $-v$) and in particular we can estimate the commutator of the laplacian and the semigroup in terms of the commutator of the Hamiltonian and the semigroup. For example \eqref{eq:GEL0} can be restated as
    \[|P_{t,s}\Delta v - \Delta P_{t,s}v| \leq P_{t,s}(|\nabla v|^2 -S_s)  - (|\nabla P_{t,s}v|^2 - S_t). \]
    Taking $\lambda v$, $\lambda \gg1$, we recover the super Ricci flow gradient estimate. Taking $v=0$ instead we get that $P_{t,s}S_s \leq S_t$ and by maximum principle $\inf S_t \geq \inf P_{t,s}S_s \geq  \inf S_s$, which also follows directly from the maximum principle applied to the evolution inequality for $S$, which is the first term of the $\mathcal{D}$ condition.

    Similarly, the $L_-$ version gives $\tau^2P_{\sigma,\tau}S_\tau \leq \sigma^2 S_\sigma + \frac{n}{2}(\tau-\sigma)$, which yields blow-up in finite time when we have $\inf S_\tau>0$, see Remark \ref{GEL-and-max-princ}.

    In Section \ref{s:FandW} we will see that \eqref{eq:GEL0} directly implies monotonicity of $\cF$, and \eqref{eq:GEL-} of $\cW$.

    Finally, recall that (or see \cite{mccanntopp}) super Ricci flows can also be characterized by saying that the Lipschitz constant is preserved along the heat flow. We have a similar characterization, phrased as the preservation of a Hamilton-Jacobi inequality, see Lemmas \ref{lem:heat-flow-preserves-HJ}, \ref{prop:heat-flow-preserves-L_-HJ}.
    \item The Wasserstein contractions \eqref{eq:WCL0}, \eqref{eq:WCL-} are already present in the literature in \cite{Lot09,Top09} respectively, where they are proven in a Ricci flow background (see also \cite{kuwada-philipowski}, \cite{Cheng15} for stochastics proofs using coupling). Such approaches also only use $\mathcal{D}\geq0$, but the reverse implication seems a bit delicate. Our proof is entirely different, and is obtained from the gradient estimate, in the spirit of ``Kuwada duality'' \cite{kuwadaduality}.
    
    Taking an appropriate limit $s\to t$ or $\sigma\to \tau$ (i.e. using the usual distance in place of $\mathcal{L}_0$) we recover the contraction for super Ricci flow from Theorem \ref{mccanntopping}. The estimate \eqref{eq:WCL-} also recovers the space-time dimensional contraction in a static manifold with nonnegative Ricci, see Remark \ref{rmk:Kuwada-contraction}. Finally, note that we are able to compare pairs of different time slices, but the pairs must be somehow related (additively for \eqref{eq:WCL0}, multiplicatively for \eqref{eq:WCL-}). For statements about unrelated times, see Corollary \ref{cor:WC-with-time-translations} and Theorem \ref{thm:dimL0charact}.
\begin{figure}[H]
\centering
\begin{tikzpicture}[scale=0.7]

  % Define coordinates
  \coordinate (Pmu) at (0,3.3);
  \coordinate (Pnu) at (1.5,3);
  \coordinate (mu) at (4.5,0.3);
  \coordinate (nu) at (6,0);

  % Points
  \fill (mu) circle (2pt) node[below left] {$\mu$};
  \fill (nu) circle (2pt) node[below right] {$\nu$};
  \fill (Pmu) circle (2pt) node[above left] {$\hat P_{s+h,s}\mu$};
  \fill (Pnu) circle (2pt) node[above right] {$\hat P_{t+h,t}\nu$};

  % Paths
  \draw[thick, dashed] (Pmu) -- (Pnu);

  \draw[<-, >={Stealth[scale=1.5]}] (Pnu) .. controls (4,1) and (5,2) .. (nu);
  \draw[<-, >={Stealth[scale=1.5]}] (Pmu) .. controls (1.6,2.5) and (2.5,0.5) .. (mu);
  \draw[thick, dashed] (mu) -- (nu);
  
  % Time axis
  \draw[->] (-1, -1) -- (7, -1);
  \foreach \x/\xtext in {0/{$s$}, 1.5/{$t$}, 4.5/{$s+h$}, 6/{$t+h$}}
    \draw (\x, -1.1) -- (\x, -0.9) node[below=5pt] {\xtext};

\end{tikzpicture}
\hspace{1cm}
\begin{tikzpicture}[scale=0.7]
  % Define coordinates
  \coordinate (Pnu) at (6,4);
  \coordinate (Pmu) at (3,3);
  \coordinate (mu) at (0.75,0);
  \coordinate (nu) at (1.5,0.3);

  % Points
  \fill (mu) circle (2pt) node[below left] {$\mu$};
  \fill (nu) circle (2pt) node[below right] {$\nu$};
  \fill (Pmu) circle (2pt) node[above left] {$\hat P_{\sigma,\alpha\sigma}\mu$};
  \fill (Pnu) circle (2pt) node[above right] {$\hat P_{\tau,\alpha\tau}\nu$};
  
  % Paths
  \draw[thick, dashed] (Pmu) -- (Pnu);

  \draw[->, >={Stealth[scale=1.5]}] (nu) .. controls (3,2.5) and (4,1.5) .. (Pnu);
  \draw[->, >={Stealth[scale=1.5]}] (mu) .. controls (1,1) and (1.2,0) .. (Pmu);
  \draw[thick, dashed] (mu) -- (nu);
  
  % Time axis
  \draw[->] (0, -1) -- (7, -1);
  \foreach \x/\xtext in {0/{$0$}, 0.75/{$\sigma$}, 1.5/{$\tau$}, 3/{$\alpha\sigma$}, 6/{$\alpha\tau$}}
  \draw (\x, -1.1) -- (\x, -0.9) node[below=5pt] {\xtext};
\end{tikzpicture}
\caption{The \ref{eq:WCL0} and \ref{eq:WCL-} contractions}
\end{figure}

    \item The entropy convexities were stated in \cite[Prop.~11,~16]{Lot09} in a Ricci flow background. Our proof is different and completely avoids $L_{0/\pm}$-geodesics and Jacobi fields. Instead, we follow the pattern of \cite{AmbGigSav15,DanSav08}, with some mild complication stemming from our time-dependent setting. In the static case, entropy convexity plays a central role in the definition of Ricci lower bounds on non smooth spaces. Although we restrict our attention to the smooth setting, the methods we use to prove entropy convexity were originally designed for $\mathsf{RCD}$ spaces, and we hope to explore similar extensions to a suitable class of singular metric flows in future work.

    \item The $\mathrm{EVI}$s \eqref{eq:EVIL0} and \eqref{eq:EVIL-} are new also for Ricci flows and are motivated by gradient flow theory. In the theory of $\mathsf{RCD}$ spaces they were studied in \cite{AGS2,AmbGigSav15}, and in a SRF setting in \cite[Sec.~6]{KopStu18}. These estimates are very strong, firstly because they imply entropy convexity (Prop \ref{prop:L0-entropy-cvx}) in the spirit of \cite{AGS2,AmbGigSav15,DanSav08}; secondly they are natural one-sided versions of the Wasserstein contractions, and summing the $\mathrm{EVI}$s one recovers the Wasserstein contractions (Rmk \ref{rmk:EVItoWC}). On the other hand they can be obtained from the gradient estimates, but in this case the proof is more involved. Note that in the static theory the existence of $\mathrm{EVI}$ gradient flows of the entropy is stronger (it can be used as definition of $\mathsf{RCD}$ space) than the entropy convexity (which defines $\mathsf{CD}$).

    In a related direction, in \cite{KopStu18,Kop18} the adjoint heat flow is identified as a time-dependent gradient flow of the entropy functional.

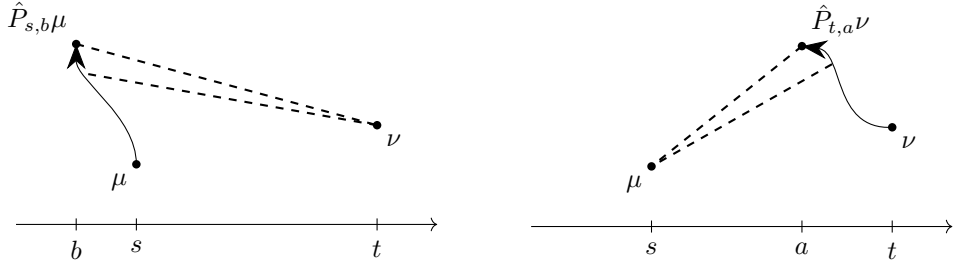
\begin{figure}[H]
\centering
\begin{tikzpicture}[scale=0.8]

  % Define coordinates
  \coordinate (P) at (3,2);
  \coordinate (mu) at (0,0);
  \coordinate (nu) at (4,0.65);
  \coordinate (Pmu-s) at (-1,2);
    
  % Paths
  \draw[->, >={Stealth[scale=2]}] (mu) .. controls (0,0.8) and (-1,1.4) .. (Pmu-s);
  \draw[thick, dashed] (Pmu-s) -- (nu);
  \draw[thick, dashed] (-0.8,1.5) -- (nu);

  \fill (mu) circle (2pt) node[below left] {$\mu$};
  \fill (nu) circle (2pt) node[below right] {$\nu$};
  \fill (Pmu-s) circle (2pt) node[above left] {$\hat{P}_{s,b}\mu$};

  % Time axis
  \draw[->] (-2, -1) -- (5, -1);
  \foreach \x/\xtext in {-1/{$b$}, 0/{$s$}, 4/{$t$}}
    \draw (\x, -1.1) -- (\x, -0.9) node[below=5pt] {\xtext};

\end{tikzpicture}
\hspace{1cm}
\begin{tikzpicture}[scale=0.8]

  % Define coordinates
  \coordinate (mu) at (0,0);
  \coordinate (nu) at (4,0.65);
  \coordinate (Pnu-a) at (2.5,2);
    
  % Paths
  \draw[->, >={Stealth[scale=2]}] (nu) .. controls (3,0.6) and (3.2,1.9) .. (Pnu-a);
  \draw[thick, dashed] (Pnu-a) -- (mu);
  \draw[thick, dashed] (3,1.7) -- (mu);

  \fill (mu) circle (2pt) node[below left] {$\mu$};
  \fill (nu) circle (2pt) node[below right] {$\nu$};
  \fill (Pnu-a) circle (2pt) node[above right] {$\hat{P}_{t,a}\nu$};

  % Time axis
  \draw[->] (-2, -1) -- (5, -1);
  \foreach \x/\xtext in {0/{$s$}, 2.5/{$a$}, 4/{$t$}}
    \draw (\x, -1.1) -- (\x, -0.9) node[below=5pt] {\xtext};
\end{tikzpicture}
\caption{The backward and the forward \eqref{eq:EVIL0}s.}
\end{figure}

\end{enumerate}

Apart from the two main Theorems, there are a handful of other results that are of comparable interest but have not been fully spelled out in this section for expository reasons. We informally list and outline them here; careful statements can be found in the referenced Theorems:

\begin{itemize}
\item {\bf Curvature-dimension type conditions}: In Theorem \ref{thm:L-charact} the dimension plays a role, but is fixed a priori, while it would be preferable (as in the synthetic Ricci lower bound $\mathsf{CD}(K,N)$ theory) to have conditions that characterize $\{\cD\geq0, \ \dim M\leq N\}$. Such a combination is necessary in the $\mathsf{CD}(K,N)$ setting for it to be preserved under an appropriate convergence of metric-measure spaces, where the topological dimension of the space may drop in the limit. Such convergence issues are beyond the scope of this paper, but in Theorem \ref{thm:dimL0charact} we do provide a list of conditions equivalent to $\{\cD\geq0, \ \dim M\leq N\}$, parallel to those of Theorems \ref{thm:L0charact}, \ref{thm:L-charact}. 
\item {\bf Conditions for the $L_+$ distance}: There is a strict analog of Theorem \ref{thm:L-charact} for the $L_+$ distance in Section \ref{ss:L+}, Theorem \ref{thm:L+charact}. We recall here that this notion of distance is induced by the action functional for curves
\begin{equation}
    \cL_+(\gamma) = \int_s^t \sqrt{r}\big(|\dot\gamma|^2_r + S_r(\gamma_r)\big)\,dr\,,
\end{equation}
and was introduced in \cite{Feldman-Ilmanen-Ni} for the purpose of studying expanding solitons. Correspondingly, it is related to parabolic rescaling with the time-origin chosen {\it previous} to the time of consideration.
\item {\bf Monotonicity of Perelman's $\cF$ and $\cW$ functionals}: In \cite{perelman2002entropy} Perelman discovered the monotonicity of the functional $\cF$, and perhaps more importantly of the scale-invariant version $\cW$, for solutions to the adjoint heat equation along a Ricci flow. These properties were a crucial analytic tool in the resolution of the Poincar\'e conjecture. More recently, the monotonicity of these functionals has been exploited in the study of partial regularity and singular structure of Ricci flows \cite{BamlerEntropy,BamlerStructure}. Note that some of these results were in fact extended to flows satisfying $\cD\geq0$ in \cite{kunikawa2023almost}.
The monotonicity of $\cW$ was re-derived in \cite{Top09} using \eqref{eq:WCL-}. We provide a new and quite direct proof of these monotonicities in Theorem \ref{thm:F-W-monotonicity}, using the gradient estimates \eqref{eq:GEL0} resp. \eqref{eq:GEL-} along with duality of forward/backward heat flows for functions/measures.
\item {\bf Weighted Riemannian manifolds}: In Theorems \ref{thm:L0charact} and \ref{thm:L-charact}, the input can be seen as a (smooth) time-dependent metric space $(M,d_{g_t})_{t\in I}$. However if one is to believe that these conditions should mirror aspects of (static) $\mathsf{CD}(K,N)$ theory in the dynamic setting, we should allow for the additional data of a measure other than $dV_{g_t}$. In Sections \ref{subsec:infinity-weighted} and \ref{subsec:N-weighted} we introduce the $\cD_{U,\infty}$ and the $\cD_{U,N}$ conditions --corresponding roughly to $\mathsf{CD}(0,\infty)$ resp. $\mathsf{CD}(0,N)$-- and provide analogs of theorems stated in the unweighted case. The $\cD_{U,\infty}$ condition is satisfied by the weighted Ricci flow, introduced in \cite{Lot09}, see Example \ref{ex:wRF} and more generally by the generalized Ricci flow (gauge-changed with a dilation flow)\footnote{This particular evolution was brought to the authors' attention by Jeff Streets soon after the first version of this paper was posted.}, considered in \cite{Streets23, KopferStreetsOTGRF}, see Example \ref{ex:gRF}. The $\cD_{U,N}$ condition is satisfied by the dimensional weighted Ricci flow, Example \ref{ex:NwRF}.
\end{itemize}

\subsection{Heuristic motivation: Perelman's infinite dimensional manifold}
In this subsection we venture an interpretation regarding $\mathcal{D}\geq0$. We will argue that while the super Ricci flow condition can be seen as the nonnegativity of the Ricci curvature of the space-time in the space-like directions, $\mathcal{D}\geq0$  corresponds to the nonnegative Ricci curvature of the space-time in every direction. This justifies that while in the SRF case we can only study optimal transport on a fixed time-slice, under $\cD\geq0$ we are able to transport mass between different time slices.

In \cite[Ch.~6]{perelman2002entropy}, Perelman gives a formal proof of the monotonicity of the reduced volume, considering the space-time manifold given by a Ricci flow and applying classical Riemannian geometry to it.

More precisely, given an $n$-dimensional time-dependent Riemannian manifold $(M,g_\tau)_{\tau\in(\tau_1,\tau_2)}$, $0\leq\tau_1<\tau_2<\infty$, he considered the following $n+N+1$-dimensional manifold:
\begin{align}
    &\Tilde{M} = M\times(\tau_1,\tau_2)\times \mathbb{S}^N \\
    &\Tilde{g}=(g_\tau)_{ij}dx^idx^j + \left(S+\frac{N}{2\tau}\right)d\tau^2 + \tau (g_{\mathbb{S}^N})_{\alpha\beta} dy^\alpha dy^\beta
\end{align}
where ${x_i}$ is a coordinate system on $M$ and ${y_\alpha}$ on $\mathbb{S}^N$, the latter endowed with the metric $g_{\mathbb{S}^N}$ of constant sectional curvature $\frac{1}{2N}$.

Perelman argues that as $N\to\infty$, if $(M,g_\tau)$ is a time-reversed Ricci flow, the Ricci curvature of $(\Tilde{M},\Tilde{g})$ vanishes. Moreover the geodesics on $\Tilde{M}$ behave asymptotically as the $L_-$ geodesics and the reduced volume $V$ can be seen as the volume of the balls in $(\Tilde{M},\Tilde{g})$. Therefore, the monotonicity of $V$ can be explained by the Bishop--Gromov comparison theorem. In a similar direction, in \cite{bustamante2025deriving} it is shown that Colding's monotonicity \cite{ColdingMonoton} applied to $\widetilde M$ recovers $\cW$ monotonicity; and in \cite{CabRivHasl2020} path space analysis on $\widetilde M$ is carried out. However, for the Bishop--Gromov theorem the lower bound on $\Tilde{\Ric}$ is enough, and we observe that this lower bound is equivalent to $\mathcal{D}\geq0$.

\begin{prop}\label{infinitedimmfd}
    Given $(M,g_\tau)$ as above, the following are equivalent:
    \begin{enumerate}
        \item $(M,g_\tau)$ satisfies $\mathcal{D}\geq0$;
        \item The space-time manifold $(\Tilde{M},\Tilde{g})$ has $\Ric \geq O\left(1/N\right)$ for $N\to\infty$.
    \end{enumerate}
\end{prop}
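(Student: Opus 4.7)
The strategy is to compute the Ricci curvature of $\widetilde g$ directly and match its leading behavior as $N\to\infty$ with the quadratic form on space-time vectors built from the three scaling-homogeneous pieces of $\mathcal D$ from Subsection~\ref{ss:D-condition}.

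\textbf{Step 1 (decomposition).} View $\widetilde M$ as the warped product $B\times_{\sqrt\tau}\mathbb S^N$ over the space-time base $B:=M\times(\tau_1,\tau_2)$ with metric $g_B:=g_\tau+h\,d\tau^2$, where $h:=S+N/(2\tau)$. By O'Neill's warped-product Ricci identities, the mixed blocks $\widetilde\Ric(X,V)$ with $X\in TB$ and $V\in T\mathbb S^N$ vanish by the $\mathrm{SO}(N+1)$-symmetry on the fiber, so $\widetilde\Ric$ splits into a fiber block and a base block. A direct computation using $\Ric_{\mathbb S^N}=\tfrac{N-1}{2N}g_{\mathbb S^N}$, $|\nabla\sqrt\tau|^2_B=1/(4\tau h)$, and an evaluation of $\Delta_B\sqrt\tau$ via the base volume element $\sqrt{h\det g_\tau}$ shows that the intrinsic sphere-Ricci and the O'Neill warping term $-\bigl(\tfrac{\Delta_B\sqrt\tau}{\sqrt\tau}+(N-1)\tfrac{|\nabla\sqrt\tau|_B^2}{\tau}\bigr)$ cancel to leading order, yielding $\widetilde\Ric(V,V)=O(1/N)$ on the fiber.

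\textbf{Step 2 (base block).} The Christoffel symbols of $g_B$ work out to $\Gamma^\tau_{ij}=-\mathcal S_{ij}/h$, $\Gamma^i_{\tau j}=\mathcal S^i_j$, $\Gamma^i_{\tau\tau}=-\tfrac12(\nabla S)^i$, $\Gamma^\tau_{\tau j}=\partial_jS/(2h)$, and $\Gamma^\tau_{\tau\tau}=(2h)^{-1}(\partial_\tau S-N/(2\tau^2))$. Assembling $\Ric_B$ from $R_{ab}=\partial_c\Gamma^c_{ab}-\partial_b\Gamma^c_{ac}+\Gamma^c_{cd}\Gamma^d_{ab}-\Gamma^c_{bd}\Gamma^d_{ac}$ and adding the O'Neill warping correction $-\tfrac{N}{\sqrt\tau}H^{\sqrt\tau}_B$, one finds, in the limit $N\to\infty$ and modulo $O(1/N)$ errors,
\[
\widetilde\Ric(X,X) = \Ric(g_\tau)(X,X)-\mathcal S(X,X),\qquad \widetilde\Ric(X,\partial_\tau)=(\div\mathcal S)(X)-\tfrac12\langle\nabla S,X\rangle,
\]
\[
\widetilde\Ric(\partial_\tau,\partial_\tau) = -\tfrac12(\partial_\tau S+\Delta S)-|\mathcal S|^2.
\]
Hence for any base vector $v=X+c\partial_\tau$,
\[
\widetilde\Ric(v,v) = \tfrac12\bigl[A(X,X)+2cB(X)+c^2C\bigr]+O(1/N) = \tfrac{c^2}{2}\,\mathcal D(X/c)+O(1/N),
\]
where $A,B,C$ are the homogeneity-$2,1,0$ pieces of $\mathcal D(Y)=A(Y,Y)+2B(Y)+C$.

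\textbf{Step 3 (equivalence).} Combined with the fiber and mixed estimates of Step 1, the condition $\widetilde\Ric\ge O(1/N)$ as $N\to\infty$ is equivalent to the quadratic form $(X,c)\mapsto A(X,X)+2cB(X)+c^2C$ being positive semidefinite on $TM\oplus\mathbb R$. By the scaling correspondence $Y\leftrightarrow X/c$ for $c\neq 0$ (together with the $c=0$ limit which recovers the super-Ricci-flow condition $A\ge0$, as discussed in Subsection~\ref{ss:D-condition}), this is in turn equivalent to $\mathcal D(Y)\ge 0$ for all $Y\in TM$, completing the equivalence.

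\textbf{Main obstacle.} The bulk of the argument is the $N$-asymptotic bookkeeping in Step 2, particularly in the $\partial_\tau\partial_\tau$ block. There $R^B_{\tau\tau}$ produces a spurious $-S/(2\tau)$ term from the Christoffel product $\Gamma^k_{k\tau}\Gamma^\tau_{\tau\tau}$ that must be exactly cancelled by a subleading $+S/(2\tau)$ contribution coming from the next-to-leading-order expansion of $\Gamma^\tau_{\tau\tau}$ inside $H^{\sqrt\tau}_B(\partial_\tau,\partial_\tau)$. Verifying this cancellation---and tracking precisely which Christoffel products are $O(1)$, $O(N)$, or $O(1/N)$ so that the coefficients match $\mathcal D$ exactly, including the inhomogeneous linear-in-$X$ piece $4(\div\mathcal S)(X)-2\langle\nabla S,X\rangle$---is the delicate heart of the proof.
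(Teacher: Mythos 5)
Your proposal is correct and takes essentially the same approach as the paper: both compute $\widetilde\Ric$ directly and identify $\widetilde\Ric(X + \lambda\partial_\tau, X + \lambda\partial_\tau) = \tfrac{\lambda^2}{2}\mathcal D(X/\lambda) + O(1/N)$, with $\widetilde\Ric(\cdot,\theta) = O(1/N)$ in the spherical directions. The one organizational difference is that you package the fiber and mixed blocks via O'Neill's warped-product formulas over the space-time base $B = M\times(\tau_1,\tau_2)$, whereas the paper simply lists the full Riemann components $\widetilde R_{i\alpha\beta j}$, $\widetilde R_{i\alpha\beta\tau}$, $\widetilde R_{\tau\alpha\beta\tau}$, etc., and traces; this is a cleaner way to see why the fiber block is $O(1/N)$ and the mixed block vanishes, but the base-block bookkeeping in your Step 2 is the same computation the paper performs. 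One small point worth making explicit, which you do and the paper's displayed identity $\tfrac{\lambda^2}{2}\mathcal D(X/\lambda)$ elides: the $\lambda=0$ case of the positivity of $(X,\lambda)\mapsto A(X,X)+2\lambda B(X)+\lambda^2 C$ must be handled separately, and it recovers precisely the super-Ricci-flow piece $A\geq 0$ of $\mathcal D\geq 0$, consistent with the scaling decoupling already discussed in Subsection~\ref{ss:D-condition}.
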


\begin{proof}
    Computations show that the curvature of $(\Tilde{M},\Tilde{g})$ is
    \begin{align}
        &\Tilde{R}_{ijkl} = R_{ijkl} + O(1/N) \\
        &\Tilde{R}_{ijk\tau} = -\left(\nabla_{\partial_i}\mathcal{S}_{jk} - \nabla_{\partial_j}\mathcal{S}_{ik}\right) + O(1/N) \\
        &\Tilde{R}_{i\tau\tau j} = -\frac{1}{2}(\nabla\nabla S)_{ij} -\partial_\tau\mathcal{S}_{ij}-\mathcal{S}_{ik}\mathcal{S}_{kj} -\frac{\cS_{ij}}{2\tau} +O(1/N) \\
        &\Tilde{R}_{\alpha\beta\gamma\delta} = O(1/N^2) \\
        &\Tilde{R}_{i\alpha \beta j} = -\frac{1}{2\tau(S+\frac{N}{2\tau})}\cS_{ij}\tilde g_{\alpha\beta} \\
        &\Tilde{R}_{i\alpha \beta \tau} = \frac{1}{4\tau(S+\frac{N}{2\tau})}\nabla_i S\tilde g_{\alpha\beta} \\
        &\Tilde{R}_{\tau \alpha \beta \tau} = \frac{1}{4\tau\left(\mathrm{S}+\frac{N}{2\tau}\right)}\left(\partial_\tau\mathrm{S}+\frac{\mathrm{S}}{\tau}\right)\tilde g_{\alpha\beta}
        \end{align}
    and tracing we get
    \begin{align}
        &\Tilde{\Ric}_{ij} = \Ric_{ij}-\mathcal{S}_{ij} + O(1/N) \\
        &\Tilde{\Ric}_{i\tau} = \nabla^l\mathcal{S}_{il}-\frac{\nabla_iS}{2}+O(1/N) \\
        &\Tilde{\Ric}_{\tau\tau} = - \frac{\Delta S}{2}-\frac{\partial_\tau S}{2} - |\mathcal{S}|^2 + O(1/N) \\
        &\Tilde{\Ric}_{i\alpha} = \Tilde{\Ric}_{\tau\alpha} = 0 \\
        &\Tilde{\Ric}_{\alpha\beta} = O(1/N)
    \end{align}

    For $V= X + \lambda\partial_\tau + \theta$, $\lambda$ a constant and $X,\theta$ tangent to $M$ and $\mathbb{S}^N$, we have
    \begin{align}
        \Tilde{\Ric}(V,V) &= \Tilde{\Ric}(X,X) + 2\lambda \Tilde{\Ric}(X,\partial_\tau) + \lambda^2\Tilde{\Ric}(\partial_\tau,\partial_\tau) + O(1/N) \\
        &=\Ric(X,X)-\mathcal{S}(X,X) + \lambda \left(2\div\mathcal{S}(X) - \nabla_XS)\right) \\
        & \ \ \ + \lambda^2\left(-\frac{\partial_\tau S}{2} - \frac{\Delta S}{2} - |\mathcal{S}|^2\right) + O(1/N) \\
        &= \frac{\lambda^2}{2}\cD(X/\lambda) + O(1/N)\,.
    \end{align}
\end{proof}

It is interesting to note that the Ricci curvature of $\tilde M$ in each direction has different scaling, splitting $\mathcal{D}$ as in \eqref{eq:Dcondition}.

\begin{rmk}
    A similar phenomenon is observed in Cabezas-Rivas--Topping's canonical shrinking soliton \cite{CabezasRivas-Topping}. In this case one can see that the space-time $M\times(a,b)$ they construct has positive weighted Ricci curvature as $N\to \infty$ if and only if $\mathcal{D}\geq0$.
\end{rmk}

\subsection{Organization of the paper}

The following is a brief roadmap for the structure and logical dependencies of this paper.

\begin{itemize}
\item In Section \ref{s:prelims} we state basic definitions and preliminaries about evolutions of smooth Riemmanian manifolds and associated optimal transport problems, with the more unwieldy proofs delayed to the appendices.
\item In Section \ref{s:L0} and \ref{s:L-} we give the proof of the main theorems, proving the following implications:
\begin{itemize}
    \item $\cD\geq 0 \iff (\mathrm{Bochner})$ in Sec \ref{ss:L0DBoch} (for $L_0$), Sec \ref{ss:L-DBoch} (for $L_-$).
    \item $(\mathrm{Bochner}) \iff (\mathrm{GE})$ in Sec \ref{ss:L0BochGE} (for $L_0$), Sec \ref{ss:L-BochGE} (for $L_-$).
    \item $(\mathrm{GE}) \iff (\mathrm{WC})$ in Sec \ref{ss:GEWCdualityL0} (for $L_0$), Sec \ref{ss:GEWCdualityL-} (for $L_-$).
    \item $(\cE\,\mathrm{convexity})\implies (\mathrm{WC})$ in Sec \ref{sss1:entropyEVIL0} (for $L_0$), Sec \ref{sss1:entropyEVIL-} (for $L_-$).
    \item $(\mathrm{GE}),(\mathrm{WC}) \implies (\mathrm{EVI})\implies (\cE\,\mathrm{convexity})$ in Sec \ref{sss2:entropyEVIL0} (for $L_0$), Sec \ref{sss2:entropyEVIL-} (for $L_-$).
\end{itemize}
\item In Section \ref{s:FandW} we prove the monotonicity of $\cF$ and $\cW$ functionals for solutions to the adjoint heat equation on smooth families of Riemannian manifolds satisfying $(\mathrm{GE})$.
\item In Section \ref{s:dimL0} we first state and prove the equivalence of a list of conditions which characterize $\cD \geq 0$ and $\dim M \leq N$ simultaneously (Sec \ref{ss:nL0Bochner} - \ref{ss:GEWCdualitydimL0}). We also state a dimensionally sensitive $(\cE \text{ convexity})$ and $(\mathrm{EVI})$ written in terms of $L_0$, then prove that these are equivalent to (any of) the $L_-$ conditions of Section \ref{s:L-} (Sec \ref{ss:nL0EC/EVI}).
\item In Section \ref{s:weight} we explain how to extend all dimensionless (Sec \ref{subsec:infinity-weighted}) and dimensionally sensitive (Sec \ref{subsec:N-weighted}) results to smooth evolutions of Riemannian manifolds equipped with a smooth, time-dependent weight.
\end{itemize}

\subsection*{Acknowledgments}
The authors would like to thank Theo Sturm for suggesting the $\cD \geq 0$ condition as a promising direction for further investigation, and Eva Kopfer and Lorenzo Portinale for fruitful discussions. We also thank Jeff Streets for suggesting the generalized Ricci flow as a potential example for the settings considered in this paper, and generously sharing related insights.

M.F. acknowledges funding by the Deutsche Forschungsgemeinschaft (DFG, German Research Foundation) under Germany's Excellence Strategy – EXC-2047/1 – 390685813. E.H. acknowledges support by the Deutsche Forschungsgemeinschaft (DFG) within the CRC 1060, at University of Bonn project number 211504053. A large part of this work was carried out during the HIM Trimester Program: Metric Analysis funded by the Deutsche Forschungsgemeinschaft (DFG) under Germany's Excellence Strategy - EXC-2047/1 - 390685813.

\section{Preliminaries}\label{s:prelims}

\subsection{Time dependent manifolds}\label{subsec:t-dep-mflds}
In this paper we always work with a \textit{closed} (compact without boundary), smooth, time dependent Riemannian manifold $(M,g_t)_{t\in I}$, where $I$ is some interval. Often one can just take $I = [0,T]$, but in some instances translations will play a role. We denote with $\cS$ the time dependent, symmetric 2-tensor and $S$ the (scalar-valued) function given by 
\begin{equation}
\cS_t := -\frac{1}{2}\partial_tg\,, \qquad S_t :=\text{tr}_t \cS_t  = -\frac{\partial_t dV_t}{dV_t}\,.
\end{equation}
Here $dV_t$ is the Riemannian volume form associated to $g_t$. In the case of Ricci flow, $\cS=\Ric$ and $S=R$. In many situations the compactness could be replaced by a uniform bound $S\geq -C$ (e.g. for the finiteness of the $L$-distance). Note that, as for the Ricci flow, if we assume $\mathcal{D}\geq0$, it is enough to assume this bound on $S$ at the initial time.

Sometimes it is convenient to work with a backward time variable, which is always denoted with greek letters, e.g. $\partial_\tau g= 2\mathcal{S}_\tau$. When switching between backwards and forwards time for a given flow, we need to make a choice of reference time $T \in \R{}$ to write 

\begin{equation}
\tau := T - t \in T - I =: \tilde I\,,\qquad (M,g_\tau) := (M,g_t)\,.
\end{equation}
Using both backwards and forwards time is purely a presentation choice, and it would also be possible to work exclusively with one or the other.

A main role is played by the (adjoint) heat semigroup on such time-dependent manifolds. We follow the conventions of \cite[Ch.~3]{KopStu18}. Let us start considering the natural heat equation starting at some $u_s\in C^0(M)$
\begin{align}
    \begin{cases}
        \partial_t u =\Delta_t u \\
        u|_{t=s}=u_s\,.
    \end{cases}
\end{align}
Given $t>s$ there exists a unique solution on $[s,t]\times M$ (in the sense that it solves the equation on $(s,t)\times M$ and converges uniformly to $u_s$ as $t\to s$) and it can be represented by a time dependent semigroup $P_{t,s}u_s:=u_t$, which is defined for all $t\geq s$ (``brings from $s$ to $t$''). In the case $v_s\in C^2(M)$, the equation is solved up to the initial time $t=s$ (in the sense of taking the one-sided right derivative). 

The adjoint heat equation is defined by
\begin{align}
    \begin{cases}
        \partial_s v = -\Delta_s v + S_s v\\
        v|_{s=t}=v_t
    \end{cases}\,.
\end{align}
Its solution (well defined for a given final datum, the equation being backward parabolic) is represented by $P^*_{t,s}v_t:=v_s$ for all $s\leq t$ (``brings from $t$ to $s$''). Note that, as with the heat equation on static spaces, this equation preserves the total mass. Indeed it can be interpreted as the backward heat equation for measures: let $\mu_s :=v_s \,dV_s$, then 
\begin{align}
    \partial_s \mu = -\Delta_s\mu\,,
\end{align}
where $\Delta_s$ is the distributional Laplacian at time $s$. We set $\hat P_{t,s}\mu_t:=\mu_s=P_{t,s}^*v_t \,dV_s=v_s\,dV_s$ and extend to the space of all Borel probability measures $\cP(M)$ by density of a.c. smooth measures in the weak topology. The adjoint heat equation is so-named because the operator $\partial_t+\Delta_t-S_t$ is the space-time adjoint of $\partial_t-\Delta_t$. As a consequence we have the important relation
\begin{align}
    \int_M P_{t,s}u\, d\mu = \int_M u\, d\hat P_{t,s}\mu\,,
\end{align}
and thus the formula for the time evolution in the other variable (with $\Delta$ interpreted distributionally if necessary)
\begin{equation}
    \partial_s P_{t,s}u = -P_{t,s}\Delta_s u\,,\qquad \partial_t \hat{P}_{t,s}\mu = \hat{P}_{t,s} \Delta_t \mu\,.
\end{equation}
When considering backward time $(M,g_\tau)$, we will have the semigroup $P_{\sigma,\tau}$, $\sigma\leq\tau$ ("brings from $\tau$ to $\sigma$") solving
\begin{align}
    \partial_\sigma P_{\sigma,\tau}u_\tau = -\Delta_\sigma P_{\sigma,\tau}u_\tau,
\end{align} 
and $\hat P_{\sigma,\tau}$ $\sigma\leq\tau$ ("brings from $\sigma$ to $\tau$") solving
\begin{align}
    \partial_\tau \hat P_{\sigma,\tau}\mu_\sigma = \Delta_\tau \hat P_{\sigma,\tau}\mu_\sigma.
\end{align} 
The heat equation is closely related to the (Boltzmann-Shannon) entropy functional, which plays a central role in certain parts of this paper. Given a probability measure $\mu$ at a time $t\in I$, we define
\begin{align}
 \cE(\mu)=\begin{cases}
     \int \rho\log\rho \,dV_t &\text{if $\mu=\rho \,dV_t$}\,, \\
     +\infty &\text{otherwise.}
 \end{cases}   
\end{align}

Lastly, we will occasionally want to make statements about limiting difference quotients in time other than standard differentiation. For this we use the following notation for one-sided Dini derivatives.

\begin{equation}\label{Dini-def}
\partial_{t^+}^+ := \limsup_{u\downarrow t}\frac{1}{u - t}\,,\qquad \partial_{t^-}^+ := \limsup_{u\uparrow t}\frac{1}{t - u}\,,\qquad  \partial_{t^-}^- := \liminf_{u\uparrow t}\frac{1}{t - u}\,,\qquad \partial_{t^+}^- := \liminf_{u\downarrow t}\frac{1}{u - t}\,.
\end{equation}
The superscript is dropped to denote a one-sided limit, while the subscript is dropped to denote an honest $\limsup$ or $\liminf$.

\subsection{\texorpdfstring{$L$}{L} distance}
We define now the $L_{m}$ distance, for $m=0,+,-$. For an extensive treatment of the $L_-$ distance, see \cite[Ch.~7-8]{rf1}. We slightly detach from the usual literature, as we add a $1/2$ factor in front of our definition of $L_m$ distance.

\begin{df}\label{df:L0}
    Let $(M,g_t)_{t\in I}$ be a smooth time-dependent Riemannian manifold. For $\gamma:[s,t]\to M$ a smooth curve on $[s,t]\subseteq I$ we define the $\mathcal{L}_0$ energy
    \begin{align}\label{eq:L0action}
        \mathcal{L}_0(\gamma) = \frac{1}{2}\int_s^t |\dot\gamma|_r^2+S_r(\gamma_r) \,dr\,.
    \end{align}
    Now for $(s,x)$, $(t,y)$ we let \begin{align}
        L_{0}^{s,t}(x,y) = \inf\{\mathcal{L}_0(\gamma) \ : \ \text{$\gamma$ with $\gamma_s=x$, $\gamma_t=y$}\}\,.
    \end{align}
    It is useful to define $\tilde L_0^{s,t}(x,y)= (t-s)L_0^{s,t}(x,y)$, so that $\tilde L_0^{s,t}(x,y)\to \frac{1}{2}d^2_s(x,y)$ as $t\to s$, just by adapting \cite[Lem.~7.13]{rf1}.
\end{df}

\begin{df}\label{df:L-}
    Let $(M,g_\tau)_{\tau\in I}$ be a smooth time-dependent Riemannian manifold. For $\gamma:[\sigma,\tau]\to M$ a smooth curve on $[\sigma,\tau]\subseteq I \subseteq [0,\infty)$ we define the $\mathcal{L}_-$ energy
    \begin{align}\label{eq:L-action}
        \mathcal{L}_-(\gamma) = \frac{1}{2}\int_\sigma^\tau \sqrt{\eta}(|\dot\gamma|_\eta^2+S_\eta(\gamma_\eta)) \,d\eta.
    \end{align}
    Now for $(\sigma,x)$, $(\tau,y)$ we let \begin{align}
        L_-^{\sigma,\tau}(x,y) = \inf\{\mathcal{L}_-(\gamma) \ : \ \text{$\gamma$ with $\gamma_\sigma=x$, $\gamma_\tau=y$}.\}
    \end{align}
    We let $\tilde L_-^{\sigma,\tau}(x,y)=(\sqrt \tau-\sqrt \sigma)L_-^{\sigma,\tau}(x,y)$, so that $\tilde L_-^{\sigma,\tau}(x,y)\to \frac{1}{4}d^2_\sigma(x,y)$ as $\tau\to \sigma$, see \cite[Lem.~7.13]{rf1}.
\end{df}

\begin{df}\label{df:L+}
    On $(M,g_t)_{t\in I}$ as in Definition \ref{df:L0}, for $\gamma:[s,t]\to M$ a smooth curve on $[s,t]\subseteq I\subseteq [0,\infty)$ we define
    \begin{align}
        \mathcal{L}_+(\gamma) = \frac{1}{2}\int_s^t \sqrt r ( |\dot\gamma|_r^2+S_r(\gamma_r))\, dr.
    \end{align}
    Now for $(s,x)$, $(t,y)$ we let \begin{align}
        L_{+}^{s,t}(x,y) = \inf\{\mathcal{L}_+(\gamma) \ : \ \text{$\gamma$ with $\gamma_s=x$, $\gamma_t=y$}.\}
    \end{align}
    Again, for $\tilde L_+^{s,t}(x,y)=(\sqrt t-\sqrt s)L_+^{s,t}(x,y)$, it holds $\tilde L_+^{s,t}(x,y)\to \frac{1}{4}d^2_s(x,y)$ as $t\to s$.
\end{df}

In \cite[Ch.~7]{rf1} one can find for the case of $L_-$, the existence of minimizing geodesics, Euler Lagrange equation, and the Lipschitz continuity of $L_-$ (and much more). We will see these features in subsection \ref{subsec:LagrangianProperties} for general Lagrangians. 

\begin{rmk}
    The $L_-$ distance was first considered on a Ricci flow background by Perelman \cite{perelman2002entropy}, $L_0$ appears in \cite{Lot09}. Similar expressions were used in \cite{LiYau86} for the study of the parabolic Schr\"odinger equation, and even before in \cite{Agmonlectures,CarmonaSimon} for the elliptic Schr\"odinger equation.
\end{rmk}

\subsection{Optimal transport}
\subsubsection{Basics on optimal transports}
Here we recall some facts on optimal transport, for further reference see for example \cite{AmbBruSem21}, \cite{Vil09}. We work with $M$ a compact manifold without boundary, and consider $c:M\times M\to [-C,\infty)$ a cost function (in our case it will be $L_m^{s,t}$ for some fixed times $s,t$). Given two probability measures we are interested in the Kantorovich problem
\begin{align}
    K_c(\mu,\nu)=\inf \left \{ \int_{M\times M} c(x,y)\,d\pi(x,y) \ : \ \pi \in \mathrm{Cpl}(\mu,\nu)\right \}\,,
\end{align}
where $\mathrm{Cpl}(\mu,\nu)=\{\pi\in \cP(M\times M) : (p_1)_\#\pi=\mu, (p_2)_\#\pi=\nu\}$. Just assuming $c$ lower semicontinuous, there exists an optimal plan $\pi$. We will often make use of the dual problem:
\begin{align}
    K_c(\mu,\nu) 
    &= \sup\left\{\int_M\phi \,d\mu + \int \psi \,d\nu \ : \ \phi\in L^1(d\mu),\, \psi\in L^1(d\nu),\, \phi\oplus\psi \leq c \right\} \label{eq:dualpbm1} \\
    &= \sup\left\{\int_M\phi \,d\mu + \int \phi^c\,d\nu \ : \ \phi\in L^1(d\mu) \right\} \label{eq:dualpbm2}
    \\
    &= \sup\left\{\int_M\psi^c \,d\mu + \int \psi \,d\nu \ : \ \psi\in L^1(d\nu) \right\} \label{eq:dualpbm3}
\end{align}
where $\phi^c(y)=\inf_x\{c(x,y)-\phi(x)\}$, $\psi^c(x)=\inf_y\{c(x,y)-\psi(y)\}$ are the $c$-transforms. When $c:M\times M\to\R{}$ is Lipschitz, then the supremum is attained by a pair $\phi,\phi^c\in \mathrm{Lip}(M)$, see \cite[Cor.~3.19]{AmbBruSem21}. The functions $(-\phi,\phi^c)$ will be referred to as Kantorovich potentials for the problem \eqref{eq:dualpbm3}--note the unusual sign convention here which was chosen to make some later statements cleaner. Note that in our case $c = L_m^{s,t}$ is Lipschitz on $M\times M$. 

\subsubsection{Regularity of $L$ and Hopf--Lax semigroup}\label{subsec:LagrangianProperties}
We consider the case where $c$ is induced by a Lagrangian action. More precisely, for fixed $a\leq s<t \leq b$, we assume that the cost is given by 
\begin{align}
    c(x,y)=L^{s,t}(x,y)=\inf\{\mathcal{L}(\gamma) \ : \ \gamma_s=x,\gamma_t=y\}, \ \ \ \text{where $\mathcal{L}(\gamma)=\int_s^t L(\dot\gamma_r,\gamma_r,r)dr$.}
\end{align}
Here $L:TM\times [a,b]\to\mathbb{R}$ is the Lagrangian action, which satisfies certain properties stated below. We work in a manifold $M$ endowed with a smooth metric $g$: this serves as support for some assumptions and regularity results (in the time-dependent case, one can choose the distance induced by the metric $g_t$, for any of the $t\in I$).

An important role in this work will be played by the Hamiltonian $H:T^*M\times [a,b]\to\mathbb{R}$, defined as 
\begin{align}\label{eq:DefHamiltonian}
    H(w,x,t)=L^*(w,x,t)=\sup_{v\in T_xM}\left\{ \La v, w\Ra - L(v,x,t) \right\}
\end{align} 
i.e. as the Legendre transform of the Lagrangian $L$ in $v$. 

The following assumptions on $L$ are from \cite[Thm.~6.2.5]{CanSin04}, see also \cite[Ch.~7]{Vil09} for related discussion.
\paragraph{Assumptions:}\label{par:L-assumptions}
\begin{itemize}
    \item $L(v,p,t)$ is $C^1$ in all variables.
    \item $L$ is strictly convex in $v$.
    \item $L$ is superlinear in $v$ in the following uniform sense: There exist $c_0\geq0$, $\theta:\mathbb{R} ^+\to\R{}$ with $\lim_{\lambda\to\infty}\frac{\theta(\lambda)}{\lambda}=\infty$ and
    \begin{align}
        L(v,x,t)\geq \theta(|v|)-c_0.
    \end{align}
    \item There exists $C>0$ s.t. 
    \begin{align}
        |d_x L(v,x,t)|+|\partial_t L(v,x,t)|\leq C\theta(|v|).
    \end{align}
    \item For all $M>0$ there exists $K_M>0$ with $\theta(q+m)\leq K_M(1+\theta(q))$ for all $m\in[0,M]$, $q\geq0$.
\end{itemize}

\begin{rmk}\label{rmk:LmaregoodLagr}
    One can see that $L_m$, $m=0,-,+$ satisfy the assumptions above, provided that in the case $m=-,+$ we work on a time interval away from $\sigma=0$, resp. $s = 0$. Therefore in these cases we will work on $I\subset (0,\infty)$; some statements can be extended up to $\sigma=0$ or $s= 0$ by continuity.
\end{rmk}

Under these assumptions, we have the following properties, which we will restate more precisely and prove, or give the reference, in Appendix \ref{Appe:Lagrangian}.
\paragraph{Properties of $L^{s,t}$:}
\begin{itemize}
    \item For any $(x,s)$, $(t,y)$ there exists a $C^1$-minimizer, solving the Euler-Lagrange equation. 
    \item As $t\to s$, $L^{s,t}(x,y)\to+\infty$ if $x\neq y$, and $L^{s,t}(x,x)\to0$.
    \item $(y,t)\mapsto L^{s,t}(x,y)$ is locally Lipschitz.
    \item The Legendre transform of $L$ is well-defined. Moreover the map $T_pM\ni v\mapsto w \in T_p^*M$ associating to $v$ its maximizer in \eqref{eq:DefHamiltonian} is a homeomorphism, with $w = d_v L(v,x,t)$.
\end{itemize}

\begin{df}
    We define the (forward) Hopf--Lax semigroup: 
\begin{align}
    Q^{s,t}\phi(y) = \inf_x\left\{ \phi(x) + L^{s,t}(x,y)\right\}.
\end{align}
\end{df}

\begin{rmk}\label{rmk:Kantorovich-via-HL}
Note that it is essentially the same as the $c$-transform: $\phi^c=Q^{s,t}(-\phi)$, hence we can rewrite \eqref{eq:dualpbm2} as
\begin{align}\label{eq:dualpbmHL}
    K &=\sup\left\{\int_M Q^{s,t}(\varphi) d\nu - \int \varphi d\mu \ : \ \varphi\in L^1(d\mu) \right\}. 
\end{align}
More generally, one can see that if $\phi$ achieves the above supremum, $(\phi,Q^{s,r}\phi)$ are Kantorovich potentials between $\mu$ and $\mu_r$, where $\mu_r$ is an $r$-midpoint between $\mu$ and $\nu$: $W_{L^{s,r}}(\mu,\mu_r) + W_{L^{r,t}}(\mu_r,\nu) = W_{L^{s,t}}(\mu,\nu)$.
\end{rmk}

The main property of $Q^{s,t}$ is that it satisfies the Hamilton-Jacobi equation
\begin{align}
    \partial_tQ^{s,t}\phi + H(dQ^{s,t}\phi,Q^{s,t}\phi,t)=0
\end{align}
in appropriate sense, see \cite[pg.~157]{Vil09}, Appendix \ref{Appe:HJ}, and Corollary \ref{cor:dynamic-potentials}. For the strongest statement we assume also that $L$ is centered quadratic (which is the case for $L_m$, $m=0,-,+$), i.e. that for every $(p,t)$ fixed
\begin{align}
    L(v,p,t) =A_{p,t}(v,v)-C_{p,t}, \ \ \ \text{where $A_{p,t}$ is a positive definite quadratic form.}
\end{align}

\subsubsection{Optimal transport with Lagrangian cost}
We now get back to the optimal transport problem with cost $c=L^{s,t}$. In this case we write $K_c=W_{L^{s,t}}$, as we are mimicking Wasserstein distances.

In the case of classical $p$-Wasserstein distance on a compact metric space, $W_{d^p}$ metrizes weak convergence of Borel measures. One should not expect such a clean statement in the current level of generality, where $W_{L^{s,t}}$ need not be nonnegative and the function $ v\mapsto L(v, x, t)$ on $T_xM$ need not be homogeneous. For our purposes, the following much weaker statement will be sufficient

\begin{prop}\label{prop:W-implies-weak-conv}
For $M$ compact and $L$ satisfying the Assumptions of \ref{par:L-assumptions}, let $(\mu_{t_i})_{i \in \NN} \subseteq \cP(M)$ be a sequence and $\mu_t \in \cP(M)$ a measure such that $t_i \uparrow t$ and $\limsup_{i\to \infty} W_{L^{t_i,t}}(\mu_{t_i},\mu_t) < \infty$ (or $t_i\downarrow t$ and $\limsup_{i\to \infty} W_{L^{t,t_i}}(\mu_t,\mu_{t_i}) < \infty$). Then also weak convergence holds: $\mu_{t_i}\weak \mu_t \in \cP(M)$ as $t_i \to t$. 
\end{prop}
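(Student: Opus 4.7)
The plan is to combine the weak-* compactness of $\cP(M)$ (since $M$ is compact) with Kantorovich duality, where the Lagrangian structure lets us essentially approximate Lipschitz functions by admissible pairs $(\phi, \phi^c)$ once $t_i$ is close to $t$. Extract any weakly convergent subsequence $\mu_{t_{i_k}} \weak \bar\mu$; it suffices to prove $\bar\mu = \mu_t$.

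\textbf{Main steps.} Fix $\phi \in \Lip(M)$ with Lipschitz constant $\Lambda$ and a scaling parameter $\lambda > 0$. First I would record that $(\lambda\phi,\,Q^{t_i,t}(-\lambda\phi))$ is an admissible pair in the dual problem \eqref{eq:dualpbm1}, since by construction $\lambda\phi(x) + Q^{t_i,t}(-\lambda\phi)(y) \leq L^{t_i,t}(x,y)$. This yields
\begin{align}
    \lambda \int \phi \, d\mu_{t_i} + \int Q^{t_i,t}(-\lambda\phi)\, d\mu_t \leq W_{L^{t_i,t}}(\mu_{t_i},\mu_t)\,.
\end{align}
Next, I would show that $Q^{t_i,t}(-\lambda\phi) \to -\lambda\phi$ uniformly on $M$ as $t_i \to t$. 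The upper bound uses the constant path $\gamma \equiv y$, giving $Q^{t_i,t}(-\lambda\phi)(y) \leq -\lambda\phi(y) + L^{t_i,t}(y,y)$, with $L^{t_i,t}(y,y) \to 0$ uniformly by the properties of $L^{s,t}$ recorded in Subsection~\ref{subsec:LagrangianProperties}. For the lower bound, using $-\lambda\phi(x) \geq -\lambda\phi(y) - \lambda\Lambda\, d(x,y)$ and the superlinearity assumption on $L$ (which forces $L^{t_i,t}(x,y) \gtrsim d(x,y)^2/|t-t_i| - c_0|t-t_i|$ after applying Jensen to the velocity), one obtains
\begin{align}
    Q^{t_i,t}(-\lambda\phi)(y) \geq -\lambda\phi(y) - C\lambda^2\Lambda^2|t-t_i| - c_0|t-t_i|\,,
\end{align}
uniformly in $y$.

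\textbf{Passing to the limit.} Along the subsequence, weak convergence $\mu_{t_{i_k}} \weak \bar\mu$ plus uniform convergence $Q^{t_{i_k},t}(-\lambda\phi) \to -\lambda\phi$ plus the hypothesis $\limsup_k W_{L^{t_{i_k},t}}(\mu_{t_{i_k}},\mu_t) \leq C_0$ give
\begin{align}
    \lambda \int \phi\, d\bar\mu - \lambda\int \phi\, d\mu_t \leq C_0\,.
\end{align}
Dividing by $\lambda$ and sending $\lambda \to \infty$ yields $\int\phi\,d\bar\mu \leq \int\phi\,d\mu_t$; replacing $\phi$ by $-\phi$ gives equality for every $\phi\in \Lip(M)$. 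Since $\Lip(M)$ is dense in $C(M)$, we conclude $\bar\mu = \mu_t$. As every subsequential limit coincides with $\mu_t$ and $\cP(M)$ is sequentially compact, the full sequence $\mu_{t_i}$ converges weakly to $\mu_t$. The other case ($t_i \downarrow t$) is handled identically using the backward Hopf--Lax transform $\tilde{Q}^{t,t_i}$ and the admissible pair $(\tilde Q^{t,t_i}(-\lambda\phi),\lambda\phi)$.

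\textbf{Main obstacle.} The heart of the argument is the uniform convergence $Q^{t_i,t}(-\lambda\phi) \to -\lambda\phi$. The infimum defining $Q^{t_i,t}$ is in general achieved at points $x$ that could drift with $t_i$; superlinearity of $L$ in the velocity together with the Lipschitz bound on $\phi$ constrains the near-minimizers to concentrate at $y$ at the correct rate, and compactness of $M$ converts pointwise control into uniform control. The factor $\lambda$ in front of $\phi$ is crucial: without the freedom to rescale, the bound $C_0$ on $W_{L^{t_i,t}}$ (merely finite, not small) would be useless, whereas sending $\lambda \to \infty$ extracts the equality $\int\phi\,d\bar\mu = \int\phi\,d\mu_t$ despite the cost's lack of positivity or homogeneity.
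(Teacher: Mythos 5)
Your proof is correct in its essential structure, but it takes a genuinely different route from the paper's. The paper argues directly on the primal side: by Lemma \ref{lem:costfromlagrangian}, $L^{t_i,t}(x,y) \to +\infty$ uniformly off any neighborhood of the diagonal, so the bounded-cost hypothesis forces the (optimal) transport plans to concentrate near the diagonal, whence testing against $f \in C^0(M)$ closes the argument. You instead argue on the dual side, pairing $\mu_{t_i}$ and $\mu_t$ against Hopf--Lax admissible pairs $(\lambda\phi,\,Q^{t_i,t}(-\lambda\phi))$ and exploiting the rescaling freedom $\lambda \to \infty$ to get equality of integrals. Both work; the paper's primal argument is more elementary and requires no duality machinery, while yours is arguably more in the spirit of the rest of the paper's toolbox (Hopf--Lax, Kantorovich duality) and makes the role of the $\inf$-convolution regularization transparent.

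One small imprecision: the Assumptions of \ref{par:L-assumptions} give only superlinearity $L(v,x,t) \geq \theta(|v|) - c_0$ with $\theta$ an abstract superlinear function, not a quadratic one, so the claimed bound $L^{t_i,t}(x,y) \gtrsim d(x,y)^2/|t-t_i| - c_0|t-t_i|$ does not follow for general $\theta$. The fix is painless: by the Jensen argument in the proof of Lemma \ref{lem:costfromlagrangian} one gets
\begin{equation}
L^{t_i,t}(x,y) \geq (t - t_i)\,\tilde\theta\!\left(\frac{d(x,y)}{t - t_i}\right)
\end{equation}
for a convex, non-decreasing, superlinear $\tilde\theta \leq \theta - c_0$, and then
\begin{equation}
Q^{t_i,t}(-\lambda\phi)(y) \geq -\lambda\phi(y) + \inf_{d\geq 0}\left\{-\lambda\Lambda d + (t - t_i)\,\tilde\theta\!\left(\frac{d}{t - t_i}\right)\right\} = -\lambda\phi(y) - (t - t_i)\,\tilde\theta^*(\lambda\Lambda)\,,
\end{equation}
where $\tilde\theta^*$ is the Legendre transform, which is finite by superlinearity. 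This still gives uniform convergence $Q^{t_i,t}(-\lambda\phi) \to -\lambda\phi$ at rate $O(t - t_i)$ for each fixed $\lambda$, which is all that you use, so the rest of your argument — taking $k\to\infty$ then $\lambda \to \infty$, then $\phi \mapsto -\phi$, density of $\Lip(M)$ in $C(M)$, and subsequence uniqueness via compactness of $\cP(M)$ — goes through unchanged.
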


\begin{note}
In the proof below and later, we may use the notation $\Psi(a_1,\ldots, a_k\mid b_1,\ldots, b_\ell)$ to denote any quantity depending on real parameters $a_1,\ldots, a_k,b_1,\ldots, b_\ell \in (0,\infty)$ that converges to $0$ when $a_1,\ldots, a_k \to 0$ simultaneously while $b_1,\ldots, b_\ell$ are kept fixed. For instance, $a/b = \Psi(a\mid b)$ but $\neq\Psi(a,b)$, for $a,b \in (0,\infty)$.
\end{note}

\begin{proof}
We will argue in the case that $t_i\downarrow t$; the other case is analogous. Fix a number $\delta > 0$. By the assumption that $\limsup_{i\to \infty} W_{L^{t_i,t}}(\mu_{t_i},\mu_t) < \infty$ and Lemma \ref{lem:costfromlagrangian}, the transport plans must concentrate near the diagonal. More precisely, there are a sequence of couplings (for instance the $W_{L^{t_i,t}}$-optimal ones) $\pi^{t_i,t}$ between $\mu_{t_i}$ and $\mu_t$ such that  $\pi^{t_i,t}\big(M\times M \setminus B_\delta(\Delta)\big) =\Psi(t - t_i\mid\delta)$, where $\Delta := \{(x,x)\mid x \in M\}$ is the diagonal. Thus we have 

\begin{align}
\int_M f(x)\,d\mu_{t_i}(x) = \int_{M \times M} f(x)\,d\pi^{t_i,t}(x,y) &= \left(\int_{M \times M \setminus B_\delta(\Delta)} + \int_{B_\delta(\Delta)}\right) f(x)\,d\pi^{t_i,t}(x,y)\\ 
&= \Psi(t - t_i\mid \delta)\|f\|_\infty + \sup_{d(x,y) < \delta}|f(x) - f(y)|+\int_{M\times M}f(y)\,d\pi^{t_i,t}(x,y) \\
&= \Psi(t - t_i\mid \delta)\|f\|_\infty + \Psi(\delta\mid f) + \int_M f(y)\,d\mu_{t}(y)\,,\qquad \forall f \in C^0(M)\,.
\end{align}

If we send $0 < t - t_i \ll \delta \to 0$ for arbitrary but fixed $f \in C^0(M)$, we see that the LHS converges to the third term of the RHS, which is weak convergence.
\end{proof}

Despite the fact that $W_{L^{s,t}}$ may take on negative values, it still satisfies a triangle inequality for intermediate times, see \cite[Thm.~7.21,~Step~1.]{Vil09}.

\begin{equation}\label{eq:triangle-inequality}
W_{L^{s,t}}(\mu_s,\mu_t) \leq W_{L^{s,r}}(\mu_s,\mu_r) + W_{L^{r,t}}(\mu_r,\mu_t)\,,\qquad \forall r \in [s,t]\,;\,\mu_s,\,\mu_r,\,\mu_t \in \cP(M)\,.
\end{equation}

It is thus possible to formulate a notion of optimal curve(s) between two measures as one that saturates the above inequality for all intermediate times.

\begin{df}[\cite{Vil09}, Theorem 7.21]\label{df:WL-geodesic}
    We say that a curve of probability measures $(\mu_r)_{r\in[s,t]}$ is a $W_{L^{s,t}}$-geodesic (called displacement interpolation in \cite{Vil09}) if 
    \begin{align}
        W_{L^{s,r}}(\mu_s,\mu_r)+W_{L^{r,t}}(\mu_r,\mu_t) = W_{L^{s,t}}(\mu_s,\mu_t)
    \end{align}
    for all $r\in [s,t]$.
\end{df}

Given $\mu_s,\mu_t\in P(M)$, there always exists such a $(\mu_r)$, see \cite[Cor.~7.22]{Vil09}. If we solve the dual problem between $\mu_s$ and $\mu_r$ we obtain a family of potentials $(\phi_r)$, that (see Remark \ref{rmk:Kantorovich-via-HL}, Proposition \ref{prop:HLsemigroup}, and Proposition \ref{prop:fiber-quadratic-HJ}) will solve the Hamilton-Jacobi equation. For ease of future reference we give the statement.
\begin{cor}\label{cor:dynamic-potentials}
For a pair of measures $(\mu_s,\mu_t) \in \cP(M) \times \cP(M)$, there is a space-time Lipschitz function $\phi_r: [s,t]\times M \to \R{}$ for which

\begin{enumerate}
\item $\phi_r$ is given by the Hopf-Lax semigroup and solves the Hamilton-Jacobi equation
\begin{align}
    \phi_r := Q^{s,r}\phi\,,\qquad
        \begin{cases}
            \partial_rQ^{s,r}\phi(x) + H(d Q^{s,r}\phi,x,t) = 0 \label{eq:general-HJequation}\\
            Q^{s,r}\phi \to \phi
        \end{cases}\qquad \text{ a.e. }(r,x)\in [s,t]\times M\,.
    \end{align}

    If additionally $ v\mapsto L(v,x,r) = A_{x,r}(v,v) + C_{x,r}$ is centered quadratic on $T_xM$ and $C^2$ in all variables, then the above holds for every $r \in [s,t]$, a.e. $x \in M$ (with appropriate one-sided time derivatives at the endpoints  $r = s,t$).
\item For any $W_{L}$ geodesic $(\mu_r)_{r\in[s,t]} \subseteq \cP(M)$ with endpoints $(\mu_s,\mu_t)$, the $\phi_r$ are Kantorovich potentials for the dual problem between any two measures along $(\mu_r)_r$. More precisely
\begin{equation}
W_{L^{\bar s,\bar t}}(\mu_{\bar s},\mu_{\bar t}) = \int_M \phi_{\bar t}\,d\mu_{\bar t} - \int_M \phi_{\bar s}\,d\mu_{\bar s}\,,\qquad \forall s \leq \bar s < \bar t \leq t\,.
\end{equation}
\end{enumerate}
\end{cor}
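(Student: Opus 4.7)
The plan is to build the single family $\phi_r$ from a Kantorovich potential for the end-point problem and then read off both claims from Hopf--Lax theory plus a short bookkeeping argument with the triangle inequality. First, I would pick $\phi \in \mathrm{Lip}(M)$ that attains the supremum in \eqref{eq:dualpbmHL} for the dual of $W_{L^{s,t}}(\mu_s, \mu_t)$; existence of such a Lipschitz maximizer follows from the fact (stated in Section \ref{subsec:LagrangianProperties}) that $L^{s,t}$ is itself Lipschitz on $M \times M$ together with \cite[Cor.~3.19]{AmbBruSem21}. I then define $\phi_r := Q^{s,r}\phi$ for $r \in [s,t]$.

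For part (1), everything is then encapsulated in standard Hopf--Lax theory: Proposition \ref{prop:HLsemigroup} from Appendix \ref{Appe:HJ} would give space-time Lipschitz regularity of $\phi_r$ and the a.e. Hamilton--Jacobi equation under the Lagrangian assumptions of \ref{par:L-assumptions}, while the sharpening to a pointwise-in-time statement under the centered-quadratic $C^2$ hypothesis (with appropriate one-sided derivatives at the endpoints) would come from Proposition \ref{prop:fiber-quadratic-HJ}. At this point I would simply cite those appendix results rather than re-derive them.

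The interesting part is (2). The key input is the trivial semigroup-type inequality
\[
\phi_{\bar t}(y) = \inf_z\{\phi(z) + L^{s,\bar t}(z,y)\} \leq \phi_{\bar s}(x) + L^{\bar s, \bar t}(x,y)\,,
\]
obtained by concatenating curves ($L^{s,\bar t}(z,y) \leq L^{s,\bar s}(z,x) + L^{\bar s,\bar t}(x,y)$) and then minimizing the left-hand side in $z$. In the sign convention of \eqref{eq:dualpbm1}, this says that the pair $(-\phi_{\bar s}, \phi_{\bar t})$ is admissible for the dual of the transport $\mu_{\bar s} \to \mu_{\bar t}$ at cost $L^{\bar s,\bar t}$, yielding the one-sided bound
\[
W_{L^{\bar s, \bar t}}(\mu_{\bar s}, \mu_{\bar t}) \;\geq\; \int_M \phi_{\bar t}\,d\mu_{\bar t} - \int_M \phi_{\bar s}\,d\mu_{\bar s}\,.
\]
To promote this into an equality I would use that a $W_L$-geodesic restricts to a $W_L$-geodesic on any subinterval (a short triangle-inequality argument using Definition \ref{df:WL-geodesic} at two different values of $r$ together with \eqref{eq:triangle-inequality}), and iterate the geodesic identity to obtain the three-term saturation
\[
W_{L^{s,\bar s}}(\mu_s,\mu_{\bar s}) + W_{L^{\bar s, \bar t}}(\mu_{\bar s}, \mu_{\bar t}) + W_{L^{\bar t, t}}(\mu_{\bar t}, \mu_t) = W_{L^{s,t}}(\mu_s, \mu_t)\,.
\]
Applying the above lower bound on each sub-segment and summing, the right-hand side telescopes to $\int\phi_t\,d\mu_t - \int\phi\,d\mu_s$, which by the optimality of $\phi$ in \eqref{eq:dualpbmHL} equals $W_{L^{s,t}}(\mu_s,\mu_t)$. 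Comparing with the three-term saturation forces equality in each individual bound, in particular at the middle segment $(\bar s, \bar t)$, which is exactly what part (2) asks for.

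I expect the conceptually substantive step to be part (2), though the calculation is short; the main subtleties to watch are keeping the sign convention on Kantorovich potentials consistent between the $\phi,\phi^c$ language of \eqref{eq:dualpbm1}--\eqref{eq:dualpbm3} and the Hopf--Lax language of \eqref{eq:dualpbmHL}, and justifying the restriction property of $W_L$-geodesics (which is not automatic since $W_L$ need not be nonnegative, but still follows from \eqref{eq:triangle-inequality}). The technical content of part (1), i.e. the HJ equation for $Q^{s,r}\phi$, is routine and already isolated in the cited appendix, so I would defer to it.
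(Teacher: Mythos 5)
Your proposal is correct and follows essentially the same route the paper intends: the text preceding Corollary \ref{cor:dynamic-potentials} defers to Remark \ref{rmk:Kantorovich-via-HL} for the ``one Kantorovich potential $\phi$ works for all intermediate times via Hopf--Lax'' observation (which is your part (2)) and to Propositions \ref{prop:HLsemigroup} and \ref{prop:fiber-quadratic-HJ} for the Hamilton--Jacobi regularity (your part (1)), without spelling out the bookkeeping. Your write-up supplies the missing details in a clean way: the ``restriction of a $W_L$-geodesic is a $W_L$-geodesic'' step does indeed go through by applying Definition \ref{df:WL-geodesic} at the two times $\bar s, r$ and combining with \eqref{eq:triangle-inequality}, the sign conventions line up correctly (the pair $(-\phi_{\bar s},\phi_{\bar t})$ is admissible for \eqref{eq:dualpbm1}), and the telescoping argument forces equality segment by segment. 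One small point you glossed over when deferring to Proposition \ref{prop:fiber-quadratic-HJ}: that proposition requires the initial datum $\phi$ to be semiconcave or semiconvex with quadratic modulus, so one should remark that a Kantorovich potential can be taken $c$-conjugate and hence is automatically semiconvex when the cost is smooth (the paper itself makes exactly this observation via \cite[Thm.~6.4.3]{CanSin04} in the proof of Proposition \ref{prop:L0-cvxty-implies-contr/EVI}); this is a technicality, not a gap.
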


\begin{rmk}
The statement that $\phi$ solves \eqref{eq:general-HJequation} in (only) an a.e. sense  would in general be too weak to be useful, but the Lipschitzianity of $\phi$ automatically improves \eqref{eq:general-HJequation} to an identity in the distributional sense for $\partial_r Q^{s,r}\phi$ on $[s,t]\times M$, and in the FTC sense for $\partial_r\int_M Q^{s,r}\phi \,d\mu$ on $[s,t]$ for any smooth measure $\mu \in \cP(M)$. It is also possible to obtain a pointwise statement, for instance by replacing \eqref{eq:general-HJequation} with an inequality where the RHS involves the slope, or interpreting \eqref{eq:general-HJequation} in the viscosity sense. This will be unnecessary for our purposes, since \eqref{eq:general-HJequation} will always be integrated against a smooth function, usually a heat kernel.
\end{rmk}

\subsubsection{Dynamic transport plans}
Here we recall the formulation of optimal transport that uses dynamic transport plans, which are measures on curves $C([s,t],M)$, giving an alternative formulation of the Kantorovich problem. The proofs can be found in the Appendix \ref{Appe:Dynplans}.

\begin{prop}\label{prop:Lisini-for-geodesics}
    For every $s,t$ fixed, for given $\mu,\nu\in \cP(M)$,
    \begin{align}
        K_c (\mu,\nu) = \inf \left\{ \int_{C([s,t],M)} \mathcal{L}(\gamma)\, d\eta(\gamma) \ : \ \eta\in \cP(C([s,t],M)), \ (e_s)_\#\eta=\mu, \ (e_t)_\#\eta=\nu \right\}\,,
    \end{align}
    where $e_r:C([s,t],M)\to M$, $e_r(\gamma)=\gamma_r$ is the evaluation map. Moreover there exists a minimizing plan $\eta$, which is supported on $L^{s,t}$ geodesics, and $\mu_r=(e_r)_\#\eta$ is a $W_{L^{s,t}}$ geodesic with
    \begin{align}
        (\dot{\mu}_r) _{L^r} = \int_C L(\dot\gamma_r,\gamma_r,r)\,d\eta(\gamma) \ \ \ \text{for every $r\in[s,t]$}\,,
    \end{align}
    where 
    \begin{align}
    (\dot{\mu}_r)_{L^r} := \lim_{h\to 0}\frac{W_{L^{r,r+h}}(\mu_r,\mu_{r+h})}{h}\,.
    \end{align}
Moreover, if $\mu_s$ is absolutely continuous with respect to $dV$, it holds
\begin{align}
    (\dot{\mu}_r)_{L^r} = \int_M L(-(d\phi_r(x))^*,x,r)d\mu_r \ \ \ \text{for every $r\in[s,t]$}\,,
\end{align}
    where $(\phi_r)_{r\in[s,t]}$ is the family of Kantorovich potentials from Corollary \ref{cor:dynamic-potentials}, and $(w)*\in TM$ is the optimizer in the Legendre duality.
\end{prop}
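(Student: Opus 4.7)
The plan is to establish the duality between static and dynamic formulations by a standard two-sided argument, then analyze the metric derivative along a minimizing dynamic plan by invoking the minimizing-geodesic structure of its support.

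\textbf{Inequality $\geq$.} For any $\eta \in \cP(C([s,t],M))$ with $(e_s)_\#\eta = \mu$, $(e_t)_\#\eta = \nu$, the pushforward $\pi := (e_s, e_t)_\#\eta$ is a coupling of $\mu$ and $\nu$, and $L^{s,t}(\gamma_s,\gamma_t) \leq \mathcal{L}(\gamma)$ by definition. Hence
\begin{equation}
K_c(\mu,\nu) \leq \int_{M\times M} L^{s,t}\,d\pi = \int_C L^{s,t}(\gamma_s,\gamma_t)\,d\eta(\gamma) \leq \int_C \mathcal{L}(\gamma)\,d\eta(\gamma).
\end{equation}

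\textbf{Inequality $\leq$ and existence of a minimizer.} Let $\pi \in \mathrm{Cpl}(\mu,\nu)$ be optimal for $K_c$. By the Lagrangian assumptions recalled in Appendix \ref{Appe:Lagrangian}, for every $(x,y) \in M\times M$ there exists at least one $C^1$ minimizer for $L^{s,t}(x,y)$. A Borel measurable selection theorem (e.g. Kuratowski--Ryll-Nardzewski, applied after checking that the set-valued map sending $(x,y)$ to its set of $L^{s,t}$-minimizers has measurable graph and nonempty closed values in the Polish space $C([s,t],M)$) produces a Borel map $\Gamma: M\times M \to C([s,t],M)$ such that $\Gamma(x,y)_s = x$, $\Gamma(x,y)_t = y$, and $\mathcal{L}(\Gamma(x,y)) = L^{s,t}(x,y)$. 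Setting $\eta := \Gamma_\#\pi$, one has the required marginals and $\int \mathcal{L}\,d\eta = \int L^{s,t}\,d\pi = K_c(\mu,\nu)$, which closes the duality and exhibits a minimizer supported on $L^{s,t}$-geodesics.

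\textbf{Geodesic property of $(\mu_r)$.} Fix $r \in [s,t]$ and consider the evaluations $(e_s,e_r)_\#\eta$, $(e_r,e_t)_\#\eta$, which are couplings of $(\mu_s,\mu_r)$ and $(\mu_r,\mu_t)$ respectively. Since any $L^{s,t}$-minimizing curve restricts to $L^{s,r}$- and $L^{r,t}$-minimizers on the subintervals, we have
\begin{equation}
W_{L^{s,r}}(\mu_s,\mu_r) + W_{L^{r,t}}(\mu_r,\mu_t) \leq \int_C \Big[\mathcal{L}(\gamma|_{[s,r]}) + \mathcal{L}(\gamma|_{[r,t]})\Big]\,d\eta(\gamma) = \int_C \mathcal{L}(\gamma)\,d\eta = W_{L^{s,t}}(\mu_s,\mu_t),
\end{equation}
and the reverse inequality is \eqref{eq:triangle-inequality}. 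Thus $(\mu_r)$ is a $W_{L^{s,t}}$-geodesic in the sense of Definition \ref{df:WL-geodesic}.

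\textbf{Metric derivative formula.} For $h > 0$ small, the restriction $(\gamma|_{[r,r+h]})_\#\eta$ is a candidate dynamic plan between $\mu_r$ and $\mu_{r+h}$. Using the geodesic property just proved and the subadditivity as above, one obtains the identity (not merely an inequality)
\begin{equation}
W_{L^{r,r+h}}(\mu_r,\mu_{r+h}) = \int_C \int_r^{r+h} L(\dot\gamma_u,\gamma_u,u)\,du\,d\eta(\gamma).
\end{equation}
Dividing by $h$ and sending $h\to 0$, dominated convergence (using equicontinuity and $C^1$-regularity of minimizing curves, plus the growth assumptions on $L$) gives the claimed formula $(\dot\mu_r)_{L^r} = \int_C L(\dot\gamma_r,\gamma_r,r)\,d\eta(\gamma)$.

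\textbf{Kantorovich-potential formula under absolute continuity.} Let $(\phi_r)$ be the family from Corollary \ref{cor:dynamic-potentials}. For any $L^{s,t}$-minimizer $\gamma$ in the support of $\eta$ and any $r \in [s,t]$, the pair $(\gamma_s,\gamma_r)$ saturates the Kantorovich duality between $\mu_s$ and $\mu_r$. Standard first-order analysis of the Hopf--Lax semigroup (Appendix \ref{Appe:HJ}) then yields $d\phi_r(\gamma_r) = -d_v L(\dot\gamma_r,\gamma_r,r)$ wherever $\phi_r$ is differentiable. The absolute continuity of $\mu_s$ propagates to $\mu_r$ (by standard arguments, e.g. the disintegration of $\eta$ over the initial marginal together with the diffeomorphism property of the exponential-type map on $L$-geodesics away from cut loci), so $\mu_r$-a.e. point lies in the differentiability set of $\phi_r$, and $\gamma_r$ is $\eta$-a.s. uniquely determined by its starting data. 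Thus the velocity at time $r$ is the Legendre dual $\dot\gamma_r = -(d\phi_r(\gamma_r))^\ast$, and substituting into the previous display yields
\begin{equation}
(\dot\mu_r)_{L^r} = \int_M L\big(-(d\phi_r(x))^\ast,x,r\big)\,d\mu_r(x).
\end{equation}

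The main obstacle is the last step: ensuring $\eta$-a.s. that $\dot\gamma_r$ equals the Legendre dual of $-d\phi_r(\gamma_r)$ requires coupling the differentiability of the (only Lipschitz) potentials $\phi_r$ with the first-order optimality of $L$-geodesics, and invoking the absolute continuity of $\mu_r$ to discard the null set of nondifferentiability. The other steps are essentially bookkeeping extensions of standard Lagrangian and Lisini-type arguments.
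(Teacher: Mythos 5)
Your proposal is correct and follows essentially the same route as the paper's proof in Appendix~\ref{Appe:Dynplans}. The paper disposes of the duality $K_c = \inf_\eta\int\mathcal{L}\,d\eta$, the existence of a minimizing dynamic plan concentrated on geodesics, and the resulting geodesic property of $(\mu_r)_r$ by citing \cite[Thm.~7.21, Cor.~7.22]{Vil09}; you re-derive these via the standard two-sided coupling argument plus a measurable-selection lemma, which is more self-contained but equivalent. The metric-derivative identity and the Kantorovich-potential formula are handled identically: you both use the saturation of the dual constraint along the plan, the superdifferential of the cost together with first-variation formulas (Villani's Prop.~10.15), and absolute continuity of $\mu_r$ (which the paper gets from \cite[Thm.~8.7]{Vil09}) to discard the nondifferentiability set of $\phi_r$, then invert via the Legendre transform. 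One small remark: the clause ``$\gamma_r$ is $\eta$-a.s. uniquely determined by its starting data'' is not actually needed for the argument—the Legendre-duality step only requires that $\dot\gamma_r$ be determined by $d\phi_r(\gamma_r)$, which follows at differentiability points of $\phi_r$ regardless of geodesic uniqueness; the paper's proof accordingly does not invoke uniqueness.
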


We now want a similar representation for any (regular enough) curve of measures, not necessarily geodesic (in particular, we will want to apply it to measures evolving under the conjugate heat flow). Let us recall that given a curve of measures $(\mu_r)_{r\in[s,t]}$, it is said to solve the continuity equation for some velocity field $v_r\in \Gamma(TM)$ if
\begin{align}\label{eq:continuityeqn}
    \partial_r\mu_r = -\div(\mu_r v_r)
\end{align}
in the distributional sense.

Note that when $\mu_r=\rho_r\,dV_r$, $\rho\in C^\infty(M\times[s,t],(0,\infty))$, setting $v_r=\nabla\phi_r$ it becomes an elliptic equation and hence we find a unique $\nabla\phi_r$: for our scope, it will be enough to prove the statement for such curves.

\begin{lem}\label{lem:dynamiclifting} Assume that the Lagrangian $L$ is centered quadratic, i.e. $L(v,x,r) = A_{x,r}(v,v) + C_{x,r}$ with $A_{x,r}$ a positive definite quadratic form, $C_{x,r} \in \R{}$. Then for any curve of probability measures $(\mu_r)_{r\in [s,t]}$ with space-time smooth and positive densities there exists $\eta\in P(C([s,t]\to M))$ supported on smooth curves such that $(e_r)_\#\eta = \mu_r$ and 
\begin{align}
        (\dot{\mu}_r)_{L^r} = \int_C L(\dot\gamma_r,\gamma_r,r)d\eta(\gamma) \ \ \ \text{for every $r\in(0,1)$}.
    \end{align}
\end{lem}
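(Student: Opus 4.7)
My plan is to build a smooth velocity field $v_r$ solving the continuity equation for $(\mu_r)$ in ``gradient'' form, let $\eta$ be the pushforward of $\mu_s$ along its flow, and match an upper bound for the metric speed from this explicit lift with a lower bound from the Kantorovich dual \eqref{eq:dualpbmHL}. Writing $\mu_r = \rho_r\,dV_r$ and using $\partial_r dV_r = -S_r dV_r$, the continuity equation reduces to $\div_{g_r}(\rho_r v_r) = S_r\rho_r - \partial_r\rho_r$ at each fixed $r$. Because $L$ is centered quadratic, $A_{x,r}$ induces a smooth Legendre isomorphism $T^*_xM \to T_xM$, and I look for $v_r = A_{x,r}^{-1}(d\phi_r)$ in gradient form. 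This converts the continuity equation into a self-adjoint elliptic PDE for $\phi_r$, whose right-hand side has zero mean against $dV_r$ by mass preservation; it thus admits a smooth solution $\phi_r$ unique up to constants, hence smooth $v_r$. Cauchy--Lipschitz gives a smooth flow $X_r$ of $v_r$; setting $\Phi(x)(r) := X_r(x)$ and $\eta := \Phi_\#\mu_s$ produces a measure supported on smooth curves with $(e_r)_\#\eta = \mu_r$ (by uniqueness for the continuity equation with smooth velocity) and $\int L(\dot\gamma_r, \gamma_r, r)\,d\eta = \int_M L(v_r, y, r)\,d\mu_r$ by a direct change of variables.

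\noindent\textbf{Matching upper and lower bounds for the metric speed.} Restricting to $[r, r+h]$ and applying Proposition \ref{prop:Lisini-for-geodesics} gives the upper bound
\[
W_{L^{r,r+h}}(\mu_r, \mu_{r+h}) \leq \int_r^{r+h}\!\int_M L(v_\rho, y, \rho)\,d\mu_\rho\,d\rho,
\]
whence $(\dot\mu_r)_{L^r} \leq \int_M L(v_r, y, r)\,d\mu_r$ after dividing by $h$ and using continuity of the integrand in $\rho$. For the reverse inequality I test the dual \eqref{eq:dualpbmHL} against $\phi_r$ itself:
\[
W_{L^{r,r+h}}(\mu_r, \mu_{r+h}) \geq \int Q^{r,r+h}\phi_r\,d\mu_{r+h} - \int \phi_r\,d\mu_r.
\]
Because $\phi_r$ is smooth, the first-order Hopf--Lax expansion $Q^{r,r+h}\phi_r = \phi_r - h\,H(d\phi_r,\cdot,r) + O(h^2)$ holds uniformly on $M$ (by direct Taylor expansion of $\phi_r(x) + L^{r,r+h}(x,y)$ around the optimizer $x = y + O(h)$, cf.\ Lemma \ref{lem:costfromlagrangian}). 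Combined with the weak expansion of $\mu_{r+h}$ and the continuity equation, the right-hand side above becomes $h \int_M \bigl(-H(d\phi_r, y, r) + \langle d\phi_r, v_r\rangle\bigr)\,d\mu_r + o(h)$. Since $v_r$ was chosen precisely as the Legendre dual of $d\phi_r$, the bracketed integrand equals $L(v_r, y, r)$, and dividing by $h$ and sending $h\downarrow 0$ yields the matching lower bound $(\dot\mu_r)_{L^r} \geq \int_M L(v_r, y, r)\,d\mu_r$.

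\noindent\textbf{Expected main obstacle.} The most delicate step is the uniform first-order Hopf--Lax expansion of $Q^{r,r+h}\phi_r$, since the Hamilton--Jacobi equation \eqref{eq:general-HJequation} holds only a.e.\ for general initial data. Here the smoothness of $\phi_r$ localizes the Hopf--Lax infimum to a neighborhood of $x = y$ where the short-time Lagrangian asymptotics of Lemma \ref{lem:costfromlagrangian} give a genuine pointwise Taylor expansion, sidestepping the a.e.\ restriction; a secondary bookkeeping concern is the Legendre identification $-H(d\phi_r) + \langle d\phi_r, v_r\rangle = L(v_r,\cdot,r)$, which must hold exactly at $r$ (ensured by our construction of $v_r$) rather than merely as an inequality.
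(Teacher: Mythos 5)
Your construction of the lift $\eta$ (solve the continuity equation in gradient form, flow $\mu_s$ along the resulting smooth vector field) and your proof of the upper bound $(\dot\mu_r)_{L^r} \leq \int_M L(v_r,\cdot,r)\,d\mu_r$ both coincide with what the paper does. The lower bound, however, is proved along a genuinely different route. You test the Kantorovich dual \eqref{eq:dualpbmHL} against the constructed potential $\phi_r$ and invoke a first-order Hopf--Lax expansion of $Q^{r,r+h}\phi_r$ to conclude. The paper instead lifts a $W_{L^{r,r+h}}$-geodesic $(\nu_u^h)$ between $\mu_r,\mu_{r+h}$ to a dynamic plan $\theta^h$, applies Young's inequality pathwise for an arbitrary $f\in C^1$, compares with the continuity-equation identity $\int f\,d\mu_{r+h}-\int f\,d\mu_r=\int_r^{r+h}\int\langle df,v_u\rangle\,d\mu_u\,du$, proves weak convergence $\nu^h_{u_h}\weak\mu_r$ (this is where Proposition \ref{prop:W-implies-weak-conv} is used), and finally specializes $f=\phi_r$. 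Your route swaps the need for weak convergence of the auxiliary geodesic for regularity of the Hopf--Lax evolution at the initial time; both sets of tools are available in the appendices.

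Two remarks on what your argument actually needs. First, you claim the uniform second-order expansion $Q^{r,r+h}\phi_r = \phi_r - h\,H(d\phi_r,\cdot,r) + O(h^2)$ on $M$, attributing it loosely to Lemma \ref{lem:costfromlagrangian}. That lemma only gives divergence off the diagonal and vanishing on it; it does not supply a uniform quadratic asymptotic. What the paper does provide is Proposition \ref{prop:fiber-quadratic-HJ} (applicable because smooth $\phi_r$ is in particular semiconcave with quadratic modulus), which gives $\lim_{h\downarrow 0}\frac{1}{h}\big(Q^{r,r+h}\phi_r(y)-\phi_r(y)\big) = -H(d\phi_r(y),y,r)$ at the initial time $t=r$ for a.e.\ $y$, together with the uniform Lipschitz bound on $h\mapsto Q^{r,r+h}\phi_r$ from Proposition \ref{prop:HLsemigroup}. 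The a.e.\ pointwise limit, the uniform bound, and the $C^0$ convergence $\rho_{r+h}\,dV_{r+h}\to\rho_r\,dV_r$ are enough to pass $\frac{1}{h}\int(Q^{r,r+h}\phi_r-\phi_r)\,d\mu_{r+h}\to -\int H(d\phi_r,\cdot,r)\,d\mu_r$ by dominated convergence. You should replace the unjustified uniform $O(h^2)$ claim with this weaker-but-sufficient statement. Second, a small normalization slip: with $L(v,x,r)=A_{x,r}(v,v)+C_{x,r}$, the Legendre dual of $d\phi_r$ is $v_r=\frac{1}{2}A_{x,r}^{-1}(d\phi_r)$, i.e.\ the gradient of $\phi_r$ in the metric $2A_{x,r}$ (as the paper writes), not $A_{x,r}^{-1}(d\phi_r)$. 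This does not affect the structure of the argument, but the continuity-equation PDE and the Legendre identity $-H(d\phi_r)+\langle d\phi_r,v_r\rangle = L(v_r,\cdot,r)$ must be stated with the correct factor for the pieces to cancel.
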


\section{Inequalities for the \texorpdfstring{$L_0$}{L0} distance}\label{s:L0}
In this section we give the proof of Theorem \ref{thm:L0charact}, spread across several subsections. We fix an arbitrary smooth family of closed Riemannian manifolds $(M, g_t)_{t \in I}$ defined for some interval $I \subseteq \R{}$. We will use this time-dependent manifold, along with the associated gadgets ($\cS_t$, $S_t$, etc.) defined in Subsection \ref{subsec:t-dep-mflds}, without further comment for the rest of the section.

\subsection{The \texorpdfstring{$\cD$}{D}-condition and the Bochner formula}\label{ss:L0DBoch}
The Bochner inequality of Theorem \ref{thm:L0charact} follows from a Bochner identity, where the role of the curvature term is played by the $\mathcal{D}$ quantity.
\begin{prop}\label{prop:DcondiffBochner}
    The following identity holds (all quantities are $t$-dependent):
    \begin{align}\label{eq:BochnerL0identity}
        (\partial_t-\Delta)(|\nabla v|^2 + 2\Delta v - S) = -2|\mathcal{S}-\Hess v|^2 - \mathcal{D}(-\nabla v)  + 2 \Delta(\partial_t-\Delta) v + 2\langle\nabla(\partial_t-\Delta) v,\nabla v\rangle
    \end{align}
    for all smooth $v:M\times I \to\R{}$. In particular, $\mathcal{D}\geq 0$ iff \begin{align}\label{eq:BochnerL0-id-to-ineq}
        (\partial_t-\Delta)(|\nabla v|^2 + 2\Delta v - S) \leq 0 \tag{B\({}_{L_0}\)}
    \end{align}
    for any smooth $v$ solution of the heat equation at $t$.
\end{prop}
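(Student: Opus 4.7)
The natural approach is to expand both sides of the claimed identity by computing $(\partial_t - \Delta_t)$ separately on each of the three pieces $|\nabla v|_t^2$, $\Delta_t v$, and $S_t$, then cancel matching terms. All the time-dependence corrections come from how $g_t^{ij}$ and $\Gamma_{ij}^k(g_t)$ evolve under $\partial_t g_t = -2\cS_t$, so the ingredients are standard in form. More precisely, the first step is:
\begin{equation}
(\partial_t-\Delta)|\nabla v|^2 = -2|\Hess v|^2 - 2\Ric(\nabla v,\nabla v) + 2\cS(\nabla v,\nabla v) + 2\langle\nabla(\partial_t-\Delta)v,\nabla v\rangle,
\end{equation}
obtained from the static Bochner identity plus the contribution $-(\partial_t g)^{ij}\partial_i v\partial_j v = 2\cS(\nabla v,\nabla v)$ from differentiating the inverse metric. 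The second step, and the main computational hurdle, is the commutator of $\Delta_t$ with $\partial_t$:
\begin{equation}
(\partial_t-\Delta)\Delta v = \Delta(\partial_t-\Delta)v + 2\langle\cS,\Hess v\rangle + 2(\div \cS)(\nabla v) - \langle\nabla S,\nabla v\rangle.
\end{equation}
Here the $\langle\cS,\Hess v\rangle$ piece comes from $-(\partial_t g)^{ij}\nabla_i\nabla_j v$, while the Bianchi-like term $2(\div\cS)(\nabla v) - \langle\nabla S,\nabla v\rangle$ comes from contracting $\partial_t\Gamma_{ij}^k = -g^{km}(\nabla_i\cS_{jm} + \nabla_j\cS_{im} - \nabla_m\cS_{ij})$ with $g^{ij}\partial_k v$.

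For the third piece, the plan is not to compute $(\partial_t-\Delta)S$ from scratch but simply to \emph{read it off} from the definition of $\cD$: plugging $X=-\nabla v$ into \eqref{Mullercond} yields
\begin{equation}
\partial_t S - \Delta S = \cD(-\nabla v) + 2|\cS|^2 + 4(\div\cS)(\nabla v) - 2\langle\nabla S,\nabla v\rangle - 2\Ric(\nabla v,\nabla v) + 2\cS(\nabla v,\nabla v).
\end{equation}
Adding the first contribution, twice the second, and minus the third, everything involving $\Ric(\nabla v,\nabla v)$, $\cS(\nabla v,\nabla v)$, $(\div\cS)(\nabla v)$, and $\langle\nabla S,\nabla v\rangle$ cancels pairwise. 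What remains is $-2|\Hess v|^2 + 4\langle\cS,\Hess v\rangle - 2|\cS|^2 - \cD(-\nabla v) + 2\Delta(\partial_t-\Delta)v + 2\langle\nabla(\partial_t-\Delta)v,\nabla v\rangle$, and the first three terms recombine as $-2|\cS-\Hess v|^2$, giving the identity \eqref{eq:BochnerL0identity}.

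For the ``iff'' part, the $(\Leftarrow)$ direction is immediate: when $v$ is a heat flow the last two terms of the RHS vanish and both surviving terms are $\leq 0$ under $\cD\geq 0$. For $(\Rightarrow)$, fix a point $(p,t_0)$ and a vector $X\in T_pM$, and choose any smooth $u:M\to\R{}$ (e.g. a quadratic polynomial in $g_{t_0}$-normal coordinates at $p$) with $\nabla u(p)=-X$ and $\Hess u(p)=\cS(p,t_0)$. Then $v_t:=P_{t,t_0}u$ solves the heat equation, so $(\partial_t-\Delta)v\equiv 0$, and at $(p,t_0)$ we have $|\cS-\Hess v|^2=0$. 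Applying the assumed inequality $(\partial_t-\Delta)(|\nabla v|^2 + 2\Delta v - S)\leq 0$ at $(p,t_0)$ and using the identity reduces it to $-\cD(X)\leq 0$ at that point, which is exactly the desired nonnegativity. Since $(p,t_0,X)$ was arbitrary, $\cD\geq 0$. The main obstacle in executing the plan is really just bookkeeping in Step 2: getting the signs right in $\partial_t\Gamma$ and seeing that the contracted-Bianchi terms produced there are precisely what is needed to kill the corresponding pieces of $\cD(-\nabla v)$.
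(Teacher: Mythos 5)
Your proof is correct and follows essentially the same route as the paper: the same two intermediate evolution identities for $|\nabla v|^2$ and $\Delta v$, grouping the third term (the evolution of $S$) via the definition of $\cD$, and the same choice of test function $u$ with $\Hess u(p)=\cS_{t_0}(p)$ for the reverse direction. The only cosmetic difference is that you solve the defining equation \eqref{Mullercond} for $\partial_t S - \Delta S$ and substitute, rather than computing the evolution of $S$ and then completing the square by adding and subtracting $2|\cS|^2$; these are algebraically identical bookkeeping choices.
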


\begin{rmk}\label{rmk:Bochner-as-commutation}
    We can reformulate the middle term of the Bochner inequality \eqref{eq:BochnerL0-id-to-ineq} as the commutation of time derivative and Laplacian, and up to changing the sign of $v$ we get
    \begin{align}
        |\partial_t\Delta v - \Delta \partial_t v| \leq -(\partial_t-\Delta)\frac{1}{2}(|\nabla v|^2 - S).
    \end{align}
    Note also that in the RHS the Hamiltonian associated to $L_0$ appears.
\end{rmk}

\begin{proof}
    The first identity is just a computation with the resulting terms grouped appropriately. Note that in the time derivative of the first term the derivative of the metric appears,
    \begin{align}
        (\partial_t-\Delta)|\nabla v|^2 = - 2|\Hess v|^2 +  2\cS (\nabla v,\nabla v) -2\Ric(\nabla v,\nabla v) + 2\La \nabla (\partial_t-\Delta)v,v\Ra,
    \end{align}
    in the second the Bianchi-term
    \begin{align}
        (\partial_t-\Delta)(\Delta v)=2\La \cS,\Hess v\Ra + \La2\div\cS-\nabla S,\nabla v\Ra + \Delta(\partial_t-\Delta)v.
    \end{align}
    and in the last one the evolution of $S$, with $2|\cS|^2$ added and subtracted to complete the square. It is clear that $\mathcal{D}\geq 0$ implies the inequality \eqref{eq:BochnerL0-id-to-ineq}. 
    
    Vice versa, assume that we have that every $v$ heat flow defined on any subinterval $[s,t]\subset I$ satisfies the inequality \eqref{eq:BochnerL0-id-to-ineq}. Then fix $(r,p)\in I\times M$, $V \in T_pM$ and choose $v_r:M\to\R{}$ such that $\nabla v_r= V$ and $\Hess v_r(p)=\mathcal{S}_r(p)$. This is always possible at a point, for example using normal coordinates centered at $p$, and choosing in charts $v_r(x)=a_{ij}x^ix^j+b_kx^k$ with $b_k=V_k$ and
    $a_{ij}=a_{ji}=\frac{1}{2}(\cS_r(p))_{ij}$. At this point, combining the assumed \eqref{eq:BochnerL0-id-to-ineq} with \eqref{eq:BochnerL0identity} and evaluating at time $(r,p)$ we get
    \begin{align}
        0 \geq (\partial_t-\Delta)(|\nabla v|^2 + 2\Delta v - S) =- \mathcal{D}(-\nabla v)\,,
    \end{align}
    which proves $\mathcal{D}_{r,p}(V) \geq 0$ for arbitrary $(r,p,V)\in(I\times TM)$.
\end{proof}

\subsection{Bochner formula and gradient estimates}\label{ss:L0BochGE}

\begin{prop} \label{prop:L0gradestimate}
    The Bochner inequality 
    \begin{align}\label{eq:BochnerL0-Bochner-to-GE}
        (\partial_t-\Delta)(|\nabla P_{t,s}v|_t^2 + 2\Delta_t P_{t,s}v - S_t) \leq 0 \,,\qquad\forall [s,t] \subseteq I \tag{B\({}_{L_0}\)}
    \end{align}
    is equivalent to the gradient estimate
    \begin{align}\label{eq:GEL0-Bochner-to-GE}
        |\nabla P_{t,s}v|_t^2 + 2\Delta_t P_{t,s}v - S_t \leq P_{t,s}(|\nabla v|^2_s +2\Delta_s v - S_s)\,,\qquad \forall [s,t]\subseteq I \tag{GE\({}_{L_0}\)}
    \end{align}
    for any smooth $v:M\to\R{}$. For $v$ Lipschitz the gradient estimate is still implied if we define $P_{t,s}\Delta_s v(x)= -\int \langle \nabla_{y} \rho_{t,s}(x,y),\nabla v(y)\rangle_{s} \,dV_s(y)$, where $\rho$ is the heat kernel.
\end{prop}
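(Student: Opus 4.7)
The plan is the standard Bakry--Émery interpolation, adapted to this time-dependent setting, together with a reverse differentiation argument at coinciding times to recover Bochner.

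\textbf{Auxiliary interpolating quantity.} For fixed $[s,t] \subseteq I$ and smooth $v:M \to \R{}$, set $u_r := P_{r,s}v$ (for $r \in [s,t]$, a solution of the forward heat equation), and define the ``carré du champ''-type quantity
\begin{align}
    G_r := |\nabla u_r|_r^2 + 2\Delta_r u_r - S_r\,, \qquad F(r) := P_{t,r}(G_r)\,,\qquad r \in [s,t]\,.
\end{align}
By construction, $F(s) = P_{t,s}(|\nabla v|_s^2 + 2\Delta_s v - S_s)$ and $F(t) = G_t = |\nabla P_{t,s}v|_t^2 + 2\Delta_t P_{t,s}v - S_t$, so the gradient estimate is exactly $F(t) \leq F(s)$.

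\textbf{Bochner \(\Rightarrow\) GE.} Using the identity $\partial_r P_{t,r}h = -P_{t,r}\Delta_r h$ (recalled in Subsection \ref{subsec:t-dep-mflds}) together with the product rule, I compute
\begin{align}
    \partial_r F(r) = -P_{t,r}(\Delta_r G_r) + P_{t,r}(\partial_r G_r) = P_{t,r}\bigl[(\partial_r - \Delta_r)G_r\bigr]\,.
\end{align}
Since $u_r = P_{r,s}v$ is itself a heat flow, the hypothesis \eqref{eq:BochnerL0-Bochner-to-GE} applied with starting data $v$ at time $s$ and evaluated at time $r$ gives $(\partial_r - \Delta_r)G_r \leq 0$. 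Because $P_{t,r}$ preserves order (heat semigroup on a closed manifold), we conclude $\partial_r F(r) \leq 0$, and integrating on $[s,t]$ yields \eqref{eq:GEL0-Bochner-to-GE}.

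\textbf{GE \(\Rightarrow\) Bochner.} Conversely, fix $t_0 \in I$ and a smooth function $w:M \to \R{}$; I want the Bochner inequality for the flow starting from $w$ at time $t_0$, evaluated at $t_0$ itself. Applying \eqref{eq:GEL0-Bochner-to-GE} with $s := t_0$, $v := w$, and variable endpoint $t := t_0 + h$ for small $h>0$ gives
\begin{align}
    \tilde G_{t_0 + h} \leq P_{t_0+h,\,t_0}\tilde G_{t_0}\,, \qquad \tilde G_r := |\nabla P_{r,t_0}w|_r^2 + 2\Delta_r P_{r,t_0}w - S_r\,.
\end{align}
The two sides agree at $h = 0$ (both equal $\tilde G_{t_0}$), so subtracting and dividing by $h$, then taking the limit $h \downarrow 0$, yields $\partial_t \tilde G_t\bigl|_{t = t_0} \leq \Delta_{t_0}\tilde G_{t_0}$; pointwise in $x$. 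Since $t_0$ and $w$ are arbitrary, and any heat flow $P_{t,s}v$ restricted to $t \geq t_0$ is of this form (with $w := P_{t_0,s}v$, using the semigroup property $P_{t,t_0}P_{t_0,s} = P_{t,s}$), this recovers \eqref{eq:BochnerL0-Bochner-to-GE} at every time.

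\textbf{The Lipschitz extension.} For Lipschitz $v$, approximate by a sequence of smooth functions $v_k$ with $v_k \to v$ uniformly and $\nabla v_k \to \nabla v$ in $L^2(dV_s)$ (and uniformly bounded). Standard heat kernel regularity ensures $P_{t,s}v_k$, $\nabla P_{t,s}v_k$, and $\Delta_t P_{t,s}v_k$ converge pointwise to the corresponding quantities for $v$. For the RHS of \eqref{eq:GEL0-Bochner-to-GE}, dominated convergence handles the $|\nabla v_k|^2_s$ and $S_s$ terms. The $P_{t,s}\Delta_s v_k$ term, written via integration by parts as $-\int \langle \nabla_y \rho_{t,s}(x,y), \nabla v_k(y)\rangle_s \,dV_s(y)$, converges to the definition given in the statement by dominated convergence, using that $\nabla_y\rho_{t,s}(x,\cdot) \in L^1(dV_s)$.

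The main technical point is simply the commutation computation giving $\partial_r F = P_{t,r}[(\partial_r - \Delta_r)G_r]$; once that is in hand, both directions are immediate. The passage to Lipschitz $v$ is routine once the heat kernel bounds are invoked.
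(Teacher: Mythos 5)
Your proof is correct and follows the same route as the paper's: define the interpolating quantity $F(r) := P_{t,r}\big(|\nabla P_{r,s}v|_r^2 + 2\Delta_r P_{r,s}v - S_r\big)$, compute $\partial_r F = P_{t,r}[(\partial_r - \Delta_r)G_r]$ via the adjoint formula and conclude monotonicity for one direction, then differentiate the gradient estimate at coincidence $t = s$ and use the semigroup property for the converse. The Lipschitz extension is also the same approximation argument, spelled out a bit more fully than in the paper.
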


\begin{rmk}
    Just as observed in Remark \ref{rmk:Bochner-as-commutation}, we can change the sign of $v$, and obtain an estimate for the commutator of the Laplacian and the semigroup:
    \[|P_{t,s}\Delta v - \Delta P_{t,s}v| \leq \frac{1}{2}(P_{t,s}|\nabla v|^2_s - |\nabla P_{t,s}v|^2_t) + \frac{1}{2}(S_t - P_{t,s}S_s). \]
    Note that both terms of the RHS are nonnegative thanks to the super Ricci flow inequality and the evolution inequality for $S$, both implied by $\mathcal{D}\geq0$.
\end{rmk}

\begin{proof}
    $\implies$) Define for $s\leq r\leq t$ the function $\phi_r= |\nabla P_{r,s}v|^2_r + 2\Delta_r P_{r,s}v - S_r$. Then one can compute that
    \begin{align} \label{eq:derivativeofsemigroup}
        \partial_r P_{t,r}(\phi_r) = P_{t,r}((\partial_r-\Delta)\phi_r) \leq 0
    \end{align}
    using the Bochner \eqref{eq:BochnerL0-Bochner-to-GE} for the inequality. Integrating the above inequality between $s$ and $t$ gives
    \begin{align}
        \phi_t \leq P_{t,s}\phi_s\,.
    \end{align}
    The statement for Lipschitz functions follows by approximating $v$ in $W^{1,2}$ with smooth functions, and using the smoothness of the heat kernel.
    
    $\impliedby$) Since \eqref{eq:GEL0-Bochner-to-GE} is an equality at $t=s$, we have 
    \begin{align}
        \partial_t|_{t=s} \phi_t \leq \partial_t|_{t=s} P_{t,s}\phi_s = \Delta P_{t,s}\phi_t |_{t=s} = \Delta \phi_s
    \end{align}
    i.e. $(\partial_t-\Delta) \phi_t |_{t=s} \leq 0 $. Varying $s$ we recover the Bochner \eqref{eq:BochnerL0-Bochner-to-GE} at any time.
\end{proof}

\subsection{Duality of gradient estimates and \texorpdfstring{$L_0$}{L0} Wasserstein contraction}\label{ss:GEWCdualityL0}
In this section we prove the equivalence of the gradient estimate and Wasserstein contraction. The approach is inspired by the works of Kuwada \cite{kuwadaduality,kuwadaspace}, with the additional difficulty that we are not working with a positive distance.
\begin{rmk}
    Note that (as done in \cite{kuwadaduality}) the duality between the gradient estimate and Wasserstein contraction requires much less structure than the one we are working with: it is enough to have a space, a Lagrangian and a time-dependent semigroup with its adjoint. In particular, the link between the Lagrangian $|\dot\gamma|^2+S(\gamma)$ and $S$ being the derivative of the measure (or part of the adjoint heat equation) is never used. 
\end{rmk}

\begin{rmk}
    The following arguments extend Kuwada duality to the case of a time-dependent semigroup, showing that this duality happens also at the level of Legendre duality. More precisely we show that for a time-dependent semigroup $P_{t,s}$ on $L^2(M,dV_t)$ with generators $(G_t)_t$ and adjoint $\hat P_{t,s}$ on $\cP(M)$, and a (sufficiently regular) Lagrangian $L$ with associated Hamiltonian $H$, the following are equivalent
    \begin{align}
        &|G_t P_{t,s}v - P_{t,s}G_s v| \leq P_{t,s}(H(d v,v,s))-H(dP_{t,s}v,P_{t,s}v,t) \qquad \text{for all $v$ smooth, $x\in M$, $t>s>0$}; \\
        &W_{L^{s,t}}(\hat P_{s+h,s}\mu, \hat P_{t+h,t}\nu)\leq W_{L^{s + h,t + h}}(\mu,\nu) \qquad \text{for all $\mu,\nu\in\cP(M)$, $t\geq s$, $h>0$}.
    \end{align}
\end{rmk}

\subsubsection{From GE to WC}
\begin{prop}\label{prop:GEtoWCL0}
    Assume that for any Lipschitz $v: M \to \R{}$, any $[s,t]\subseteq I$, the gradient estimate \begin{align}\label{eq:GEL0-GE-to-WC}\tag{GE\({}_{L_0}\)}
        |\nabla P_{t,s}v|_t^2 + 2\Delta_t P_{t,s}v - S_t \leq P_{t,s}(|\nabla v|^2_s +2\Delta_s v - S_s)
    \end{align} holds. Then, for all probability measures $\mu,\nu \in \cP(M)$, all times $ [s,t] \in I$, and all flow times $h < \sup I - t$, we have the Wasserstein contraction
    \begin{align}\label{eq:WCL0-GE-to-WC}
        W_{L_0^{s,t}}(\hat{P}_{s+h,s}\mu,\hat{P}_{t+h,t}\nu) \leq W_{L_0^{s+h,t+h}}(\mu,\nu)\,.\tag{WC\({}_{L_0}\)}
    \end{align}
\end{prop}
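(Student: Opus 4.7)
The plan is Kuwada duality: reduce \eqref{eq:WCL0-GE-to-WC} via Kantorovich duality to a pointwise commutation between the forward heat semigroup and the Hopf--Lax semigroup, then derive that commutation from \eqref{eq:GEL0-GE-to-WC} through a Hamilton--Jacobi subsolution argument. Applying the formulation \eqref{eq:dualpbmHL} to the cost $L_0^{s,t}$, and using the adjoint identity $\int f\,d\hat P_{b,a}\rho = \int P_{b,a}f\,d\rho$, one can take the Kantorovich potential $\psi$ as a Lipschitz function on $M$ at time $s$, so that
\begin{align}
W_{L_0^{s,t}}(\hat P_{s+h,s}\mu,\hat P_{t+h,t}\nu) = \sup_{\psi}\Bigl\{\int P_{t+h,t}(Q^{s,t}\psi)\,d\nu - \int P_{s+h,s}\psi\,d\mu\Bigr\}.
\end{align}
On the other hand, the Lipschitz function $P_{s+h,s}\psi$ is itself a valid test potential for $W_{L_0^{s+h,t+h}}(\mu,\nu)$, giving the matching lower bound
\begin{align}
W_{L_0^{s+h,t+h}}(\mu,\nu) \geq \int Q^{s+h,t+h}(P_{s+h,s}\psi)\,d\nu - \int P_{s+h,s}\psi\,d\mu.
\end{align}
The claim thus reduces to the pointwise commutation inequality
\begin{align}\label{eq:commutation-PQ}
P_{t+h,t}(Q^{s,t}\psi)(y) \leq Q^{s+h,t+h}(P_{s+h,s}\psi)(y), \qquad \forall y \in M.
\end{align}

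To prove \eqref{eq:commutation-PQ} I would interpolate via
\begin{align}
G(a,y) := P_{a,a-h}\bigl(Q^{s,a-h}\psi\bigr)(y), \qquad a\in[s+h,t+h],
\end{align}
so that $G(s+h,\cdot) = P_{s+h,s}\psi$ and $G(t+h,\cdot) = P_{t+h,t}(Q^{s,t}\psi)$. Differentiating in $a$, the two-parameter semigroup identities $\partial_{t_1}P_{t_1,t_2}f = \Delta_{t_1}P_{t_1,t_2}f$, $\partial_{t_2}P_{t_1,t_2}f = -P_{t_1,t_2}\Delta_{t_2}f$, combined with the Hamilton--Jacobi equation $\partial_r Q^{s,r}\psi = -\tfrac12|\nabla Q^{s,r}\psi|_r^2 + \tfrac12 S_r$ attached to the $L_0$-Hamiltonian $H(w,x,r) = \tfrac12|w|_r^2 - \tfrac12 S_r(x)$ (Corollary \ref{cor:dynamic-potentials}), produce
\begin{align}
\partial_a G = \Delta_a G - P_{a,a-h}\bigl[\Delta_{a-h} v + \tfrac12|\nabla v|_{a-h}^2 - \tfrac12 S_{a-h}\bigr], \qquad v := Q^{s,a-h}\psi.
\end{align}
Halving \eqref{eq:GEL0-GE-to-WC} and applying it to $v$ between times $a-h$ and $a$ bounds the bracketed expression from below by $\Delta_a G + \tfrac12|\nabla G|_a^2 - \tfrac12 S_a$, so $G$ becomes a Hamilton--Jacobi subsolution: $\partial_a G + H(dG,\cdot,a) \leq 0$.

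With this in hand, \eqref{eq:commutation-PQ} follows from the standard characterization of the Hopf--Lax semigroup as the maximal HJ subsolution. Concretely, for any smooth curve $\gamma\colon[s+h,t+h]\to M$ ending at $y$, Legendre duality gives
\begin{align}
\tfrac{d}{da}G(a,\gamma(a)) = \partial_a G + \langle dG,\dot\gamma\rangle_a \leq -H(dG,\gamma,a) + \langle dG,\dot\gamma\rangle_a \leq L_0(\dot\gamma,\gamma,a),
\end{align}
and integrating then optimizing over the initial point and curve yields $G(t+h,y) \leq Q^{s+h,t+h}G(s+h,\cdot)(y)$, which is exactly \eqref{eq:commutation-PQ}.

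The main obstacle I expect is regularity: $Q^{s,r}\psi$ is only Lipschitz in space-time, so its HJ equation holds only in the a.e./viscosity sense and the differentiation of $G$ in $a$ must be justified. This is mitigated by the strict positivity $h>0$, which makes $P_{a,a-h}$ a genuine smoothing operator, so $G(a,\cdot)$ is spatially smooth; the HJ equation for $v$ can be paired against the smooth heat kernel of $P_{a,a-h}$ as suggested by the remark after Corollary \ref{cor:dynamic-potentials}, and standard smooth approximation of $\psi$ combined with parabolic regularity should turn the formal chain-rule manipulation into a rigorous one.
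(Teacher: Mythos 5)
Your proposal is correct and takes essentially the same route as the paper: your interpolating function $G(a,y) = P_{a,a-h}\bigl(Q^{s,a-h}\psi\bigr)(y)$ is exactly the paper's $P_h[f]$ with $f_r = Q^{s,r}\psi$, your differential identity and application of \eqref{eq:GEL0-GE-to-WC} reproduce the computation in Lemma~\ref{lem:heat-flow-preserves-HJ}, and your commutation inequality is the Hopf--Lax repackaging of the domination statement \eqref{eq:L0-dominated-preserved} which the paper then feeds into Kantorovich duality just as you do. The regularity worry you raise at the end is precisely what the Leibniz-rule argument at the start of the proof of Lemma~\ref{lem:heat-flow-preserves-HJ} addresses, using the instantaneous smoothing of $P_{a,a-h}$ for $h>0$.
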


We break down the proof in two parts: first we show that \eqref{eq:GEL0-GE-to-WC} implies that the HJ inequality is preserved under the heat flow, and then, exploiting the Kantorovich duality, we get \eqref{eq:WCL0-GE-to-WC}.

Note that the Hopf--Lax semigroup $Q^{s,t}$
associated to $L_0$ solves (recall \ref{subsec:LagrangianProperties}) 
\begin{equation}\label{eq:L0-HJ}
\partial_t Q^{s,t}\phi (x,t) = -\frac{1}{2}\left(|\nabla Q^{s,t}\phi|_t^2(x,t)-S(x,t)\right)
\end{equation}
as the right-hand side is (minus) the Legendre transform of the $L_0$ Lagrangian.

\begin{lem} \label{lem:heat-flow-preserves-HJ}
Suppose that \eqref{eq:GEL0-GE-to-WC} holds, $f: [t_1,t_2] \times M\to \R{}$, $[t_1,t_2]\subseteq I$ is Lipschitz, and satisfies the Hamilton-Jacobi inequality
\begin{equation}
\partial_t f(t,x) \leq -\frac{1}{2}|\nabla f(t,x)|^2 + \frac{1}{2}S_{(t,x)}\,,\qquad \text{for a.e. }(t,x)\in I\times M\,.
\end{equation}

Then the Lipschitz function $P_h[f]: [t_1 + h, t_2 + h] \times M \to \R{}$, defined for $0 < h \leq \sup I - t_2$ by $P_h[f](t+h,x) := P_{t+h, t}f_t(x)$, itself satisfies the Hamilton-Jacobi inequality
\begin{equation}
\partial_t P_h[f](t+h,x) \leq -\frac{1}{2}|\nabla P_h[f](t+h,x)|^2 + \frac{1}{2}S_{(t+h,x)}\,,\qquad \text{for a.e. } t\in  [t_1,t_2], \text{ for all } x\in M\,,
\end{equation}

and is dominated by the $L_0$-distance:
\begin{equation}\label{eq:L0-dominated-preserved}
P_h[f](t+h,y) - P_h[f](s+h,x)  \leq L^{s+h,t+h}_0(x,y)\,,\qquad \text{for all} \ x,y \in M\,;\, s < t\in [t_1, t_2]\,. 
\end{equation}
\end{lem}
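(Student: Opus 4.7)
The plan is to differentiate $u(\sigma,x) := P_h[f](\sigma,x) = P_{\sigma,\sigma-h}f_{\sigma-h}(x)$ directly in $\sigma$, then use the HJ assumption on $f$ together with the gradient estimate \eqref{eq:GEL0-GE-to-WC} to absorb the Laplacian that arises. Using the forward/backward semigroup identities
\begin{equation}
\partial_a P_{a,b}v \,=\, \Delta_a P_{a,b}v \ \ (\text{with } b,v \text{ fixed}), \qquad \partial_b\bigl(P_{a,b}v_b\bigr) \,=\, P_{a,b}\bigl(\partial_b v_b - \Delta_b v_b\bigr) \ \ (\text{with } a \text{ fixed})
\end{equation}
from Subsection \ref{subsec:t-dep-mflds}, the chain rule applied to $\sigma \mapsto P_{\sigma,\sigma-h}f_{\sigma-h}(x)$ yields
\begin{equation}
\partial_\sigma u(\sigma,x) \,=\, \Delta_\sigma u(\sigma,x) + P_{\sigma,\sigma-h}\bigl(\partial_s f_s - \Delta_s f_s\bigr)\big|_{s=\sigma-h}.
\end{equation}

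Substituting the HJ assumption $\partial_s f_s \leq -\tfrac{1}{2}|\nabla f_s|_s^2 + \tfrac{1}{2}S_s$ rewrites the second summand as a nonpositive multiple of $|\nabla f_s|^2 + 2\Delta_s f_s - S_s$. Applying \eqref{eq:GEL0-GE-to-WC} to $v = f_{\sigma-h}$ (in its Lipschitz form from Proposition \ref{prop:L0gradestimate}) then gives
\begin{equation}
P_{\sigma,\sigma-h}\bigl(|\nabla f_s|_s^2 + 2\Delta_s f_s - S_s\bigr)\big|_{s=\sigma-h} \,\geq\, |\nabla u|_\sigma^2 + 2\Delta_\sigma u - S_\sigma,
\end{equation}
so the $\Delta_\sigma u$ contributions cancel and we are left with $\partial_\sigma u \leq -\tfrac{1}{2}|\nabla u|_\sigma^2 + \tfrac{1}{2}S_\sigma$, the desired HJ inequality for $P_h[f]$. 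For the $L_0$-domination \eqref{eq:L0-dominated-preserved}, I would integrate this inequality along any smooth test curve $\gamma:[s+h,t+h]\to M$ joining $x$ and $y$: combining with Young's inequality $\langle\nabla u,\dot\gamma_r\rangle_r \leq \tfrac{1}{2}|\nabla u|_r^2 + \tfrac{1}{2}|\dot\gamma_r|_r^2$ gives $\tfrac{d}{dr}u(r,\gamma_r) \leq \tfrac{1}{2}(|\dot\gamma_r|_r^2 + S_r)$, and integration from $s+h$ to $t+h$ followed by infimizing over $\gamma$ produces the bound $u(t+h,y) - u(s+h,x) \leq L_0^{s+h,t+h}(x,y)$.

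The main obstacle I anticipate is regularity bookkeeping rather than anything geometric. Since $f$ is only Lipschitz, its HJ inequality holds only a.e., so the pointwise chain-rule computation above is a priori formal; one should first verify that $u$ is itself Lipschitz on $[t_1+h,t_2+h]\times M$ (heat-kernel smoothing handles the spatial variable for $h>0$, temporal Lipschitzianity transfers from $f$), and hence differentiable a.e. by Rademacher. To make the argument rigorous I would mollify $f$ in time, establish the pointwise HJ inequality and the $L_0$-domination for the smoothed function, and then pass to the limit using uniform Lipschitz bounds; the FTC-style integration along $\gamma$ is then a standard consequence.
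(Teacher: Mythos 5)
Your proof is correct and follows essentially the same route as the paper: differentiate $\sigma \mapsto P_{\sigma,\sigma - h}f_{\sigma - h}$, feed in the Hamilton--Jacobi inequality for $f$, absorb the resulting Laplacian pair via \eqref{eq:GEL0-GE-to-WC}, and then integrate along an $L_0$-geodesic using Young's inequality. For the rigorization, the paper does not mollify $f$ in time but instead exploits the smoothing of $P_{t+h,t}$ directly to show $P_h[f]$ is time-differentiable for a.e.\ $t$ and \emph{every} $x$ (passing the difference quotient inside the kernel integral using the Lipschitz bound on $f$), which is what yields the ``a.e.\ $t$, all $x$'' form of the conclusion; your mollification plan would work but needs a small extra step to upgrade from a.e.\ $x$ to all $x$ before integrating along a curve.
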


\begin{proof}
    Note first that $P_hf_t$ is differentiable in space everywhere, by regularization of the heat flow, and in time for a.e. $t\in I$, \textbf{for all} $x\in M$.
    Indeed, because $f$ is Lipschitz, there is a full measure subset $I_0 \subset I$ such that for $t\in I_0$, $f_t$ is differentiable at a.e. $x\in M$. Then, for $t\in I_0$ we also have that for every $x\in M$, using a Leibniz rule argument,
    \begin{align}
        \lim_{\eps\to0} \frac{P_h[f](t+h+\eps,x)-P_h[f](t+h,x)}{\eps} &= \lim_{\eps\to0} \frac{P_{t+h+\eps,t+\eps}f_{t+\eps}(x)-P_{t+h,t}f_t(x)}{\eps} \\
        &=\Delta P_{t+h,t}f_t(x) - P_{t+h,t}\Delta f_t(x) \\
        &\qquad + \lim_{\eps\to0} \frac{1}{\eps}\int \rho_{t+h,t}(y,x)(f_{t+\eps}(y)-f_t(y))dV_t(y) \\
        &=\Delta P_{t+h,t}f_t(x) - P_{t+h,t}\Delta f_t(x) + P_{t+h,t}(\partial_tf_t)(x)
    \end{align}
    where in the last line we used that $f$ is Lipschitz in order to bring the limit inside $P_{t+h,t}$, and $P_{t+h,t}(\partial_tf_t)(x)$ is well defined for every $x\in M$. Now, for $t\in I_0$, using the Hamilton-Jacobi inequality for $f$ together with the maximum principle for $P_{t+h,t}$, and then \eqref{eq:GEL0-GE-to-WC} we have
    \begin{align}
        \partial_t P_{t+h,t}f_t &= \Delta P_{t+h,t}f_t - P_{t+h,t}\Delta f_t + P_{t+h,t}\partial_t f_t \\
        &\leq \Delta P_{t+h,t}f_t - P_{t+h,t}\Delta f_t - \frac{1}{2}P_{t+h,t}|\nabla f_t|^2 + \frac{1}{2}P_{t+h,t}S_t \\
        &\leq -\frac{1}{2}|\nabla P_{t+h,t}f_t|^2 + \frac{1}{2}S_{t+h}\,.
    \end{align}

    Integrating this Hamilton-Jacobi inequality we get domination by the $L_0$-distance: considering $\gamma$ an $L_0$-geodesic between $(s+h,x)$ and $(t+h,y)$ we have
    \begin{align}
        P_h[f](t+h,y) - P_h[f](s+h,x) &= \int_{s+h}^{t+h} \partial_r P_h[f](r,\gamma_{r}) + \langle \nabla P_h[f](r,\gamma_{r}),\dot\gamma_{r} \rangle dr \\
        &\leq \int_{s+h}^{t+h} -\frac{1}{2}|\nabla P_h[f](r,\gamma_{r})|^2 + \frac{1}{2}S_{r,\gamma_{r}} + \langle \nabla P_h[f](r,\gamma_{r}),\dot\gamma_{r} \rangle dr \\
        &\leq \int_{s+h}^{t+h} \frac{1}{2}|\dot\gamma_{r}|^2 + \frac{1} {2}S_{r,\gamma_{r}} dr  \\
        &= L^{s+h,t+h}_0(y,x)
    \end{align}
    using Young's inequality (or the Legendre duality). Note that the quantities in the integrals are well defined for a.e. time along the curve thanks to the improved regularity of $P_h[f]$.
\end{proof}

\begin{proof}[Proof of Prop \ref{prop:GEtoWCL0}]
    Using the dual problem \eqref{eq:dualpbm2} we get
    \begin{align}
        W_{L_0^{s,t}}(\hat{P}_{s+h,s}\mu,\hat{P}_{t+h,t}\nu) &= \sup \left\{ \int Q^{s,t}\phi \  d\hat{P}_{t+h,t} \nu - \int \phi \ d\hat{P}_{s + h,s} \mu \ : \ \phi\in L^1 \right\} \\
        &= \sup \left\{ \int P_{t+h,t}[Q^{s,t}\phi](y)d\nu - \int P_{s + h,s}\phi(x) d\mu \ : \ \phi\in L^1 \right\} \\
        &\leq \sup \left\{ \int \psi(y)d\nu - \int \phi(x) d\mu \ : \ \phi\oplus \psi \leq L^{s+h,t+h} \right\} \\
        &= W_{L_0^{s+h,t+h}}(\mu,\nu)\,,
    \end{align}
    where for the inequality we used \eqref{eq:L0-dominated-preserved}.
\end{proof}

\subsubsection{From WC to GE}
\begin{prop}\label{prop:L0-contraction-to-GE}
    Suppose that for every $\mu,\nu\in \cP(M)$, times $[s,t]\subseteq I$, and flow time $h < \sup I - t$ the Wasserstein contraction holds \noeqref{eq:WCL0-WC-to-GE}
    \begin{align}
        W_{L_0^{s,t}}(\hat{P}_{s+h,s}\mu,\hat{P}_{t+h,t}\nu) \leq W_{L_0^{s+h,t+h}}(\mu,\nu)\,.\label{eq:WCL0-WC-to-GE}\tag{WC\({}_{L_0}\)}
    \end{align} 
    Then we recover the gradient estimate \noeqref{eq:GEL0-WC-to-GE}
    \begin{align}
        |\nabla P_{t,s}v|^2 + 2\Delta P_{t,s}v - S_t \leq P_{t,s}(|\nabla v|^2 +2\Delta v - S_s)\,,\label{eq:GEL0-WC-to-GE}\tag{GE\({}_{L_0}\)}
    \end{align}
    for all $[s,t] \subseteq I$, all $v$ smooth.
\end{prop}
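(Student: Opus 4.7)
The plan is to reverse the Kuwada-type duality from Proposition \ref{prop:GEtoWCL0}: I will extract from \eqref{eq:WCL0-WC-to-GE} a pointwise semigroup inequality between $P \circ Q$ and $Q \circ P$, then differentiate it in $t$ at $t = s^+$, where it is saturated, to immediately produce \eqref{eq:GEL0-WC-to-GE}.

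For the first step I apply the Hopf--Lax form of Kantorovich duality \eqref{eq:dualpbmHL} to \eqref{eq:WCL0-WC-to-GE} with Dirac measures $\mu = \delta_x$, $\nu = \delta_y$ and any Lipschitz test function $\phi$. This gives
\[
P_{t+h,t}[Q^{s,t}\phi](y) - P_{s+h,s}\phi(x) \leq L_0^{s+h,t+h}(x,y) \qquad \text{for all } x, y \in M.
\]
Taking the infimum over $x$ of the right-hand side (viewed as $P_{s+h,s}\phi(x) + L_0^{s+h,t+h}(x,y)$) then yields the pointwise semigroup inequality
\[
P_{t+h,t}[Q^{s,t}\phi] \leq Q^{s+h,t+h}[P_{s+h,s}\phi] \qquad \text{on } M,
\]
valid for all $t \geq s$, Lipschitz $\phi$, and admissible $h > 0$.

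For the second step I observe that at $t = s$ both sides of the previous display reduce to $P_{s+h,s}\phi$, since $Q^{a,a}$ acts as the identity. Hence the right time-derivatives at $t = s^+$ must satisfy $\partial_t^+|_{t=s}\mathrm{LHS} \leq \partial_t^+|_{t=s}\mathrm{RHS}$. Using the two-parameter identity $\partial_t P_{t+h,t}u = \Delta_{t+h}P_{t+h,t}u - P_{t+h,t}\Delta_t u$ together with the Hamilton--Jacobi equation $\partial_t Q^{s,t}\phi|_{t=s} = -\tfrac{1}{2}(|\nabla \phi|^2_s - S_s)$, the LHS derivative equals
\[
\Delta_{s+h}P_{s+h,s}\phi \;-\; P_{s+h,s}\Delta_s\phi \;-\; \tfrac{1}{2}P_{s+h,s}\bigl(|\nabla\phi|^2_s - S_s\bigr),
\]
while the RHS derivative requires only Hamilton--Jacobi for $Q^{s+h,\cdot}$ at its initial time:
\[
-\tfrac{1}{2}\bigl(|\nabla P_{s+h,s}\phi|^2_{s+h} - S_{s+h}\bigr).
\]

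Doubling the resulting derivative inequality and collecting terms produces
\[
|\nabla P_{s+h,s}\phi|^2_{s+h} + 2\Delta_{s+h}P_{s+h,s}\phi - S_{s+h} \leq P_{s+h,s}\bigl(|\nabla\phi|^2_s + 2\Delta_s\phi - S_s\bigr),
\]
which is \eqref{eq:GEL0-WC-to-GE} after relabeling $s+h \mapsto t$ and $\phi \mapsto v$. The only delicate point is justifying the term-by-term differentiation in $t$ at $t = s^+$: this is handled by standard short-time smoothness of $Q^{s,t}\phi$ for smooth $\phi$ on a closed manifold, together with joint smoothness of the two-parameter heat semigroup in both time arguments. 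I expect no deeper obstacle.
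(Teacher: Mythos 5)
Your proof is correct, and it follows a genuinely different route from the one in the paper. The paper proves $(\mathrm{WC}) \Rightarrow (\mathrm{GE})$ by a Lagrangian argument: it fixes a curve $\bar\gamma$ of points, flows the associated Dirac masses by the conjugate heat flow, lifts the resulting curve of measures to a dynamic transport plan via Lemma \ref{lem:dynamiclifting}, estimates the resulting quantity by Young's inequality and the differentiated Wasserstein contraction, and then sends $t\to s$ with $\dot{\bar\gamma}_0 = \nabla P_{s+h,s}v$. You instead invert the Kuwada duality used for the forward implication $(\mathrm{GE}) \Rightarrow (\mathrm{WC})$: testing \eqref{eq:WCL0-WC-to-GE} against Dirac measures via \eqref{eq:dualpbmHL} gives the pointwise commutation inequality
\begin{equation}
P_{t+h,t}\bigl[Q^{s,t}\phi\bigr] \leq Q^{s+h,t+h}\bigl[P_{s+h,s}\phi\bigr]\,,
\end{equation}
which is an equality at $t=s$; differentiating at $t = s^+$ (two-parameter semigroup identity on the left, Hamilton--Jacobi equation \eqref{eq:L0-HJ} on both sides) yields \eqref{eq:GEL0-WC-to-GE} directly. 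This Eulerian argument is shorter and is a true mirror image of the forward implication obtained via Lemma \ref{lem:heat-flow-preserves-HJ} and Proposition \ref{prop:GEtoWCL0}. The paper's Lagrangian route avoids touching the regularity of the Hopf--Lax semigroup, at the cost of the machinery of lifted plans on curve space.

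One caveat worth tightening: the appeal to ``standard short-time smoothness of $Q^{s,t}\phi$'' is stronger than necessary and not quite standard in the generality you are working at. In fact you only need two facts, both available in the paper's appendix: $(t,x)\mapsto Q^{s,t}\phi(x)$ is space-time Lipschitz (Proposition \ref{prop:HLsemigroup}), which is enough to bring the $t$-limit inside the integration against the smooth heat kernel $\rho_{s+h,s}(y,\cdot)\,dV_s$ exactly as in the Leibniz computation inside the proof of Lemma \ref{lem:heat-flow-preserves-HJ}; and, since $\phi$ is smooth (hence semiconvex with quadratic modulus) and $L_0$ is fiberwise centered quadratic, Proposition \ref{prop:fiber-quadratic-HJ} guarantees that $Q^{s,t}\phi$ satisfies the Hamilton--Jacobi equation with the one-sided derivative at $t=s^+$ for a.e.\ $x$, which suffices after this integration. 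With this substitution the differentiation step is fully justified.
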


\begin{proof}
    For given $x,y$, $s,t$, $h>0$, let $\bar\gamma:[0,t-s]\to M$ be a curve from $x$ to $y$, then
    \begin{align}
        P_{t+h,t}v(y)-P_{s+h,s}v(x) &= \int v d\hat{P}_{t+h,t}\delta_{\bar\gamma_{t-s}} - \int v d\hat{P}_{s+h,s}\delta_{\bar\gamma_0}.
    \end{align}
    Consider now the curve $[0,t-s]\ni r \mapsto \hat{P}_{s+h+r,s+r}\delta_{\bar\gamma_r} =: \mu_r$ which is a curve of probability measures from $\hat{P}_{s+h,s}\delta_{\bar\gamma_0}$ to $\hat{P}_{t+h,t}\delta_{\bar\gamma_{t-s}}$. 
    Using Lemma \ref{lem:dynamiclifting} we can find $\eta \in \mathcal{P}(C([0,t-s]\to M))$ with $(e_r)_\#\eta = \mu_r$ and
    \begin{align} \label{eq:liftingwithsameKE}
        \frac{1}{2}\int_{C([0,t-s]\to M)} |\dot{\gamma}_r|_{s+r}^2 + S_{s+r}(\gamma_r) d\eta(\gamma) = (\dot{\mu}_r)_{L_0^{s+r}}.
    \end{align}
    Hence
    \begin{align}
        \int v d\hat{P}_{t+h,t}\delta_{\bar\gamma_{t-s}} - \int v d\hat{P}_{s+h,s}\delta_{\bar\gamma_0} &= \int_{C([0,t-s]\to M)} \int_0^{t-s} \langle dv(\gamma_r) \dot{\gamma}_r\rangle dr d\eta(\gamma) \\
        & \leq \int_{C([0,t-s]\to M)} \int_0^{t-s} \left(\frac{1}{2} |\nabla v(\gamma_r)|_{s+r}^2 + \frac{1}{2}|\dot{\gamma}_r|_{s+r}^2\right) dr d\eta(\gamma) \\
        &= \frac{1}{2} \int_{C} \int_0^{t-s} |\nabla v(\gamma_r)|_{s+r}^2 dr d\eta(\gamma) + \frac{1}{2}\int_C\int_0^{t-s}|\dot{\gamma}_r|_{s+r}^2 drd\eta(\gamma) \\
        &= \frac{1}{2} \int_0^{t-s} \int_M |\nabla v|_{s+r}^2 d\mu_r dr + \frac{1}{2}\int_C\int_0^{t-s}\left(|\dot{\gamma}_r|_{s+r}^2 + S_{s+r}(\gamma_r)\right) drd\eta \\
        & \ \ \ \ - \frac{1}{2}\int_0^{t-s}\int_M S_{s+r}d\mu_r dr \\
        &= \frac{1}{2}\int_0^{t-s} P_{s+h+r,s+r}(|\nabla v|_{s+r}^2)(\bar\gamma_r) dr + \int_0^{t-s}(\dot{\mu}_r)_{L_0^{s+r}} dr \\
        & \ \ \ \ - \frac{1}{2}\int_0^{t-s}P_{s+h+r,s+r}S_{s+r}(\bar\gamma_r)dr. \label{eq:L0-Lisini-application}
    \end{align}
    We used Young's inequality in the inequality, repeatedly the fact that $\mu_r=(e_r)_\#\eta$, and \eqref{eq:liftingwithsameKE} in the last equality.

    Now, the Wasserstein contraction estimate implies
    \begin{align}
        W_{L_0^{s+r,s+r+\epsilon}}(\mu_r,\mu_{r+\epsilon}) \leq W_{L_0^{s+r+h,s+r+\epsilon+h}}(\delta_{\bar\gamma_r},\delta_{\bar\gamma_{r+\epsilon}})\,,
    \end{align}
    and dividing by $\epsilon$ and for $\epsilon\to0$ we get
    \begin{align}
        (\dot{\mu}_r)_{L_0^{s+r}} \leq (\dot{\delta}_{\bar\gamma_r})_{L_0^{s+h+r}} = \frac{1}{2}\left(|\dot{\bar\gamma}_r|_{s+h+r}^2 + S_{s+h+r}(\bar\gamma_r)\right).
    \end{align}
    Substituting in the previous estimate,
    \begin{align}
        P_{t+h,t}v(y)-P_{s+h,s}v(x) &\leq \frac{1}{2}\int_0^{t-s} P_{s+h+r,s+r}(|\nabla v|_{s+r}^2)(\bar\gamma_r) dr + \int_0^{t-s}(\dot{\mu}_r)_{L_0^{s+r}} dr \\
        & \ \ \ \ - \frac{1}{2}\int_0^{t-s}P_{s+h+r,s+r}S_{s+r}(\bar\gamma_r) dr \\
        &\leq \frac{1}{2}\int_0^{t-s} P_{s+h+r,s+r}(|\nabla v|_{s+r}^2)(\bar\gamma_r) dr + \frac{1}{2}\int_0^{t-s}\left(|\dot{\bar\gamma}_r|_{s+h+r}^2 + S_{s+h+r}(\bar\gamma_r)\right) dr \\
        & \ \ \ \ - \frac{1}{2}\int_0^{t-s}P_{s+h+r,s+r}S_{s+r}(\bar\gamma_r) dr.
    \end{align}
    In the estimate we have the freedom to choose $y$ and $\bar\gamma$ accordingly, and hence we pick $\bar\gamma$ with $\dot{\bar\gamma}_0 = \nabla P_{s + h,s}v(x)$. We now divide everything by $t-s$ and take the limit as $t\to s$. The left-hand side becomes 
    \begin{align}
        \partial_r|_{r=0} P_{s+h+r,s+r}v(\bar\gamma_r) &= \Delta P_{s+h,s}v(x) - P_{s+h,s}\Delta v(x) + \langle dP_{s+h,s}v(x),\dot{\bar\gamma}(0) \rangle \\
        &= \Delta P_{s+h,s}v(x) - P_{s+h,s}\Delta v(x) + |\nabla P_{s+h,s} v|^2_{s+h}(x).
    \end{align}
    We then get
    \begin{align}
        \Delta P_{s+h,s}v(x) - P_{s+h,s}\Delta v(x) &+ |\nabla P_{s+h,s} v|^2_{s+h}(x) \\ &\leq 
        \frac{1}{2} P_{s+h,s}|\nabla v|_{s}^2(x) + \frac{1}{2}\left(|\nabla P_{s+h,s} v|_{s+h}^2(x) + S_{s+h}(x)\right) - \frac{1}{2}P_{s+h,s}S_{s}(x)
    \end{align}
    and conclude
    \begin{align}
        \Delta P_{s+h,s}v - P_{s+h,s}\Delta v &+ \frac{1}{2} |\nabla P_{s+h,s} v|^2_{s+h} \leq 
        \frac{1}{2} P_{s+h,s}|\nabla v|_{s}^2 + \frac{1}{2}S_{s+h} - \frac{1}{2}P_{s+h,s}S_{s}.
    \end{align}
\end{proof}

\subsection{Entropy convexity and Evolution Variational Inequality (\texorpdfstring{$\mathrm{EVI}$}{EVI})}\label{ss:entropyEVIL0}

In this section we show the equivalence of the $\cD$ condition and the convexity of a quantity related to entropy. This convexity statement has already been obtained in \cite{Top09} in the $L_-$ case via a Jacobi field approach, and in \cite{Lot09} for the $L_0$ case using a formal Otto Calculus argument. They both worked in the setting of Ricci flows, but Huang \cite{Huang} later noticed that the $\cD$-condition is sufficient. We will show that the two are in fact equivalent. 

The proof of $W_{L_0}$ contraction from entropy convexity proceeds largely as in \cite{Top09} where the semiconcavity of the $L_0$ cost, and therefore the smooth structure of the underlying space, plays a role in estimating the one-sided derivatives of Entropy along $W_{L_0}$ geodesics. In the time-independent case, these derivative estimates can be obtained in non-smooth settings--see e.g. \cite[Prop.~5.10]{Gig15}--but we leave such considerations to future work.

For the backward direction, one could go all the way back to the $\cD$ condition, then rely on \cite{Top09,Huang}. However, we give a new proof following the approach of \cite{AmbGigSav15} for $\mathsf{RCD}$ spaces, who use the gradient estimate to obtain an $\mathrm{EVI}$ (Evolution Variational Inequality). This inequality can then be used to recover entropy convexity by following an argument from \cite{DanSav08}. One motivation for this alternative proof is that it does not rely on tensor calculus or Jacobi fields, and may therefore still be valid in lower-regularity settings.

\subsubsection{From entropy convexity to $W_{L_0}$ contraction}\label{sss1:entropyEVIL0}

Here we show that the convexity of an appropriate modification of entropy along $W_{L_0}$ geodesics implies $W_{L_0}$ contraction. In order to perform computations with the assumed entropy convexity, we wish to rewrite it as a monotonicity of first derivatives. We may not be able to compute these first derivatives explicitly, but we can bound them from the (for the purposes of this implication) correct side.

\begin{lem}[{\cite[l.~(3.18), (3.24-25)]{Top09}}]\label{lem:Top-1-sided-est}
Consider times $[s, t] \subseteq I$ and absolutely contiuous probability measures $\mu_{s} = \rho_{s}\,dV_{s}, \mu_{t} = \rho_{t}\,dV_{t} \in \cP(M)$ with smooth, positive densities $0 < \rho_{s}, \rho_t \in C^\infty(M)$. Let $(\mu_r)_{r \in [s,t]}$ be a $W_{L_0}$-geodesic connecting them, and $(\phi_r)_{r \in [s,t]} :M \to \R{}$ the Kantorovich potentials of Corollary \ref{cor:dynamic-potentials}. Then the following one-sided derivatives exist and satisfy the estimate:

\begin{align}
\left.\partial_r\right|_{r = s^+}\cE(\mu_r) &\geq \int_M (\La \nabla \ln(\rho_{s}),\nabla \phi_{s}\Ra + S_{s} )\,d\mu_{s}\,, \\
\left.\partial_r\right|_{r = t^-} \cE(\mu_r) &\leq \int_M (\La \nabla \ln(\rho_{t}),\nabla \phi_{t}\Ra + S_{t} )\,d\mu_{t}\,.
\end{align}
\end{lem}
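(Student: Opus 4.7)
The plan is to make rigorous, at the endpoints only, the formal Eulerian identity $\partial_r\cE(\mu_r) = \int_M (S_r + \langle\nabla\ln\rho_r, \nabla\phi_r\rangle)\,d\mu_r$, which one derives by differentiating $\int \rho_r\log\rho_r\,dV_r$ along the continuity equation $\partial_r\rho_r - S_r\rho_r = \pm\,\div_r(\rho_r\nabla\phi_r)$ (with the sign fixed by Proposition \ref{prop:Lisini-for-geodesics}) and integrating by parts. The reason we only recover the inequality form is that the interior densities $\rho_r$, $r \in (s,t)$, need not remain smooth, and the Hopf--Lax potentials of Corollary \ref{cor:dynamic-potentials} are only Lipschitz and \emph{semiconcave}, forcing one-sided estimates.

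Concretely, for the bound at $s^+$, I would exploit the smoothness of $\rho_s$ to realise the infinitesimal displacement interpolation as an honest pushforward: for small $h > 0$, $\mu_{s+h} = (T_h)_\#\mu_s$ with $T_h(x) = \exp_x^{g_s}\!\bigl(h\nabla_s\phi_s(x)\bigr) + o(h)$, since the $L_0$-geodesics issuing from $\supp \mu_s$ are Hamiltonian flow lines of $\phi_s$ (Proposition \ref{prop:Lisini-for-geodesics}), and smoothness of $\rho_s$ prevents mass collisions to first order. The change of variables formula then gives
\begin{equation*}
\cE(\mu_{s+h}) - \cE(\mu_s) = -\int_M \rho_s\log J_h\,dV_s,
\end{equation*}
where $J_h$ is the Jacobian of $T_h\colon (M,g_s)\to (M,g_{s+h})$, combining $DT_h$ with the volume-form ratio $dV_{s+h}/dV_s$ (the latter producing the $S_s$ contribution via $\partial_r dV_r = -S_r dV_r$). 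A first-order Taylor expansion formally yields $\log J_h = h(\Delta_s\phi_s - S_s) + o(h)$; the semiconcavity of $\phi_s$ upgrades this to a one-sided bound $\log J_h \le h(\Delta_s\phi_s - S_s) + o(h)$ after integration against the smooth measure $\rho_s\,dV_s$. Integration by parts then converts $-\int \rho_s\Delta_s\phi_s\,dV_s$ into $\int \langle\nabla\phi_s, \nabla\ln\rho_s\rangle\,d\mu_s$, and dividing by $h$ and passing to the $\liminf$ delivers the stated inequality. The bound at $t^-$ is obtained by the symmetric argument, running the Hamiltonian flow of $\phi_t$ backwards in time; the reversal of direction swaps the sign of the one-sided bound, producing the $\le$ inequality.

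The principal obstacle is rigorously justifying the Taylor expansion of $\log J_h$. Because $\phi_s, \phi_t$ are only semiconcave, the second-order control needed to make this expansion rigorous is available on one side only; this is precisely what forces the lemma to produce one-sided inequalities rather than equalities. Its sign must also be correctly paired with the convexity of $F(x) = x\log x$, so that Jensen's inequality (applied to the change-of-variables pushforward) points in the same direction as the semiconcavity, and the smoothness and positivity of $\rho_s, \rho_t$ ensure that the remainders stay genuinely $o(h)$. Were one willing to assume $\rho_r$ remains smooth throughout the interpolation, all the inequalities would upgrade to equalities, but we do not need this stronger statement for the subsequent contraction arguments.
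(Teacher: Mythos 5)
Your proposal follows essentially Topping's argument, which the paper invokes by citation: realize the displacement interpolation as a first-order pushforward, apply McCann's change-of-variables formula $\cE(\mu_{s+h}) - \cE(\mu_s) = -\int_M\rho_s\log J_h\,dV_s$, expand $\log J_h$ to first order, and exploit the one-sided second-order regularity of the endpoint potentials to control the singular part of $\Delta\phi$. The identification of the velocity as $\nabla\phi_s$ via Proposition \ref{prop:Lisini-for-geodesics}, the inclusion of the $S$-term from the moving volume form, and the shape of the target estimate are all right.

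There is, however, a substantive error in the regularity you attribute to the initial potential. You state that $\phi_s$ is \emph{semiconcave}, and use this to pass from the Alexandrov expansion of $\log J_h$ to the integrated bound $\int_M\rho_s\log J_h\,dV_s \leq h\int_M\rho_s(\Delta_s\phi_s - S_s)\,dV_s + o(h)$, with $\Delta_s\phi_s$ meaning the distributional Laplacian. But $\phi_s$ is \emph{semiconvex}: in the sign convention of Corollary \ref{cor:dynamic-potentials}, $-\phi_s$ is the usual $c$-concave Kantorovich potential, equivalently $\phi_s$ is the backwards $L_0^{s,t}$-transform of $\phi_t$ (hence semiconvex by \cite[Thm.~6.4.3]{CanSin04}), while it is $\phi_t = Q^{s,t}\phi_s$ that is semiconcave. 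This is not a cosmetic slip. The step you describe needs the singular part of the distributional Laplacian $\Delta_s\phi_s$ to be a \emph{nonnegative} measure, so that the pointwise a.e. Alexandrov Laplacian $\Delta_{\mathrm{ac}}\phi_s$ appearing in $\log J_h$ satisfies $\int_M\rho_s\,\Delta_{\mathrm{ac}}\phi_s\,dV_s \leq -\int_M\La\nabla\rho_s,\nabla\phi_s\Ra\,dV_s$; this is exactly what semiconvexity of $\phi_s$ delivers. Were $\phi_s$ semiconcave, the singular part would be nonpositive, the last inequality would reverse, and you would conclude $\partial_r|_{r=s^+}\cE(\mu_r) \leq \cdots$ rather than the stated lower bound. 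The symmetric argument at $t^-$ correctly uses the semiconcavity of $\phi_t$, so the two endpoint inequalities arise from \emph{opposite} one-sided regularities, not from the same semiconcavity run in reverse. Two lesser points: the convexity of $x \mapsto x\log x$ and Jensen's inequality do not actually enter the first-order McCann computation (they are relevant for displacement convexity, not for this endpoint-derivative estimate); and "prevents mass collisions to first order" is attributed to smoothness of $\rho_s$, but injectivity of the approximate transport map is a consequence of the semiconvexity of $\phi_s$, not of the density regularity.
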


\begin{rmk}
Topping uses a different sign convention for the Kantorovich potentials, and writes the statement in terms of backwards time. Moreover, his proof is for the $L_-$ cost instead of $L_0$, and at first glance seems to use the Ricci flow equation to obtain a finite lower bound on the second derivative of volume density along $L_-$-geodesics. However, only space-time regularity and diameter boundedness of $(M,g_t)$ are actually being used (e.g. in the form $\cD(X) \geq -C(|X|^2 + 1) > -\infty$ for some $C \in \R{}$, see \cite[Lem.~2.4]{Huang}), along with the fact that the Lagrangian inducing the cost is smooth and centered-quadratic on each fiber.
\end{rmk}

\begin{rmk}
The inequalities of Lemma \ref{lem:Top-1-sided-est} cannot be upgraded to equalities in general. This loss occurs exactly in inequality \cite[l.~(3.22)]{Top09}, which uses the semiconvexity of $\phi_s$ and semiconcavity of $\phi_{t}$ to argue that the singular parts of $\Delta \phi_{s}$ and $\Delta \phi_t$ are signed. However, the presence of Lipschitz peaks cannot be ruled out, e.g. by \cite[Cor.~12.21]{Vil09}. One might attempt to solve this issue by taking a limit of the computation in the interior of the geodesic where both inequalities of Lemma \ref{lem:Top-1-sided-est}, and therefore also equality, might hold. But for $r \in (s,t)$ there is no guarantee that $\rho_r$ will be smooth, even if $0 < \rho_{s},\rho_t \in C^\infty(M)$ (see e.g. \cite[Thm.~12.44]{Vil09}), and one has insufficient regularity to make sense of $\int_M \La\nabla \phi_r, \nabla \rho_r\,dV_r\Ra$ as the pairing of an a.e. defined function with a distribution.
\end{rmk}

We may then proceed to one direction of the equivalence between entropy (semi)convexity and $W_{L_0}$-contraction along the heat flow, using an argument that can essentially be found in \cite[Sec.~4]{Top09}.

\begin{prop}\label{prop:L0-cvxty-implies-contr/EVI}
Suppose that for any times $[s, t] \subseteq I$ and measures $\mu_{s},\mu_t \in \cP(M)$, there exists a $W_{L_0}$-geodesic $(\mu_{r})_{r \in [s,t]}$ such that
\begin{align}
    r\mapsto \cE(\mu_r) - \cE(\mu_{s}) - W_{L_0^{s,r}}(\mu_{s}, \mu_r) \ \ \text{ is convex.}\label{eq:ECL0-EC-to-WC}\tag{EC\({}_{L_0}\)}
\end{align} Then for any $\mu_{s}, \mu_{t} \in \cP(M)$ and $h > s - \inf I$ the following $W_{L_0}$ contraction holds \noeqref{eq:WCL0-EC-to-WC}
\begin{equation}
W_{L_0^{s - h,t - h}}(\hat{P}_{s,t - h}\mu_{s},\hat{P}_{t, t - h}\mu_{t}) \leq W_{L_0^{s, t}}(\mu_{s},\mu_{t})\,.\label{eq:WCL0-EC-to-WC}\tag{WC\({}_{L_0}\)}
\end{equation}
\end{prop}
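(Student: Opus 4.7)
The aim is to show that $\cE$ convexity along $W_{L_0}$-geodesics implies the $W_{L_0}$-contraction along the adjoint heat flow, following \cite[Sec.~4]{Top09}. The strategy is to establish an infinitesimal form of the contraction and then extend to finite time-shifts. More precisely, it suffices to prove
\[
\limsup_{\epsilon \downarrow 0^+}\frac{W_{L_0^{s-\epsilon,t-\epsilon}}(\hat P_{s,s-\epsilon}\mu_s,\hat P_{t,t-\epsilon}\mu_t)-W_{L_0^{s,t}}(\mu_s,\mu_t)}{\epsilon}\leq 0
\]
for all times $[s,t]\subseteq I$ and all smooth positive-density measures $\mu_s,\mu_t\in \cP(M)$. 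The general case follows by approximation via the smoothing property of $\hat P$, Proposition \ref{prop:W-implies-weak-conv}, and lower semicontinuity of $W_{L_0}$, while the finite-$h$ statement follows by iterating the infinitesimal inequality using the semigroup property of $\hat P$ together with the triangle inequality \eqref{eq:triangle-inequality}.

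For the infinitesimal inequality, let $(\mu_r)_{r\in[s,t]}$ be a $W_{L_0^{s,t}}$-geodesic along which $g(r) := \cE(\mu_r) - \cE(\mu_s) - W_{L_0^{s,r}}(\mu_s,\mu_r)$ is convex, as guaranteed by \eqref{eq:ECL0-EC-to-WC}, with associated Kantorovich potentials $(\phi_r)$ as in Corollary \ref{cor:dynamic-potentials}. Construct the competitor curve
\[
\tilde\mu_\rho := \hat P_{\rho+\epsilon,\rho}\mu_{\rho+\epsilon}, \qquad \rho \in [s-\epsilon,\,t-\epsilon],
\]
which realizes the correct endpoints $\hat P_{s,s-\epsilon}\mu_s$ and $\hat P_{t,t-\epsilon}\mu_t$. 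Lemma \ref{lem:dynamiclifting} then provides the Benamou-Brenier upper bound
\[
W_{L_0^{s-\epsilon,t-\epsilon}}(\hat P_{s,s-\epsilon}\mu_s,\hat P_{t,t-\epsilon}\mu_t)\leq\int_{s-\epsilon}^{t-\epsilon}(\dot{\tilde\mu}_\rho)_{L_0^\rho}\,d\rho.
\]

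The central step is the first-order expansion of this integral in $\epsilon$. The continuity equation satisfied by $\tilde\mu$ is driven by a velocity that combines the geodesic velocity $-\nabla\phi_{\rho+\epsilon}$ with the effective Otto velocity $-\nabla\log\rho_{\rho+\epsilon}$ of the adjoint heat flow. The leading-order term of the action reproduces $W_{L_0^{s,t}}(\mu_s,\mu_t)$ (via Proposition \ref{prop:Lisini-for-geodesics}), while the order-$\epsilon$ correction $R$ collects cross-terms of the form $\int\langle\nabla\phi_r,\nabla\log\rho_r\rangle\,d\mu_r = -\int\Delta\phi_r\,d\mu_r$ together with endpoint contributions from the interval shift. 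Using the Hamilton-Jacobi identity $\partial_r\phi_r = -\tfrac12|\nabla\phi_r|^2 + \tfrac12 S_r$, these terms telescope so that $R$ is controlled by $F(t) - F(s)$, where $F(r):=-\int\Delta\phi_r\,d\mu_r + \tfrac12\int(S_r - |\nabla\phi_r|^2)\,d\mu_r$. Integration by parts in Lemma \ref{lem:Top-1-sided-est} yields $g'(s^+)\geq F(s)$ and $g'(t^-)\leq F(t)$; combined with the convexity of $g$ (which gives $g'(s^+) \leq g'(t^-)$), this chains to $F(s)\leq F(t)$, forcing $R$ to have the required sign.

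The main obstacle is the rigorous first-order expansion of the action integral for the composite curve $\tilde\mu$, particularly the correct identification of its continuity-equation velocity field intertwining the $W_{L_0}$-geodesic dynamics with the adjoint heat flow, together with the careful accounting of endpoint contributions from the shift $[s,t] \to [s-\epsilon,t-\epsilon]$. A further subtlety is that the one-sided entropy derivative estimates of Lemma \ref{lem:Top-1-sided-est} are available only at the endpoints $r = s,t$; hence the correction $R$ must be reduced to an endpoint expression, which is possible because of the telescoping provided by the Hamilton-Jacobi equation for $\phi_r$ combined with the velocity identification along the $W_{L_0}$-geodesic.
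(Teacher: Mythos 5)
Your proposal takes a genuinely different route: you argue on the primal side, constructing a composite competitor curve $\tilde\mu_\rho = \hat P_{\rho+\eps,\rho}\mu_{\rho+\eps}$ and bounding $W_{L_0^{s-\eps,t-\eps}}(\hat P_{s,s-\eps}\mu_s,\hat P_{t,t-\eps}\mu_t)$ from above by the curve's Benamou--Brenier action, then trying to show that the first-order correction in $\eps$ has a sign via entropy convexity. The paper's proof stays dual: it observes that the Hopf--Lax extensions of the Kantorovich potentials $(\phi_r)$ of the $\eps$-presmoothed geodesic remain \emph{candidates} for the dual problem between $\hat P_{s,s-\eps+h}\nu_s$ and $\hat P_{t,t-\eps+h}\nu_t$ at nearby flow times $h>0$, and are \emph{optimal} at $h=0$; so a lower bound on $\partial_{h}\big|_{0^+}\big[\int\phi_{t-\eps+h}\,d\hat P\nu_t - \int\phi_{s-\eps+h}\,d\hat P\nu_s\big]$ immediately gives a lower Dini derivative bound for the Wasserstein distance along the flow. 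Both arguments extract from Lemma~\ref{lem:Top-1-sided-est} and the convexity assumption the same endpoint comparison; they differ in how that comparison is converted into the contraction. The dual route is cleaner exactly where you flag your ``main obstacle'': the paper never needs to identify the continuity-equation velocity of $\tilde\mu_\rho$, which is genuinely delicate (it mixes push-forward of the geodesic velocity $-\nabla\phi_{\rho+\eps}$ with commutators of $\hat P$ and the time-dependent $\Delta$), because it differentiates only the potential pairing, for which the Hamilton--Jacobi and adjoint heat equations suffice. Your claimed telescoping of the $O(\eps)$ cross-terms to an endpoint expression is the crux and is asserted rather than demonstrated. One smaller point: the finite-$h$ extension does not go ``via the triangle inequality''; rather, translation-invariance of the infinitesimal argument together with the semigroup property gives the Dini derivative bound at every flow time (not only at $0$), whence monotonicity, after which the $\eps\downarrow 0$ limit is handled by weak convergence and \cite[Thm.~5.20]{Vil09}.
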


\begin{proof}
Let us first apply the heat flow for some small time; suppose that $\mu_{s - \eps} = \hat{P}_{
s,s - \eps}\nu_{s}$ and $\mu_{t - \eps} = \hat{P}_{
t,t - \eps}\nu_{t}$ for some $\eps > 0$ and $\nu_{s}, \nu_t \in \cP(M)$. Then $\mu_{s - \eps},\mu_{t - \eps}$ are smooth with positive density (by the strong maximum principle). Let $(\phi_r)_{r \in [s -\eps,t-\eps]} :M \to \R{}$ be the space-time Lipschitz Kantorovich potentials of Corollary \ref{cor:dynamic-potentials} associated to the pair $(\mu_{s - \eps}, \mu_{t - \eps})$. By the convexity assumption, we have

\begin{equation}
0 \leq \left.\frac{d}{dr}\left[\cE(\mu_r) - W_{L_0^{s - \eps,r}}(\mu_{s - \eps},\mu_r)\right]\right|_{s - \eps^+}^{t - \eps^-}\,.
\end{equation}

Note that the existence of the one-sided derivative of the first term by itself is provided and estimated by Lemma \ref{lem:Top-1-sided-est}, while the one-sided derivative of the second term is provided by Proposition \ref{prop:Lisini-for-geodesics} applied to the $L_0$ Lagrangian, which yield the further inequality

\begin{equation}
\cdots \leq \left.\int_M (\La \nabla \ln( \rho_r),\nabla \phi_r \Ra + S_r)\,d\mu_r - \frac{1}{2}\int_M (|\nabla \phi_r|^2 + S_r)\,d\mu_r \right|_{s - \eps}^{t - \eps}\,.
\end{equation}

Integrating by parts ($\rho_{s - \eps}$ and $\rho_{t - \eps}$ are smooth) and then using the heat equation and the Hamilton-Jacobi equation gives

\begin{align}
\cdots &= \left.\int_M -\phi_r\Delta \rho_r \,dV_r - \frac{1}{2}\int_M (|\nabla \phi_r|^2 - S_r)\,d\mu_r\right|_{s - \eps}^{t - \eps} \\
&=\left.\frac{d}{dh}\right|_{h = 0^+} \left[\int_{M} \phi_{t - \eps + h } \,d\hat{P}_{t,t - \eps + h}\nu_{t} - \int_{M} \phi_{s - \eps + h} \,d\hat{P}_{s,s - \eps + h}\nu_{s}\right]\,.
\end{align}

It is worth noting that $\phi_r$ is a priori only defined for $r \in [s - \eps, t - \eps]$, but can be freely extended to later times using the Hopf-Lax semigroup and solves the Hamilton-Jacobi equation at times $s - \eps$, $t - \eps$, and at a.e. point $x \in M$ by the semiconvexity of $\phi_{s - \eps}$ (use that the Kantorovich potential $\phi_{s - \eps}$ is itself given by a backwards $L_0^{s - \eps,t -\eps}$-transform, then apply \cite[Thm.~6.4.3]{CanSin04}) and Proposition \ref{prop:fiber-quadratic-HJ}. We now use the fact that $(\phi_{s - \eps + h}, \phi_{t - \eps + h})$ are related by the $L_0^{s - \eps +h, t - \eps + h}$-transform, and thus are candidates for the dual problem as stated in \eqref{eq:dualpbm1} associated to $(\hat{P}_{ s,s - \eps + h}\nu_{s}, \hat{P}_{ t,t - \eps + h}\nu_{t})$, to estimate the first and third term of the following difference quotient from above

\begin{align}
&\phantom{\geq}\frac{1}{h}\left[\int_{M} (\phi_{t - \eps + h} \,d\hat{P}_{t,t -\eps + h}\nu_{t} - \phi_{t - \eps}\,d\mu_{t - \eps}) - \int_{M} (\phi_{s - \eps + h} \,d\hat{P}_{s,s - \eps + h}\nu_{s} - \phi_{s - \eps}\,d\mu_{s - \eps})\right]\\
&\leq  \frac{1}{h} \left[W_{L_0^{s - \eps + h, t -\eps + h}}(\hat{P}_{s,s - \eps + h}\nu_{s}, \hat{P}_{t,t - \eps + h}\nu_{t}) - W_{L_0^{s - \eps, t - \eps}}(\mu_{s - \eps}, \mu_{t - \eps})\right]\,,\qquad h > 0\,.
\end{align}

Taking the $\liminf$ as $h \downarrow 0$ and chaining together the previous inequalities shows that \begin{equation}
\left.\partial_h^-\right|_{h = 0^+}W_{L_0^{s - \eps + h, t - \eps + h}}(\hat{P}_{s,s - \eps + h}\nu_{s}, \hat{P}_{t,t - \eps + h}\nu_{t}) \geq 0\,,
\end{equation}
i.e. $W_{L_0}$ is non-increasing in the backwards time direction along the adjoint heat flow, except perhaps at the initial (backwards) time. 

To extend this statement to $\eps = 0$, note that the heat flow for measures is weakly continuous ($\hat P_{t,t - \eps}\nu_{t} \weak \nu_{t}, \,\hat P_{s,s - \eps}\nu_{s} \weak \nu_{s} \in \cP(M)$) and the costs are uniformly converging ($c_\eps:= L_0^{s - \eps,t - \eps} \to c_0:=L_0^{s, t} \in C^0(M\times M)$) as $\eps \downarrow 0$. Then, using \cite[Thm.~5.20]{Vil09}, any optimal transport plans $\pi_\eps$ for $(\hat P_{s,s - \eps}\nu_{s},\hat P_{t,t - \eps}\nu_{t})$ with respect to $L_0^{s - \eps,t - \eps} $ weakly converge (up to subsequence) to an optimal transport plan $\pi_0$ for $(\nu_{s},\nu_{t})$ with respect to the cost $L_0^{s, t}$. As the pairing of a uniformly convergent sequence of functions with a weakly convergent sequence of measures, we have $\lim_{\eps \to 0}\int_{M\times M} c_\eps \,d\pi_\eps = \int_{M\times M} c_0\,d\pi_0 = W_{L_0^{s,t}}(\nu_{s},\nu_{t})$, the second equality holding by optimality of $\pi_0$. This convergence, along with the contraction property for $\eps > 0$, gives 

\begin{equation}
W_{L_0^{s - \eps, t - \eps}}(\hat{P}_{s,s - \eps}\nu_{s}, \hat{P}_{t,t - \eps}\nu_{t}) \downarrow W_{L_0^{s , t}}(\nu_{s}, \nu_{t}) \qquad \text{as }\eps \downarrow 0\,.
\end{equation}
\end{proof}

\subsubsection{From gradient estimates to entropy convexity (via $\mathrm{EVI}$).}\label{sss2:entropyEVIL0}
We now show how to prove the convexity statement assuming the gradient estimate. The approach is roughly based on \cite[Sec.~4]{AmbGigSav15}, albeit with a few complications due to the time-dependence and many simplifications due to the smoothness of the background space-time. See also \cite[Sec.~6]{KopStu18} for a related approach in the setting of singular super Ricci flows with a quantitative regularity assumption in time.

The general idea is to first approximate $W_{L_0}$-Wasserstein geodesics $(\mu_{r})_{r \in [s,t]}$ in an appropriate topology by curves of smooth measures $(\mu_{r}^\eps)_{r \in [s - \eps,t-\eps]}$ (at the cost of losing the geodesic property) using Lemma \ref{lem:wass-geodesic-smoothing}. Then in Lemma \ref{lem:L0-action-entropy-est} we obtain an action-entropy estimate for the approximating curve, which becomes the $\mathrm{EVI}$ (Proposition \ref{prop:L0GE-to-EVI}) after sending the smoothing parameter $\eps \to 0$, then taking a limiting difference quotient. Finally, we use the $\mathrm{EVI}$ to obtain the entropy convexity in Proposition \ref{prop:L0-entropy-cvx}. 

In the spirit of \cite[Thm.~4.16]{AmbGigSav15} we can show:
\begin{lem}\label{lem:L0-action-entropy-est}
Let $(\nu_r)_{r \in [s,t]} = (\rho_r dV_r)_{r \in [s,t]}$, $\rho \in C^\infty\big([s,t] \times M; (0,\infty)\big)$, $[s,t] \subseteq I$, a curve of smooth measures with positive densities. For $t - s, s - \inf I > h > 0$, fix constants $\lambda_I := h/(t - s + h)$ and $\lambda_{II}:= h/(t-h - s)$. Then if \eqref{eq:GEL0-GE-to-EVI} holds, we have the action-entropy estimates

\begin{align}
\lambda_I \cE(\hat{P}_{s, s - h}[\nu_{s}]) + (1-\lambda_I^2)W_{L_0^{s - h,t}}(\hat{P}_{s, s - h}[\nu_{s}], \nu_{t}) &\leq \lambda_I \cE(\nu_{t}) + (1 - \lambda_I)\int_{s}^{t}(\dot{\nu}_{r})_{L_0^{r}}\,dr \\
&- \frac{\lambda_I^2}{2}\int_{s - h}^{t}\int_M P_{r + \lambda_I(t - r),r}[S_r]\,d\nu_{r + \lambda_I(t - r)}\,dr\,, \label{eq:L0-preEVI-I}
\end{align}

and 

\begin{align}
-\lambda_{II}\cE(\nu_{s}) + (1-\lambda_{II}^2)W_{L_0^{s,t - h}}(\nu_{s}, \hat{P}_{t,t -h}[\nu_{t}]) &\leq -\lambda_{II} \cE(\hat{P}_{t,t -h}[\nu_{t}]) + (1 + \lambda_{II})\int_{s}^{t}(\dot{\nu}_{r})_{L_0^{r}}\,dr \\
&- \frac{\lambda^2_{II}}{2}\int_{s}^{t - h}\int_M P_{r + \lambda_{II}(r - s),r}[S_{r}]\,d\nu_{r + \lambda_{II}(r - s)}\,dr\,. \label{eq:L0-preEVI-II}
\end{align}
\end{lem}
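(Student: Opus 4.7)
The approach follows the framework of Ambrosio--Gigli--Savar\'e \cite[Thm.~4.16]{AmbGigSav15}, which derives Evolutionary Variational Inequalities from Bakry--\'Emery-type gradient estimates, adapted here to the time-dependent metric and Lagrangian cost. I focus on \eqref{eq:L0-preEVI-I}; the estimate \eqref{eq:L0-preEVI-II} is proved by the symmetric construction. Define the affine map $a(r) := (1-\lambda_I)r + \lambda_I t$ on $r \in [s-h,t]$; one checks $a(s-h) = s$, $a(t) = t$, $a(r) \geq r$, and $a'\equiv 1-\lambda_I$. Set the interpolating curve
\[
\sigma_r := \hat P_{a(r), r}[\nu_{a(r)}]\,, \qquad r \in [s-h,t]\,,
\]
whose endpoints are $\sigma_{s-h} = \hat P_{s,s-h}[\nu_s]$ and $\sigma_t = \nu_t$, and each $\sigma_r$ has smooth positive density by the regularization of the backward heat kernel. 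The mirror interpolation used for \eqref{eq:L0-preEVI-II} is $\sigma_r := \hat P_{b(r), r}[\nu_{b(r)}]$ on $r \in [s, t-h]$ with $b(r) := s + (1+\lambda_{II})(r-s)$, satisfying $b(s) = s$, $b(t-h) = t$, $b'\equiv 1+\lambda_{II}$.

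Next, let $\phi^0 \in \Lip(M)$ be an optimal Kantorovich potential for the $L_0^{s-h, t}$-cost Wasserstein problem between $\sigma_{s-h}$ and $\sigma_t$, and define $\phi_r := Q^{s-h, r}\phi^0$ via Hopf--Lax. By Corollary \ref{cor:dynamic-potentials} (applicable because the $L_0$-Lagrangian is $C^2$ and fiberwise centered-quadratic), $\phi_r$ is space-time Lipschitz, solves the Hamilton--Jacobi equation $\partial_r \phi_r + \tfrac12 |\nabla \phi_r|^2 - \tfrac12 S_r = 0$ a.e., and $W_{L_0^{s-h,t}}(\sigma_{s-h}, \sigma_t) = \int \phi_t\,d\sigma_t - \int \phi_{s-h}\,d\sigma_{s-h}$. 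Consider the action-entropy functional $H(r) := \int_M \phi_r\,d\sigma_r + \lambda_I\,\cE(\sigma_r)$ and differentiate in $r$. Three types of contribution arise: the HJ term $-\tfrac12\int |\nabla \phi_r|^2\,d\sigma_r + \tfrac12\int S_r\,d\sigma_r$; the transport contributions from $\partial_r\sigma_r$, which decompose via the chain rule into an adjoint-heat part (producing after integration by parts $-\int \langle\nabla \phi_r, \nabla \log u_r\rangle\,d\sigma_r$ with $u_r = d\sigma_r/dV_r$) and a drift part proportional to $a'\equiv 1 - \lambda_I$ arising from differentiation through $\nu_{a(r)}$; and the standard entropy dissipation $\lambda_I\int(|\nabla \log u_r|^2 + S_r)\,d\sigma_r$ plus an analogous entropy-drift term.

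Grouping quadratic terms, one completes the square to bound the cross-term $-\int\langle\nabla \phi_r, \nabla \log u_r\rangle\,d\sigma_r$ via Young's inequality with the coefficient that exactly balances $\lambda_I\int|\nabla\log u_r|^2$. The residual quadratic in $\phi_r$, after integration by parts against $u_r$, is recognized as the Bochner-type quantity $|\nabla \phi_r|^2 + 2\Delta_r \phi_r - S_r$ tested against $\sigma_r$; via $\sigma_r = \hat P_{a(r), r}[\nu_{a(r)}]$ and duality, this is pushed to $\int P_{a(r),r}(|\nabla\phi_r|^2+2\Delta \phi_r - S_r)\,d\nu_{a(r)}$, at which point $(\mathrm{GE}_{L_0})$ produces the pointwise estimate that absorbs the remaining positive terms and leaves exactly $-\tfrac{\lambda_I^2}{2}\int P_{a(r),r}[S_r]\,d\nu_{a(r)}$. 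The drift contributions are estimated using the action formula of Proposition \ref{prop:Lisini-for-geodesics} combined with a further Young's inequality, yielding an integrand dominated by $(1-\lambda_I)\,(\dot\nu_{a(r)})_{L_0^{a(r)}}$. Integrating the resulting inequality over $r \in [s-h,t]$, applying the change of variables $a = a(r)$, $da = (1-\lambda_I)\,dr$ to identify the drift integral with $(1-\lambda_I)\int_s^t(\dot\nu_a)_{L_0^a}\,da$, and using $H(t) - H(s-h) = \lambda_I[\cE(\nu_t) - \cE(\sigma_{s-h})] + W_{L_0^{s-h,t}}(\sigma_{s-h}, \nu_t)$ yields \eqref{eq:L0-preEVI-I}.

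The principal obstacle is the calibration of coefficients. The weight $\lambda_I$ on entropy in $H(r)$ is chosen precisely so that the Young's inequality on the $\langle\nabla\phi_r,\nabla\log u_r\rangle$ cross-term and the application of $(\mathrm{GE}_{L_0})$ are simultaneously saturated; any other weight leaves a positive remainder that obstructs the clean appearance of the factor $(1-\lambda_I^2) = (1-\lambda_I)(1+\lambda_I)$ in front of $W_{L_0^{s-h,t}}$, whose asymmetric form reflects the interaction between the forward heat-kernel smoothing over time $a(r) - r$ and the backward Hamilton--Jacobi transport along $\phi_r$. A secondary subtlety is correctly accounting for the $a(r)$-dependence of $\partial_r\sigma_r$ in a form compatible with the $L_0$-Lagrangian action, which is where Proposition \ref{prop:Lisini-for-geodesics} is essential.
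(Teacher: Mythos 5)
Your plan follows the same route as the paper's proof: the same affine ``sliding-window'' interpolation $\sigma_r = \hat P_{a(r),r}[\nu_{a(r)}]$, the Hopf--Lax Kantorovich potentials, the strategy of combining the potential integral with a multiple of entropy, completing a square to feed into \eqref{eq:GEL0-GE-to-EVI}, and integrating in $r$ with a change of variable in the action term. Structurally there is nothing new here; the differences are in detail, and one of those details is a genuine gap.

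The entropy coefficient in your $H(r)$ has the wrong sign. The paper's combined functional is $\int_M\phi_r\,d\nu_{r,h} - \frac{\lambda_I}{1-\lambda_I}\cE(\nu_{r,h})$ with a \emph{negative} coefficient on the entropy, while you take $H(r) = \int_M\phi_r\,d\sigma_r + \lambda_I\cE(\sigma_r)$. This is not a harmless normalization. First, for the quadratic in the $r$-derivative of the combined functional: your version produces (up to lower-order terms and the sign of the cross-term) $\pm\La\nabla\phi_r,\nabla\ln u_r\Ra + \lambda_I|\nabla\ln u_r|^2 - \tfrac{1}{2}|\nabla\phi_r|^2$, which is unbounded above in $\nabla\ln u_r$. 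The ``Young's inequality balance'' you invoke can therefore only give a \emph{lower} bound on $H'(r)$, not the upper bound needed to deduce \eqref{eq:L0-preEVI-I}. With the paper's negative coefficient, the quadratic is bounded above by the negative of the perfect square $\tfrac{1}{2}\left|\nabla\left(\phi_r - \tfrac{\lambda_I}{1-\lambda_I}\ln\rho_{r,h}\right)\right|^2$, and the auxiliary function $f_r = \phi_r - \tfrac{\lambda_I}{1-\lambda_I}\ln\rho_{r,h}$ is exactly what one applies \eqref{eq:GEL0-GE-to-EVI} to. Second, your stated endpoint identity $H(t) - H(s-h) = \lambda_I[\cE(\nu_t) - \cE(\sigma_{s-h})] + W_{L_0^{s-h,t}}(\sigma_{s-h},\nu_t)$ rearranges with $\lambda_I\cE(\nu_t)$ on the left and $\lambda_I\cE(\sigma_{s-h})$ on the right, which is the opposite of \eqref{eq:L0-preEVI-I}. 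This arrangement is not cosmetic: $\cE(\nu_t)$ is not guaranteed finite (only $\hat P_{s,s-h}[\nu_s]$ is automatically smooth), so the conclusion must have $\cE(\nu_t)$ on the right-hand side to be trivially satisfied when it is $+\infty$. Finally, you do not explain where the factor $(1-\lambda_I^2)$ on $W$ comes from; in the paper this arises because the residual $-\tfrac{\lambda_I}{2}\int(S_r + |\nabla\phi_r|^2)\,d\nu_{r,h}\,dr$, after using the Hamilton--Jacobi equation and the fact that the $\phi_r$ are Kantorovich potentials, contributes an additional $\lambda_I W$, giving $(1+\lambda_I)W$, and the entire inequality is then multiplied through by $1-\lambda_I$. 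Your one-sentence remark about the ``asymmetric form'' gestures at this but does not supply the argument.
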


\begin{rmk}
The final terms on the RHS's above are bounded above by $O(t^2)$, and in particular will disappear upon taking the limit $\limsup_{t \downarrow 0}\frac{1}{t}$. It may be possible to remove these errors pre-limit by making more careful choices throughout the proof, but it is not necessary here.
\end{rmk}

\begin{proof}
Consider the increasing (bijective) affine map of intervals $\xi_h: [s - h,t]\to [s,t]$, and write the constant $\lambda := 1 - \xi_h'  \equiv h/(t - s + h)$. We now define a curve of smooth measures $\nu_{r, h}:= \hat{P}_{\xi_h(r),r}[\nu_{\xi_h(r)}]$ for $r \in [s - h,t]$, whose endpoints $\nu_{s,h} = \hat P_{s,s - h}[\nu_{s}]$ and $\nu_{t,h} = \nu_{t}$ are those appearing in \eqref{eq:L0-preEVI-I}. Write $(\phi_r)_{r \in [s - h,t]}$ the Kantorovich potentials for the pair $(\nu_{s - h,h}, \nu_{t,h})$ given by Corollary \ref{cor:dynamic-potentials}. We now compute time derivatives of entropy and the pairing with $\phi_r$ along the curve $(\nu_{r,h})_r$. Let us point out here that there is no need to regularize the integrand of the entropy, since $\nu_{r,h}$ has densities $\rho_{r,h} := P^*_{\xi_h(r), r}[\rho_{\xi_h(r)}]$, which are bounded away from $0$ by the strong maximum principle.

\begin{align}
\partial_r \cE(\nu_{r, h}) &= \partial_r \int_M \rho_{r,h}\ln(\rho_{r,h})\,dV_r \\
&= \int_M [(1 + \ln(\rho_{r,h}))\partial_r\rho_{r,h} -  \rho_{r,h}\ln(\rho_{r,h})S_r]\,dV_r \\
&= - \int_M (1 + \ln(\rho_{r,h}))\Delta_r \,d\nu_{r,h} + \int_M S_r \,d\nu_{r,h} \\
& \qquad +(1 - \lambda)\int_M (1 + \ln(\rho_{r,h}))\,d\hat{P}_{\xi_h(r), r}[\left.(\partial_u + \Delta)\right|_{u = \xi_h(r)}\nu_u]\\
&= (1 - \lambda)\int_M P_{\xi_h(r),r}\left[\frac{\lambda}{1 - \lambda}|\nabla \ln (\rho_{r,h})|^2-\Delta_r \ln(\rho_{r,h})\right]\,d\nu_{\xi_h(r)}\\
&\qquad + \int_M S_r \,d\nu_{r,h}+(1 - \lambda)\int_M(\Delta_{\xi_h(r)} P_{\xi_h(r),r}[\ln(\rho_{r,h})] )\,d\nu_{\xi_h(r)}  \\
& \qquad + (1 - \lambda) \int_M P_{\xi_h(r),r}[1 + \ln(\rho_{r,h})]\left.\partial_u\right|_{u = \xi_h(r)} \,d\nu_{u}\,.
\end{align}

In the last equality we used duality of the heat flow and integration by parts, in particular rewriting the first term in a strange way (note that $1=(1-\lambda)(\frac{\lambda}{1-\lambda}+1)$). We also used that the forward heat flow preserves constant functions to simplify $1 + \ln(\rho_{r,h})$ whenever it is differentiated, which we will continue to do without further comment. We similarly compute for a.e. time, differentiating under the integral sign by time-Lipshitzianity of $\phi_r$ and using the equation \eqref{eq:L0-HJ} solved by $\phi_r$ for a.e. $x \in M$

\begin{align}
\partial_r \int_M \phi_r\, d\nu_{r,h} &= \int_M\left(-\frac{1}{2}|\nabla \phi_r|^2 + \frac{1}{2}S_r\right) \, d\nu_{r,h} + \int_M \phi_r (\partial_r \rho_{r,h} - S_r\rho_{r,h})\,dV_r \\
&= (1 - \lambda)\int_M P_{\xi_h(r), r}\left[\frac{\lambda}{1 - \lambda}\La\nabla \phi_r, \nabla \ln (\rho_{r,h})\Ra - \Delta_r \phi_r +\frac{1}{2} S_{r} -\frac{1}{2}|\nabla \phi_r|^2\right]\,d\nu_{\xi_h(r)} \\
&\qquad + (1 - \lambda)\int_M(\Delta_{\xi_h(r)} P_{\xi_h(r), r}[\phi_r] )\,d\nu_{\xi_h(r)} + \lambda\int_M \frac{1}{2}S_r - \frac{1}{2}|\nabla \phi_r|^2\,d\nu_{r,h} \\
&\qquad + (1 - \lambda) \int_M P_{\xi_h(r),r}[\phi_r]\left.\partial_u\right|_{u = \xi_h(r)} \,d\nu_{u}\,.
\end{align}

Amongst the terms of the previous two RHS's, after multiplying the first equation by $-\frac{\lambda}{1-\lambda}$, we notice the following perfect square:

\begin{align}
\frac{1}{2}\left|\nabla \left(\phi_r - \frac{\lambda\ln(\rho_{r,h})}{1 - \lambda}\right)\right|^2 &= \frac{\lambda^2}{2(1 - \lambda)^2}|\nabla \ln(\rho_{r,h})|^2 - \frac{\lambda}{1 - \lambda}\La \nabla \phi_r,\nabla \ln(\rho_{r,h})\Ra + \frac{1}{2} |\nabla\phi_r|^2 \\
&\left(\leq \frac{\lambda^2}{(1 - \lambda)^2}|\nabla \ln(\rho_{r,h})|^2 - 
\frac{\lambda}{1 - \lambda}\La \nabla \phi_r,\nabla \ln(\rho_{r,h})\Ra + \frac{1}{2} |\nabla\phi_r|^2\right)\,.
\end{align}

In order to lighten notation, we introduce the auxiliary function $f_r := \phi_r - \lambda\ln(\rho_{r,h})/(1 - \lambda)$. We may now integrate in time and add the first two inequalities (forming the perfect square previously observed) to obtain:

\begin{align}\label{eq:EVI-before-grad-est}
\left.\int_M \phi_r\, d\nu_{r,h} - \frac{\lambda}{1 - \lambda}\cE(\nu_{r,h})\right|_{s - h}^{t} &\leq \int_{s - h}^{t}\left((1 - \lambda)\int_M P_{\xi_h(r), r}\left[-\Delta_r f_r - \frac{1}{2}|\nabla f_r|^2 + \frac{1}{2}S_r\right]\,d\nu_{\xi_h(r)} \right.\\
&\qquad +(1 - \lambda) \int_M(\Delta_{\xi_h(r)} P_{\xi_h(r),r}[f_r] )\,d\nu_{\xi_h(r)} \\
&\qquad - \lambda\int_M \frac{1}{2}S_r + \frac{1}{2}|\nabla \phi_r|^2\,d\nu_{r,h} - \frac{\lambda^2}{1 - \lambda}\int_M S_r \,d\nu_{r,h} \\
&\qquad + \left.(1 - \lambda) \int_M P_{\xi_h(r),r}[f_r]\,\left.\partial_u\right|_{u = \xi_h(r)} \,d\nu_u \right)\,dr\,.
\end{align}

We further estimate the the final term of the integrand above by using the computation of \eqref{eq:L0-Lisini-application}, which holds for a general curve of smooth measures, to bound

\begin{align}
&(1 - \lambda) \int_M P_{\xi_h(r),r}[f_r]\,\left.\partial_u\right|_{u = \xi_h(r)} \,d\nu_u \\
&\leq \frac{1 - \lambda}{2} \int_M \Big(|\nabla P_{\xi_h(r),r}[f_r]|^2 - S_{\xi_h(r)}\Big)\,d\nu_{\xi_h(r)} + (1 - \lambda) (\dot{\nu}_{\xi_h(r)})_{L_0^{\xi_h(r)}}\,.
\end{align}

After implementing the above inequality in our estimate for $\left.\int_M \phi_r \,d\nu_{r,h} - \lambda\cE(\nu_{r,h})/(1 - \lambda) \right|_{s - h}^{t}$, we may use the assumed $L_0$-type gradient estimate \eqref{eq:GEL0-GE-to-EVI} to bound the terms

\begin{align}
&\phantom{-}(1 - \lambda)\int_M P_{\xi_h(r), r}\left[-\Delta_r f_r - \frac{1}{2}|\nabla f_r|^2 + \frac{1}{2}S_r\right]\,d\nu_{\xi_h(r)} \\
&- (1 - \lambda)\int_M\left(-\Delta_{\xi_h(r)} P_{\xi_h(r), r}[f_r] -\frac{1}{2} |\nabla P_{\xi_h(r),r}[f_r]|^2 + \frac{1}{2}S_{\xi_h(r)} \right)\,d\nu_{\xi_h(r)} \leq 0\,.
\end{align}

When the dust settles, we obtain 
\begin{align}
\left.\int_M \phi_r \,d\nu_{r,h} - \frac{\lambda}{1 - \lambda}\cE(\nu_{r,h}) \right|_{s - h}^{t} &\leq -\frac{\lambda}{2}\int_{s - h}^{t}\int_M S_r + |\nabla \phi_r|^2\,d\nu_{r,h} \,dr + (1 - \lambda)\int_{s - h}^{t}(\dot{\nu}_{\xi_h(r)})_{L_0^{\xi_h(r)}}\,dr \\
&\qquad - \frac{\lambda^2}{1 - \lambda}\int_{s - h}^{t}\int_M S_r \,d\nu_{r,h}\,dr\,.
\end{align}

Recalling now that the $\phi_r$ solve \eqref{eq:L0-HJ} and are $W_{L_0}$ Kantorovich potentials for any pair of measures on the curve $\nu_{r,h}$, we can rewrite the above as

\begin{align}
\left. -\frac{\lambda}{1 - \lambda}\cE(\nu_{r,h}) \right|_{s - h}^{t} + (1 + \lambda)W_{L_0^{s - h,t}}(\nu_{s - h,h}, \nu_{t,h}) &\leq (1 - \lambda)\int_{s - h}^{t}(\dot{\nu}_{\xi_h(r)})_{L_0^{\xi_h(r)}}\,dr\\
&\qquad- \frac{\lambda^2}{1 - \lambda}\int_{s - h}^{t}\int_M S_r\,d\nu_{r,h}\,dr\,.
\end{align}

After applying duality in the $O(\lambda^2)$ term and changing variables in the action term, this proves the first estimate of the lemma. The second follows in essentially the same way, after running the same argument with $\nu_r^h := \hat{P}_{\tilde\xi_h(r),r}[\nu_{\tilde\xi_h(r)}]$ for the increasing, affine, bijective map of intervals $\tilde{\xi}:[s, t - h] \to [s,t]$.
\end{proof}

We now take a limiting quotient of the action-entropy estimates obtained in the previous Lemma to obtain the pair of \eqref{eq:EVIL0-GE-to-EVI}s.

\begin{prop}\label{prop:L0GE-to-EVI} Assume that for any Lipschitz $v$, any times $[s,t] \subseteq I$, the gradient estimate
\begin{equation}
|\nabla P_{t,s}v|^2_t + 2\Delta_t P_{t,s}v - S_t \leq P_{t,s}(|\nabla v|^2_s + 2\Delta_s v - S_s)\label{eq:GEL0-GE-to-EVI}\tag{GE\({}_{L_0}\)}
\end{equation}
holds. Then, for any Borel probability measures $\mu,\nu \in \cP(M)$ and times $[s, t] \subseteq I$, the backward heat flow $\hat{P}$ satisfies the following $\mathrm{EVI}$s:

\begin{align}
-\partial_{b^-}^-W_{L_0^{b,t}}( \hat{P}_{s,b}\mu, \nu) & \leq \frac{1}{t - b}\left(\cE(\nu) - \cE(\hat{P}_{s,b}\mu) - W_{L_0^{b, t}}( \hat{P}_{s,b}\mu, \nu)\right) \,, \qquad\forall b \in (\inf I,s]\,.\\
-\partial_{a^-}^- W_{L_0^{s,a}}(\mu,\hat{P}_{t,a}\nu) & \leq \frac{1}{a - s}\left(\cE(\mu) - \cE(\hat{P}_{t,a}\nu) + W_{L_0^{s,a}}(\mu, \hat{P}_{t,a}\nu)\right) \,, \qquad\forall a \in (s, t]\,. \label{eq:EVIL0-GE-to-EVI}\tag{EVI\({}_{L_0}\)}
\end{align}
\end{prop}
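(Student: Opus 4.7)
The plan is to upgrade the action-entropy estimates of Lemma \ref{lem:L0-action-entropy-est} to the two \eqref{eq:EVIL0-GE-to-EVI}s; the two are obtained symmetrically (the first from \eqref{eq:L0-preEVI-I}, the second from \eqref{eq:L0-preEVI-II}), so I describe only the second. Setting $\tilde\nu := \hat P_{t,a}\nu$ and using the semigroup property $\hat P_{t, a - h}\nu = \hat P_{a, a - h}\tilde\nu$, the target at parameter $a \in (s,t]$ reduces to the special case $a = t$ applied on the sub-interval $[s,a] \subseteq I$. Since $\cE$ is bounded below on compact $M$, the case $\cE(\mu) = +\infty$ is trivial, so we may assume $\cE(\mu), \cE(\nu) < \infty$; the target becomes
\begin{equation}\label{eq:plan-target-EVI}
\limsup_{h \downarrow 0}\frac{W_{L_0^{s,t-h}}(\mu, \hat P_{t,t-h}\nu) - W_{L_0^{s,t}}(\mu,\nu)}{h} \leq \frac{1}{t - s}\Big(\cE(\mu) - \cE(\nu) + W_{L_0^{s,t}}(\mu,\nu)\Big)\,.
\end{equation}

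Approximate $\mu,\nu$ by smooth positive-density measures $\mu^\eps,\nu^\eps$ with $\mu^\eps \weak \mu$, $\nu^\eps \weak \nu$, $\cE(\mu^\eps) \to \cE(\mu)$, $\cE(\nu^\eps) \to \cE(\nu)$ (mollification via convex combination with the normalized volume form suffices). Take a $W_{L_0^{s,t}}$-geodesic from $\mu^\eps$ to $\nu^\eps$ and smooth it via Lemma \ref{lem:wass-geodesic-smoothing} to a space-time smooth, positive-density curve $(\nu_r^\eps)_{r\in[s,t]}$ whose $L_0$-action converges to $W_{L_0^{s,t}}(\mu,\nu)$ as $\eps \to 0$. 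Plug this curve into \eqref{eq:L0-preEVI-II} (with $\lambda_{II} = h/(t - h - s)$), rearrange, and send $\eps \to 0$ using: joint continuity of $W_{L_0^{s,t-h}}$ under weak convergence for the fixed bounded continuous cost on compact $M$, weak continuity of $\hat P$, and $C^k$-continuity of entropy on smooth positive densities (applicable to $\hat P_{t,t-h}\nu^\eps \to \hat P_{t,t-h}\nu$ by the regularizing effect of $\hat P$ at time $h > 0$). Absorbing the $\lambda_{II}^2$-terms using uniform boundedness of $W_{L_0^{s,t-h}}$ into a single $O(\lambda_{II}^2)$ remainder, the estimate becomes
\begin{equation}
W_{L_0^{s,t-h}}(\mu, \hat P_{t, t-h}\nu) \leq \lambda_{II}\big[\cE(\mu) - \cE(\hat P_{t,t-h}\nu)\big] + (1 + \lambda_{II})\,W_{L_0^{s,t}}(\mu,\nu) + O(\lambda_{II}^2)\,.
\end{equation}

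Subtracting $W_{L_0^{s,t}}(\mu,\nu)$, dividing by $h$, and taking $\limsup_{h \downarrow 0}$ recovers \eqref{eq:plan-target-EVI}: $\lambda_{II}/h \to 1/(t-s) > 0$, the error $O(\lambda_{II}^2)/h = O(h) \to 0$, and the entropy term is handled by weak lower semicontinuity of $\cE$, which gives $\liminf_h \cE(\hat P_{t,t-h}\nu) \geq \cE(\nu)$ and hence $\limsup_h(-\cE(\hat P_{t,t-h}\nu)) \leq -\cE(\nu)$. The main technical obstacle is the entropy approximation $\cE(\mu^\eps) \to \cE(\mu)$: in our time-dependent setting the derivative $\partial_s \cE(\hat P_{t,s}\mu) = \int(|\nabla\log\rho_s|^2 + S_s)\,d\mu_s$ has indefinite sign, so the straightforward heat-flow smoothing from the static case is not available; the convex-combination-with-volume mollification provides a clean alternative. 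A secondary point is coordinating the smoothings of the endpoints and the geodesic (from Lemma \ref{lem:wass-geodesic-smoothing}), which can be handled by a diagonal $\eps \to 0$.
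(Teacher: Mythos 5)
Your proof follows the same route as the paper's: smooth the $W_{L_0}$-geodesic via Lemma \ref{lem:wass-geodesic-smoothing}, feed the smoothed curve into the action-entropy estimate of Lemma \ref{lem:L0-action-entropy-est}, send the smoothing parameter to $0$, then take $\limsup_{h\downarrow 0}\frac{1}{h}$. The two genuine organizational additions are (i) the explicit semigroup reduction to $a = t$ (resp. $b = s$), which the paper leaves implicit but is correct and clean, and (ii) an extra pre-mollification of $\mu,\nu$ by convex combination with the normalized volume form. The second of these is superfluous: Lemma \ref{lem:wass-geodesic-smoothing} only requires its endpoints to be absolutely continuous, which is automatic once $\cE(\mu),\cE(\nu) < \infty$, and the lemma's entropy-convergence clause already resolves the concern you raise about $\partial_s\cE(\hat P_{t,s}\mu)$ having indefinite sign (its proof proceeds by Jensen's inequality applied to the adjoint heat kernel, not by any monotonicity of entropy along the flow). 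Also note that convex combination with $dV/\Vol$ produces a density that is bounded away from $0$ but is not $C^\infty$ as you write, so the label ``smooth positive-density'' is inaccurate; this is harmless here since only absolute continuity is used, but it would matter if you tried to apply Lemma \ref{lem:L0-action-entropy-est} directly to $\mu^\eps$ rather than to the output of Lemma \ref{lem:wass-geodesic-smoothing}. Finally, your observation that only the lower semicontinuity bound $\liminf_{h}\cE(\hat P_{t,t-h}\nu) \geq \cE(\nu)$ is needed (rather than full convergence) is correct, since this term enters the right-hand side with a minus sign.
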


\begin{proof}
We prove the first inequality; the second follows in essentially the same way. First, let $(\nu_t)_{t \in [b,t]}$ be a $W_{L_0^{b,t}}$ geodesic (see Definition \ref{df:WL-geodesic}) connecting $\hat{P}_{s,b}\mu$ and $\nu$. Next, apply Lemma \ref{lem:wass-geodesic-smoothing} for $\eps > 0$ to obtain a curve of smooth measures $(\nu_r^\eps)_{r \in [b - \eps ,t -\eps]}$. Lemma \ref{lem:L0-action-entropy-est} can now be applied to the smoothed curve for small $h > 0$, yielding

\begin{align}
\lambda \cE(\hat P_{b - \eps, b - \eps - h}[\nu_{b - \eps}^\eps]) + (1 - \lambda^2)W_{L_0^{b - \eps - h, t - \eps}}(\hat P_{b - \eps, b - \eps - h}[\nu_{b - \eps}^\eps],\nu_{t - \eps}^\eps) &\leq \lambda\cE(\nu^\eps_{t - \eps}) + (1 - \lambda)\int_{b - \eps}^{t - \eps}(\dot\nu^\eps_r)_{L_0^r}\,dr + C\lambda^2\,,
\end{align}

where $\lambda = h/(t  + h - b)$. We would like to remove the $\eps$'s in the above inequality; indeed by Lemma \ref{lem:wass-geodesic-smoothing}, the action and second entropy term above converge to the corresponding quantities for $(\nu_{r})_{r \in [b,t]}$ as $\eps \to 0$. The first entropy term converges because weak convergence $\nu_{b - \eps}^\eps \weak \nu_b$ implies smooth convergence of positive densities $P_{b - \eps,b - \eps - h}^*\rho_{b - \eps}^\eps \to P_{b,b - h}^*\rho_b$ for any fixed $h > 0$, and thus convergence of $\cE$. The $W_{L_0}$ term converges because of the weak convergence $\hat P_{b - \eps, b - \eps - h}[\nu_{b - \eps}^\eps] \weak \hat P_{b, b - h}[\nu_{b}]$ and $\nu_{t - \eps}^\eps \weak \nu_t$, followed by an application of \cite[Thm.~5.20]{Vil09}. Thus, the action-entropy inequality also holds for the $W_{L_0}$-geodesic $(\nu_r)_{r \in [b,t]}$. 

Since the action corresponds to the $W_{L_0}$ distance between endpoints for this curve, the inequality can be rewritten (after unpacking notation and rearranging terms)

\begin{align}
W_{L_0^{b - h,t}}(\hat{P}_{b,b-h}\nu_b, \nu_t) - W_{L_0^{b,t}}(\nu_b,\nu_t) \leq \frac{h}{(t + h - b)}(\cE(\nu_t) - \cE(\hat{P}_{b,b-h}\nu_b) - W_{L_0^{b,t}}(\nu_b,\nu_t)) + Ch^2\,.
\end{align}

Taking $\limsup_{h \downarrow 0}\frac{1}{h}$ on both sides gives the desired conclusion (with $\lim_{h \to 0}\cE(\hat{P}_{b,b-h}\nu_b) = \cE(\nu_b)$ provided by the proof of entropy convergence in Lemma \ref{lem:wass-geodesic-smoothing}).
\end{proof}

\begin{rmk}
    It is worth noting that in other contexts $\mathrm{EVI}$s are often stated as inequalities for classical (as opposed to Dini) time derivatives and for a.e. (as opposed to every) time. In the static setting, the function $t\mapsto W_{d^2}(\eta, \hat P_t \mu)$ is absolutely continuous and the two conditions coincide, with the absolute continuity ultimately stemming from the a.c. regularity of the curve $[0,\infty)\ni t\mapsto \hat P_t\mu \in (\cP(M), W_{d^2})$. We have made no attempt to develop a theory of a.c. curves of measures adapted to the $W_{L_{0/\pm}}$ transportation costs in this paper, though such considerations might be useful if one wished to venture beyond the smooth setting.
\end{rmk}

To show that \eqref{eq:EVIL0-GE-to-EVI} implies \eqref{eq:ECL0-EC-to-WC}, we now follow the argument of \cite[Thm.~3.2]{DanSav08}, i.e. make good on \cite[Rmk.~8]{Lot09}.

\begin{prop}\label{prop:L0-entropy-cvx}
Suppose that for any Borel probability measures $\mu,\nu \in \cP(M)$ and times $[s, t] \subseteq I$, the backward heat flow $\hat{P}$ satisfies the $\mathrm{EVI}$s \eqref{eq:EVIL0-GE-to-EVI}.

Then for times $\bar r \in [s,t]\subset I$ and a $W_{L_0}$ geodesic $(\mu_r)_{r \in [s,t]} \subseteq \cP(M)$, the following entropy convexity statement holds:

\begin{equation}\label{eq:ECL0-EVI-to-EC
}
\frac{t - \bar{r}}{t - s}\cE(\mu_{s}) + \frac{\bar{r} - s}{t - s}\cE(\mu_{t}) - \cE(\mu_{\bar{r} }) + \frac{t - \bar{r}}{t - s}W_{L_0^{s,\bar{r}}}(\mu_{s}, \mu_{\bar{r}}) - \frac{\bar{r} - s}{t - s}W_{L_0^{\bar{r},t}}(\mu_{\bar{r}}, \mu_{t}) \geq 0\,.
\end{equation}
\end{prop}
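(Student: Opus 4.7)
The plan is to apply both $\mathrm{EVI}$s of \eqref{eq:EVIL0-GE-to-EVI} at the base measure $\mu_{\bar r}$ — the first with target $\mu_t$ (evolving forward in the index $b$) and the second with target $\mu_s$ (evolving forward in the index $a$) — and then combine them via the $W_{L_0}$-geodesic property of $(\mu_r)_{r\in[s,t]}$ to obtain \eqref{eq:ECL0-EVI-to-EC}. This mirrors \cite[Thm.~3.2]{DanSav08} adapted to our time-dependent costs.

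First, reformulate the target. Multiplying \eqref{eq:ECL0-EVI-to-EC} by $(t-s) > 0$ and using $t - s = (t - \bar r) + (\bar r - s)$ to split $(t-s)\cE(\mu_{\bar r})$, the inequality is equivalent to
\begin{equation}\label{eq:reform-EC-plan}
(t - \bar r)\bigl[\cE(\mu_{\bar r}) - \cE(\mu_s) - W_{L_0^{s,\bar r}}(\mu_s,\mu_{\bar r})\bigr] + (\bar r - s)\bigl[\cE(\mu_{\bar r}) - \cE(\mu_t) + W_{L_0^{\bar r,t}}(\mu_{\bar r}, \mu_t)\bigr] \leq 0\,.
\end{equation}
The endpoint cases $\bar r \in \{s,t\}$ are trivial, so we restrict to $\bar r \in (s,t)$, and may assume $\cE(\mu_s), \cE(\mu_t) < \infty$ (else \eqref{eq:ECL0-EVI-to-EC} is vacuously true).

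Second, apply the first $\mathrm{EVI}$ of \eqref{eq:EVIL0-GE-to-EVI} with $(\mu,\nu) = (\mu_{\bar r},\mu_t)$ at times $(\bar r,t)$, evaluated at $b = \bar r$ (where $\hat P_{\bar r,\bar r}\mu_{\bar r} = \mu_{\bar r}$). Multiplying through by $-(\bar r - s)(t - \bar r) < 0$ (which flips the inequality) yields
\begin{equation}
(\bar r - s)\bigl[\cE(\mu_{\bar r}) - \cE(\mu_t) + W_{L_0^{\bar r,t}}(\mu_{\bar r},\mu_t)\bigr] \leq (\bar r - s)(t - \bar r)\,\partial_{b^-}^- W_{L_0^{b,t}}(\hat P_{\bar r,b}\mu_{\bar r},\mu_t)\big|_{b = \bar r}\,.
\end{equation}
Symmetrically, the second $\mathrm{EVI}$ applied to $(\mu,\nu) = (\mu_s,\mu_{\bar r})$ at times $(s,\bar r)$ and evaluated at $a = \bar r$ gives
\begin{equation}
(t - \bar r)\bigl[\cE(\mu_{\bar r}) - \cE(\mu_s) - W_{L_0^{s,\bar r}}(\mu_s,\mu_{\bar r})\bigr] \leq (\bar r - s)(t - \bar r)\,\partial_{a^-}^- W_{L_0^{s,a}}(\mu_s,\hat P_{\bar r,a}\mu_{\bar r})\big|_{a = \bar r}\,.
\end{equation}

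Finally, sum the two displays and use superadditivity of $\liminf$, viewing both Dini derivatives as one-sided derivatives in a common parameter $\eta \uparrow \bar r$ (setting $b = a = \eta$). The LHS of \eqref{eq:reform-EC-plan} is then bounded by $(\bar r - s)(t - \bar r)$ times
\begin{equation}
\partial_{\eta^-}^- \bigl[\,W_{L_0^{s,\eta}}(\mu_s,\hat P_{\bar r,\eta}\mu_{\bar r}) + W_{L_0^{\eta,t}}(\hat P_{\bar r,\eta}\mu_{\bar r},\mu_t)\,\bigr]\big|_{\eta = \bar r}\,.
\end{equation}
By the triangle inequality \eqref{eq:triangle-inequality} applied to the three measures $\mu_s,\hat P_{\bar r,\eta}\mu_{\bar r},\mu_t$ at the times $s,\eta,t$, the bracketed quantity is $\geq W_{L_0^{s,t}}(\mu_s,\mu_t)$ for all $\eta \in (s,\bar r]$; meanwhile the $W_{L_0}$-geodesic property of $(\mu_r)$ forces \emph{equality} at $\eta = \bar r$. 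Thus the bracketed function attains a left-local minimum at $\eta = \bar r$, so its $\partial_{\eta^-}^-$ is $\leq 0$, closing \eqref{eq:reform-EC-plan}. The only real care required is sign-bookkeeping for the one-sided Dini derivatives in \eqref{eq:EVIL0-GE-to-EVI}; the observation that the geodesic assumption forces a local minimum is what makes the whole scheme work and has no analog in a non-geodesic setting.
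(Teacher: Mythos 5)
Your proposal is correct and follows essentially the same route as the paper's proof: apply the two $\mathrm{EVI}$s at the initial flow time for the pairs $(\mu_{\bar r},\mu_t)$ and $(\mu_s,\mu_{\bar r})$, normalize and sum, use superadditivity of $\liminf$ to merge the Dini derivatives, and then invoke the $W_{L_0}$ triangle inequality together with the geodesic equality to conclude the merged Dini derivative is $\leq 0$. The reformulation \eqref{eq:reform-EC-plan} and the phrasing in terms of a left local minimum of $\eta\mapsto W_{L_0^{s,\eta}}(\mu_s,\hat P_{\bar r,\eta}\mu_{\bar r}) + W_{L_0^{\eta,t}}(\hat P_{\bar r,\eta}\mu_{\bar r},\mu_t)$ are merely cosmetic repackagings of the paper's argument.
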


\begin{rmk}\label{rmk:L0-entropy-alt-form}
The $W_{L_0}$ terms of \eqref{eq:ECL0-EVI-to-EC
} might look a bit strange, but they are simply an alternate way of writing Lott's convex quantity \cite[Prop.~11]{Lot09} or equivalently \eqref{eq:ECL0-EC-to-WC}. Indeed, if one writes $(\phi_r)_{r \in [s,t]}$ the Kantorovich potentials for the pair of measures $(\mu_{s},\mu_{t})$ given by Corollary \ref{cor:dynamic-potentials}, then

\begin{equation}
W_{L_0^{\bar{s},\bar{t}}}(\mu_{\bar{s}},\mu_{\bar{t}}) = \int_{M}\phi_{\bar{t}}\,d\mu_{\bar{t}} - \int_M\phi_{\bar{s}}\,d\mu_{\bar{s}}\,,\qquad \forall s \leq \bar{s} \leq \bar{t} \leq t\,.
\end{equation}

One then has the following identity relating convex combinations of $ \int_M -\phi_r \,d\mu_r$ and the last two terms of \eqref{eq:ECL0-EVI-to-EC
}, by first rearranging terms and then applying the above identity:

\begin{align}
&\phantom{=}\frac{t - \bar{r}}{t - s}\int_M(-\phi_{s})\,d\mu_{s} + \frac{\bar{r} - s}{t - s}\int_M (-\phi_{t})\,d\mu_{t} - \int_M (-\phi_{\bar{r}})\,d\mu_{\bar{r}} \\
&= \frac{t - \bar{r}}{t - s}\left(\int_M \phi_{\bar{r}}\,d\mu_{\bar{r}} - \int_M\phi_{s}\,d\mu_{s}\right) + \frac{\bar{r} - s}{t - s}\left(\int_M \phi_{\bar{r}}\,d\mu_{\bar{r}} - \int_M\phi_{t}\,d\mu_{t}\right) \\
&= \frac{t - \bar{r}}{t - s}W_{L_0^{s, \bar{r}}}(\mu_{s},\mu_{\bar{r}}) - \frac{\bar{r} - s}{t - s}W_{L_0^{\bar{r},t}}(\mu_{\bar{r}},\mu_{t})\,,\qquad \bar{r}\in [s,t]\,.
\end{align}

\end{rmk}

\begin{proof}
Let $(\mu_r)_{r \in [s,t]}$ be a Wasserstein geodesic, and we may assume that $\cE(\mu_{s}),\, \cE(\mu_{t})< \infty$ (otherwise the desired conclusion follows immediately). Picking an intermediate time $\bar{r} \in [s,t]$, we take a sum of the assumed $\mathrm{EVI}$s for the pairs of measures $(\mu_{\bar{r}},\mu_{t})$ resp. $(\mu_{s},\mu_{\bar{r}})$ at the initial flow time, normalized by the factor $(t - \bar{r})(\bar{r} - s)/(t - s)$:

\begin{align}
&\phantom{=} \frac{t - \bar{r}}{t - s}\cE(\mu_{s}) + \frac{\bar{r} - s}{t - s}\cE(\mu_{t}) - \cE(\mu_{\bar{r}}) + \frac{t - \bar{r}}{t - s}W_{L_0^{s,\bar{r}}}(\mu_{s}, \mu_{\bar{r}}) - \frac{\bar{r} - s}{t - s}W_{L_0^{\bar{r},t}}(\mu_{\bar{r}}, \mu_{t})\\
&\geq \frac{(\bar{r} - s)(t - \bar{r})}{(t - s)}\Big(-\left.\partial_{r}^-\right|_{r = \bar{r}^-} W_{L_0^{s,r}}(\mu_{s},\hat{P}_{\bar{r},r}\mu_{\bar{r}}) -\left.\partial_{r}^-\right|_{r = \bar{r}^-} W_{L_0^{r,t}}(\hat{P}_{\bar{r},r}\mu_{\bar{r}},\mu_{t})\Big)\,.
\end{align}

The (negative) Dini derivatives on the RHS above can be lower bounded by (the negative of) a $\liminf$ of difference quotients for an arbitrary sequence converging to $\bar{r}$. Indeed, let $r_i \uparrow \bar{r}$, then  

\begin{align}
&-\left.\partial_{r}^-\right|_{r = \bar{r}^-} W_{L_0^{s,r}}(\mu_{s},\hat{P}_{\bar{r},r}\mu_{\bar{r}}) -\left.\partial_{r}^-\right|_{r = \bar{r}^-} W_{L_0^{r,t}}(\hat{P}_{\bar{r},r}\mu_{\bar{r}},\mu_{t}) \\
&\geq -\liminf_{i \to \infty}\frac{1}{\bar{r} - r_i}\Big((W_{L_0^{s,\bar{r}}}(\mu_{s},\mu_{\bar{r}}) - W_{L_0^{s,r_i}}(\mu_{s},\hat P_{\bar{r},r_i}\mu_{\bar{r}})) + (W_{L_0^{\bar{r},t}}(\mu_{\bar{r}},\mu_{t})-W_{L_0^{r_i,t}}(\hat P_{\bar{r},r_i}\mu_{\bar{r}},\mu_{t}))\Big)\,.
\end{align}

The RHS above is a limit of difference quotients, each of which is $\geq 0$ by applying the $W_{L_0}$ triangle inequality for the triplet of measures $(\mu_{s},\hat P_{\bar{r},r_i}\mu_{\bar{r}}, \mu_{t})$, along with the triangle {\it equality} for the triplet of points $(\mu_{s},\mu_{\bar{r}},\mu_{t})$ lying on the $W_{L_0}$ geodesic $(\mu_r)_{r \in [s,t]}$.

\begin{figure}[H]
\centering
\begin{tikzpicture}

  % Define coordinates
  \coordinate (P) at (3,2);
  \coordinate (mu-s) at (0,0);
  \coordinate (mu-r) at (4,0.4);
  \coordinate (mu-t) at (6.5,0.65);

  % Paths
  \draw[<-, >={Stealth[scale=2]}] (P) .. controls (4,0.8) and (3.2,1) .. (mu-r);
  \draw[thick, dashed] (P) -- (mu-s);
  \draw[thick, dashed] (mu-s) -- (mu-r);
  \draw[thick, dashed] (mu-r) -- (mu-t);
  \draw[thick, dashed] (P) -- (mu-t);
  % Points
  \fill (P) circle (2pt) node[above left] {$\hat P_{r,r - h}\mu_r$};
  \fill (mu-s) circle (2pt) node[below left] {$\mu_s$};
  \fill (mu-r) circle (2pt) node[below right] {$\mu_r$};
  \fill (mu-t) circle (2pt) node[below right] {$\mu_t$};
  
  % EVI Labels
  \node at (2.4, 0.75) {$\mathrm{EVI}$ I};
  \node at (4.3, 1) {$\mathrm{EVI}$ II};

  % Time axis
  \draw[->] (-1, -1) -- (7, -1);
  \foreach \x/\xtext in {0/{$s$}, 3/{$r - h$}, 4/{$r$}, 6.5/{$t$}}
    \draw (\x, -1.1) -- (\x, -0.9) node[below=5pt] {\xtext};

\end{tikzpicture}
\caption{Appending the two \eqref{eq:EVIL0-GE-to-EVI}s returns entropy convexity \eqref{eq:ECL0}.}
\end{figure}
\end{proof}

\begin{rmk}\label{rmk:EVItoWC}
    The \eqref{eq:EVIL0-GE-to-EVI}s can also be used to obtain \eqref{eq:WCL0} in a few lines, without needing to pass through \eqref{eq:ECL0}. Indeed they imply that the function $h\mapsto W_{L_0^{s-h,t-h}}(\hat P_{s,s-h}\mu,\hat P_{t,t-h}\nu)$ is non increasing: for all $h\geq 0$
\begin{align}
    \partial_{h^+}^+ W_{L_0^{s-h,t-h}}(\hat P_{s,s-h}\mu,\hat P_{t,t-h}\nu) 
    &= \partial_{k^+}^+ W_{L_0^{s-k,t-h}}(\hat P_{s,s-k}\mu,\hat P_{t,t-h}\nu)|_{k=h} + \partial_{k^+}^+ W_{L_0^{s-h,t-k}}(\hat P_{s,s-h}\mu,\hat P_{t,t-k}\nu)|_{k=h} \\
    &= -\partial_{r^-}^- W_{L_0^{r,t-h}}(\hat P_{s,r}\mu,\hat P_{t,t-h}\nu)|_{r=s-h} - \partial_{r^-}^- W_{L_0^{s-h,r}}(\hat P_{s,s-h}\mu,\hat P_{t,r}\nu)|_{r=t-h} \\
    &\overset{\eqref{eq:EVIL0-GE-to-EVI}}{\leq} 0 \,.
\end{align}
\end{rmk}

\section{Inequalities for the \texorpdfstring{$L_-$}{L-} distances}\label{s:L-}

In this section we give the proof of Theorem \ref{thm:L-charact}, again divided into multiple subsections. We also state an analogous theorem tailored to the $L_+$ distance. Throughout the section, we work with an arbitrary but fixed smooth family of closed Riemannian manifolds parametrized by backwards time $(M^n, g_\tau)_{\tau \in I}$ of dimension $\dim M = n$, defined for some interval $I \subseteq (0,\infty)$, with the notation of Subsection \ref{subsec:t-dep-mflds} still in force.

\subsection{The \texorpdfstring{$\cD$}{D}-condition in dimension \texorpdfstring{$n$}{n} and the \texorpdfstring{$L_-$}{L-} Bochner formula}\label{ss:L-DBoch}

\begin{prop}\label{prop:L-Bochner}
For any $\tau\in I$, any smooth solution $v$ of the heat flow at $\tau$, the following identity holds:
\begin{equation}
-(\ptau + \Delta)\left(|\nabla v|^2 - 2\tau\Delta v - \tau^2 S + \frac{n}{2}\tau\right) = -2\left|\tau\mathcal{S} - \frac{g}{2} + \Hess v\right|^2 -\tau^2\mathcal{D}(\nabla v/\tau)\,.
\end{equation}
In particular, $\cD \geq 0$ iff 

\begin{equation}\label{eq:BochnerL--id-to-ineq}
-(\ptau + \Delta)\left(|\nabla v|^2 - 2\tau\Delta v - \tau^2 S + \frac{n}{2}\tau\right) \leq 0\,, \tag{B\({}_{L_-}\)}
\end{equation}
for any $\tau \in I$, any smooth solution $v$ of the heat flow at $\tau$.
\end{prop}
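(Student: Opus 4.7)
The strategy is to follow Proposition \ref{prop:DcondiffBochner} (the $L_0$ case): prove the pointwise identity by applying $-(\ptau + \Delta)$ termwise to $F_\tau := |\nabla v|^2 - 2\tau\Delta v - \tau^2 S + \frac{n}{2}\tau$, using the backward heat equation $(\ptau + \Delta)v = 0$ to eliminate commutator remainders, and then reorganizing the result as a perfect square plus a multiple of $\cD$. Once the identity is in place the equivalence is obtained exactly as in Proposition \ref{prop:DcondiffBochner}.

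All the required tensor identities already appear in Proposition \ref{prop:DcondiffBochner}, and carry over to backward time $\tau$ (with $\ptau g = 2\cS$) through $\ptau = -\partial_t$, which conveniently sends $\partial_t - \Delta$ to $-(\ptau + \Delta)$. Namely, one uses the standard Bochner formula on $|\nabla v|^2$, the Bianchi-type commutator identity for $(\ptau + \Delta)\Delta v$, and the evolution equation for $S$ (which yields the scalar part $-\ptau S - \Delta S - 2|\cS|^2$ of $\cD$, together with a $-2\tau^2|\cS|^2$ residue destined for the perfect square). The constant term $\frac{n}{2}\tau$ contributes just $-\frac{n}{2}$, while differentiating the $-2\tau$ prefactor of $-2\tau\Delta v$ produces a $2\Delta v$. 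After collecting and rescaling $X = \nabla v/\tau$, the pieces built from $\ptau S,\Delta S,|\cS|^2,\div\cS,\nabla S,\Ric(\nabla v,\nabla v),\cS(\nabla v,\nabla v)$ assemble into $-\tau^2\cD(\nabla v/\tau)$, while the remaining terms $-2|\Hess v|^2 - 4\tau\langle\cS,\Hess v\rangle - 2\tau^2|\cS|^2 + 2\Delta v + 2\tau S - \frac{n}{2}$ are precisely the expansion of $-2|\tau\cS - g/2 + \Hess v|^2$. The offset $-g/2$ is in fact forced: its cross pairings with $\Hess v$ and $\tau\cS$ supply the traces $2\Delta v$ and $2\tau S$ appearing in the sum, while $|g/2|^2 = n/4$ absorbs the $-\frac{n}{2}$ left over from the $\frac{n}{2}\tau$ correction in $F$.

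The forward direction is then immediate from the identity. For the converse, I would argue exactly as in Proposition \ref{prop:DcondiffBochner}: given $(\tau_0,p)\in I\times M$ and $V \in T_pM$, pick $v_{\tau_0}$ (a quadratic polynomial in normal coordinates at $p$) with $\nabla v_{\tau_0}(p) = V$ and $\Hess v_{\tau_0}(p) = g(p)/2 - \tau_0\cS(p)$, extend via the backward heat flow, and evaluate the Bochner identity at $(\tau_0,p)$; the perfect square vanishes and the assumed inequality yields $\cD(V/\tau_0) \geq 0$, hence $\cD \geq 0$ everywhere by arbitrariness of $(\tau_0,p,V)$. No conceptual obstacle is anticipated: the argument is pure bookkeeping, and the only non-routine step is guessing the right perfect square, where the linear $\frac{n}{2}\tau$ correction in $F$ is the unique ingredient that closes the algebra with the $-g/2$ offset.
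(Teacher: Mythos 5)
Your proposal is correct and essentially reproduces the paper's argument: apply $-(\partial_\tau+\Delta)$ termwise using the backward heat equation, invoke the three tensor identities from Proposition \ref{prop:DcondiffBochner} (adapted to $\partial_\tau g = 2\cS$), and regroup into the perfect square $-2|\tau\cS - g/2 + \Hess v|^2$ plus $-\tau^2\cD(\nabla v/\tau)$, with the $|g/2|^2 = n/4$ term absorbing the $-n/2$ from the linear-in-$\tau$ correction exactly as you note. The converse is also the paper's argument; the only cosmetic difference is that you normalize the prescribed gradient as $\nabla v(p)=V$ rather than $\nabla v(p)=\tau_0 V$, which is immaterial since $V$ ranges over all of $T_pM$ and $\tau_0>0$.
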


\begin{proof}
Let us recall the following identities:
\begin{align}
&-(\ptau + \Delta)|\nabla v|^2 = -2|\Hess v|^2+(2\mathcal{S}-2\Ric)(\nabla v,\nabla v), \\ & -(\ptau + \Delta)\Delta v = 2\La \mathcal{S}, \Hess v\Ra + \La 2\div \mathcal{S}-\nabla S,\nabla v \Ra, \\
& -(\ptau + \Delta)S = 2|\mathcal{S}|^2 -(\ptau S+\Delta S+2|\mathcal{S}|^2) 
\end{align}
Note that we have omitted the (zero) terms that involve the heat operator applied to $v$. One may now compute
\begin{align}
-(\ptau + \Delta)\left(|\nabla v|^2 - 2\tau\Delta v - \tau^2 S +\frac{n}{2}\tau \right) &= -2|\Hess v|^2 + 2\Delta v - 4\tau \langle\mathcal{S},\Hess v\rangle + 2\tau S - 2\tau^2 |\mathcal{S}|^2  -\frac{n}{2} \\
& \qquad  + (2\mathcal{S}-2\Ric)(\nabla v,\nabla v) -2\tau \La 2\div \mathcal{S}-\nabla S,\nabla v \Ra \\
&\qquad +\tau^2(\ptau S+\Delta S+2|\mathcal{S}|^2)  \\
&= -2\left( |\Hess v|^2 +\tau^2 |\mathcal{S}|^2 + \frac{n}{4} +2\tau \langle\mathcal{S},\Hess v\rangle - \Delta v -\tau S \right) \\ 
& \qquad -\tau^2 \mathcal{D}(\nabla v/\tau) \\
&= -2\left|\tau\mathcal{S} - \frac{g}{2} +  \Hess v \right|^2 -\tau^2 \mathcal{D}(\nabla v/\tau)\,.
\end{align}

Given this formula, it is clear that $\mathcal{D}\geq 0$ implies \eqref{eq:BochnerL--id-to-ineq}. Vice versa, assume that every smooth solution $v$ to the heat flow at $\tau \in I$ satisfies \eqref{eq:BochnerL--id-to-ineq}. Then one can argue as in the proof of Proposition \ref{prop:DcondiffBochner}, picking initial data $v_{\tau}$ for a heat flow such that $\tau \cS_p - \frac{1}{2}g_p + (\Hess v)_p = 0$ and $(\nabla v)_p/\tau = V$ for arbitrary $V \in T_pM$ to conclude that $\cD_{\tau,p} \geq0$.
\end{proof}

\begin{rmk}
    If we assume, for some $N>0$, that
    \begin{equation}
-(\ptau + \Delta)\left(|\nabla v|^2 - 2\tau\Delta v - \tau^2 S + \frac{N}{2}\tau\right) \leq 0 \,,
\end{equation}
we get
    \begin{align}
        2\left|\tau\mathcal{S} - \frac{g}{2} +  \Hess v \right|^2 +\tau^2 \cD(\nabla v/\tau) + \frac{N-n}{2} \geq0\,,
    \end{align}
    which only yields (choosing $v$ as above)
    \begin{align}
        \tau^2 \cD(X/\tau) + \frac{N-n}{2} \geq0 \qquad \text{for all $(p,X,\tau)\in TM \times I$.}
    \end{align}
    In order to decouple the $\cD$ condition (in particular, its zero-order in $v$ part) and the dimension bound one may consider time-translations, see Section \ref{s:dimL0}.
\end{rmk}

\subsection{\texorpdfstring{$L_-$}{L-} Bochner formula and gradient estimates}\label{ss:L-BochGE}

\begin{prop}\label{prop:L-GradEst}
The Bochner inequality for any $\tau \in I$, any smooth solution $v$ to the heat equation at $\tau$ \noeqref{eq:BochnerL--Bochner-to-GE}
\begin{equation}\label{eq:BochnerL--Bochner-to-GE}\tag{B\({}_{L_-}\)}
-(\partial_\tau + \Delta)\left(|\nabla v|^2_\tau - 2\tau\Delta v - \tau^2 S_\tau + \frac{n}{2}\tau\right) \leq 0\,,
\end{equation}
is equivalent to the gradient estimate
\begin{align}\label{eq:GEL--Bochner-to-GE}
    |\nabla P_{\sigma,\tau}v|^2-2\sigma \Delta P_{\sigma,\tau}v-\sigma^2S_{\sigma} \leq P_{\sigma,\tau}\left(|\nabla v|^2-2\tau\Delta v - \tau^2S_{\tau}\right) + \frac{n}{2}(\tau-\sigma) \tag{GE\({}_{L_-}\)}
\end{align}
for all $[\sigma,\tau]\in I$ and all Lipschitz $v: M \to \R{}$, in which case we define $P_{\sigma,\tau} \Delta_{\tau} v (x) := -\int_M \La \nabla_{y} \rho_{\sigma,\tau}(x,y), \nabla v(y)\Ra_{\tau} \,dV_{\tau}(y)$, where $\rho$ is the heat kernel.
\end{prop}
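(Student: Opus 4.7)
The plan is to follow the template of Proposition \ref{prop:L0gradestimate}, replacing the $L_0$-adapted potential with the quantity
\[
\phi_\eta := |\nabla P_{\eta,\tau}v|_\eta^2 - 2\eta\Delta_\eta P_{\eta,\tau}v - \eta^2 S_\eta + \frac{n}{2}\eta\,,\qquad \eta \in [\sigma,\tau]\,,
\]
which is precisely the combination to which the Bochner identity of Proposition \ref{prop:L-Bochner} applies. Here $v$ is fixed final data at time $\tau$, so $P_{\eta,\tau}v$ is a backward-time heat flow in $\eta$, and the assumed \eqref{eq:BochnerL--Bochner-to-GE} gives $(\partial_\eta + \Delta_\eta)\phi_\eta \geq 0$ pointwise along this flow.

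For the direction Bochner $\Rightarrow$ GE, I will consider $\psi(\eta) := P_{\sigma,\eta}\phi_\eta$ on $\eta \in [\sigma,\tau]$, with $\psi(\sigma) = \phi_\sigma$ and $\psi(\tau) = P_{\sigma,\tau}\phi_\tau$. The key identity is $\partial_\eta P_{\sigma,\eta}w = +P_{\sigma,\eta}\Delta_\eta w$ at fixed $w$, which follows from the semigroup property $P_{\sigma,\tau+\delta} = P_{\sigma,\tau}P_{\tau,\tau+\delta}$ together with the defining equation $\partial_\sigma P_{\sigma,\tau} = -\Delta_\sigma P_{\sigma,\tau}$; note that the sign here is opposite to the forward-time analogue, as a consequence of the backward-time convention. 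Then
\[
\psi'(\eta) = P_{\sigma,\eta}\Delta_\eta\phi_\eta + P_{\sigma,\eta}\partial_\eta\phi_\eta = P_{\sigma,\eta}(\partial_\eta + \Delta_\eta)\phi_\eta \geq 0\,,
\]
so $\phi_\sigma = \psi(\sigma) \leq \psi(\tau) = P_{\sigma,\tau}\phi_\tau$. Since $P_{\sigma,\tau}$ preserves constants, the $\tfrac{n}{2}\eta$ pieces combine to $\tfrac{n}{2}(\tau-\sigma)$, reproducing exactly \eqref{eq:GEL--Bochner-to-GE}.

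For the direction GE $\Rightarrow$ Bochner, I will differentiate the gradient estimate at $\sigma = \tau$, where it holds as an equality. More precisely, the function $f(\sigma) := P_{\sigma,\tau}\phi_\tau - \phi_\sigma(P_{\sigma,\tau}v)$ satisfies $f(\tau) = 0$ and $f(\sigma) \geq 0$ for $\sigma \leq \tau$, hence $f'(\tau^-) \leq 0$. Expanding using the chain rule and the evolution equations $\partial_\sigma P_{\sigma,\tau}\phi_\tau = -\Delta_\sigma P_{\sigma,\tau}\phi_\tau$ and $\partial_\sigma P_{\sigma,\tau}v = -\Delta_\sigma P_{\sigma,\tau}v$, the resulting pointwise inequality at $\sigma = \tau$ reassembles to $(\partial_\eta + \Delta_\eta)\phi_\eta(P_{\eta,\tau}v)|_{\eta = \tau} \geq 0$, which is the Bochner inequality at $\tau$ for the chosen $v$. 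Varying $\tau\in I$ and $v$ smooth yields the full statement. The Lipschitz case follows by smooth approximation, with $P_{\sigma,\tau}\Delta_\tau v$ interpreted through the integration-by-parts formula in the statement and the smoothness of the heat kernel.

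Because the argument parallels the $L_0$ case closely, no major new obstacle is expected; the only substantive bookkeeping is tracking the sign of $\partial_\tau P_{\sigma,\tau}$ in the backward-time semigroup and verifying that the specific dimensional combination $\phi_\eta$ is exactly preserved (modulo the explicit $\tfrac{n}{2}\eta$ linear term, which decouples via the constant-preserving property of $P_{\sigma,\tau}$) to match the Bochner identity from Proposition \ref{prop:L-Bochner}.
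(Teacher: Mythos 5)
Your proof is correct and matches what the paper intends (the paper omits the proof, deferring to the $L_0$ case Proposition \ref{prop:L0gradestimate} ``taking care of the sign changes when passing to reversed time''). You have correctly worked out the one nontrivial sign: since $P_{\sigma,\eta}$ solves $\partial_\sigma P_{\sigma,\eta}=-\Delta_\sigma P_{\sigma,\eta}$, the semigroup property gives $\partial_\eta P_{\sigma,\eta}w = +P_{\sigma,\eta}\Delta_\eta w$, so that $\psi'(\eta)=P_{\sigma,\eta}(\partial_\eta+\Delta_\eta)\phi_\eta \geq 0$ and the Grönwall-type integration and the endpoint-differentiation argument both carry over directly.
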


\begin{rmk}\label{GEL-and-max-princ}
    The estimate with $v=0$, along with the maximum principle yields 
    \begin{align}
        S_{\sigma} \geq \frac{1}{\sigma^2}\left(\tau^2\underline{S}_\tau -\frac{n}{2}(\tau-\sigma)\right) \geq \frac{1}{\sigma^2}\left(\tau^2\underline{S}_\tau -\frac{n}{2}\tau\right)
    \end{align}
    where $\underline{S}_\tau=\inf_MS_\tau$. Therefore, if $\tau^2\underline{S}_\tau -\frac{n}{2}\tau >0$, then $S$ must blow up at time $\sigma=0$. In particular, if  $\underline{S}_\tau>0$, then the flow cannot exist before $\tau-\frac{n}{2\underline{S}_\tau}$, recovering a classical statement for the Ricci flow, see e.g. \cite[Cor.~3.2.4]{topping2006lectures}.
\end{rmk}

The proof of Proposition \ref{prop:L-GradEst} is almost identical to that of Proposition \ref{prop:L0gradestimate} (taking care of the sign changes when passing to reversed time). We therefore omit it. It is also convenient to record the following ``gradient estimate with parameter'', by exploiting the commutator of time-translation and parabolic rescaling. It will be used only later, in Subsection \ref{ss:GEWCdualitydimL0}.

\begin{cor}\label{cor:L_GradEst_param}
Suppose that the Gradient estimate \eqref{eq:GEL--Bochner-to-GE} holds for any shift $(M,g_{\tau - T})_{\tau \in (I + T)\cap (0,\infty)}$ of the flow $(M,g_\tau)_{\tau \in I}$. For a Lipschitz function $w: M \to \R{}$, times $[\sigma, \tau] \in I$, and a parameter $0 < \lambda < \tau/\sigma$, the following gradient estimate holds
\begin{align}
&(|\nabla P_{\sigma,\tau}[w]|^2 - 2\lambda\sigma \Delta P_{\sigma,\tau}[w] - (\lambda\sigma)^2S_{\sigma}) \\
&- (P_{\sigma,\tau}[|\nabla w|^2] - 2\tau P_{\sigma,\tau}[\Delta w] - \tau^2P_{\sigma,\tau}[S_{\tau}
]) \leq \frac{n}{2}\frac{(\tau - \lambda \sigma)^2}{\tau - \sigma}
\end{align}
\end{cor}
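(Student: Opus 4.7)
The plan is to apply the standard gradient estimate \eqref{eq:GEL-} to a flow $\tilde g$ obtained from $g$ by composing a parabolic rescaling with a time translation, with the constants chosen so that at the two endpoints of the rescaled flow the coefficients match the desired $\lambda\sigma$ and $\tau$. First I would unpack the hypothesis: by Propositions \ref{prop:L-GradEst} and \ref{prop:L-Bochner} applied to each shifted flow $g_{\cdot - T}$, the assumption is equivalent to the pointwise condition $\cD \geq 0$ on $g$. As noted in Subsection \ref{ss:D-condition}, the three homogeneity components of $\cD$ decouple under parabolic rescaling $g_\tau \mapsto \mu g_{\mu \tau}$, so $\cD \geq 0$ is preserved by both parabolic rescalings and time translations, hence by any composition of these two operations. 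Consequently any such flow again satisfies \eqref{eq:GEL-}.

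Concretely, for $0 < \lambda < \tau/\sigma$ set
\[
B := \frac{\tau - \sigma}{\tau - \lambda\sigma} > 0\,,\qquad C := \frac{\sigma\tau(1-\lambda)}{\tau - \lambda\sigma}\,,\qquad \tilde g_\eta := B^{-1}\,g_{B\eta + C}\,,\quad \eta \in [\lambda\sigma,\tau]\,.
\]
These are the unique constants for which $\eta \mapsto B\eta+C$ is an affine bijection $[\lambda\sigma,\tau] \to [\sigma,\tau] \subseteq I$ sending $\lambda\sigma \mapsto \sigma$ and $\tau \mapsto \tau$. Under the constant conformal factor $B^{-1}$ and this reparametrization one has $|\tilde\nabla f|^2_{\tilde g_\eta} = B|\nabla f|^2_{g_{B\eta+C}}$, $\tilde\Delta_{\tilde g_\eta} = B\,\Delta_{g_{B\eta+C}}$, $\tilde S_\eta = B\,S_{g,B\eta+C}$, and a direct uniqueness argument for the heat equation gives $\tilde P_{\lambda\sigma,\tau}[w] = P_{\sigma,\tau}[w]$. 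Substituting these identifications into \eqref{eq:GEL-} for $\tilde g$ at $(\eta_1,\eta_2) = (\lambda\sigma,\tau)$, dividing through by the common factor $B$, and rewriting the error term as
\[
\frac{n}{2B}(\tau - \lambda\sigma) \;=\; \frac{n(\tau - \lambda\sigma)^2}{2(\tau - \sigma)}\,,
\]
yields precisely the claimed inequality.

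The only step that needs genuine care is the verification that the rescaling factors for $|\nabla|^2$, $\Delta$, $S$, and the heat semigroup all collapse into a single common factor of $B$ on both sides, so that the final division is clean. Once this bookkeeping is done, the two-parameter freedom $(B,C)$ is exactly what decouples the two endpoint coefficients: a time shift alone forces them to differ by the rigid amount $\tau - \sigma$, while the non-commuting composition with a parabolic rescaling allows them to take any admissible pair $(\lambda\sigma,\tau)$, at the cost of the extra factor $(\tau - \lambda\sigma)/(\tau - \sigma) = 1/B$ that upgrades the error from $\tfrac{n}{2}(\tau-\sigma)$ to $\tfrac{n}{2}(\tau - \lambda\sigma)^2/(\tau-\sigma)$.
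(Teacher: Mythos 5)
Your proof is correct, but it takes a genuinely different route from the paper's. You unpack the hypothesis via Propositions \ref{prop:L-Bochner} and \ref{prop:L-GradEst} to recover the pointwise condition $\cD \geq 0$, then invoke the invariance of $\cD \geq 0$ under parabolic rescaling and time translation (observed in Subsection \ref{ss:D-condition}), apply \eqref{eq:GEL-} to the rescaled flow $\tilde g_\eta = B^{-1}g_{B\eta + C}$, and translate back. The bookkeeping checks out: the conformal factor $B^{-1}$ introduces a common factor of $B$ in $|\nabla\cdot|^2$, $\Delta$, and $S$, while the forward heat propagators for $\tilde g$ and $g$ agree under the affine time reparametrization, so the division by $B$ is clean and produces the error $\tfrac{n}{2B}(\tau - \lambda\sigma) = \tfrac{n}{2}(\tau-\lambda\sigma)^2/(\tau - \sigma)$.

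The paper's proof avoids any detour through $\cD\geq0$. Instead of rescaling the metric, it scales the \emph{test function}: it writes the RHS quantity in terms of $cw$ and inserts the constant $c$ via $\tau + \tau(c-1)$, so that the bracketed expression is exactly what the shifted gradient estimate \eqref{eq:GEL-} (on $(M,g_{\tau - T})$ with $T = \tau(c-1)$) controls; choosing $c$ so that $(\sigma + \tau(c-1))/c = \lambda\sigma$ and dividing by $c^2$ finishes. This is purely algebraic and stays entirely within the class of flows named in the hypothesis (shifts of the original), whereas your argument needs to re-run the forward direction $\cD\geq0\Rightarrow$\eqref{eq:GEL-} on a parabolically rescaled flow, which is not literally a shift. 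The two manipulations are secretly the same---rescaling the function $w$ by $c$ compensates the homogeneity of the gradient estimate exactly as the conformal rescaling $B^{-1}$ does---but yours is conceptually more transparent about the underlying invariance, while the paper's is logically more economical. Both are valid.
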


\begin{proof}
We begin by writing the quantity from the LHS of \eqref{eq:GEL--Bochner-to-GE} as follows, for some constant $c > 0$ to be determined later:

\begin{align}
&\phantom{=}P_{\sigma,\tau}[|\nabla w|^2] - 2\tau P_{\sigma,\tau}[\Delta w] - \tau^2P_{\sigma,\tau}[S_{\tau}
] \\
&= \frac{1}{c^2}\left[P_{\sigma,\tau}[|\nabla (cw)|^2] - 2c\tau P_{\sigma,\tau}[\Delta (cw)] - (c \tau)^2P_{\sigma,\tau}[S_{\tau}
]\right] \\
&= \frac{1}{c^2}\left[P_{\sigma,\tau}[|\nabla (cw)|^2] - 2\Big(\tau + \tau(c - 1)\Big)P_{\sigma,\tau}[\Delta (cw)] - \Big(\tau + \tau(c - 1)\Big)^2P_{\sigma,\tau}[S_{\tau}
]\right]\,.
\end{align}

We now apply the gradient estimate \eqref{eq:GEL--Bochner-to-GE} but for the shifted flow $(M,g_{\tau - T})_{\tau \in I + T}$ with $T := \tau(c - 1)$, to the bracketed expression on the RHS to obtain

\begin{align}
&\phantom{\geq}P_{\sigma,\tau}[|\nabla w|^2] - 2\tau P_{\sigma,\tau}[\Delta w] - \tau^2 P_{\sigma,\tau}[S_{\tau}
] \\
&\geq \frac{1}{c^2}\left[|\nabla P_{\sigma,\tau}[cw]|^2 - 2\Big(\sigma + \tau(c - 1)\Big)\Delta P_{\sigma,\tau}[cw] - \Big(\sigma + \tau(c - 1)\Big)^2 S_{\sigma}
 -\frac{n}{2}(\tau - \sigma)\right]
\end{align}

It is at this point that we choose $c > 0$, which we do so that $\big(\sigma + \tau(c - 1)\big)/c = \lambda\sigma$--note that this is only possible if $0 < \lambda < \tau/\sigma$ and ensures that $\sigma +T = c\lambda\sigma > 0$. In particular, the above inequality simplifies to

\begin{align}
&\phantom{\geq}P_{\sigma,\tau}[|\nabla w|^2] - 2\tau P_{\sigma,\tau}[\Delta w] - \tau^2P_{\sigma,\tau}[S_{\tau}
] \\
&\geq \left[|\nabla P_{\sigma,\tau}[w]|^2 - 2\lambda\sigma \Delta P_{\sigma,\tau}[w] - (\lambda\sigma)^2S_{\sigma}
\right] - \frac{n}{2}\left(\frac{\tau - \lambda\sigma}{\tau - \sigma}\right)^2(\tau - \sigma)\,.
\end{align}

\end{proof}

\subsection{Duality of \texorpdfstring{$L_-$}{L-} gradient estimates and \texorpdfstring{$L_-$}{L-}-Wasserstein contraction}\label{ss:GEWCdualityL-}

Before proving the equivalence, we fix some notation and perform a couple computations. Let us start by recording the gradient estimate \eqref{eq:GEL--Bochner-to-GE} written in terms of times $(\tau,\alpha\tau)$:
    \begin{align}\label{eq:alphaGE}
        |\nabla P_{\tau,\alpha\tau}v|^2-2\tau\Delta P_{\tau,\alpha\tau}v-\tau^2 S_{\tau} \leq P_{\tau,\alpha\tau}\left(|\nabla v|^2-2\alpha\tau\Delta v - \alpha^2\tau^2S_{\alpha\tau}\right) + \frac{n}{2}(\alpha\tau-\tau)\,.
    \end{align}
In this subsection we will work with the Lagrangians $L_-$, $\tilde L_-$ defined in \ref{df:L-}. We compute the Hamiltonian associated to $\tilde L_-$ given by the Legendre transform
\begin{align}
    \tilde{H}^{\sigma,\tau}(v,p,\eta) &= \left( (\sqrt{\tau}-\sqrt{\sigma})\sqrt{\eta}\left(\frac{1}{2}|v|^2+\frac{1}{2}S(p,\eta)\right) \right)^* \\
    &= \left( (\sqrt{\tau}-\sqrt{\sigma})\sqrt{\eta}\frac{1}{2}|v|^2 \right)^* - \left( (\sqrt{\tau}-\sqrt{\sigma})\sqrt{\eta}\frac{1}{2}S(p,\eta) \right) \\
    &= \frac{1}{(\sqrt{\tau} -\sqrt{\sigma})\sqrt{\eta}}\frac{1}{2}|v|^2 - (\sqrt{\tau}-\sqrt{\sigma})\sqrt{\eta}\frac{1}{2}S(p,\eta).
\end{align}

Applying now Proposition \ref{prop:HLsemigroup}, for any Lipschitz $\phi: M \to \R{}$ we have that the Hopf-Lax semigroup $Q$ associated to $\tilde L$ satisfies
\begin{align}\label{HLforRescaledL-}
    \partial_\eta Q^{\alpha\sigma,\alpha \eta}\phi 
    &= -\alpha \tilde{H}^{\alpha\sigma,\alpha \eta}(d Q^{\alpha\sigma,\alpha \eta}\phi,\phi,\alpha \eta) \\
    &= -\alpha \left(\frac{1}{(\sqrt{\alpha\tau} -\sqrt{\alpha\sigma})\sqrt{\alpha \eta}}\frac{1}{2}|\nabla Q^{\alpha\sigma,\alpha \eta} \phi|^2 - (\sqrt{\alpha\tau}-\sqrt{\alpha\sigma})\sqrt{\alpha \eta}\frac{1}{2}S_{\alpha \eta} \right) \\
    &= -\frac{1}{(\sqrt{\tau} -\sqrt{\sigma})\sqrt{\eta}}\frac{1}{2}|\nabla Q^{\alpha\sigma,\alpha \eta}\phi|^2 + \alpha^2(\sqrt{\tau} -\sqrt{\sigma}) \sqrt{\eta}\frac{1}{2}S_{\alpha \eta}\,.
\end{align}

We now proceed to the first direction of the equivalence. We give a slightly different proof from the $L_0$ case (and following \cite{kuwadaduality} more closely). In this case we do not pass through the Hamilton-Jacobi preservation (as done in the $L_0$ case, Lemma \ref{lem:heat-flow-preserves-HJ}), which we nevertheless state later for completeness (Proposition \ref{prop:heat-flow-preserves-L_-HJ}). This is purely a presentation choice, and one could exchange the approaches.

\begin{prop}\label{prop:WL-contraction}
Suppose that for a Lipschitz $v: M \to \R{}$ and times $[\sigma,\tau]\subseteq I$, the following gradient estimate holds
\begin{align} \label{eq:GEL--GE-to-WC}
    |\nabla P_{\sigma,\tau}v|_\sigma^2-2\sigma \Delta_\sigma P_{\sigma,\tau}v-\sigma^2S_{\sigma} \leq P_{\sigma,\tau}\left(|\nabla v|_\tau ^2 - 2\tau\Delta_\tau v - \tau ^2S_{\tau}\right) + \frac{n}{2}(\tau-\sigma) \tag{GE\({}_{L_-}\)}\,.
\end{align}
Then for any times $[\sigma,\tau] \in I$ and factor $1 < \alpha < \sup I/\tau$, the following $W_{L_-}$ contraction along the adjoint heat flow for any $\mu,\nu \in \cP(M)$ holds
    \begin{align} \label{eq:WCL--GE-to-WC}
            W_{\tilde L_-^{\alpha\sigma,\alpha\tau}}(\hat P_{\sigma,\alpha\sigma}\mu, \hat P_{\tau,\alpha\tau}\nu) \leq W_{\tilde L_-^{\sigma,\tau}}(\mu,\nu) + (\sqrt{\tau}-\sqrt{\sigma})^2\frac{n}{2}(\alpha-1)\,.\tag{WC\({}_{L_-}\)}  
    \end{align}
\end{prop}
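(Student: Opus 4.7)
The plan is to adapt the Kuwada-duality argument of Proposition \ref{prop:GEtoWCL0} (the $L_0$ case) to the cost $\tilde L_-^{\alpha\sigma,\alpha\tau}$. Via the Kantorovich dual formulation \eqref{eq:dualpbmHL} with the Hopf--Lax semigroup $Q$ and the duality between the heat flows on functions and measures, one rewrites
\[
W_{\tilde L_-^{\alpha\sigma,\alpha\tau}}(\hat P_{\sigma,\alpha\sigma}\mu,\hat P_{\tau,\alpha\tau}\nu) = \sup_\phi\left\{\int_M P_{\tau,\alpha\tau}[Q^{\alpha\sigma,\alpha\tau}\phi]\,d\nu - \int_M P_{\sigma,\alpha\sigma}\phi\,d\mu\right\},
\]
the supremum being over Lipschitz test functions $\phi:M\to\mathbb{R}$. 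It therefore suffices to prove the pointwise bound
\[
P_{\tau,\alpha\tau}[Q^{\alpha\sigma,\alpha\tau}\phi](y) - P_{\sigma,\alpha\sigma}\phi(x) \leq \tilde L_-^{\sigma,\tau}(x,y) + (\sqrt\tau-\sqrt\sigma)^2\tfrac{n(\alpha-1)}{2}
\]
for all $x,y\in M$.

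To obtain this pointwise bound, I will interpolate via the family $\Phi(\eta,x):=P_{\eta,\alpha\eta}[Q^{\alpha\sigma,\alpha\eta}\phi](x)$ for $\eta\in[\sigma,\tau]$, whose endpoints $\Phi(\sigma,\cdot)=P_{\sigma,\alpha\sigma}\phi$ and $\Phi(\tau,\cdot)=P_{\tau,\alpha\tau}Q^{\alpha\sigma,\alpha\tau}\phi$ are precisely those appearing on the LHS. For an arbitrary smooth curve $\gamma:[\sigma,\tau]\to M$ joining $x$ to $y$, I would write the difference as $\int_\sigma^\tau\frac{d}{d\eta}\Phi(\eta,\gamma_\eta)\,d\eta$, then expand $\partial_\eta\Phi$ using the heat equation in each slot of $P_{\eta,\alpha\eta}$ together with the Hamilton--Jacobi equation \eqref{HLforRescaledL-} satisfied by $\psi_\eta:=Q^{\alpha\sigma,\alpha\eta}\phi$. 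Applying Young's inequality with parameter $(\sqrt\tau-\sqrt\sigma)\sqrt\eta$ matched to the $\tilde L_-^{\sigma,\tau}$-Lagrangian density, the $|\dot\gamma|^2$ term aligns with the target action, reducing the problem to controlling the remaining Laplacian, gradient, and scalar-curvature terms by a suitable $L_-$-type gradient inequality on $\psi_\eta$. Minimizing over $\gamma$ and integrating in $\eta$ will then produce $\tilde L_-^{\sigma,\tau}(x,y)$ plus the claimed error term.

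The main obstacle is the \emph{matching of scaling factors} in the integrand: the gradient estimate \eqref{eq:alphaGE} naturally carries coefficients of order $\eta$, $\alpha\eta$ (and their squares) on the Laplacian and scalar-curvature terms, whereas the Hopf--Lax HJ introduces mixed factors of $(\sqrt\tau-\sqrt\sigma)\sqrt\eta$, which do not align pointwise with any single parameter choice in Corollary \ref{cor:L_GradEst_param}. Reconciling these will require combining the parametrized gradient estimate with the time-translation invariance of the flow underlying that Corollary, together with the super-Ricci-flow bound $|\nabla\Phi|^2\leq P|\nabla\psi|^2$ (another consequence of $\cD\geq 0$). Once matched, the dimensional error density $\frac{n(\alpha-\lambda)^2\eta}{2(\alpha-1)}$ from Corollary \ref{cor:L_GradEst_param}, at the choice $\lambda=(\sqrt\tau-\sqrt\sigma)/\sqrt\eta$ that matches the source-side coefficients, integrates to precisely the claimed error $(\sqrt\tau-\sqrt\sigma)^2 n(\alpha-1)/2$.
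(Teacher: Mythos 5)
Your overall framework is the right one and matches the paper's: reduce to delta measures, interpolate via $\eta\mapsto P_{\eta,\alpha\eta}Q^{\alpha\sigma,\alpha\eta}\phi(\gamma_\eta)$ along an $L_-^{\sigma,\tau}$-geodesic $\gamma$, differentiate in $\eta$ using the heat flow and the Hamilton--Jacobi equation \eqref{HLforRescaledL-}, apply Young's inequality to absorb the $\langle\nabla P Q, \dot\gamma\rangle$ term into the $\tilde L_-$ Lagrangian, and then use the gradient estimate on what remains.

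However, the ``main obstacle'' you identify is not there, and the way you propose to get around it introduces a genuine gap. You claim the mixed $(\sqrt\tau-\sqrt\sigma)\sqrt\eta$ factors from the Hopf--Lax semigroup cannot be matched by any one choice in Corollary \ref{cor:L_GradEst_param}, and then invoke that Corollary (plus a separate super-Ricci-flow gradient estimate). But Corollary \ref{cor:L_GradEst_param} assumes \eqref{eq:GEL-} for \emph{all} time-shifts of the flow; Proposition \ref{prop:WL-contraction} grants you \eqref{eq:GEL--GE-to-WC} only for the given flow, so the Corollary is not at your disposal here (and deducing it would effectively require $\cD\geq0$, which is the thing Theorem \ref{thm:L-charact} is in the middle of establishing). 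What actually resolves the mismatch is a one-line rescaling: apply the plain $L_-$ gradient estimate \eqref{eq:alphaGE} at times $(\eta,\alpha\eta)$ to the test function $v := \frac{\sqrt\eta}{\sqrt\tau-\sqrt\sigma}\, Q^{\alpha\sigma,\alpha\eta}\phi$. Since \eqref{eq:alphaGE} has terms of homogeneity $0$, $1$, $2$ in $v$, one might expect three constraints for one free constant, but the prefactor $\frac{\sqrt\tau-\sqrt\sigma}{\eta^{3/2}}$ in front of the bracket together with $c=\frac{\sqrt\eta}{\sqrt\tau-\sqrt\sigma}$ satisfies all three simultaneously. No parametrized gradient estimate, no time-shift invariance, and no auxiliary super-Ricci-flow bound are needed. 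The dimensional error then comes out directly as
\begin{equation}
    \int_\sigma^\tau \frac{\sqrt\tau-\sqrt\sigma}{\eta^{3/2}}\cdot\frac{n}{4}(\alpha\eta-\eta)\,d\eta = \frac{n}{2}(\sqrt\tau-\sqrt\sigma)^2(\alpha-1)\,,
\end{equation}
which is the claimed right-hand side. As a side remark, your proposed parameter choice $\lambda = (\sqrt\tau-\sqrt\sigma)/\sqrt\eta$ for Corollary \ref{cor:L_GradEst_param} need not satisfy the admissibility constraint $0<\lambda<\alpha$ for $\eta$ near $\sigma$ when $\tau/\sigma$ is large and $\alpha$ is close to $1$, so that route would fail even if the Corollary were available.
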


\begin{rmk}\label{rmk:Kuwada-contraction}
This $W_{\tilde L_-}$ contraction recovers the classical Wasserstein contraction at different times on manifolds with nonnegative Ricci curvature, see \cite{kuwadaspace},  which also holds on $\mathsf{RCD}(0,N)$ spaces, \cite{eks}. Indeed, when we are on a static space $(M,g)$, $\tilde L_-((x,\sigma),(y,\tau))=\frac{1}{4}d^2(x,y)$. Moreover the semigroup is time independent: $\hat P_{\sigma,\tau}=\hat P_{\tau-\sigma}$. Denoting by $W_2$ the Wasserstein distance induced by $d_g^2$, and setting $s=\sigma(\alpha-1)$, $t=\tau(\alpha-1)$, \eqref{eq:WCL--GE-to-WC} becomes
    \begin{align}
    W_2( \hat{P}_s\mu, \hat{P}_t\nu) \leq W_2(\mu,\nu) + 2n(\sqrt{t}-\sqrt{s})^2,
\end{align}
which is \cite[Eq.~(1.4)]{kuwadaspace} with $K=0$, see also Remark 2.10 therein. It is interesting to note that the same contraction is derived in duality with two different gradient estimates: in Kuwada it was
\begin{align}\label{eq:staticNGE}
    |\nabla P_t v|^2 \leq P_t|\nabla v|^2 - \frac{2t}{n}(\Delta P_t v)^2\,,
\end{align}
while in our case
\begin{align}
    |\nabla P_{r-s}v|^2-2s \Delta P_{r-s}v \leq P_{r-s}\left(|\nabla v|^2-2r\Delta v \right) + \frac{n}{2}(r-s)\,,
\end{align}
i.e, by commuting the semigroup and the generator, and setting $t=r-s$,
\begin{align}\label{eq:staticNGE2}
    |\nabla P_tv|^2 \leq P_t|\nabla v|^2-2t\Delta P_tv + \frac{nt}{2}\,.
\end{align}
Estimate \eqref{eq:staticNGE} is strictly better than \eqref{eq:staticNGE2} since $\frac{2t}{n}(\Delta P_tv)^2\geq 2t\Delta P_tv -\frac{nt}{2}$ by $\frac{t}{2n}(2\Delta P_t v- n)^2\geq0$. However, one might notice that \eqref{eq:staticNGE2} is not homogeneous in $v$, and up to rescaling one can make (pointwise) $2\Delta P_t v- n=0$ and recover \eqref{eq:staticNGE}. See also Subsection \ref{ss:dimBochnerandL_} for related considerations.
\end{rmk}

\begin{proof}[Proof of Proposition \ref{prop:WL-contraction}]
It is enough to prove the statement for delta measures (see \cite[Lem.~3.3]{kuwadaduality}). Let $\gamma:[\sigma,\tau]\to M$ be a $L_-^{\sigma,\tau}$-geodesic from $x$ to $y$. Then recalling the duality from Remark \ref{rmk:Kantorovich-via-HL}
\begin{align}
    W_{\tilde L_-^{\alpha\sigma,\alpha\tau}}(\hat P_{\sigma,\alpha\sigma}\delta_x, \hat P_{\tau,\alpha\tau}\delta_y) &= \sup_\phi \int Q^{\alpha\sigma,\alpha\tau}\phi d\hat P_{\tau,\alpha\tau} \delta_{\gamma_\tau}- \int \phi d \hat P_{\sigma,\alpha\sigma} \delta_{\gamma_\sigma} \\
    &= \sup_\phi P_{\tau,\alpha\tau}Q^{\alpha\sigma,\alpha\tau}\phi(\gamma_\tau)-P_{\sigma,\alpha\sigma}\phi(\gamma_\sigma) \\
    &=\sup_\phi \int_\sigma ^\tau\partial_\eta P_{\eta,\alpha \eta}Q^{\alpha \sigma,\alpha \eta}\phi(\gamma_\eta) d\eta \\
    &= \sup_\phi \int_\sigma ^\tau -\Delta P_{\eta,\alpha \eta}Q^{\alpha \sigma,\alpha \eta} \phi(\gamma_\eta) + \alpha P_{\eta,\alpha \eta}\Delta Q^{\alpha\sigma,\alpha \eta}\phi(\gamma_\eta) \\
    & \hspace{1cm}+ P_{\eta,\alpha \eta}(\partial_\eta Q^{\alpha\sigma,\alpha \eta}\phi)(\gamma_\eta) + \left\langle \nabla P_{\eta,\alpha \eta}Q^{\alpha\sigma,\alpha \eta}\phi(\gamma_\eta),\dot\gamma_\eta
    \right\rangle_\eta d\eta \\
    %a%
    &\leq \sup_\phi \int_\sigma ^\tau -\Delta P_{\eta,\alpha \eta}Q^{\alpha \sigma,\alpha \eta} \phi(\gamma_\eta) + \alpha P_{\eta,\alpha \eta}\Delta Q^{\alpha\sigma,\alpha \eta}\phi(\gamma_\eta) \\
    & \hspace{1cm}+ P_{\eta,\alpha \eta}\left(-\frac{1}{(\sqrt{\tau} -\sqrt{\sigma})\sqrt{\eta}}\frac{1}{2}|\nabla Q^{\alpha \sigma,\alpha \eta}\phi(\gamma_\eta)|^2 + \alpha^2(\sqrt{\tau} -\sqrt{\sigma}) \sqrt{\eta}\frac{1}{2}S_{\alpha \eta}\right) \\
    & \hspace{1cm} + \frac{1}{2(\sqrt{\tau}-\sqrt{\sigma})\sqrt{\eta}}|\nabla P_{\eta,\alpha \eta}Q^{\alpha\sigma,\alpha \eta}\phi(\gamma_\eta)|_\eta^2 + \frac{1}{2}(\sqrt{\tau}-\sqrt{\sigma})\sqrt{\eta}|\dot\gamma_\eta|_\eta^2 d\eta
    \end{align}
    using in the last line \eqref{HLforRescaledL-} and Young's inequality. Exploiting the different homogeneity in $\phi$ of the terms, we set up the gradient estimate \eqref{eq:alphaGE} that will be applied to $\frac{\sqrt{\eta}Q^{\alpha \sigma,\alpha \eta} \phi(\gamma_\eta)}{\sqrt{\tau}-\sqrt{\sigma}}$:
    
    \begin{align}
    &...=\sup_\phi \int_\sigma ^\tau \frac{\sqrt{\tau}-\sqrt{\sigma}}{\eta^{3/2}}\Bigg[-\eta\Delta P_{\eta,\alpha \eta}\left(\frac{\sqrt{\eta}Q^{\alpha \sigma,\alpha \eta} \phi(\gamma_\eta)}{\sqrt{\tau}-\sqrt{\sigma}}\right) + \alpha \eta P_{\eta,\alpha \eta}\Delta \left(\frac{\sqrt{\eta}Q^{\alpha\sigma,\alpha \eta}\phi(\gamma_\eta)}{\sqrt{\tau}-\sqrt{\sigma}}\right) \\
    & \hspace{1cm}+ P_{\eta,\alpha \eta}\left(-\frac{1}{2}\left|\frac{\nabla \sqrt{\eta}Q^{\alpha \sigma,\alpha \eta}\phi(\gamma_\eta)}{\sqrt{\tau} -\sqrt{\sigma}}\right|^2 + \alpha^2 \eta^2\frac{1}{2}S_{\alpha \eta}\right) \\
    & \hspace{1cm} + \frac{1}{2}\left|\frac{\nabla P_{\eta,\alpha \eta}\sqrt{\eta}Q^{\alpha\sigma,\alpha \eta}\phi(\gamma_\eta)}{\sqrt{\tau}-\sqrt{\sigma}}\right|^2 \Bigg] + \frac{1}{2}(\sqrt{\tau}-\sqrt{\sigma})\sqrt{\eta}|\dot\gamma_\eta|^2 d\eta \\
    & \overset{\eqref{eq:alphaGE}}{\leq} \sup_\phi \int_\sigma ^\tau  \frac{\sqrt{\tau}-\sqrt{\sigma}}{\eta^{3/2}} \Bigg[ \frac{\eta^2}{2}S_\eta + \frac{n}{4}(\alpha \eta-\eta) \Bigg] + \frac{1}{2}(\sqrt{\tau}-\sqrt{\sigma})\sqrt{\eta}|\dot\gamma_\eta|^2 d\eta \\
    &= (\sqrt{\tau}-\sqrt{\sigma}) \int_\sigma ^\tau \frac{\sqrt{\eta}}{2}|\dot\gamma_\eta|^2 + \frac{\sqrt{\eta}}{2}S_\eta + \frac{\sqrt{\tau}-\sqrt\sigma}{\sqrt{\eta}} \frac{n}{4}(\alpha-1) d\eta  \\
    & = \tilde{L}_-^{\sigma,\tau}(x,y) + (\sqrt{\tau}-\sqrt{\sigma})^2 \frac{n}{2}(\alpha-1)\,.
    \end{align}
\end{proof}

The gradient estimate \eqref{eq:GEL--GE-to-WC} implies the preservation of the $L_-$ Hamilton-Jacobi equation along the heat flow (see Lemma \ref{lem:heat-flow-preserves-HJ} for the $L_0$ analogue and further comments), which was essentially a mid-step in the proof from \eqref{eq:GEL--GE-to-WC} to \eqref{eq:WCL--GE-to-WC}.

\begin{prop} \label{prop:heat-flow-preserves-L_-HJ}
Suppose that \eqref{eq:GEL--GE-to-WC} holds, $f: [\alpha\tau_1,\alpha\tau_2] \times M\to \R{}$, $\alpha> 1$, $[\alpha\tau_1,\alpha\tau_2]\subseteq I$ is Lipschitz and satisfies the Hamilton-Jacobi inequality
\begin{equation}
\partial_\tau f(\tau,x) \leq -\frac{1}{2(\sqrt{\alpha\tau_2}-\sqrt{\alpha\tau_1})\sqrt{ \tau}}|\nabla f(\tau,x)|^2 + (\sqrt{\alpha\tau_2}-\sqrt{\alpha\tau_1}) \frac{\sqrt{\tau}}{2}S_{(\tau,x)}\,,\qquad \text{ a.e. }(\tau,x)\in M\times [\alpha\tau_1,\alpha\tau_2]\,.
\end{equation}

Then the Lipschitz function $P_{\alpha}[f]: [\tau_1, \tau_2] \to \R{}$, defined for $\tau_1\in I$ by $P_{\alpha}[f](\tau,x) := P_{\tau,\alpha\tau}f_{\alpha\tau}(x)$, itself satisfies the Hamilton-Jacobi-type inequality
\begin{align}
\partial_\tau P_{\alpha}[f](\tau,x) \leq -\frac{1}{2(\sqrt{\tau_2}-\sqrt{\tau_1})\sqrt\tau}|\nabla P_{\alpha}[f](\tau,x)|^2 + (\sqrt{\tau_2}-\sqrt{\tau_1}) \frac{\sqrt\tau}{2}S_{(\tau,x)}&+\frac{n}{4}(\alpha-1)\frac{\sqrt{\tau_2}-\sqrt{\tau_1}}{\sqrt\tau} \\ & \text{for a.e. } \tau\in  [\tau_1,\tau_2], \ \text{for all} \ x\in M,
\end{align}

and is therefore dominated by the $\tilde L_-$ distance: for all $x,y \in M$ and $\sigma < \tau\in [\tau_1, \tau_2]$, 
\begin{equation}\label{eq:L--dominated-preserved}
    P_{\alpha}[f](\tau,y) - P_{\alpha}[f](\sigma,x)  \leq (\sqrt \tau_2-\sqrt \tau_1) L^{\sigma,\tau}_-(x,y) + \frac{n}{2}(\alpha-1)(\sqrt\tau_2-\sqrt\tau_1)(\sqrt{\tau}-\sqrt{\sigma})\,.
\end{equation}
\end{prop}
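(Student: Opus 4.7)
The plan is to mirror the $L_0$ argument of Lemma \ref{lem:heat-flow-preserves-HJ}, but with the additional bookkeeping required by the parabolic rescaling $\tau \mapsto \alpha \tau$. The key realization is that the $\tilde L_-$-adapted gradient estimate \eqref{eq:GEL--GE-to-WC} is precisely tuned to yield a $\tilde L_-$-adapted Hamilton--Jacobi inequality along $P_\alpha[f]$, modulo an additive $n$-correction term coming from the gradient estimate's dimensional defect.

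First I would establish space-time regularity of $P_\alpha[f](\tau,x) = P_{\tau,\alpha\tau}f_{\alpha\tau}(x)$: spatial smoothness is automatic from the heat kernel, and the classical time-Lipschitz/Leibniz-rule argument from Lemma \ref{lem:heat-flow-preserves-HJ} goes through. Parametrizing $P_\alpha[f]$ as a function of three variables via $F(\sigma,\eta,\zeta):=P_{\sigma,\eta}[f_\zeta]$ and applying chain rule together with the (backwards time) semigroup identities $\partial_\sigma P_{\sigma,\eta}u = -\Delta_\sigma P_{\sigma,\eta}u$ and $\partial_\eta P_{\sigma,\eta}u = P_{\sigma,\eta}\Delta_\eta u$ yields, for a.e.\ $\tau$ and every $x$,
\begin{equation}
\partial_\tau P_\alpha[f] = -\Delta_\tau P_{\tau,\alpha\tau}f_{\alpha\tau} + \alpha P_{\tau,\alpha\tau}\Delta_{\alpha\tau}f_{\alpha\tau} + \alpha P_{\tau,\alpha\tau}\!\left[\partial_\eta f_\eta|_{\eta=\alpha\tau}\right].
\end{equation}

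The central step is to derive the HJ inequality for $P_\alpha[f]$. Applying the hypothesized HJ inequality for $f$ to the third term and simplifying with $\sqrt{\alpha\tau_i} = \sqrt\alpha \sqrt{\tau_i}$ collapses the $\alpha$-factors and yields
\begin{equation}
\alpha P_{\tau,\alpha\tau}[\partial_\eta f_\eta] \leq -\frac{P_{\tau,\alpha\tau}|\nabla f_{\alpha\tau}|^2}{2(\sqrt{\tau_2}-\sqrt{\tau_1})\sqrt\tau} + \alpha^2(\sqrt{\tau_2}-\sqrt{\tau_1})\frac{\sqrt\tau}{2}P_{\tau,\alpha\tau}S_{\alpha\tau}.
\end{equation}
The remaining terms $-\Delta_\tau P f$, $\alpha P \Delta f$, $P|\nabla f|^2$, and $\alpha^2 P S_{\alpha\tau}$ must now be absorbed into the target inequality. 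To do this, I apply \eqref{eq:GEL--GE-to-WC} not to $f_{\alpha\tau}$ directly but to the rescaled function $c f_{\alpha\tau}$ with $c := \sqrt{\tau}/(\sqrt{\tau_2}-\sqrt{\tau_1})$, then divide by $c^2$. The five coefficient-matching conditions $2\tau/c = 2(\sqrt{\tau_2}-\sqrt{\tau_1})\sqrt\tau$, $\tau^2/c^2 = (\sqrt{\tau_2}-\sqrt{\tau_1})^2\tau$, etc., are all simultaneously satisfied by this choice (with the defect $n(\alpha-1)\tau/(2c^2) = \tfrac{n}{2}(\alpha-1)(\sqrt{\tau_2}-\sqrt{\tau_1})^2$), producing exactly the claimed HJ inequality for $P_\alpha[f]$ after combining everything.

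To deduce the domination \eqref{eq:L--dominated-preserved}, I would pick an $L_-$-minimizing curve $\gamma:[\sigma,\tau]\to M$ from $x$ to $y$ and integrate the new HJ inequality along it. Young's inequality with the weight $(\sqrt{\tau_2}-\sqrt{\tau_1})\sqrt\eta$ gives
\begin{equation}
\langle \nabla P_\alpha[f], \dot\gamma\rangle_\eta \leq \frac{|\nabla P_\alpha[f]|^2}{2(\sqrt{\tau_2}-\sqrt{\tau_1})\sqrt\eta} + (\sqrt{\tau_2}-\sqrt{\tau_1})\frac{\sqrt\eta}{2}|\dot\gamma|^2_\eta,
\end{equation}
which exactly cancels the negative gradient term and leaves the Lagrangian $(\sqrt{\tau_2}-\sqrt{\tau_1})\,\mathcal{L}_-(\gamma)$ plus the error term $\int_\sigma^\tau \tfrac{n(\alpha-1)(\sqrt{\tau_2}-\sqrt{\tau_1})}{4\sqrt\eta}\,d\eta$, the latter evaluating via $\int_\sigma^\tau \eta^{-1/2}\,d\eta = 2(\sqrt\tau-\sqrt\sigma)$ to the desired correction. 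The main obstacle is not conceptual but computational: arranging the parabolic scaling factor $c$ so that the five coefficient-matching constraints line up simultaneously. This delicate alignment is what makes \eqref{eq:GEL--GE-to-WC} the \emph{right} gradient estimate for $\tilde L_-$-geometry, and is analogous to the way the $L_0$-scaling $c=1$ works automatically in Lemma \ref{lem:heat-flow-preserves-HJ}.
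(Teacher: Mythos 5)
Your proposal is correct and follows essentially the same route as the paper's proof: the paper also computes $\partial_\tau P_{\tau,\alpha\tau}f_{\alpha\tau}$ via the Leibniz rule, inserts the HJ inequality for $f$, and applies \eqref{eq:GEL--GE-to-WC} to the rescaled function $\frac{\sqrt{\alpha\tau}}{\sqrt{\alpha\tau_2}-\sqrt{\alpha\tau_1}}f_{\alpha\tau}$, which (since $\sqrt{\alpha\tau_i} = \sqrt{\alpha}\sqrt{\tau_i}$) is exactly your $c\,f_{\alpha\tau}$ with $c = \sqrt{\tau}/(\sqrt{\tau_2}-\sqrt{\tau_1})$. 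One small remark: your final assertion that the integrated error ``evaluates to the desired correction'' deserves a closer look — since $\int_\sigma^\tau \eta^{-1/2}\,d\eta = 2(\sqrt{\tau}-\sqrt{\sigma})$, the HJ error coefficient $\frac{n}{4}(\alpha-1)\frac{\sqrt{\tau_2}-\sqrt{\tau_1}}{\sqrt{\eta}}$ integrates to $\frac{n}{2}(\alpha-1)(\sqrt{\tau_2}-\sqrt{\tau_1})(\sqrt{\tau}-\sqrt{\sigma})$ and the leading term comes out as $(\sqrt{\tau_2}-\sqrt{\tau_1})L_-^{\sigma,\tau}(x,y)$ rather than $\tilde L_-^{\sigma,\tau}(x,y)=(\sqrt{\tau}-\sqrt{\sigma})L_-^{\sigma,\tau}(x,y)$; these only coincide with the displayed \eqref{eq:L--dominated-preserved} in the endpoint case $\sigma=\tau_1$, $\tau=\tau_2$ (modulo the constant), so the discrepancy traces back to the paper's statement rather than your argument.
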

\begin{proof}
    The extra regularity is proved just as in Lemma \ref{lem:heat-flow-preserves-HJ}. Using the Hamilton-Jacobi inequality for $f$ and applying \eqref{eq:GEL--GE-to-WC} to $\frac{\sqrt{\alpha\tau}}{(\sqrt{\alpha\tau_2}-\sqrt{\alpha\tau_1})}f_{\alpha \tau}$ we get
    \begin{align}
        \partial_\tau P_{\tau,\alpha\tau}(f_{\alpha\tau}) 
        &\leq -\Delta P_{\tau,\alpha\tau}f_{\alpha\tau} + \alpha P_{\tau,\alpha\tau}\Delta f_{\alpha\tau} + \alpha \Bigg(-\frac{1}{2(\sqrt{\alpha\tau_2}-\sqrt{\alpha\tau_1})\sqrt{\alpha\tau}}P_{\tau,\alpha\tau}|\nabla f_{\alpha\tau}|^2 \\
        & \hspace{7cm}+(\sqrt{\alpha\tau_2}-\sqrt{\alpha\tau_1}) \frac{\sqrt{\alpha\tau}}{2}P_{\tau,\alpha\tau}S_{\alpha\tau}\Bigg) \\
        &\leq \frac{(\sqrt{\alpha\tau_2}-\sqrt{\alpha\tau_1})}{\sqrt{\alpha\tau}}\left[-\frac{1}{2\tau}\left|\nabla P_{\tau,\alpha\tau}f_{\alpha \tau}\frac{\sqrt{\alpha\tau}}{(\sqrt{\alpha\tau_2}-\sqrt{\alpha\tau_1})}\right|^2 + \frac{\tau}{2}S_{\tau} +\frac{n}{4}(\alpha-1) \right] \\
        &= -\frac{1}{2(\sqrt{\tau_2}-\sqrt{\tau_1})\sqrt\tau}|\nabla P_{\tau,\alpha\tau}f_{\alpha\tau}|^2 + (\sqrt{\tau_2}-\sqrt{\tau_1})\frac{\sqrt{\tau}}{2}S_\tau+\frac{n}{4}\frac{\sqrt{\tau_2}-\sqrt{\tau_1}}{\sqrt\tau}(\alpha-1).
    \end{align}
    Equation \eqref{eq:L--dominated-preserved} follows from integrating the Hamilton-Jacobi inequality along a geodesic.
\end{proof}

We now recover \eqref{eq:GEL--GE-to-WC} from \eqref{eq:WCL--GE-to-WC}.
\begin{prop}
    Suppose that for any times $[\sigma,\tau] \subseteq I$, factor $1 < \alpha < \sup I/\tau$, and measures $\mu,\nu \in \cP(M)$, \eqref{eq:WCL--GE-to-WC} holds. Then for all Lipschitz $v:M \to \R{}$, times $\sigma \in I$, and factors $1 < \alpha < \sup I/\sigma$, we also have \noeqref{eq:GEL--WC-to-GE}
    \begin{align}
    &|\nabla P_{\sigma,\alpha\sigma} v|^2_\sigma + 2\sigma\Delta_\sigma P_{\sigma,\alpha\sigma}v - \sigma^2 S_\sigma(x) \\
    &\leq P_{\sigma,\alpha \sigma}|\nabla v|^2_{\alpha \sigma}(x) + 2\alpha\sigma P_{\sigma,\alpha\sigma}\Delta_{\alpha\sigma} v - \alpha^2\sigma^2 P_{\sigma,\alpha \sigma}S_{\alpha \sigma}(x) + \frac{n}{2}\sigma(\alpha-1)\,.\label{eq:GEL--WC-to-GE}\tag{GE\({}_{L_-}\)}
\end{align}
\end{prop}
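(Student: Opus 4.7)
The strategy is to mirror the proof of Proposition~\ref{prop:L0-contraction-to-GE} (the WC$\Rightarrow$GE direction for $L_0$), replacing the flat time shift $r\mapsto r+h$ with the parabolic rescaling $u=\alpha\eta$. Because Lemma~\ref{lem:dynamiclifting} requires the Lagrangian time variable to coincide with the time slice containing the measure, a reparametrization will be necessary. Fix $x\in M$, $\sigma\in I$ with $1<\alpha<\sup I/\sigma$. Take an auxiliary $\tau>\sigma$ close to $\sigma$, a point $y\in M$, and a smooth curve $\bar\gamma:[\sigma,\tau]\to M$ from $x$ to $y$, and introduce the reparametrized curve of probability measures $\tilde\mu_u:=\hat P_{u/\alpha,u}\delta_{\bar\gamma_{u/\alpha}}\in\cP(M)$ living in the time slice $u\in[\alpha\sigma,\alpha\tau]$. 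By duality,
\begin{equation}
P_{\tau,\alpha\tau}v(y)-P_{\sigma,\alpha\sigma}v(x)=\int v\,d\tilde\mu_{\alpha\tau}-\int v\,d\tilde\mu_{\alpha\sigma}.
\end{equation}

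Apply Lemma~\ref{lem:dynamiclifting} to $\tilde\mu_u$ with Lagrangian $L_-$ (the density is smooth and strictly positive by the smoothing of the adjoint heat flow and smoothness of $\bar\gamma$) to produce a dynamic plan $\tilde\eta$ with $(e_u)_\#\tilde\eta=\tilde\mu_u$ and $(\dot{\tilde\mu}_u)_{L_-^u}=\tfrac{\sqrt u}{2}\int_C(|\dot\gamma_u|^2_u+S_u(\gamma_u))\,d\tilde\eta$. Writing $\int v\,d\tilde\mu_{\alpha\tau}-\int v\,d\tilde\mu_{\alpha\sigma}=\int_C\!\int_{\alpha\sigma}^{\alpha\tau}\La dv(\gamma_u),\dot\gamma_u\Ra_u\,du\,d\tilde\eta$, applying Young's inequality $\La dv,\dot\gamma\Ra_u\leq\tfrac{1}{2c(u)}|\nabla v|^2_u+\tfrac{c(u)}{2}|\dot\gamma|^2_u$ with a positive weight $c(u)$ to be selected, and completing the square with $S_u$, the lifting identity gives
\begin{equation}
\int v\,d\tilde\mu_{\alpha\tau}-\int v\,d\tilde\mu_{\alpha\sigma}\leq\int_{\alpha\sigma}^{\alpha\tau}\!\left(\frac{P_{u/\alpha,u}|\nabla v|^2_u(\bar\gamma_{u/\alpha})}{2c(u)}+\frac{c(u)}{\sqrt u}(\dot{\tilde\mu}_u)_{L_-^u}-\frac{c(u)}{2}P_{u/\alpha,u}S_u(\bar\gamma_{u/\alpha})\right)du.
\end{equation}
The hypothesis \eqref{eq:WCL--WC-to-GE} enters through an infinitesimal contraction: applied to the pair $(\tilde\mu_{u_1},\tilde\mu_{u_1+h})$, after dividing by $(\sqrt{u_1+h}-\sqrt{u_1})$ to pass from $W_{\tilde L_-}$ to $W_{L_-}$ and sending $h\to 0^+$, one obtains the pointwise bound
\begin{equation}
(\dot{\tilde\mu}_u)_{L_-^u}\leq\frac{\sqrt u}{2\alpha^2}\big(|\dot{\bar\gamma}_{u/\alpha}|^2_{u/\alpha}+S_{u/\alpha}(\bar\gamma_{u/\alpha})\big)+\frac{n(\alpha-1)}{4\alpha\sqrt u}.
\end{equation}

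Substitute this bound, change variables $u=\alpha\eta$, divide by $\tau-\sigma$, and let $\tau\to\sigma^+$. The LHS tends to $\partial_\tau|_{\tau=\sigma}P_{\tau,\alpha\tau}v(\bar\gamma_\tau)=-\Delta_\sigma P_{\sigma,\alpha\sigma}v(x)+\alpha P_{\sigma,\alpha\sigma}\Delta_{\alpha\sigma}v(x)+\La\nabla P_{\sigma,\alpha\sigma}v(x),\dot{\bar\gamma}_\sigma\Ra_\sigma$, while the RHS becomes a quadratic in $\dot{\bar\gamma}_\sigma$. Optimize the direction: the quadratic $\La\nabla P_{\sigma,\alpha\sigma}v,\dot{\bar\gamma}\Ra-\tfrac{c(\alpha\sigma)}{2\alpha}|\dot{\bar\gamma}|^2$ is maximized at $\dot{\bar\gamma}_\sigma=\tfrac{\alpha}{c(\alpha\sigma)}\nabla P_{\sigma,\alpha\sigma}v(x)$ with value $\tfrac{\alpha}{2c(\alpha\sigma)}|\nabla P_{\sigma,\alpha\sigma}v|^2_\sigma$. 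Choosing $c(u)=u$ so that $c(\alpha\sigma)=\alpha\sigma$, multiplying through by $2\sigma$ and rearranging yields exactly \eqref{eq:GEL--Bochner-to-GE}; the target \eqref{eq:GEL--WC-to-GE} follows by substituting $-v$ for $v$.

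\textbf{Main obstacle.} The difficulty is not conceptual but arithmetic: the various scale factors $\alpha$, $\sqrt u$, $u/\alpha$ introduced by the reparametrization must line up so that a single positive choice of Young weight $c(u)=u$ simultaneously produces the correct coefficients of $\Delta_\sigma P_{\sigma,\alpha\sigma}v$, $P_{\sigma,\alpha\sigma}\Delta_{\alpha\sigma}v$, $\sigma^2 S_\sigma$, $\alpha^2\sigma^2 P_{\sigma,\alpha\sigma}S_{\alpha\sigma}$, and the deficit $\tfrac{n}{2}\sigma(\alpha-1)$. The weight $c(u)=u$ is in effect forced by matching the two Laplacian coefficients, and once this is identified the remaining terms fall into place automatically.
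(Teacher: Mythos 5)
Your proof is correct and follows essentially the same path as the paper's: lift the reparametrized curve $\varsigma\mapsto\hat P_{\varsigma,\alpha\varsigma}\delta_{\bar\gamma_\varsigma}$ via Lemma~\ref{lem:dynamiclifting}, apply Young's inequality to the pairing $\La dv,\dot\gamma\Ra$, substitute the infinitesimal form of \eqref{eq:WCL-} for the $L_-$-speed, then divide by $\tau-\sigma$ and send $\tau\to\sigma$. The only genuine (and cosmetic) difference is in the Young parameter: the paper uses a constant $\lambda=\alpha$, which produces the canonical \eqref{eq:GEL-} for the rescaled function $\sigma v$ (hence the paper multiplies by $2\sigma^2$ and remarks on the $v\mapsto\pm\sigma v$ substitution at the end), whereas your time-dependent weight $c(u)=u$ — equal to $\alpha\sigma$ at the relevant instant, i.e.\ $\sigma$ times the paper's choice — lands directly on the estimate for $v$ after multiplying by $2\sigma$. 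Both choices are legitimate since only the value at $u=\alpha\sigma$ survives the $\tau\to\sigma$ limit, and your version spares a superfluous rescaling; the final sign flip $v\mapsto -v$ to match the stated $+2\sigma\Delta$ form is handled identically in both. Your infinitesimal contraction bound $(\dot{\tilde\mu}_u)_{L_-^u}\leq\tfrac{\sqrt u}{2\alpha^2}(|\dot{\bar\gamma}_{u/\alpha}|^2+S_{u/\alpha})+\tfrac{n(\alpha-1)}{4\alpha\sqrt u}$ is precisely the paper's \eqref{eq:diffWCL-} after changing variables $u=\alpha\varsigma$ and dividing by $\alpha^{3/2}$.
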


\begin{proof}
    We can rewrite \eqref{eq:WCL--GE-to-WC} as
\begin{align}\label{eq:WL-minus-monotoncity}
    \sqrt{\alpha}W_{L_-^{\alpha\sigma,\alpha\tau}}(\hat{P}_{\sigma,\alpha\sigma}\mu,\hat{P}_{\tau,\alpha\tau}\nu)\leq W_{L_-^{\sigma,\tau}}(\mu,\nu)+\frac{n}{2}(\sqrt{\tau}-\sqrt{\sigma})(\alpha-1).
\end{align}
Given $x,y\in M$, $[\sigma,\tau]\subseteq I$, $1<\alpha<\sup I/\tau$, consider a smooth curve $\bar\gamma:[\sigma,\tau]\to M$ from $x$ to $y$. For $\varsigma\in[\sigma,\tau]$ we define 
\begin{align}
    \varsigma\mapsto\mu_{\alpha \varsigma} := \hat{P}_{\varsigma,\alpha \varsigma}\delta_{\bar\gamma_{\varsigma}}
\end{align}
which is a curve of probability measures ``living at time $\alpha \varsigma$'' from $\hat{P}_{\sigma,\alpha \sigma}\delta_{\bar\gamma_\sigma}$ to $\hat{P}_{\tau,\alpha \tau}\delta_{\bar\gamma_\tau}$. By Lemma \ref{lem:dynamiclifting}, we find $\eta\in \cP(C([\alpha\sigma,\alpha\tau]\to M))$ with
\begin{align}
    (\dot{\mu}_{\alpha \varsigma})_{L_-^{\alpha \varsigma}} = \int_{C([\alpha\sigma,\alpha\tau]\to M)}\sqrt{\alpha \varsigma}\frac{1}{2}\left(|\dot{\gamma}|_{\alpha \varsigma}^2+S_{\alpha \varsigma}(\gamma_{\alpha \varsigma})\right)d\eta(\gamma).
\end{align}
Now, with the substitution $\vartheta=(\varrho-\varsigma)>0$
\begin{align}
    (\dot{\mu}_{\alpha \varsigma})_{L_-^{\alpha \varsigma}} &:= \lim_{\vartheta\to0}\frac{W_{L_-^{\alpha \varsigma,\alpha \varsigma+\alpha \vartheta}}(\mu_{\alpha \varsigma},\mu_{\alpha \varsigma+\alpha \vartheta})}{\alpha \vartheta} \\
    &=\lim_{\varrho\to \varsigma}\frac{W_{L_-^{\alpha \varsigma,\alpha \varrho}}(\mu_{\alpha \varsigma},\mu_{\alpha \varrho})}{\alpha(\varrho-\varsigma)},
\end{align}
hence \eqref{eq:WL-minus-monotoncity} implies (when $\tau\to\sigma$)
\begin{align}\label{eq:diffWCL-}
    \alpha^{3/2}(\dot\mu_{\alpha \varsigma})_{L_-^{\alpha \varsigma}} \leq (\dot\delta_{\bar\gamma_\varsigma})_{L_-^\varsigma} + \frac{n}{2}(\alpha-1)\frac{1}{2\sqrt{\varsigma}}.
\end{align}

We now have
\begin{align}
    P_{\tau, \alpha\tau} v(y) -P_{\sigma, \alpha\sigma} v(x)
    &=\int v d\hat{P}_{\tau,\alpha\tau}\delta_{\bar\gamma_\tau} - \int vd\hat{P}_{\sigma,\alpha\sigma}\delta_{\bar\gamma_\sigma} \\
    &= \int vd(\mu_{\alpha\tau}-\mu_{\alpha\sigma}) \\
    &= \int_{C([\alpha\sigma,\alpha\tau])\to M}\int_{\alpha\sigma}^{\alpha\tau} \langle dv(\gamma_\varsigma),\dot\gamma_\varsigma\rangle d\varsigma d\eta(\gamma) \\
    &\leq \int_{C([\alpha\sigma,\alpha\tau])\to M}\int_{\alpha\sigma}^{\alpha\tau} \left(\frac{1}{2\lambda}|\nabla v|_{\varsigma}^2(\gamma_\varsigma) + \frac{\lambda}{2}|\dot\gamma_\varsigma|_{\varsigma}^2 \right) d\varsigma d\eta(\gamma)
    \end{align}
    by Young's inequality for any $\lambda>0$. Setting up to recover the $W_{L^-}$ speed and using the (differentiated) Wasserstein contraction \eqref{eq:diffWCL-}
    \begin{align} 
    ...&= \int_{\alpha\sigma}^{\alpha\tau}\int_M\frac{1}{2\lambda} |\nabla v|^2_{\varsigma}d\mu_\varsigma d\varsigma + \int_C\int_{\alpha\sigma}^{\alpha\tau}\frac{\lambda}{2}(|\dot\gamma_\varsigma|^2_{\varsigma} + S_{\varsigma}(\gamma_\varsigma))d\varsigma d\eta(\gamma) - \int_C\int_{\alpha\sigma}^{\alpha\tau}\frac{\lambda}{2}S_{\varsigma}(\gamma_\varsigma)d\varsigma d\eta(\gamma) \\
    &= \int_{\sigma}^{\tau}\int_M\frac{1}{2\lambda} |\nabla v|^2_{\alpha \varsigma}d\mu_{\alpha \varsigma} \alpha d\varsigma + \int_C\int_{\sigma}^{\tau}\frac{\lambda}{2}(|\dot\gamma_{\alpha \varsigma}|^2_{\alpha \varsigma} + S_{\alpha \varsigma}(\gamma_{\alpha \varsigma}))\alpha d\varsigma d\eta(\gamma) - \int_C\int_{\sigma}^{\tau}\frac{\lambda}{2}S_{\alpha \varsigma}(\gamma_{\alpha \varsigma})\alpha d\varsigma d\eta(\gamma) \\
    &= \int_{\sigma}^{\tau}\frac{\alpha}{2\lambda}P_{\varsigma,\alpha \varsigma}|\nabla v|^2_{\alpha \varsigma}(\bar\gamma_\varsigma)d\varsigma + \int_{\sigma}^{\tau}\frac{\lambda\alpha}{\sqrt{\alpha \varsigma}}(\dot\mu_{\alpha \varsigma})_{L_-^{\alpha \varsigma}} d\varsigma - \int_{\sigma}^{\tau}\frac{\lambda\alpha}{2}P_{\varsigma,\alpha \varsigma}S_{\alpha \varsigma}(\bar\gamma_\varsigma)d\varsigma \\
    &\overset {\eqref{eq:diffWCL-}} {\leq} \int_{\sigma}^{\tau}\frac{\alpha}{2\lambda}P_{\varsigma,\alpha \varsigma}|\nabla v|^2_{\alpha \varsigma}(\bar\gamma_\varsigma)d\varsigma - \int_{\sigma}^{\tau}\frac{\lambda\alpha}{2}P_{\varsigma,\alpha \varsigma}S_{\alpha \varsigma}(\bar\gamma_\varsigma)d\varsigma \\
    &\qquad +\int_{\sigma}^{\tau}\left[\frac{\lambda\alpha}{\sqrt{\alpha \varsigma}}\frac{\sqrt{\varsigma}}{\alpha^{3/2}}\frac{1}{2}(|\dot{\bar\gamma}|^2_\varsigma+S_\varsigma(\bar\gamma_\varsigma)) + \frac{\lambda\alpha}{\sqrt{\alpha \varsigma}}\frac{1}{\alpha^{3/2}}\frac{n}{2}(\alpha-1)\frac{1}{2\sqrt{\varsigma}}\right]d\varsigma \label{eq:L--Lisini-application}
\end{align}

Now divide by $(\tau-\sigma)$ and send $\tau\to\sigma$: the first line becomes
\begin{align}
    \lim_{\tau\to\sigma} \frac{P_{\tau,\alpha\tau}v(\bar\gamma_\tau)-P_{\sigma,\alpha\sigma}v(\bar\gamma_\sigma)}{\tau-\sigma} &= -\Delta P_{\sigma,\alpha\sigma}v(x) + \alpha P_{\sigma,\alpha\sigma}\Delta v(x) + \langle dP_{\sigma,\alpha\sigma} v(x),\dot{\bar\gamma}_\sigma\rangle \\
    &= -\Delta P_{\sigma,\alpha\sigma}v(x) + \alpha P_{\sigma,\alpha\sigma}\Delta v(x) + |\nabla P_{\sigma,\alpha\sigma} v(x)|^2
\end{align}
by choosing $\dot{\bar\gamma}_\sigma=\nabla P_{\sigma,\alpha\sigma}v(x)$. The last line instead, with $\lambda=\alpha$, becomes
\begin{align}
    &\frac{1}{2}P_{\sigma,\alpha \sigma}|\nabla v|^2_{\alpha \sigma}(\bar\gamma_\sigma) - \frac{\alpha^2}{2}P_{\sigma,\alpha \sigma}S_{\alpha \sigma}(\bar\gamma_\sigma) +\frac{1}{2}(|\dot{\bar\gamma}_\sigma|^2_\sigma+S_\sigma(\bar\gamma_\sigma)) + \frac{1}{\sigma}\frac{n}{4}(\alpha-1) \\
    &=\frac{1}{2}P_{\sigma,\alpha \sigma}|\nabla v|^2_{\alpha \sigma}(\bar\gamma_\sigma) - \frac{\alpha^2}{2}P_{\sigma,\alpha \sigma}S_{\alpha \sigma}(\bar\gamma_\sigma) +\frac{1}{2}(|\nabla P_{\sigma,\alpha\sigma}v(x)|^2_\sigma+S_\sigma(\bar\gamma_\sigma)) + \frac{1}{\sigma}\frac{n}{4}(\alpha-1)
\end{align}
Altogether,
\begin{align}
    &-\Delta P_{\sigma,\alpha\sigma}v(x) + \alpha P_{\sigma,\alpha\sigma}\Delta v(x) + \frac{1}{2}|\nabla P_{\sigma,\alpha\sigma} v(x)|^2 \\
    &\leq \frac{1}{2}P_{\sigma,\alpha \sigma}|\nabla v|^2_{\alpha \sigma}(x) - \frac{\alpha^2}{2}P_{\sigma,\alpha \sigma}S_{\alpha \sigma}(x) +\frac{1}{2}S_\sigma(x) + \frac{1}{\sigma}\frac{n}{4}(\alpha-1)
\end{align}
i.e. multiplying by $2\sigma^2$
\begin{align}
    &|\nabla P_{\sigma,\alpha\sigma} \sigma v(x)|^2 - 2\sigma\Delta P_{\sigma,\alpha\sigma}\sigma v(x) - \sigma^2 S_\sigma(x) \\
    &\leq P_{\sigma,\alpha \sigma}|\nabla\sigma v|^2_{\alpha \sigma}(x) - 2\alpha\sigma P_{\sigma,\alpha\sigma}\Delta \sigma v(x) - \alpha^2\sigma^2 P_{\sigma,\alpha \sigma}S_{\alpha \sigma}(x) + \frac{n}{2}\sigma(\alpha-1)\,,
\end{align}
which is the desired gradient estimate for the function $\tilde v = -\sigma v$.
\end{proof}

We conclude the subsection with some modifications of the Wasserstein contraction estimates that can be useful in other contexts. The next corollary shows that, under $\cD\geq0$, we can get a Wasserstein estimate for any four times satisfying the compatibility assumption \eqref{eq:compatibilityoftimes}.
\begin{cor}\label{cor:WC-with-time-translations}
    Consider $(M^n,g_\tau)_{\tau \in I}$ a smooth family of closed Riemannian manifolds with $\dim M = n$, but now parametrized by an arbitrary (not necessarily positive) $I \subseteq \R{}$. Suppose that $\mathcal{D}\geq0$ holds, and let $\sigma_1,\sigma_2,\tau_1,\tau_2 \in I$ with $\sigma_1<\sigma_2$, $\tau_1<\tau_2$, $\sigma_1<\tau_1$ (without loss of generality, up to exchanging $\sigma_i$ and $\tau_i$), $\sigma_2 < \tau_2$, and \begin{align}\label{eq:compatibilityoftimes}
        \tau_2-\tau_1 > \sigma_2-\sigma_1.
    \end{align} Then there exists $T_0$ for which $\frac{\sigma_2+T_0}{\sigma_1+T_0}=\frac{\tau_2+T_0}{\tau_1+T_0}=:\alpha$ and $\sigma_1+T_0>0$, and hence
    \begin{align}
    W_{\tilde{L}_{T_0,-}^{\sigma_2,\tau_2}}(\hat{P}_{\sigma_1,\sigma_2}\mu,\hat{P}_{\tau_1,\tau_2}\nu)\leq W_{\tilde{L}_{T_0,-}^{\sigma_1,\tau_1}}(\mu,\nu)+\frac{n}{2}(\sqrt{\tau_2 - \tau_1}-\sqrt{\sigma_2 -\sigma_1})^2\,.
    \end{align}
    Here $\tilde L_{T_0,-}^{\alpha,\beta}$ is the normalized cost $\tilde L_{T_0,-}^{\alpha,\beta} = (\sqrt{\alpha + T_0} - \sqrt{\beta + T_0})L_{T_0,-}^{\alpha,\beta}$, with $L_{T_0,-}^{\alpha,\beta}(x,y)$ induced by the Lagrangian $L_{T_0,-}(v,x,\tau) :=\sqrt{\tau + T_0}(|v|^2_{g_\tau}+S_{\tau})(x)$.
\end{cor}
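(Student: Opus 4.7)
The plan is to reduce to the already-established Wasserstein contraction along \eqref{eq:WCL-} by performing a time translation, using the fact that the $\cD$-condition is intrinsic to the smooth family $(M,g_\tau)$ and so is preserved when we replace the parametrization $\tau$ by $\tau + T_0$.

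First, I would solve the algebraic constraint for $T_0$ and $\alpha$. The relations $\frac{\sigma_2 + T_0}{\sigma_1 + T_0} = \frac{\tau_2 + T_0}{\tau_1 + T_0} = \alpha$ are equivalent to $\sigma_1 + T_0 = \frac{\sigma_2 - \sigma_1}{\alpha - 1}$ and $\tau_1 + T_0 = \frac{\tau_2 - \tau_1}{\alpha - 1}$; subtracting gives the explicit formula
\begin{equation}
\alpha - 1 = \frac{(\tau_2 - \tau_1) - (\sigma_2 - \sigma_1)}{\tau_1 - \sigma_1}\,,\qquad T_0 = \frac{\sigma_2 - \sigma_1}{\alpha - 1} - \sigma_1\,.
\end{equation}
Both the compatibility condition \eqref{eq:compatibilityoftimes} and the inequalities $\sigma_1 < \tau_1$, $\sigma_1 < \sigma_2$ are exactly what is needed to guarantee $\alpha > 1$ and $\sigma_1 + T_0 > 0$.

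Next, I would consider the shifted family $(M,\tilde g_\tau)_{\tau \in I + T_0}$ with $\tilde g_\tau := g_{\tau - T_0}$. Since shifting the parametrization does not change $\partial_\tau g$, $\cS$, $S$, or any of the curvature quantities appearing in $\cD$, the shifted family also satisfies $\cD \geq 0$, so Proposition \ref{prop:WL-contraction} applies to it. Applying \eqref{eq:WCL-} with times $\sigma := \sigma_1 + T_0$, $\tau := \tau_1 + T_0$, and factor $\alpha$, and then re-expressing the shifted $\tilde L_-$ cost and shifted adjoint heat semigroup in terms of the original flow (where they become precisely $\tilde L_{T_0,-}$ and $\hat P_{\sigma_i,\sigma_j}$), I obtain
\begin{equation}
W_{\tilde L_{T_0,-}^{\sigma_2,\tau_2}}(\hat P_{\sigma_1,\sigma_2}\mu, \hat P_{\tau_1,\tau_2}\nu) \leq W_{\tilde L_{T_0,-}^{\sigma_1,\tau_1}}(\mu,\nu) + \bigl(\sqrt{\tau_1 + T_0} - \sqrt{\sigma_1 + T_0}\bigr)^2 \frac{n}{2}(\alpha - 1)\,.
\end{equation}

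Finally, I would simplify the error term using the formulas for $\sigma_1 + T_0$ and $\tau_1 + T_0$ derived above:
\begin{equation}
\bigl(\sqrt{\tau_1 + T_0} - \sqrt{\sigma_1 + T_0}\bigr)^2 (\alpha - 1) = \frac{1}{\alpha - 1}\bigl(\sqrt{\tau_2 - \tau_1} - \sqrt{\sigma_2 - \sigma_1}\bigr)^2 (\alpha - 1) = \bigl(\sqrt{\tau_2 - \tau_1} - \sqrt{\sigma_2 - \sigma_1}\bigr)^2\,,
\end{equation}
which yields the stated inequality. There is no serious obstacle here; the only thing worth double-checking is that all the bookkeeping—namely that the shifted Lagrangian $\sqrt{\tau}(|v|^2 + \tilde S_\tau)$ corresponds under $\tau \mapsto \tau - T_0$ exactly to $\sqrt{\tau + T_0}(|v|^2 + S_\tau)$, and the shifted semigroup acts between the claimed times—is consistent, which follows straightforwardly from the definitions in Section \ref{subsec:t-dep-mflds} and Definition \ref{df:L-}.
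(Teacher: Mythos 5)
Your proposal is correct and follows the paper's approach exactly: express the constraint on $T_0$ and $\alpha$, verify positivity of $\sigma_1 + T_0$ and $\alpha > 1$ from the assumed inequalities, apply \eqref{eq:WCL-} to the shifted flow $(M,g_{\tau - T_0})$ (which satisfies $\cD\geq 0$ by translation-invariance), and translate back. The only cosmetic difference is that you express $T_0$ via $\alpha - 1$ rather than the closed form $T_0 = \frac{\sigma_2\tau_1 - \sigma_1\tau_2}{(\tau_2-\tau_1)-(\sigma_2-\sigma_1)}$ (the two are equivalent), and you carry out the error-term simplification explicitly, which the paper leaves implicit.
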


\begin{proof}
    Equation $\frac{\sigma_1+T_0}{\sigma_2+T_0}=\frac{\tau_1+T_0}{\tau_2+T_0}$ becomes $T_0=\frac{\sigma_2\tau_1-\sigma_1\tau_2}{(\tau_2-\tau_1)-(\sigma_2-\sigma_1)}$. Moreover, we have $\sigma_1+T_0=\frac{(\tau_1-\sigma_1)(\sigma_2-\sigma_1)}{(\tau_2-\tau_1)-(\sigma_2-\sigma_1)}>0$, thanks to \eqref{eq:compatibilityoftimes}. Now we apply the Wasserstein contraction to the shifted flow $(M,g_{\tau -T_0})_{\tau \in I + T_0}$: the translated flow still satisfies $\mathcal{D}\geq0$, as it is translation-invariant.
\end{proof}

The following allows us to compare two conjugate heat flows at any times using the $L_-$ distance, with their initial distribution at time $\tau=0$. Choosing the same measure we get a continuity in time statement for the conjugate heat flow.
\begin{cor}
    Consider $(M^n,g_\tau)_{\tau\in[0,T]}$ a smooth family of Riemannian manifolds with $\dim M = n$ and $\mathcal{D}\geq0$, then we have the estimate for any $\mu,\nu \in \cP(M)$, times $0<\sigma<\tau < T$,
    \begin{align}
        4W_{\tilde{L}_-^{\sigma,\tau}}(\hat{P}_{0,\sigma}\mu,\hat{P}_{0,\tau}\nu)\leq W_{d_0^2}(\mu,\nu)+2n(\sqrt{\tau}-\sqrt{\sigma})^2.
    \end{align}
    In particular, for $\mu=\nu$
    \begin{align}
        W_{\tilde{L}_-^{\sigma,\tau}}(\hat{P}_{0,\sigma}\mu,\hat{P}_{0,\tau}\mu)\leq \frac{n}{2}(\sqrt{\tau}-\sqrt{\sigma})^2.
    \end{align}
\end{cor}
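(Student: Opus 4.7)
The plan is to obtain both inequalities as a limit of the Wasserstein contraction \eqref{eq:WCL-} when the two initial times on the right-hand side are sent to zero with a fixed ratio. Fix $0 < \sigma < \tau < T$ and, for $t \in (0,1)$, set $\sigma_0 := t\sigma$, $\tau_0 := t\tau$, $\alpha := 1/t$. The constraints $1 < \alpha < T/\tau_0$ reduce to $\tau < T$, while $\alpha\sigma_0 = \sigma$ and $\alpha\tau_0 = \tau$. Applying \eqref{eq:WCL-} (available under $\mathcal{D}\geq 0$ by Theorem \ref{thm:L-charact}) gives
\begin{equation}
W_{\tilde L_-^{\sigma,\tau}}\bigl(\hat P_{\sigma_0,\sigma}\mu,\hat P_{\tau_0,\tau}\nu\bigr) \leq W_{\tilde L_-^{\sigma_0,\tau_0}}(\mu,\nu) + \frac{n}{2}(\sqrt\tau - \sqrt\sigma)^2(1-t),
\end{equation}
after simplifying $(\sqrt{\tau_0}-\sqrt{\sigma_0})^2(\alpha-1) = (\sqrt\tau - \sqrt\sigma)^2(1-t)$. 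As $t\to 0$, this error term tends trivially to $\tfrac{n}{2}(\sqrt\tau - \sqrt\sigma)^2$. For the left-hand side, the weak continuity $\hat P_{\sigma_0,\sigma}\mu \weak \hat P_{0,\sigma}\mu$ (analogously for $\nu$) together with lower semicontinuity of transport costs with continuous integrand and weakly convergent marginals yields $\liminf_{t\to 0} W_{\tilde L_-^{\sigma,\tau}}(\hat P_{\sigma_0,\sigma}\mu,\hat P_{\tau_0,\tau}\nu) \geq W_{\tilde L_-^{\sigma,\tau}}(\hat P_{0,\sigma}\mu,\hat P_{0,\tau}\nu)$.

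The main step is to show that the cost $\tilde L_-^{\sigma_0,\tau_0}$ converges uniformly on $M\times M$ to $\tfrac14 d_0^2$ as $t\to 0$, which by \cite[Thm.~5.20]{Vil09} yields $W_{\tilde L_-^{\sigma_0,\tau_0}}(\mu,\nu) \to \tfrac14 W_{d_0^2}(\mu,\nu)$. Under the parabolic change of variables $\eta = ts$, $\tilde\gamma(s) := \gamma(ts)$ (so $|\dot\gamma(\eta)|^2_\eta = t^{-2}|\dot{\tilde\gamma}(s)|^2_{ts}$), the defining action transforms to
\begin{equation}
\sqrt{t}\,L_-^{\sigma_0,\tau_0}(x,y) = \frac{1}{2}\inf_{\tilde\gamma}\int_\sigma^\tau \sqrt{s}\,\bigl(|\dot{\tilde\gamma}(s)|_{ts}^2 + t^2 S_{ts}(\tilde\gamma(s))\bigr)\,ds,
\end{equation}
where the infimum is over $\tilde\gamma:[\sigma,\tau]\to M$ with $\tilde\gamma(\sigma)=x$, $\tilde\gamma(\tau)=y$. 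Smoothness of $\eta \mapsto g_\eta$ and boundedness of $S$ on $[0,T]\times M$ give $g_{ts} = g_0 + O(t)$ and $t^2 S_{ts} = O(t^2)$, uniformly in $(s,x)$, so the infimum converges uniformly in $(x,y)$ to $\tfrac12 \inf_{\tilde\gamma} \int_\sigma^\tau \sqrt{s}|\dot{\tilde\gamma}|_0^2\,ds$. This one-dimensional problem is solved explicitly via Cauchy--Schwarz: for any curve from $x$ to $y$,
\begin{equation}
d_0(x,y)^2 \leq \Bigl(\int_\sigma^\tau |\dot{\tilde\gamma}|_0\,ds\Bigr)^2 \leq \Bigl(\int_\sigma^\tau s^{-1/2}ds\Bigr)\Bigl(\int_\sigma^\tau \sqrt{s}|\dot{\tilde\gamma}|_0^2 ds\Bigr) = 2(\sqrt\tau - \sqrt\sigma)\int_\sigma^\tau \sqrt{s}|\dot{\tilde\gamma}|_0^2\,ds,
\end{equation}
with equality attained by a suitable reparametrization of a $d_0$-geodesic (one with $|\dot{\tilde\gamma}|_0 \propto s^{-1/4}$). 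Hence $\sqrt t\,L_-^{\sigma_0,\tau_0}(x,y) \to d_0(x,y)^2/\bigl(4(\sqrt\tau - \sqrt\sigma)\bigr)$, uniformly, and since $\tilde L_-^{\sigma_0,\tau_0} = (\sqrt{\tau_0}-\sqrt{\sigma_0})L_-^{\sigma_0,\tau_0} = \sqrt{t}(\sqrt\tau - \sqrt\sigma)L_-^{\sigma_0,\tau_0}$, multiplying through gives the desired $\tilde L_-^{\sigma_0,\tau_0}(x,y) \to \tfrac14 d_0(x,y)^2$ uniformly on $M\times M$.

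Combining the three limits produces $W_{\tilde L_-^{\sigma,\tau}}(\hat P_{0,\sigma}\mu,\hat P_{0,\tau}\nu) \leq \tfrac14 W_{d_0^2}(\mu,\nu) + \tfrac{n}{2}(\sqrt\tau - \sqrt\sigma)^2$; multiplying by $4$ gives the first assertion, and setting $\mu = \nu$ kills the first right-hand term to yield the second. The main delicate point is the uniform convergence $\tilde L_-^{\sigma_0,\tau_0} \to \tfrac14 d_0^2$, but this is ultimately an explicit one-dimensional variational problem combined with standard uniform control of the smooth ingredients on the compact $[0,T]\times M$.
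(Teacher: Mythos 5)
Your proof follows the same strategy as the paper's: apply \eqref{eq:WCL-} with $\sigma_0 = t\sigma$, $\tau_0 = t\tau$, $\alpha = 1/t$, and send $t\to 0$ using \cite[Thm.~5.20]{Vil09}. The paper's proof is terse on the key hypothesis of that theorem—the uniform convergence of the rescaled cost $\tilde L_-^{\sigma_0,\tau_0} \to \tfrac14 d_0^2$, which requires $\sigma_0,\tau_0\to 0$ with fixed ratio rather than the $\tau\to\sigma$ limit addressed by \cite[Lem.~7.13]{rf1}—and your explicit parabolic rescaling plus Cauchy--Schwarz calculation fills this gap cleanly. One slip in the equality case: Cauchy--Schwarz with $f = s^{-1/4}$, $g = s^{1/4}|\dot{\tilde\gamma}|_0$ is saturated when $f \propto g$, i.e.\ when $|\dot{\tilde\gamma}|_0 \propto s^{-1/2}$, not $s^{-1/4}$ as you wrote; with this exponent one readily checks $\int_\sigma^\tau |\dot{\tilde\gamma}|_0\,ds = d_0(x,y)$ and the bound is sharp. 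This doesn't affect the conclusion.
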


\begin{proof}
    Fix $0<\bar \sigma<\bar\tau$, let $\bar\tau=\lambda\bar\sigma$. Consider a sequence $\sigma_i\to0$ and let $\tau_i=\lambda\sigma_i$, $\alpha_i=\frac{\bar\sigma}{\sigma_i}$. Apply equation \eqref{eq:WCL--GE-to-WC} with $\sigma=\sigma_i$, $\tau=\tau_i$, $\alpha=\alpha_i$, sending $i\to\infty$ we have (using \cite[Thm.~5.20]{Vil09} for the convergence)
    \begin{align}
        W_{\tilde{L}_-^{\bar\sigma,\bar\tau}}(\hat{P}_{0,\bar\sigma}\mu,\hat{P}_{0,\bar\tau}\nu) \leq W_{d_0^2/4}(\mu,\nu)+ \frac{n}{2}(\sqrt{\bar\tau}-\sqrt{\bar\sigma})^2.
    \end{align}
\end{proof}

\subsection{Entropy \texorpdfstring{$\tau^{-1/2}$}{t12}-convexity and \texorpdfstring{$\mathrm{EVI}$}{EVI}}\label{ss:entropyEVIL-}

\subsubsection{From entropy \texorpdfstring{$\tau^{-1/2}$}{t12}-convexity to $W_{L_-}$ contraction}\label{sss1:entropyEVIL-}
This is the analog of Subsection \ref{ss:entropyEVIL0} for the $L_{-}$ distance. Many aspects will be unchanged from the $L_0$ case, and we will omit the details that have already appeared in that setting. We begin with the implication where the formal computations are correct up to an error with a favorable sign. We consider times $\sigma < \tau$, a.c. measures $\mu_{\sigma} = \rho_\sigma\,dV_\sigma,\,\mu_{\tau} = \rho_\tau\,dV_\tau \in \cP(M)$, and Kantorovitch potentials $(\phi_\eta)_{\eta \in [\sigma,\tau]}: M\to \R{}$ provided by Corollary \ref{cor:dynamic-potentials} with respect to the $L_-$ cost. In particular, $\phi_\eta$ is Lipschitz and solves the following Hamilton-Jacobi equation

\begin{equation}\label{eq:L-minus-cvx-implies-contraction-I}
\partial_\eta \phi_\eta(x) = - \frac{1}{2\sqrt{\eta}}|\nabla \phi_\eta|^2_{g_\eta}(x) + \frac{\sqrt{\eta}}{2}S_\eta(x)\,,\qquad \forall \eta \in [\sigma,\tau]\,,\text{ a.e. }x\in M\,.
\end{equation}

One can also estimate the (one-sided) derivatives of entropy along $L_-
$ Wasserstein geodesics using the semi-concavity properties of the potentials as in Lemma \ref{lem:Top-1-sided-est}, or more directly \cite[l.~(3.18),~(3.24-25)]{Top09}

\begin{align}\label{eq:L-minus-cvx-implies-contraction-II}
\sqrt{\sigma}\left.\frac{d}{d\eta}\right|_{\eta = \sigma^+} \cE(\mu_\eta) &\geq \int_M (\La \nabla \ln(\rho_{\sigma}),\nabla \phi_{\sigma}\Ra - \sqrt{\sigma}S_{\sigma})d\mu_{\sigma}\,, \\
\sqrt{\tau}\left.\frac{d}{d\eta}\right|_{\eta = \tau^-} \cE(\mu_\eta) &\leq \int_M (\La \nabla \ln(\rho_{\tau}),\nabla \phi_{\tau}\Ra - \sqrt{\tau}S_{\tau})d\mu_{\tau}\,.
\end{align}

As in Proposition \ref{prop:L0-cvxty-implies-contr/EVI}, this is sufficient to obtain $W_{L_-}$ contraction for multiplicatively related times if we assume an appropriate entropy convexity along $W_{L_-}$ geodesics. These computations are related to those of \cite[Prop.~16]{Lot09}, but with \eqref{eq:L-minus-cvx-implies-contraction-II} in hand there is no need to perform any (further) computations along $L_-$ geodesics.

\begin{prop}\label{prop:L-minus-convexity-implies-contraction}
Pick times $[\sigma,\tau]\subseteq I$ and suppose also that for any measures $\mu_{\sigma},\,\mu_\tau \in \cP(M)$ there exists a $W_{L_-}$-geodesic $(\mu_{\eta})_{\eta \in [\sigma,\tau]}$ between $\mu_{\sigma}, \mu_{\tau}$ such that

\begin{equation}
\cE(\mu_\eta) - \cE(\mu_{\sigma}) + \frac{1}{\sqrt{\eta}}W_{L_-^{\sigma,\eta}}(\mu_{\sigma},\mu_{\eta}) + \frac{n}{2}\ln(\eta/\sigma)\,,\label{eq:ECL--EC-to-WC}\tag{EC\({}_{L_-}\)}
\end{equation}
is convex in the variable $\eta^{-1/2}$. Then for any $\mu_{\sigma}, \mu_{\tau} \in \cP(M)$ and $1 < \alpha \leq \sup I/\tau$, it holds \noeqref{eq:WCL--EC-to-WC}

\begin{align}
(\sqrt{\alpha \tau} - \sqrt{\alpha\sigma})&\cdot W_{L_-^{\alpha\sigma, \alpha\tau}}(\hat{P}_{\sigma,\alpha\sigma}\mu_{\sigma}, \hat{P}_{\tau, \alpha\tau}\mu_{\tau}) - \frac{n}{2}(\sqrt{\alpha\tau} - \sqrt{\alpha\sigma})^2 \\
&\leq (\sqrt{\tau} - \sqrt{\sigma})\cdot W_{L_-^{\sigma,\tau}}(\mu_{\sigma}, \mu_{\tau}) - \frac{n}{2}(\sqrt{\tau} - \sqrt{\sigma})^2\,.\label{eq:WCL--EC-to-WC}\tag{WC\({}_{L_-}\)}
\end{align}
\end{prop}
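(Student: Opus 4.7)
The plan is to adapt Proposition~\ref{prop:L0-cvxty-implies-contr/EVI}, replacing the additive time-translation used there by the multiplicative rescaling $\eta \mapsto \alpha\eta$ natural for the $L_-$ cost. First I would smooth the input by fixing $\alpha^* \in (1, \sup I/\tau]$ and considering the pair $(\hat{P}_{\sigma,\alpha^*\sigma}\mu_\sigma, \hat{P}_{\tau,\alpha^*\tau}\mu_\tau)$, which has smooth strictly positive densities by the strong maximum principle. The convexity assumption~\eqref{eq:ECL--EC-to-WC} applied to this pair at times $[\alpha^*\sigma, \alpha^*\tau]$ yields a $W_{L_-}$-geodesic $(\tilde\mu_\eta)$ and space-time Lipschitz Kantorovich potentials $(\phi_\eta)$ from Corollary~\ref{cor:dynamic-potentials} solving the HJ equation~\eqref{eq:L-minus-cvx-implies-contraction-I}. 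The assumed convexity of $F$ in the variable $u = \eta^{-1/2}$ translates, via the chain rule, to monotonicity of $\eta \mapsto \eta^{3/2}F'(\eta)$; evaluated at the endpoints of the geodesic this gives $(\alpha^*\sigma)^{3/2}F'((\alpha^*\sigma)^+) \leq (\alpha^*\tau)^{3/2}F'((\alpha^*\tau)^-)$.

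Next I would expand both sides of this inequality. Using~\eqref{eq:L-minus-cvx-implies-contraction-II} for the entropy terms, the Lisini formula $(\dot{\tilde\mu}_\eta)_{L_-^\eta} = \int_M[|\nabla\phi_\eta|^2/(2\sqrt\eta) + \sqrt\eta\,S_\eta/2]\,d\tilde\mu_\eta$ from Proposition~\ref{prop:Lisini-for-geodesics}, and integration by parts (valid by smoothness of the densities), the endpoint bound reduces to an expression involving only $\phi_\eta$, $\partial_\eta\phi_\eta$, and the Laplacian of the density of $\tilde\mu_\eta$. Substituting via the HJ equation $\sqrt\eta\,\partial_\eta\phi_\eta = -\frac{1}{2}|\nabla\phi_\eta|^2 + \frac{\eta}{2}S_\eta$ and the identity $\partial_r\hat{P}_{\sigma,r}\mu_\sigma|_{r=\eta} = \Delta_\eta\tilde\mu_\eta$, the terms recombine as the one-sided $\eta$-derivatives of $\int_M\phi_\eta\,d\hat{P}_{\sigma,\eta}\mu_\sigma$ and $\int_M\phi_\eta\,d\hat{P}_{\tau,\eta}\mu_\tau$ at $\eta = \alpha^*\sigma$ and $\eta = \alpha^*\tau$, in direct analogy with the $L_0$ computation.

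I would then identify the resulting estimate as a one-sided differential inequality at $\alpha = \alpha^*$ for the quantity $\Phi(\alpha) := (\sqrt{\alpha\tau} - \sqrt{\alpha\sigma})W_{L_-^{\alpha\sigma,\alpha\tau}}(\hat{P}_{\sigma,\alpha\sigma}\mu_\sigma, \hat{P}_{\tau,\alpha\tau}\mu_\tau) - \frac{n}{2}(\sqrt{\alpha\tau} - \sqrt{\alpha\sigma})^2$. For this one uses that the Hopf--Lax-transported potentials from~\eqref{HLforRescaledL-} for $\alpha$ near $\alpha^*$ furnish admissible candidates in the dual problem~\eqref{eq:dualpbm1} at the rescaled endpoints, giving a one-sided upper bound on $W_{L_-^{\alpha\sigma,\alpha\tau}}(\hat{P}_{\sigma,\alpha\sigma}\mu_\sigma, \hat{P}_{\tau,\alpha\tau}\mu_\tau)$ that equals the Wasserstein distance itself at $\alpha = \alpha^*$. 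Varying $\alpha^*$ over $(1,\sup I/\tau]$ then produces the Dini inequality throughout the parameter range, implying $\Phi$ is non-increasing on $(1,\sup I/\tau]$; continuity of $\Phi$ under weak convergence (via~\cite[Thm.~5.20]{Vil09}) as $\alpha^* \downarrow 1$ extends to $[1,\sup I/\tau]$, delivering~\eqref{eq:WCL--EC-to-WC}.

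The main obstacle is the last identification. The multiplicative rescaling introduces factors of $\alpha$ throughout the Hopf--Lax equation~\eqref{HLforRescaledL-}, the Kantorovich potentials, and the adjoint heat semigroup, which must be balanced carefully. In particular, the additional term $\frac{n}{2}\ln(\eta/\sigma)$ appearing in $F$ contributes $\frac{n\sqrt\eta}{2}$ to $\eta^{3/2}F'(\eta)$; this extra quantity must be matched against the $\frac{n}{2}(\sqrt{\alpha\tau} - \sqrt{\alpha\sigma})^2$ correction in $\Phi$, whose $\alpha$-derivative can be computed directly. Tracking this correction precisely through the derivation is the new technical element absent from the $L_0$ case.
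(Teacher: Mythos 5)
Your proposal is correct and takes essentially the same approach as the paper: apply the heat flow for a small parameter (your $\alpha^* > 1$, the paper's implicit $\eps > 0$ referenced back to Proposition \ref{prop:L0-cvxty-implies-contr/EVI}) to smooth the endpoints, apply the convexity to the smoothed pair, expand the $\eta^{-1/2}$-derivative of the convex quantity at the endpoints via the one-sided entropy estimates \eqref{eq:L-minus-cvx-implies-contraction-II}, the Lisini formula, integration by parts, and the HJ equation, and identify the result with a one-sided $\alpha$-derivative bound for $\sqrt\alpha W_{L_-^{\alpha\sigma,\alpha\tau}}$ via the duality with rescaled Hopf--Lax candidates. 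Your observation about the $\frac{n}{2}\sqrt{\eta}$ contribution from the logarithmic term matching the $\alpha$-derivative of $\frac{n}{2}(\sqrt{\alpha\tau}-\sqrt{\alpha\sigma})^2$ after accounting for the $\sqrt\alpha$ Jacobian is precisely the bookkeeping the paper performs.
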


\begin{proof}
The technical details--in particular the smoothing of the measures by flowing for some small time $\eps > 0$--are identical to those of the proof of Proposition \ref{prop:L0-cvxty-implies-contr/EVI}. We therefore restrict our attention to the formal computations, which all hold rigorously for endpoint measures that are smooth with full support.

Writing $d/d(\eta^{-1/2}) = -2\eta^{3/2}d/d\eta$, by the convexity assumption we have

\begin{equation}
-\frac{n}{2}(\sqrt{\tau} - \sqrt{\sigma}) \leq \left. \eta^{3/2}\frac{d}{d\eta}\left[\cE(\mu_\eta) + \frac{1}{\sqrt{\eta}}W_{L_-^{\sigma,\eta}}(\mu_{\sigma},\mu_\eta)\right]\right|_{\sigma^+}^{\tau^{-}}\,.
\end{equation}

Note that the existence of the one-sided derivatives of the first term of the RHS is provided by \eqref{eq:L-minus-cvx-implies-contraction-II}, while the derivative for the second term is computed using Proposition \ref{prop:Lisini-for-geodesics} applied to the $L_-$ Lagrangian, yielding
\begin{align}
\cdots &\leq \left.\eta\int_M (\La \nabla \ln( \rho_\eta),\nabla \phi_\eta \Ra - \sqrt{\eta}S_\eta)d\mu_\eta -\frac{1}{2}\int_{M}\phi_\eta\,d\mu_{\eta} + \eta\int_M \left(\frac{1}{2\sqrt{\eta}}|\nabla \phi_\eta|^2 + \frac{\sqrt{\eta}}{2}S_\eta\right)\, d\mu_\eta\right|_{\sigma}^{\tau}\\
&= \left.-\eta\int_M \phi_\eta\Delta\rho_\eta \,dV_\eta -\frac{1}{2}\int_{M}\phi_\eta\,d\mu_{\eta} - \eta\int_M \left(-\frac{1}{2\sqrt{\eta}}|\nabla \phi_\eta|^2 + \frac{\sqrt{\eta}}{2}S_\eta\right)\, d\mu_\eta\right|_{\sigma}^{\tau}\,.
\end{align}

Since $\phi_\eta$ (extended past $\tau$ using the Hopf-Lax semigroup) is Lipschitz and \eqref{eq:L-minus-cvx-implies-contraction-I} holds a.e. in $M$ for $\eta = \sigma,\tau$, one computes 

\begin{equation}
\cdots =-\left.\partial_{\alpha}\right|_{\alpha = 1^+} \sqrt{\alpha}\cdot\left[\int_{M} \phi_{\alpha\tau} d\hat{P}_{\tau, \alpha\tau}[\nu_{\tau}] - \int_{M} \phi_{\alpha\sigma} d\hat{P}_{\sigma, \alpha\sigma}[\nu_{\sigma}]\right]\,.
\end{equation}

Now, as in Proposition \ref{prop:L0-cvxty-implies-contr/EVI}, and modulo the technical details therein, this shows that 

\begin{equation}
\left.\partial_\alpha^+\right|_{\alpha = 1^+} \sqrt{\alpha}\cdot W_{L_-^{\alpha\sigma, \alpha\tau}}(\hat{P}_{\sigma,\alpha\sigma}[\nu_{\sigma}], \hat{P}_{\tau,\alpha\tau}[\nu_{\tau}]) \leq \frac{n}{2}(\sqrt{\tau} - \sqrt{\sigma})\,,
\end{equation}
and thus that the quantity $(\sqrt{\alpha \tau} - \sqrt{\alpha\sigma})\cdot W_{L_-^{\alpha\sigma, \alpha\tau}}(\hat{P}_{\sigma, \alpha\sigma}[\nu_{\sigma}], \hat{P}_{\tau,\alpha\tau}[\nu_{\tau}]) - (n/2)(\sqrt{\alpha\tau} - \sqrt{\alpha\sigma})^2$ is non-increasing in $\alpha > 1$.
\end{proof}

\subsubsection{From gradient estimates to entropy \texorpdfstring{$\tau^{-1/2}$}{t12}-convexity (via $\mathrm{EVI}$).}\label{sss2:entropyEVIL-}
We now turn to the other direction: beginning  from \eqref{eq:GEL--GE-to-EVI} we recover \eqref{eq:ECL--EC-to-WC}, passing through \eqref{eq:EVIL--GE-to-EVI}. The general strategy is the same as in the $L_0$ case, but there are a few additional technical challenges.

\begin{lem}\label{lem:L-minus-action-entropy-est}
Let $(\nu_\eta)_{\eta \in [\sigma,\tau]} = (\rho_\eta dV_\eta)_{\eta \in [\sigma,\tau]}$, $\rho \in C^\infty\big([\sigma,\tau]\times M;(0,\infty)\big)$, $[\sigma,\tau] \subseteq I$, a curve of smooth measures with positive densities. For $\sup I/\tau,\tau/\sigma > \alpha > 1$, write $\lambda_I := (\sqrt{\alpha} - 1)/((\sigma/\alpha)^{-1/2} - \tau^{-1/2})$, $\lambda_{II} := (\sqrt{\alpha} - 1)/(\sigma^{-1/2} - (\tau/\alpha)^{-1/2})$. Then if the $L_-$-type gradient estimate \eqref{eq:GEL--GE-to-EVI} holds, we have the action-entropy estimates
\begin{align}
\lambda_I\cE(\hat{P}_{\tau, \alpha\tau}[\nu_{\tau}]) + \frac{1}{\sqrt{\alpha}}W_{L_-^{\sigma,\alpha\tau}}(\nu_{\sigma},\hat{P}_{\tau, \alpha\tau}[\nu_{\tau}]) &\leq \lambda_I\cE(\nu_{\sigma})  +\frac{1 - \lambda_I/\sqrt{\sigma}}{\sqrt{\alpha}}\int_{\sigma}^{\tau}(\dot\nu_{\tilde\eta})_{L_-^{\tilde \eta}}\,d\tilde\eta  \\
&\qquad + \frac{\lambda_I}{2}\int_{\sigma}^{\tau}\frac{1}{\eta^{3/2}}\left(\int_{\sigma}^{\eta}(\dot{\nu}_{\tilde\eta})_{L_-^{\tilde \eta}}\,d\tilde\eta  - W_{L_-^{\sigma,\eta}}(\nu_{\sigma},\nu_{\eta})\right)\,d\eta\\
&\qquad +n (\sqrt{\alpha} - 1)\sqrt{\tau} - \frac{n}{2}\lambda_I \ln(\tau/\sigma) + O((\sqrt{\alpha} - 1)^2)\,, \label{eq:L-minus-preEVI-I}
\end{align}
and
\begin{align}
-\lambda_{II} \cE(\nu_{\tau}) + W_{L_0^{\alpha\sigma,\tau}}(\hat{P}_{\sigma, \alpha\sigma}[\nu_{\sigma}], \nu_{\tau}) &\leq -\lambda_{II} \cE(\hat{P}_{\sigma, \alpha\sigma}[\nu_{\sigma}]) + (1 + \lambda_{II}/\sqrt{\tau})\int_{\sigma}^{\tau}(\dot{\nu}_{\tilde\eta})_{L_0^{\tilde\eta}}\,d\tilde\eta \\
& \qquad - \frac{\lambda_{II}}{2}\int_{\sigma}^{\tau}\frac{1}{\eta^{3/2}}\left(\int^{\eta}_{\sigma}(\dot{\nu}_{\tilde\eta})_{L_-^{\tilde \eta}}\,d\tilde\eta  - W_{L_-^{\sigma,\eta}}(\nu_{\sigma},\nu_{\eta})\right)\,d\eta \\
&\qquad -n(\sqrt{\alpha} - 1)\sqrt{\sigma} + \frac{n}{2}\lambda_{II}\ln(\tau/\sigma) + O((\sqrt{\alpha} - 1)^2)\,. \label{eq:L-minus-preEVI-II}\
\end{align}
The constant implicit in the $O((\sqrt\alpha - 1)^2)$ error term depends on quantities that are independent of the curve $(\nu_\eta)_{\eta \in [\sigma,\tau]}$ (namely $\text{diam}_{L_-}((M,g_\eta)_{\eta \in [\sigma,\tau]}) $, $-\inf_{(\eta,x)\in I \times M} \min(0, S_\eta(x))$, $n$, $\tau$, $\sigma$), and on a uniform bound on the actions $\sup_{[\tilde\sigma,\tilde{\tau}]\subseteq [\sigma,\tau]} \left|\int_{\tilde\sigma}^{\tilde\tau}(\dot\nu_{\eta})_{L_-^\eta}\,d\eta\right|$.
\end{lem}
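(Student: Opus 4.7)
The plan is to adapt the strategy of Lemma \ref{lem:L0-action-entropy-est} to the $L_-$ setting, in which the Lagrangian carries a $\sqrt\eta$-weight and time-rescalings are multiplicative rather than additive. I focus on \eqref{eq:L-minus-preEVI-I}, since \eqref{eq:L-minus-preEVI-II} follows by the symmetric argument flowing the $\sigma$-endpoint in place of the $\tau$-endpoint. First, choose a smooth increasing diffeomorphism $\xi_\alpha : [\sigma, \alpha\tau] \to [\sigma,\tau]$ fixing $\sigma$ and sending $\alpha\tau$ to $\tau$; an affine parametrization in the $\eta^{-1/2}$-variable is the natural choice, as it produces exactly the interpolation constant $\lambda_I$. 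Define the curve of smooth, positive-density probability measures
\begin{equation}
\nu_{\hat\eta, \alpha} := \hat{P}_{\xi_\alpha(\hat\eta), \hat\eta}\big[\nu_{\xi_\alpha(\hat\eta)}\big]\,, \qquad \hat\eta \in [\sigma, \alpha\tau]\,,
\end{equation}
so that $\nu_{\sigma,\alpha} = \nu_\sigma$ and $\nu_{\alpha\tau, \alpha} = \hat{P}_{\tau, \alpha\tau}\nu_\tau$. Let $(\phi_{\hat\eta})_{\hat\eta \in [\sigma, \alpha\tau]}$ denote the Lipschitz $L_-$-Kantorovich potentials between these endpoints supplied by Corollary \ref{cor:dynamic-potentials}, which solve a.e.\ the Hamilton-Jacobi equation $\partial_{\hat\eta}\phi_{\hat\eta} = -|\nabla\phi_{\hat\eta}|^2/(2\sqrt{\hat\eta}) + \sqrt{\hat\eta}\,S_{\hat\eta}/2$.

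Next, compute $\partial_{\hat\eta}\cE(\nu_{\hat\eta, \alpha})$ and $\partial_{\hat\eta}\int_M \phi_{\hat\eta}\, d\nu_{\hat\eta,\alpha}$ using the transport equation
\begin{equation}
\partial_{\hat\eta}\nu_{\hat\eta,\alpha} = \Delta_{\hat\eta}\nu_{\hat\eta,\alpha} + \xi_\alpha'(\hat\eta)\hat{P}_{\tilde\eta, \hat\eta}\big[(\partial_{\tilde\eta} - \Delta_{\tilde\eta})\nu_{\tilde\eta}\big]\,, \qquad \tilde\eta := \xi_\alpha(\hat\eta)\,,
\end{equation}
followed by duality and integration by parts. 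Setting $\lambda := 1 - \xi_\alpha'(\hat\eta)$ and taking a linear combination of the two derivatives weighted by $-\lambda\sqrt{\hat\eta}/(1-\lambda)$ -- the $\sqrt{\hat\eta}$-factor chosen to match the corresponding factor in the HJ equation -- one isolates the perfect square $|\nabla (\phi_{\hat\eta} - \lambda\sqrt{\hat\eta}\ln\rho_{\hat\eta,\alpha}/(1-\lambda))|^2/(2\sqrt{\hat\eta})$ together with a residual expression in the auxiliary function $f_{\hat\eta} := \phi_{\hat\eta} - \lambda\sqrt{\hat\eta}\ln\rho_{\hat\eta,\alpha}/(1-\lambda)$ of precisely the form controlled by \eqref{eq:GEL--GE-to-EVI}:
\begin{equation}
(1-\lambda)\Big(P_{\tilde\eta, \hat\eta}\big[|\nabla f_{\hat\eta}|^2 - 2\hat\eta\Delta f_{\hat\eta} - \hat\eta^2 S_{\hat\eta}\big] - \big[|\nabla P_{\tilde\eta, \hat\eta}f_{\hat\eta}|^2 - 2\tilde\eta\Delta P_{\tilde\eta,\hat\eta}f_{\hat\eta} - \tilde\eta^2 S_{\tilde\eta}\big]\Big) \geq -\tfrac{n}{2}(\hat\eta - \tilde\eta)(1-\lambda)\,.
\end{equation}
Integrating this $(n/2)(\hat\eta - \tilde\eta)$-correction over $\hat\eta \in [\sigma,\alpha\tau]$ with the weights produced by the prior manipulations yields the dimensional terms $n(\sqrt\alpha - 1)\sqrt\tau - (n/2)\lambda_I\ln(\tau/\sigma)$ of the statement (up to $O((\sqrt\alpha - 1)^2)$ contributions from the Taylor expansion of the logarithm and the factor $\sqrt{\alpha\tau}$).

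Finally, integrate the resulting differential inequality from $\sigma$ to $\alpha\tau$. The pairing $\int \phi_{\hat\eta}\,d\nu_{\hat\eta,\alpha}\big|_\sigma^{\alpha\tau}$ telescopes (by optimality of the potentials) to $W_{L_-^{\sigma,\alpha\tau}}(\nu_\sigma,\hat P_{\tau,\alpha\tau}\nu_\tau)$, and the $1/\sqrt\alpha$ factor in \eqref{eq:L-minus-preEVI-I} emerges from the parabolic scaling relation $\sqrt\alpha\cdot W_{L_-^{\sigma,\alpha\tau}}\leftrightarrow W_{L_-^{\sigma/\alpha,\tau}}$ implicit in the reparametrization $\xi_\alpha$. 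The velocity contribution $\xi_\alpha'(\hat\eta)\cdot(\dot\nu_{\tilde\eta})_{L_-^{\tilde\eta}}$ becomes $(1 - \lambda_I/\sqrt\sigma)/\sqrt\alpha \cdot \int_\sigma^\tau (\dot\nu_{\tilde\eta})_{L_-^{\tilde\eta}}\,d\tilde\eta$ after change of variables. The extra weighted correction $(\lambda_I/2)\int_\sigma^\tau\eta^{-3/2}\big(\int_\sigma^\eta (\dot\nu)_{L_-^{\tilde\eta}}\,d\tilde\eta - W_{L_-^{\sigma,\eta}}(\nu_\sigma,\nu_\eta)\big)\,d\eta$ arises from integration by parts in time against the varying weight $1/\sqrt{\hat\eta}$: on each intermediate subinterval $[\sigma,\eta]$ the pair $(\phi_\sigma,\phi_\eta)$ is only admissible (not optimal) for the transport problem $(\nu_\sigma,\nu_\eta)$, so the corresponding pairing is dominated by $W_{L_-^{\sigma,\eta}}(\nu_\sigma,\nu_\eta)$, while the genuine $L_-$-action dominates the cost, and this asymmetry -- together with the sign of $\partial_{\hat\eta}(1/\sqrt{\hat\eta})$ -- produces the favorable sign of the correction.

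The main technical obstacle is the precise coefficient matching in the gradient-estimate step: one must verify that the factors $2\hat\eta, 2\tilde\eta, \hat\eta^2, \tilde\eta^2$ multiplying $\Delta$ and $S$ in the residual expression agree \emph{exactly} with those appearing in \eqref{eq:GEL--GE-to-EVI}, and the compatibility of this matching with the $\sqrt{\hat\eta}$-weight forces the specific form of $\xi_\alpha$ and hence of $\lambda_I, \lambda_{II}$. A secondary, mostly combinatorial, difficulty is the accounting of the $O((\sqrt\alpha - 1)^2)$ error, which collects higher-order Taylor contributions from $\sqrt{\alpha\tau}, \sqrt{\alpha\sigma}$ and from the deviation of the gradient estimate's linearization at $\hat\eta = \tilde\eta$; the implicit constant is controlled uniformly in terms of the diameter $\text{diam}_{L_-}$, the lower bound on $S$, the dimension, and the uniform action bound specified in the statement.
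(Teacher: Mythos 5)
Your overall architecture is right and matches the paper's: define a diagonal curve by flowing along a shrinking sliding window, pair the density derivative with the Hamilton-Jacobi equation for the potentials, complete a square so that the gradient estimate applies, then integrate by parts in time. However, two of the specific choices you make are not the ones the paper uses, and they would break the precise coefficient-matching that the lemma requires.

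First, the reparametrization $\xi_\alpha: [\sigma,\alpha\tau]\to[\sigma,\tau]$ in the paper is affine in $\eta^{1/2}$, not $\eta^{-1/2}$: it is defined by $\sqrt{\xi_\alpha(\eta)} = (1-\lambda)\sqrt{\eta} + b$, with $\lambda = \sqrt{\tau}(\sqrt{\alpha}-1)/(\sqrt{\alpha\tau}-\sqrt{\sigma})$ and $b = \sqrt{\sigma\tau}(\sqrt{\alpha}-1)/(\sqrt{\alpha\tau}-\sqrt{\sigma})$. A short computation shows $b = \lambda_I$, so this is exactly the affine map that produces the constant appearing in the statement. An $\eta^{-1/2}$-affine map $\xi_\alpha^{-1/2} = a\,\eta^{-1/2}+c$ with the same endpoints yields $1 - a = \lambda_I/\sqrt{\tau}$ and $c = \lambda_I/\sqrt{\sigma\tau}$, neither of which is $\lambda_I$ (even though the entropy convexity statement is phrased in $\eta^{-1/2}$, the reparametrization must be $\sqrt{\cdot}$-affine for the constants to come out as stated). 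Second, the weight in front of the entropy derivative is not $\lambda := 1-\xi_\alpha'(\hat\eta)$ but rather $\kappa := 1-\hat\eta\,\xi_\alpha'(\hat\eta)/\xi_\alpha(\hat\eta) = 1 - (1-\lambda)\sqrt{\hat\eta/\xi_\alpha(\hat\eta)}$. Your $1-\xi_\alpha'(\hat\eta) = 1-(1-\lambda)\sqrt{\xi_\alpha(\hat\eta)/\hat\eta}$ has the ratio $\hat\eta/\xi_\alpha$ inverted inside the square root; the two agree only at $\hat\eta = \sigma$ and differ elsewhere by a quantity of leading order $O(\sqrt{\alpha}-1)$, which is of the same order as the whole estimate. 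The correct $\kappa$ is forced by two requirements you omit: (i) the gradient estimate (GE$_{L_-}$) applied to $\sqrt{\hat\eta}f_{\hat\eta}$ between $\xi_\alpha(\hat\eta)<\hat\eta$ demands that the $\Delta$ and $S$ terms in the residual carry coefficients $2\xi_\alpha(\hat\eta),\ \xi_\alpha^2(\hat\eta)$ on the outside and $2\hat\eta,\ \hat\eta^2$ inside $P_{\xi_\alpha,\hat\eta}$, which pins down the ratio $\hat\eta\xi_\alpha'/\xi_\alpha$; and (ii) one must then divide the differential inequality through by $\hat\eta^2/\sqrt{\xi_\alpha(\hat\eta)}$, at which point the algebraic identity $\kappa = b/\sqrt{\xi_\alpha(\hat\eta)}$ converts the $\hat\eta$-dependent entropy coefficient into the constant $b = \lambda_I$ so that it can be integrated in time and taken outside. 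With your choice of $\lambda$ neither the normalization nor the constant extraction works, and you would be left with a non-constant entropy coefficient that does not reduce to the stated $\lambda_I\cE|_{\sigma}^{\alpha\tau}$.
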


\begin{proof}
We proceed as in the proof of Lemma \ref{lem:L0-action-entropy-est}, but with fewer details when the two proofs align. In this case, we consider the $\sqrt{\cdot}$-linear, bijective map of intervals $\xi_\alpha: [\sigma,\alpha\tau] \to [\sigma,\tau
]$ given by

\begin{equation}\label{eq:L-paramdefn}
\xi_\alpha:\begin{cases}\xi_\alpha(\sigma) = \sigma\,,\,\xi_\alpha(\alpha\tau) = \tau\,,& \\ \sqrt{\xi_\alpha(\eta)} = (1 - \lambda) \cdot\sqrt{\eta} + b&\end{cases}\, \implies \lambda = \frac{\sqrt{\tau}\cdot(\sqrt{\alpha} - 1)}{\sqrt{\alpha\tau} - \sqrt{\sigma}}\,,\qquad b = \frac{\sqrt{\sigma\tau}\cdot(\sqrt{\alpha} - 1)}{\sqrt{\alpha\tau} - \sqrt{\sigma}} \,.
\end{equation}

The two properties of $\xi_\alpha$ that will interest us are its values at the boundary of the interval, along with the ODE $\xi_\alpha'(\eta) = (1 - \lambda)\cdot \sqrt{\xi_\alpha(\eta)/\eta}$ it solves. As in Lemma \ref{lem:L0-action-entropy-est}, we now define the ``diagonal'' curve of measures $\nu_{\eta,\alpha} := \rho_{\eta,\alpha}\,dV_\eta := \hat{P}_{\xi_{\alpha}(\eta),\eta}[\nu_{\xi_\alpha(\eta)}]$ for $\eta \in [\sigma,\alpha\tau]$, along with a family of Kantorovich potentials $(\phi_\eta)_{\eta \in [\sigma,\alpha\tau]}: M \to \R{}$ from $\nu_{\sigma,\alpha}$ to $\nu_{\alpha\tau,\alpha}$ given by Corollary \ref{cor:dynamic-potentials}, which are Lipschitz and solve the Hamilton-Jacobi equation \eqref{eq:L-minus-cvx-implies-contraction-I} for every $\eta \in [\sigma,\alpha\tau]$ and a.e. $x \in M$. It will be convenient to label the (non-constant) quantity $\kappa := 1 - \eta\xi_\alpha'(\eta)/\xi_{\alpha}(\eta) = 1 - \sqrt{\eta/\xi_\alpha(\eta)}(1 - \lambda)$. We compute (using $1=(1-\kappa)(\frac{\kappa}{1-\kappa}+1)$)

\begin{align}
\eta^{2}\partial_\eta \cE(\nu_{\eta,\alpha}) &= (1 - \kappa)\int_M P_{\xi_\alpha(\eta),\eta}\left[\eta\Delta_\eta \big(\eta\ln \rho_{\eta,\alpha}\big) - \frac{\kappa}{1 - \kappa} |\nabla\big(\eta\ln \rho_{\eta,\alpha}\big)|^2\right] \,d\nu_{\xi_\alpha(\eta)} \\
& \qquad - \eta^{2}\int_M S_\eta\,d\nu_{\eta,\alpha} - (1 - \kappa)\int_M \Big(\xi_\alpha(\eta)\Delta_{\xi_\alpha(\eta)}P_{ \xi_\alpha(\eta),\eta}[\eta\ln \rho_{\eta,\alpha}]\Big)\,d\nu_{\xi_\alpha(\eta)}\\
&\qquad + (1 - \lambda)\int_M \sqrt{\eta\xi_\alpha(\eta)} P_{\xi_\alpha(\eta),\eta}[\eta\big(1 + \ln \rho_{\eta,\alpha}\big)]\,\left.\partial_\upsilon\right|_{\upsilon = \xi_\alpha(\eta)}d\nu_\upsilon\,,
\end{align}

and

\begin{align}
&\phantom{=}\eta^{3/2}\partial_\eta \int_M \phi_\eta \,d\nu_{\eta,\alpha} \\
&= (1 - \kappa)\int_M P_{ \xi_\alpha(\eta),\eta}\left[\eta\Delta_\eta (\sqrt{\eta}\phi_\eta) - \frac{\kappa}{1 - \kappa}\La\nabla (\sqrt{\eta}\phi_\eta), \nabla (\eta\ln \rho_{\eta,\alpha})\Ra +\frac{\eta^{2}}{2} S_{\eta} -\frac{1}{2}|\nabla(\sqrt{\eta} \phi_\eta)|^2\right]\,d\nu_{\xi_\alpha(\eta)} \\
& \qquad- (1 - \kappa)\int_M\left(\xi_\alpha(\eta)\Delta_{\xi_\alpha(\eta)} P_{\xi_\alpha(\eta),\eta}[\sqrt{\eta}\phi_\eta] \right)\,d\nu_{\xi_\alpha(\eta)} + \kappa\int_M \frac{\eta^2}{2}S_\eta - \frac{1}{2}|\sqrt{\eta}\nabla \phi_\eta|^2\,d\nu_{\eta,\alpha} \\
&\qquad + (1 - \lambda) \int_M \sqrt{\eta\xi_\alpha(\eta)}P_{\xi_\alpha(\eta),\eta}[\sqrt{\eta}\phi_\eta]\,\left.\partial_\upsilon\right|_{\upsilon = \xi_\alpha(\eta)} d\nu_\upsilon\,.
\end{align}

We now verify the remaining steps from the proof of Lemma \ref{lem:L0-action-entropy-est}. In particular, we consider a linear combination $\eta^{3/2}\partial_\eta \int_M \phi_\eta\,d\nu_{\eta,\alpha} + [\kappa/(1 - \kappa)]\eta^2\partial_\eta\cE(\nu_{\eta,\alpha})$ of the LHS's above and attempt to estimate the terms appearing on the RHS's. It is convenient to define the corresponding auxiliary function $f_\eta: = \phi_\eta + \kappa\sqrt{\eta}\ln(\rho_{\eta,\alpha})/(1 - \kappa)$. 

First, using the computation of \eqref{eq:L--Lisini-application} yields 

\begin{align}
&(1 - \lambda)\int_M \sqrt{\eta\xi_\alpha(\eta)}P_{\xi_\alpha(\eta),\eta}[\sqrt{\eta} f_\eta]\,\left.\partial_\upsilon\right|_{\upsilon = \xi_\alpha(\eta)}d\nu_\upsilon \\
&\leq \frac{1 - \lambda}{2}\int_M \sqrt{\frac{\eta}{\xi_\alpha(\eta)}}\Big(|\nabla P_{\xi_\alpha(\eta),\eta}[\sqrt{\eta} f_\eta]|^2 - \xi_\alpha^2(\eta) S_{\xi_\alpha(\eta)}\Big)\,d\nu_{\xi_\alpha(\eta)} + (1 - \lambda)\sqrt{\eta}\xi_\alpha(\eta)(\dot{\nu}_{\xi_\alpha(\eta)})_{L_-^{\xi_\alpha(\eta)}}\\
&= \frac{1 - \kappa}{2}\int_M \Big(|\nabla P_{\xi_\alpha(\eta),\eta}[\sqrt{\eta} f_\eta]|^2 - \xi_\alpha^2(\eta) S_{\xi_\alpha(\eta)}\Big)\,d\nu_{\xi_\alpha(\eta)} + \eta\sqrt{\xi_\alpha(\eta)}\xi_\alpha'(\eta)(\dot{\nu}_{\xi_\alpha(\eta)})_{L_-^{\xi_\alpha(\eta)}}\,. 
\end{align}

Second, using the assumed gradient estimate \eqref{eq:GEL--GE-to-EVI} with initial data $\sqrt{\eta} f_\eta$ between the times $\xi_\alpha(\eta) < \eta$ yields

\begin{align}
&\phantom{-}(1 - \kappa)\int_M P_{\xi_\alpha(\eta),\eta}[\eta\Delta_\eta (\sqrt{\eta}f_\eta) - \frac{1}{2}|\nabla (\sqrt{\eta}f_\eta)|^2 + \frac{\eta^2}{2}S_\eta]\,d\nu_{\xi_\alpha(\eta)} \\
&\qquad - (1 - \kappa)\int_M\left(\xi_\alpha(\eta)\Delta_{\xi_\alpha(\eta)} P_{\xi_\alpha(\eta),\eta}[\sqrt{\eta}f_\eta] -\frac{1}{2} |\nabla P_{\xi_\alpha(\eta),\eta}[\sqrt{\eta}f_\eta]|^2 + \frac{\xi_\alpha^2(\eta)}{2}S_{\xi_\alpha(\eta)} \right)\,d\nu_{\xi_\alpha(\eta)} \\
&\leq \frac{n}{4}(1 - \kappa)(\eta - \xi_\alpha(\eta))\,.
\end{align}

Assembling these two estimates as in Lemma \ref{lem:L0-action-entropy-est}, and multiplying by $(1 - \kappa)$ we obtain

\begin{align}
(1 - \kappa^2)\eta^{3/2}\partial_\eta\int_M \phi_\eta \,d\nu_{\eta,\alpha} + \kappa\eta^2\partial_\eta\cE(\nu_{\eta,\alpha}) &\leq (1 - \kappa)\eta\sqrt{\xi_\alpha(\eta)}(\dot{\nu}_{\xi_\alpha(\eta)})_{L_-^{\xi_\alpha(\eta)}}\xi_\alpha'(\eta) \\
&\qquad - \kappa^2\eta^2\int_M S_\eta \,d\nu_{\eta,\alpha} +\frac{n}{4}(1 - \kappa)^2(\eta - \xi_\alpha(\eta))\,.
\end{align}

Before integrating in $\eta$, we would like to remove the $\eta$ dependence of the $\cE(\nu_{\eta,\alpha})$ coefficient. We compute:

\begin{equation}\label{eq:L-param-rel}
\kappa = 1 - \sqrt{\frac{\eta}{\xi_\alpha(\eta)}}(1 - \lambda) = \frac{b}{\sqrt{\xi_\alpha(\eta)}}
\end{equation}

We should therefore divide the last inequality through by $\eta^2/\sqrt{\xi_\alpha(\eta)}$. Doing so and integrating in time yields

\begin{align}\label{eq:halfwayL-EVI}
\int_{\sigma}^{\alpha\tau}\left((1 - \kappa^2)\sqrt{\frac{\xi_\alpha(\eta)}{\eta}}\partial_\eta\int_M \phi_\eta \,d\nu_{\eta,\alpha}\right)\,d\eta + b\left.\cE(\nu_{\eta,\alpha})\right|_{\sigma}^{\alpha\tau} &\leq (1 - \lambda)\int_{\sigma}^{\alpha\tau}\left(\sqrt{\frac{\xi_\alpha(\eta)}{\eta}}(\dot{\nu}_{\xi_\alpha(\eta)})_{L_-^{\xi_\alpha(\eta)}}\xi_\alpha'(\eta)\right)\,d\eta \\
&\qquad - \int_{\sigma}^{\alpha\tau}\left(\kappa^2\sqrt{\xi_\alpha(\eta)}\int_M S_\eta \,d\nu_{\eta,\alpha}\right)\,d\eta \\
&\qquad +\frac{n}{4}\int_{\sigma}^{\alpha\tau}\left((1 - \kappa)^2\sqrt{\xi_\alpha(\eta)}\frac{(\eta - \xi_\alpha(\eta))}{\eta^2}\right)\,d\eta\,.
\end{align}

To avoid dragging along higher-order errors, we will begin consolidating terms that are uniformly $O\big((\sqrt{\alpha} - 1)^2\big)$ whenever uniform upper bounds $ -S, \,\tau - \sigma, \,W_{L_-^{\sigma,\tau}}(\nu_{\sigma}, \,\nu_{\tau}) \leq C$ hold. For instance, this will include all monomials of degree $\geq 2$ in $b,\lambda,\kappa$, by \eqref{eq:L-paramdefn}, \eqref{eq:L-param-rel}. This leaves us with three integrals to compute. Integrating by parts:

\begin{align}
\int_{\sigma}^{\alpha\tau}\left((1 - \kappa^2)\sqrt{\frac{\xi_\alpha(\eta)}{\eta}}\partial_\eta\int_M \phi_\eta \,d\nu_{\eta,\alpha}\right)\,d\eta &= \left. \sqrt{\frac{\xi_\alpha(\eta)}{\eta}}\left(\int_M \phi_\eta\,d\nu_{\eta_\alpha} - \int_M \phi_{\sigma}\,d\nu_{\sigma,\alpha}\right)\right|_{\eta = \sigma}^{\alpha\tau} + O((\sqrt{\alpha} - 1)^2)\\
&\qquad -\int_{\sigma}^{\alpha\tau}\partial_\eta\left((1 - \lambda) + \frac{b}{\sqrt{\eta}}\right)\left(\int_M \phi_\eta\,d\nu_{\eta_\alpha} - \int_M \phi_{\sigma}\,d\nu_{\sigma,\alpha}\right)\,d\eta  \\
&= \frac{1}{\sqrt{\alpha}}W_{L_-^{\sigma,\alpha\tau}}(\nu_{\sigma,\alpha},\nu_{\alpha\tau,\alpha}) +\frac{b}{2}\int_{\sigma}^{\alpha\tau}\frac{1}{\eta^{3/2}}W_{L_-^{\sigma,\eta}}(\nu_{\sigma,\alpha},\nu_{\eta,\alpha})\,d\eta \\
&\qquad +O((\sqrt{\alpha} - 1)^2)\,,
\end{align}
and by substitution, denoting by $\xi_\alpha^{\circ - 1}:[\sigma,\tau]\to[\sigma,\alpha\tau]$ the inverse of $\xi_\alpha$,
\begin{align}
(1 - \lambda)\int_{\sigma}^{\alpha\tau}\left(\sqrt{\frac{\xi_\alpha(\eta)}{\eta}}(\dot{\nu}_{\xi_\alpha(\eta)})_{L_-^{\xi_\alpha(\eta)}}\xi_\alpha'(\eta)\right)\,d\eta &= (1 - \lambda)\int_{\xi_\alpha(\sigma)}^{\xi_\alpha(\alpha\tau)}\sqrt{\frac{\bar\eta}{\xi_\alpha^{\circ - 1}(\bar\eta)}}(\dot{\nu}_{\bar\eta})_{L_-^{\bar \eta}}\,d\bar \eta \\
&=(1 - \lambda)\left.\sqrt{\frac{\bar\eta}{\xi_\alpha^{\circ - 1}(\bar\eta)}}\int_{\xi_\alpha(\sigma)}^{\bar\eta}(\dot{\nu}_{\tilde\eta})_{L_-^{\tilde \eta}}\,d\tilde\eta\right|_{\bar\eta = \xi_\alpha(\sigma)}^{\xi_\alpha(\alpha\tau)} \\
&\qquad - (1 - \lambda)\int_{\xi_\alpha(\sigma)}^{\xi_\alpha(\alpha\tau)}\partial_{\bar\eta} \left(\sqrt{\frac{\bar\eta}{\xi_\alpha^{\circ - 1}(\bar\eta)}}\right)\int_{\xi_\alpha(\sigma)}^{\bar\eta}(\dot{\nu}_{\tilde\eta})_{L_-^{\tilde \eta}}\,d\tilde\eta d\bar\eta \\
&= \frac{1 - \lambda}{\sqrt{\alpha}}\int_{\sigma}^{\tau}(\dot\nu_{\tilde\eta})_{L_-^{\tilde \eta}}\,d\tilde\eta + \frac{b(1-\lambda)}{2}\int_{\sigma}^{\alpha\tau}\frac{1}{\eta^{3/2}}\int_{\xi_\alpha(\sigma)}^{\xi_\alpha(\eta)}(\dot{\nu}_{\tilde\eta})_{L_-^{\tilde \eta}}\,d\tilde\eta d\eta\,.
\end{align}
Note that to simplify the last integral we used $\partial_{\bar\eta} \left(\sqrt{\frac{\bar\eta}{\xi_\alpha^{\circ - 1}(\bar\eta)}}\right) = -\frac{b}{2}\frac{1}{\eta^{3/2}\xi_\alpha'(\eta)}$ for $\eta=\xi_\alpha^{\circ - 1}(\bar\eta)$.

The expressions appearing in the RHS's of the two computations above are, up to $O((\sqrt{\alpha} - 1)^2)$ error with allowable implicit constant dependence, the $W_{L_-}$ distance and action terms that appear in \eqref{eq:L-minus-preEVI-I}. We finally turn to the last term in \eqref{eq:halfwayL-EVI}, which is of order $O(\sqrt{\alpha} - 1)$, and compute its coefficient
\begin{align}
\lim_{\alpha \to 1^+}\frac{1}{\sqrt{\alpha} - 1}\int_{\sigma}^{\alpha\tau}\frac{n}{4}\left((1 - \kappa)^2\sqrt{\xi_\alpha(\eta)}\frac{(\eta - \xi_\alpha(\eta))}{\eta^2}\right)\,d\eta &= \lim_{\alpha \to 1^+}\frac{n(1 - \lambda)^2}{4}\int_{\sigma}^{\alpha\tau}\frac{(\eta - \xi_\alpha(\eta))}{(\alpha - 1)}\frac{\sqrt{\alpha} + 1}{\sqrt{\xi_\alpha(\eta)}\eta}\,d\eta \\
&= \frac{n}{2}\int_{\sigma}^{\tau}\frac{-\left.\partial_\alpha\right|_{\alpha = 1^+}\xi_\alpha(\eta)}{\eta^{3/2}}\,d\eta \\
&= n \sqrt{\tau} - \frac{n}{2}\frac{\sqrt{\tau\sigma}}{\sqrt{\tau} - \sqrt{\sigma}}\ln(\tau/\sigma)\,,
\end{align}
where we used 
\begin{align}
\left.\partial_\alpha\right|_{\alpha = 1^+}\xi_\alpha(\eta) &= \left.\partial_\alpha\right|_{\alpha = 1^+}\left(\frac{\sqrt{\tau} - \sqrt{\sigma}}{\sqrt{\alpha\tau} - \sqrt{\sigma}}\sqrt{\eta} + \frac{\sqrt{\tau\sigma}\cdot(\sqrt{\alpha} - 1)}{\sqrt{\alpha\tau} - \sqrt{\sigma}}\right)^2 \\
&= -\frac{\sqrt{\tau}}{\sqrt{\tau} - \sqrt{\sigma}}\eta + \frac{\sqrt{\tau\sigma}}{\sqrt{\tau} - \sqrt{\sigma}}\sqrt{\eta}\,.
\end{align}
Gathering $O((\sqrt{\alpha} - 1)^{2})$ errors and unpacking notation gives the desired final expression.

The second inequality follows from running the same argument with $\nu_\eta^\alpha := \hat{P}_{\tilde{\xi}_\alpha(\eta),\eta}[\nu_{\tilde{\xi}_\alpha(\eta)}]$ for the range of times $[\alpha\sigma,\tau]$, where $\tilde{\xi}_\alpha: [\alpha\sigma,\tau] \to [\sigma,\tau]$ solves

\begin{equation}
\tilde{\xi}_\alpha(\eta): \begin{cases}\tilde{\xi}_\alpha(\alpha\sigma) = \sigma\,,\,\tilde{\xi}_\alpha(\tau) = \tau\,,&\\
\sqrt{\tilde{\xi}_\alpha(\eta)} = (1 + \tilde{\lambda})\cdot \sqrt{\eta} + \tilde{b}\end{cases}\,\implies \tilde{\lambda} = \frac{\sqrt{\sigma}(\sqrt{\alpha} - 1)}{\sqrt{\tau} - \sqrt{\alpha\sigma}}\,,\qquad \tilde{b} = -\frac{\sqrt{\tau\sigma}(\sqrt{\alpha} - 1)}{\sqrt{\tau} - \sqrt{\alpha\sigma}}\,.
\end{equation}
\end{proof}

As in the $L_0$ case, these action-entropy estimates can be used to obtain the respective \eqref{eq:EVIL--GE-to-EVI}s.

\begin{prop}\label{prop:L-GE-to-EVI}
Assume that for any Lipschitz $v$ and times $[\sigma,\tau] \subseteq I$, the gradient estimate
\begin{equation}
 |\nabla P_{\sigma,\tau}v|_\sigma^2-2\sigma \Delta_\sigma P_{\sigma,\tau}v-\sigma^2S_{\sigma} \leq P_{\sigma,\tau}\left(|\nabla v|_\tau ^2 - 2\tau\Delta_\tau v - \tau ^2S_{\tau}\right) + \frac{n}{2}(\tau-\sigma)\label{eq:GEL--GE-to-EVI}\tag{GE\({}_{L_-}\)}
\end{equation}
holds. Then, for any Borel probability measures $\mu,\nu \in \cP(M)$ and times $[\sigma,\tau] \subseteq I$, the backward heat flow $\hat P$ satisfies the following $\mathrm{EVI}$s
\begin{align}
\eta\cdot\partial_{\eta^+}^+ W_{L_-^{\eta,\tau}}(\hat{P}_{\sigma,\eta}[\nu], \mu) &\leq \frac{1}{2(\eta^{-1/2} - \tau^{-1/2})} \left[\cE(\mu) -\cE(\hat{P}_{\sigma,\eta}[\nu]) + \frac{W_{L_-^{\eta,\tau}}(\hat{P}_{\sigma,\eta}[\nu], \mu)}{\sqrt{\tau}} \right] \\
&\qquad +\frac{n\ln(\tau/\eta)}{4(\eta^{-1/2} - \tau^{-1/2})} - \frac{n\sqrt{\eta}}{2}\,, \qquad \forall\eta \in [\sigma,\tau)\,, \\
\varsigma\cdot\partial_{\varsigma^+}^+ W_{L_-^{\sigma,\varsigma}}(\nu,\hat{P}_{\tau,\varsigma}[\mu]) &\leq \frac{1}{2(\sigma^{-1/2} - \varsigma^{-1/2})} \left[\cE(\nu) - \cE(\hat{P}_{\tau,\varsigma}[\mu])  - \frac{W_{L_-^{\sigma,\varsigma}}(\nu,\hat{P}_{\tau,\varsigma}[\mu])}{\sqrt{\sigma}}\right]\\
&\qquad + \frac{n\ln(\sigma/\varsigma)}{4(\sigma^{-1/2} - \varsigma^{-1/2})} + \frac{n\sqrt{\varsigma}}{2}\,, \qquad \forall \varsigma \in [\tau,\sup I)\,. \label{eq:EVIL--GE-to-EVI}\tag{EVI\({}_{L_-}\)}
\end{align}
\end{prop}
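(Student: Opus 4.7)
The plan is to adapt the proof of Proposition \ref{prop:L0GE-to-EVI} to the $L_-$ setting, combining a smoothing procedure with the action-entropy inequalities of Lemma \ref{lem:L-minus-action-entropy-est} and then passing to a limiting difference quotient as the parameter $\alpha \downarrow 1$. The two \eqref{eq:EVIL--GE-to-EVI}s are parallel, so I focus on the second; the first is obtained by running the same argument on \eqref{eq:L-minus-preEVI-II} in place of \eqref{eq:L-minus-preEVI-I}. Moreover, it suffices to prove the second inequality at $\varsigma = \tau$: for general $\varsigma \in [\tau,\sup I)$, apply the endpoint case on the interval $[\sigma,\varsigma]$ with $\mu$ replaced by $\hat P_{\tau,\varsigma}[\mu]$ and invoke the semigroup property of $\hat P$.

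\textbf{Setup.} Fix $\mu,\nu \in \cP(M)$ and let $(\nu_\eta)_{\eta \in [\sigma,\tau]}$ be a $W_{L_-^{\sigma,\tau}}$-geodesic with endpoints $(\nu,\mu)$. Using an $L_-$ analog of Lemma \ref{lem:wass-geodesic-smoothing}—proved in the same way as the $L_0$ case by flowing via the adjoint heat semigroup for some $\epsilon > 0$ and invoking that the $L_-$ Lagrangian satisfies the standard regularity hypotheses—produce a curve $(\nu^\epsilon_\eta)$ with smooth, strictly positive densities approximating $(\nu_\eta)$. Apply estimate \eqref{eq:L-minus-preEVI-I} of Lemma \ref{lem:L-minus-action-entropy-est} to $(\nu^\epsilon_\eta)$ with parameter $\alpha > 1$ close to $1$. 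The geodesic property of the limit implies that the action $\int_\sigma^\tau(\dot\nu_{\tilde\eta})_{L_-^{\tilde\eta}}\,d\tilde\eta$ converges to $W_{L_-^{\sigma,\tau}}(\nu,\mu)$ and that the nested integral $\int_\sigma^\tau \eta^{-3/2}\bigl(\int_\sigma^\eta(\dot\nu_{\tilde\eta})_{L_-^{\tilde\eta}}\,d\tilde\eta - W_{L_-^{\sigma,\eta}}(\nu_\sigma,\nu_\eta)\bigr)\,d\eta$ vanishes as $\epsilon \downarrow 0$.

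\textbf{Removing the smoothing and taking the difference quotient.} Send $\epsilon \downarrow 0$ using weak continuity of the heat flow, lower semicontinuity/convergence of $W_{L_-}$ under uniformly converging costs via \cite[Thm.~5.20]{Vil09}, and continuity of entropy along smoothed measures, exactly as in Proposition \ref{prop:L0GE-to-EVI}. Rearranging yields
\begin{align}
\tfrac{1}{\sqrt\alpha}\bigl[W_{L_-^{\sigma,\alpha\tau}}(\nu,\hat P_{\tau,\alpha\tau}[\mu]) - W_{L_-^{\sigma,\tau}}(\nu,\mu)\bigr] &\leq \lambda_I\bigl[\cE(\nu) - \cE(\hat P_{\tau,\alpha\tau}[\mu])\bigr] - \tfrac{\lambda_I}{\sqrt{\alpha\sigma}}W_{L_-^{\sigma,\tau}}(\nu,\mu) \\
&\quad + n(\sqrt\alpha - 1)\sqrt\tau - \tfrac{n}{2}\lambda_I\ln(\tau/\sigma) + O\bigl((\sqrt\alpha - 1)^2\bigr).
\end{align}
Divide by $(\sqrt\alpha - 1)$ and take $\limsup_{\alpha \downarrow 1}$. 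Under the substitution $\varsigma = \alpha\tau$ one has $1/(\sqrt\alpha - 1) = \sqrt\tau/(\sqrt\varsigma - \sqrt\tau) \sim 2\tau/(\varsigma - \tau)$, so the LHS becomes $2\tau\cdot\partial_{\varsigma^+}^+ W_{L_-^{\sigma,\varsigma}}(\nu,\hat P_{\tau,\varsigma}[\mu])$ at $\varsigma = \tau$. On the RHS, $\lambda_I/(\sqrt\alpha - 1) \to \sqrt{\sigma\tau}/(\sqrt\tau-\sqrt\sigma) = 1/(\sigma^{-1/2} - \tau^{-1/2})$, and the terms collapse to the claimed coefficients. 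Dividing the resulting inequality by $2$ produces exactly \eqref{eq:EVIL--GE-to-EVI} at $\varsigma = \tau$.

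\textbf{Main obstacle.} The heart of the work is algebraic rather than analytic: carefully extracting the $O(\sqrt\alpha - 1)$ contributions while confining errors to the $O((\sqrt\alpha - 1)^2)$ remainder guaranteed by Lemma \ref{lem:L-minus-action-entropy-est}, and keeping track of the interplay between $\lambda_I$, $b$, $\sqrt\alpha$, and the $\sqrt\tau$ vs.\ $\sqrt\alpha\sqrt\tau$ scaling introduced by parametrizing through $\sqrt\alpha$ rather than $\alpha$. A secondary, essentially routine task is the $L_-$ smoothing lemma and the verification that entropy, $W_{L_-}$-distance, and action all behave well in the $\epsilon \downarrow 0$ limit—here the arguments of the $L_0$ case (Proposition \ref{prop:L0GE-to-EVI}, Lemma \ref{lem:wass-geodesic-smoothing}) transfer with only notational changes, since the underlying regularization is still the heat semigroup and the $L_-$ cost is, away from $\sigma = 0$, comparable to the squared Riemannian distance.
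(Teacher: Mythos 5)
Your proposal is correct and follows the same route as the paper: smooth the $W_{L_-}$ geodesic via Lemma \ref{lem:wass-geodesic-smoothing}, apply \eqref{eq:L-minus-preEVI-I}/\eqref{eq:L-minus-preEVI-II}, then send $\eps\to 0^+$ followed by $\alpha\to 1^+$, identifying $\lambda_I/(\sqrt\alpha-1) \to 1/(\sigma^{-1/2}-\tau^{-1/2})$ and $1/(\sqrt\alpha-1)\sim 2\tau/(\varsigma-\tau)$ to extract the stated Dini derivative and coefficients. One minor correction: Lemma \ref{lem:wass-geodesic-smoothing} is already stated for the costs $L_{0/\pm}$ (see the appendix), so you do not need to prove a separate ``$L_-$ analog''; your reduction to the endpoint $\varsigma=\tau$ via the semigroup property is a fine presentational variant of what the paper does by running the argument on the geodesic from $\nu$ to $\hat P_{\tau,\varsigma}[\mu]$ directly.
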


\begin{proof}
The argument follows the same lines as the proof of the corresponding $L_0$ case Proposition \ref{prop:L0GE-to-EVI}, and we therefore omit the details. In a few words, one selects a $W_{L_-}$ geodesic connecting the measures appearing in the $\mathrm{EVI}$s, uses Lemma \ref{lem:wass-geodesic-smoothing} to smooth this geodesic so that \ref{lem:L-minus-action-entropy-est} can be applied, and then takes the limit in the smoothing parameter $\eps \to 0^+$ followed by the limit $\alpha\to 1^+$. It is perhaps worth noting that the action-$W_{L_-}$ difference integrands in the second lines of \eqref{eq:L-minus-preEVI-I}, \eqref{eq:L-minus-preEVI-II} vanish for $W_{L_-}$ geodesics, and both the action and $W_{L_-}$ distance between any two points along the smoothed curves are uniformly bounded and converge as $\eps \to 0^+$ by Lemma \ref{lem:wass-geodesic-smoothing} and \cite[Thm.~5.20]{Vil09}.
\end{proof}

\begin{prop}\label{prop:L-minus-entropy-cvx}
Assume that for any Borel probability measures $\mu,\nu \in \cP(M)$ and times $[\sigma,\tau] \subseteq I$, the backward heat flow $\hat P$ satisfies the $\mathrm{EVI}$s \eqref{eq:EVIL--GE-to-EVI}. Then for times $\sigma \leq \bar\eta \leq \tau$ with $[\sigma,\tau] \subseteq I$ and a $W_{L_-}$ geodesic $(\mu_{\eta})_{\eta \in [\sigma,\tau]}\subseteq \cP(M)$, the following entropy convexity statement holds:
\begin{align}\label{eq:L-minus-entropy-convexity}
&\phantom{-}\frac{\bar\eta^{-1/2} - \tau^{-1/2}}{\sigma^{-1/2} - \tau^{-1/2}}(\cE(\mu_{\sigma}) + (n/2)\ln\sigma) + \frac{ \sigma^{-1/2} - \bar\eta^{-1/2}}{\sigma^{-1/2} - \tau^{-1/2}}(\cE(\mu_{\tau}) + (n/2)\ln\tau) - (\cE(\mu_{\bar\eta }) + (n/2)\ln\bar\eta) \\
&- \frac{\bar\eta^{-1/2} - \tau^{-1/2}}{\sigma^{-1/2} - \tau^{-1/2}}\Big(\sigma^{-1/2}W_{L_-^{\sigma,\bar\eta}}(\mu_{\sigma}, \mu_{\bar\eta})\Big) + \frac{ \sigma^{-1/2} - \bar\eta^{-1/2}}{\sigma^{-1/2} - \tau^{-1/2}}\Big(\tau^{-1/2}W_{L_-^{\bar\eta,\tau}}(\mu_{\bar\eta}, \mu_{\tau})\Big) \geq 0\,.
\end{align}
\end{prop}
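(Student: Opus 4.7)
The approach is a direct adaptation of Proposition \ref{prop:L0-entropy-cvx}, substituting the additive parameter $r$ of the $L_0$ setting with the $L_-$ arc-length-like parameter $\eta^{-1/2}$, for which the two \eqref{eq:EVIL--GE-to-EVI}s are naturally stated. As in that argument, we may reduce to the case $\cE(\mu_\sigma), \cE(\mu_\tau) < \infty$ (otherwise the conclusion is vacuous or trivial, modulo a standard regularization of the endpoints by a short adjoint heat flow to guarantee $\cE(\mu_{\bar\eta}) < \infty$ pre-limit, as in Lemma \ref{lem:wass-geodesic-smoothing}). Fix an intermediate time $\bar\eta \in (\sigma,\tau)$ (the endpoint cases are trivial).

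The plan is to apply both EVIs at the initial flow time $\bar\eta^+$, each perturbing $\mu_{\bar\eta}$ by the adjoint heat flow. Concretely, the first \eqref{eq:EVIL--GE-to-EVI} is applied to the pair $(\nu,\mu) = (\mu_{\bar\eta}, \mu_\tau)$ on the time interval $[\bar\eta,\tau)$, yielding a $\limsup$ bound on $\bar\eta\,\partial^+_{\eta^+}|_{\eta = \bar\eta^+} W_{L_-^{\eta,\tau}}(\hat P_{\bar\eta,\eta}\mu_{\bar\eta},\mu_\tau)$; the second \eqref{eq:EVIL--GE-to-EVI} is applied to $(\nu,\mu) = (\mu_\sigma, \mu_{\bar\eta})$ on the interval $[\bar\eta, \sup I)$, yielding a $\limsup$ bound on $\bar\eta\,\partial^+_{\varsigma^+}|_{\varsigma = \bar\eta^+} W_{L_-^{\sigma,\varsigma}}(\mu_\sigma, \hat P_{\bar\eta,\varsigma}\mu_{\bar\eta})$. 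Along any common sequence $\eta_i \downarrow \bar\eta$, the \eqref{eq:triangle-inequality} for the triplet $(\mu_\sigma, \hat P_{\bar\eta,\eta_i}\mu_{\bar\eta}, \mu_\tau)$ combined with the geodesic equality $W_{L_-^{\sigma,\tau}}(\mu_\sigma,\mu_\tau) = W_{L_-^{\sigma,\bar\eta}}(\mu_\sigma,\mu_{\bar\eta}) + W_{L_-^{\bar\eta,\tau}}(\mu_{\bar\eta},\mu_\tau)$ gives that the sum of the two difference quotients is $\geq 0$; passing to $\liminf$ and using $\liminf(x_n+y_n) \leq \limsup x_n + \limsup y_n$ produces
\begin{equation}
0 \leq \bar\eta\,\partial^+_{\varsigma^+}\big|_{\bar\eta^+} W_{L_-^{\sigma,\varsigma}}(\mu_\sigma,\hat P_{\bar\eta,\varsigma}\mu_{\bar\eta}) + \bar\eta\,\partial^+_{\eta^+}\big|_{\bar\eta^+} W_{L_-^{\eta,\tau}}(\hat P_{\bar\eta,\eta}\mu_{\bar\eta},\mu_\tau)\,.
\end{equation}

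Chaining this with the two EVI upper bounds, the $\pm n\sqrt{\bar\eta}/2$ constant terms cancel and one is left with the inequality
\begin{align}
0 &\leq \frac{1}{2(\bar\eta^{-1/2}-\tau^{-1/2})}\Big[\cE(\mu_\tau) - \cE(\mu_{\bar\eta}) + \tau^{-1/2}W_{L_-^{\bar\eta,\tau}}(\mu_{\bar\eta},\mu_\tau)\Big] + \frac{n\ln(\tau/\bar\eta)}{4(\bar\eta^{-1/2}-\tau^{-1/2})} \\
&\quad + \frac{1}{2(\sigma^{-1/2}-\bar\eta^{-1/2})}\Big[\cE(\mu_\sigma) - \cE(\mu_{\bar\eta}) - \sigma^{-1/2}W_{L_-^{\sigma,\bar\eta}}(\mu_\sigma,\mu_{\bar\eta})\Big] + \frac{n\ln(\sigma/\bar\eta)}{4(\sigma^{-1/2}-\bar\eta^{-1/2})}\,.
\end{align}
Multiplying through by the positive factor $2(\sigma^{-1/2}-\bar\eta^{-1/2})(\bar\eta^{-1/2}-\tau^{-1/2})/(\sigma^{-1/2}-\tau^{-1/2})$ and using the two algebraic identities $(\bar\eta^{-1/2}-\tau^{-1/2}) + (\sigma^{-1/2}-\bar\eta^{-1/2}) = \sigma^{-1/2}-\tau^{-1/2}$ and $(\bar\eta^{-1/2}-\tau^{-1/2})\ln(\sigma/\bar\eta) + (\sigma^{-1/2}-\bar\eta^{-1/2})\ln(\tau/\bar\eta) = (\bar\eta^{-1/2}-\tau^{-1/2})\ln\sigma + (\sigma^{-1/2}-\bar\eta^{-1/2})\ln\tau - (\sigma^{-1/2}-\tau^{-1/2})\ln\bar\eta$ consolidates the coefficients to exactly those of \eqref{eq:L-minus-entropy-convexity}.

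The main subtle point, as in the $L_0$ case, is reconciling the a priori finiteness of $\cE(\mu_{\bar\eta})$ with the EVI RHS (which contains $-\cE(\mu_{\bar\eta})$ via the flowing measure at $\bar\eta^+$); this is resolved by running the whole argument with endpoints $\hat P_{\sigma, \sigma + \epsilon}\mu_\sigma$ and $\hat P_{\tau - \epsilon, \tau}$-regularized $\mu_\tau$ (whose intermediate geodesic has smooth positive densities), then passing $\epsilon \to 0$ using weak continuity of the heat flow and the convergence of $W_{L_-}$ under uniform convergence of costs via \cite[Thm.~5.20]{Vil09}. A secondary bookkeeping task is verifying that each EVI application satisfies the admissibility hypothesis on flow-time intervals, which holds provided $\sigma, \bar\eta, \tau$ all lie in the interior of $I \subseteq (0,\infty)$, as assumed.
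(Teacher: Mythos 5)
Your proof follows the same route as the paper's (which itself defers to the $L_0$ version, Proposition \ref{prop:L0-entropy-cvx}): you apply the two \eqref{eq:EVIL--GE-to-EVI}s to the sub-curves with endpoints $(\mu_{\bar\eta},\mu_\tau)$ resp.~$(\mu_\sigma,\mu_{\bar\eta})$ at initial flow time $\bar\eta^+$, observe the cancellation of the $\pm n\sqrt{\bar\eta}/2$ dimensional terms, and bound the normalized sum of forward Dini derivatives from below via the triangle inequality \eqref{eq:triangle-inequality} together with the geodesic equality at $\bar\eta$. The algebraic identities you use to consolidate the coefficients into those of \eqref{eq:L-minus-entropy-convexity} are correct, and the paper explicitly flags the $\sqrt{\bar\eta}$-term cancellation as the main point of difference from the $L_0$ case.

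One technical remark is, however, false: the parenthetical claim that the $W_{L_-}$-geodesic between the flow-regularized endpoints ``has smooth positive densities'' does not hold -- intermediate measures along a Wasserstein geodesic between absolutely continuous (even smooth) endpoints need not themselves be smooth, and the paper invokes \cite[Thm.~12.44]{Vil09} for exactly this point in Section \ref{sss1:entropyEVIL0}. To control $\cE(\mu_{\bar\eta}) < \infty$ (needed for the conclusion to be non-vacuous, since $\cE(\mu_{\bar\eta})$ enters with a minus sign), a cleaner patch is to apply the EVIs at a small positive flow time $\bar\eta + \delta$, where the flowed intermediate measure $\hat P_{\bar\eta,\bar\eta + \delta}\mu_{\bar\eta}$ does have finite entropy, extract a uniform bound from the chained inequality, and then send $\delta \to 0$ using lower semicontinuity. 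The paper's own two-line proof glosses over this point just as the $L_0$ version does, so this is a refinement rather than a defect in your overall strategy.
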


\begin{rmk}
As in Remark \ref{rmk:L0-entropy-alt-form}, the second line of \eqref{eq:L-minus-entropy-convexity} can be rewritten so that the inequality reads as an $(\eta^{-1/2})$-convexity statement for a single quantity. Indeed, if one writes $(\phi_\eta)_{\eta \in [\sigma,\tau]}$ the Kantorovich potentials for the pair of measures $(\mu_{\sigma},\mu_{\tau})$ given by Corollary \ref{cor:dynamic-potentials}, then

\begin{equation}
W_{L_-^{\bar{\sigma},\bar{\tau}}}(\mu_{\bar{\sigma}},\mu_{\bar{\tau}}) = \int_{M}\phi_{\bar{\tau}}d\mu_{\bar{\tau}} - \int_M\phi_{\bar{\sigma}}d\mu_{\bar{\sigma}}\,,\qquad \sigma \leq \bar{\sigma} \leq \bar{\tau} \leq \tau\,,
\end{equation}
and plugging in the above identity and regrouping terms gives

\begin{align}
&- \frac{\bar\eta^{-1/2} - \tau^{-1/2}}{\sigma^{-1/2} - \tau^{-1/2}}\Big(\sigma^{-1/2}W_{L_-^{\sigma,\bar\eta}}^2(\mu_{\sigma}, \mu_{\bar\eta})\Big) + \frac{ \sigma^{-1/2} - \bar\eta^{-1/2}}{\sigma^{-1/2} - \tau^{-1/2}}\Big(\tau^{-1/2}W_{L_-^{\bar\eta,\tau}}^2(\mu_{\bar\eta}, \mu_{\tau})\Big) \\
&= \frac{\bar\eta^{-1/2} - \tau^{-1/2}}{\sigma^{-1/2} - \tau^{-1/2}}\sigma^{-1/2}\int_M \phi_{\sigma}\,d\mu_{\sigma} + \frac{ \sigma^{-1/2} - \bar\eta^{-1/2}}{\sigma^{-1/2} - \tau^{-1/2}}\tau^{-1/2}\int_M \phi_{\tau}\,d\mu_{\tau} - \bar\eta^{-1/2}\int_M \phi_{\bar\eta} \,d\mu_{\bar\eta}\,.
\end{align}
\end{rmk}

\begin{proof}
As in the proof of the $L_0$ version (Proposition \ref{prop:L0-entropy-cvx}), the argument consists of taking the normalized sum of the \eqref{eq:EVIL--GE-to-EVI}s applied respectively to the curves of measures $\left.(\mu_{\eta})_\eta\right|_{[\bar\eta,\tau]}$ resp. $\left.(\mu_{\eta})_\eta\right|_{[\sigma,\bar\eta]}$, then using the $W_{L_-}$ triangle inequality on the sum of Dini derivatives to obtain a lower bound of $0$. To avoid senseless redundancy we omit the details, but we do point out the cancellation of the $\sqrt{\bar\eta}$-dimensional terms with opposite signs in the respective \eqref{eq:EVIL--GE-to-EVI}s.
\end{proof}

\subsection{Characterizations for the \texorpdfstring{$L_+$}{L+} distance}\label{ss:L+}

There is a collection of equivalent conditions that is parallel to those of Theorem \ref{thm:L-charact}, but adapted to the $L_+$ distance--see Definition \ref{df:L+}. We do not include as many details as for the corresponding $L_-$ statements. This is for reasons of redundancy--the arguments are the same as for $L_-$ but with a handful of symbol and sign changes--and of external interest--$L_-$ is more useful in the study of partial regularity for the Ricci flow since it is adapted to blowing up on singularities that appear as one advances in (forward) time, and therefore receives much more attention in the literature. Nevertheless, let $t = T - \tau \in T - I =: \tilde I$ be a choice of forward time variable, then

\begin{thm}\label{thm:L+charact}
     Given $(M^n, g_t)_{t \in \tilde I}$ a smooth time-dependent Riemannian manifold of dimension $M = n$ and $\tilde I \subseteq (0,\infty)$, the following are equivalent:
    \begin{enumerate}
        \item $\mathcal{D}\geq 0$ ;
        \item Bochner formula: for every $v: M \to \R{}$ smooth and times $[s , t] \subseteq \tilde I$
        \begin{align}\label{eq:BochnerL+}
            (\partial_t-\Delta)\left( |\nabla P_{t,s}v|_t^2-2t\Delta_t P_{t,s}v - t^2S_t -\frac{n}{2}t \right) &\leq 0\,;\tag{B\({}_{L_+}\)}
        \end{align}
        \item Gradient estimate: for every $v$ smooth and times $[s , t] \subseteq \tilde I$
        \begin{align}\label{eq:GEL+}
            |\nabla P_{t,s}v|^2_t -2t\Delta P_{t,s}v - t^2S_t \leq P_{t,s}\left(|\nabla v|^2_s-2s\Delta v-s^2 S_s\right) + \frac{n}{2}(t-s)\,;\tag{GE\({}_{L_+}\)}
        \end{align}
        \item Wasserstein contraction: for every probability measures $\mu,\nu \in \cP(M)$, times $[s , t] \subseteq \tilde I$, and factor $1 < a <\sup \tilde I/t$,
        \begin{align}\label{eq:WCL+}
            W_{\Tilde{L}_+^{s,t}}(\hat P_{a s,s}\mu,\hat P_{a t,t}\nu) \leq W_{\Tilde{L}_+^{as,at}}(\mu,\nu) + \frac{n}{2}(a-1)(\sqrt{t}-\sqrt{s})^2\,;\tag{WC\({}_{L_+}\)}
        \end{align}
        \item Entropy convexity: for every $\nu_s,\nu_t \in \cP(M)$ and times $[s,t] \subseteq \tilde I$, there is a $W_{L_{+}^{s,t}}$-geodesic $(\nu_r)_{r \in [s,t]}$ (equivalently, for all such curves) such that
        \begin{align}
            r\mapsto \cE_r(\nu_r) - \cE_s(\nu_s) - \frac{1}{\sqrt{r}}W_{L_{+}^{s,r}}(\mu_{s},\mu_{r}) + \frac{n}{2}\ln(r/s)\ \ \text{is convex in the variable $r^{-1/2}$;}\tag{EC\({}_{L_+}\)}\label{eq:ECL+}
        \end{align}
        \item $\mathrm{EVI}$ characterization: for every $\mu,\nu \in \cP(M)$ and times $[s,t]\subseteq \tilde I$, the heat flow $\hat P_{t,s}\mu$ satisfies the $\mathrm{EVI}$s
        \begin{align}
            -a\cdot\partial_{a^-}^- W_{L_+^{a,t}}(\hat{P}_{s,a}\nu, \mu) &\leq \frac{1}{2(a^{-1/2} - t^{-1/2})} \left[\cE_t(\mu) -\cE_a(\hat{P}_{s,a}\nu) - \frac{W_{L_{+}^{a,t}}(\hat{P}_{s,a}\nu, \mu)}{\sqrt{t}} \right] \\
            &+\frac{n\ln(t/a)}{4(a^{-1/2} - t^{-1/2})} - \frac{n\sqrt{a}}{2}\,, \qquad \forall a \in (\inf \tilde I,s]\,.\\
            -b\cdot\partial_{b^-}^- W_{L_+^{s,b}}(\nu,\hat{P}_{t,b}\mu) &\leq \frac{1}{2(s^{-1/2} - b^{-1/2})} \left[\cE_s(\nu) - \cE_b(\hat{P}_{t,b})  + \frac{W_{L_+^{s,b}}(\nu,\hat{P}_{t,b}\mu)}{\sqrt{s}}\right]\\
            &+ \frac{n\ln(s/b)}{4(s^{-1/2} - b^{-1/2})} + \frac{n\sqrt{b}}{2}\,, \qquad \forall b \in (s,t]\,.\label{eq:EVIL+}\,\tag{EVI\({}_{L_+}\)}\\
    \end{align}
    \end{enumerate}
\end{thm}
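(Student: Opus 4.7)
The plan is to follow the same six-vertex cycle of implications used to prove Theorem \ref{thm:L-charact}, with the $L_+$ Lagrangian and forward time variable $t$ replacing the $L_-$ Lagrangian and backward time $\tau$ throughout. All of the machinery (Bochner identities, Kuwada duality, Lisini lifts, action-entropy estimates, and parametrizations) transfers after bookkeeping adjustments, the most delicate of which are sign flips reflecting the reversal of the parabolic scaling convention between $L_-$ (shrinking) and $L_+$ (expanding).

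For the equivalence $\mathcal{D}\geq 0 \iff$ \eqref{eq:BochnerL+}, I would compute directly, mirroring Proposition \ref{prop:L-Bochner}. Using the forward-time convention $\partial_t g = -2\mathcal{S}$, a straightforward calculation should yield an identity of the form
\begin{align}
    (\partial_t - \Delta)\left(|\nabla v|^2 - 2t\Delta v - t^2 S - \tfrac{n}{2}t\right) = -2\left|t\mathcal{S} - \tfrac{g}{2} - \Hess v\right|^2 - t^2\mathcal{D}(-\nabla v/t)
\end{align}
for $v$ solving the heat equation at time $t$. The forward direction follows at once; the converse is by selecting $v$ with prescribed first jet at a chosen point so that the square term vanishes, exactly as in the proof of Proposition \ref{prop:L-Bochner}. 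The equivalence \eqref{eq:BochnerL+} $\iff$ \eqref{eq:GEL+} is then the standard semigroup argument: set $\phi_r := |\nabla P_{r,s}v|^2 - 2r\Delta P_{r,s}v - r^2 S_r - \frac{n}{2}r$, apply $\partial_r P_{t,r}\phi_r = P_{t,r}((\partial_r - \Delta)\phi_r)$, and integrate (resp.\ take $\partial_t|_{t=s}$ for the converse), paralleling Proposition \ref{prop:L-GradEst}.

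For \eqref{eq:GEL+} $\iff$ \eqref{eq:WCL+}, I would adapt the Kuwada duality of Section \ref{ss:GEWCdualityL-}. The Hopf--Lax semigroup associated to the rescaled $L_+$ Lagrangian satisfies a forward Hamilton--Jacobi equation whose Hamiltonian is, up to the rescaling factor, $\tfrac{1}{2(\sqrt{t}-\sqrt{s})}(|w|^2/\sqrt{t} - \sqrt{t}\,S)$; duality with the heat semigroup together with \eqref{eq:GEL+} applied to a rescaled version of $Q^{as,ar}\phi$ produces \eqref{eq:WCL+} along the lines of Proposition \ref{prop:WL-contraction}. The reverse direction is the Lisini-style argument of Section \ref{ss:GEWCdualityL-}: lift the curve $r \mapsto \hat P_{ar,r}\delta_{\bar\gamma_r}$ to a dynamic plan via Lemma \ref{lem:dynamiclifting}, apply Young's inequality, and take the limit while choosing the initial tangent of $\bar\gamma$ to be $\nabla P_{s,as}v(x)$.

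The entropy convexity and EVI equivalences follow the pattern of Section \ref{ss:entropyEVIL-}. The implication \eqref{eq:ECL+} $\implies$ \eqref{eq:WCL+} uses one-sided Topping-type estimates for the entropy along $W_{L_+}$-geodesics (the $L_+$ analogue of Lemma \ref{lem:Top-1-sided-est}) combined with Kantorovich potentials solving the $L_+$ Hamilton--Jacobi equation, as in Proposition \ref{prop:L-minus-convexity-implies-contraction}. The chain \eqref{eq:GEL+} $\implies$ \eqref{eq:EVIL+} $\implies$ \eqref{eq:ECL+} proceeds via an action-entropy estimate analogous to Lemma \ref{lem:L-minus-action-entropy-est}, using the reparametrization $\xi_a:[as,t]\to[s,t]$ determined by $\xi_a(as) = s$, $\xi_a(t) = t$, and $\sqrt{\xi_a(r)}$ affine in $\sqrt{r}$ (with a symmetric choice for the second EVI). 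The main obstacle, and the main source of bookkeeping, is the sign reversal in the $W_{L_+}$ term of \eqref{eq:ECL+} relative to \eqref{eq:ECL-} and the accompanying sign changes in \eqref{eq:EVIL+} versus \eqref{eq:EVIL-}. These originate from the fact that the convexity variable $r^{-1/2}$ decreases as the flow advances in forward time, whereas in the $L_-$ setting $\eta^{-1/2}$ decreases in the direction of increasing backward time (toward the would-be singularity). Tracking these signs carefully through the Legendre transforms, parametrizations, and Dini derivatives is the main technical overhead; the underlying analytic content is unchanged.
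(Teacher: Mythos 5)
Your overall plan is exactly what the paper does: the paper itself only says that the $L_+$ arguments ``are the same as for $L_-$ but with a handful of symbol and sign changes'' and gives no further details. So your six-vertex cycle, the choice of a $\sqrt{\cdot}$-affine reparametrization $\xi_a$, and the observation that the main overhead is sign bookkeeping from the reversed parabolic scaling are all on target.

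However, the one concrete formula you wrote down, the proposed $L_+$ Bochner identity, has sign errors. You wrote
\begin{align}
    (\partial_t - \Delta)\bigl(|\nabla v|^2 - 2t\Delta v - t^2 S - \tfrac{n}{2}t\bigr) = -2\bigl|t\mathcal{S} - \tfrac{g}{2} - \Hess v\bigr|^2 - t^2\mathcal{D}(-\nabla v/t)\,,
\end{align}
but the correct identity is
\begin{align}
    (\partial_t - \Delta)\bigl(|\nabla v|^2 - 2t\Delta v - t^2 S - \tfrac{n}{2}t\bigr) = -2\bigl|t\mathcal{S} + \tfrac{g}{2} + \Hess v\bigr|^2 - t^2\mathcal{D}(\nabla v/t)\,.
\end{align}
The sign that must flip relative to the $L_-$ identity is the one in front of $g/2$, not the one in front of $\Hess v$ or in the argument of $\cD$. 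The source of the flip is the product rule applied to the explicitly time-dependent coefficient: $(\partial_t - \Delta)(-2t\Delta v)$ produces $-2\Delta v$ in forward time, whereas $-(\partial_\tau + \Delta)(-2\tau\Delta v)$ produces $+2\Delta v$ in backward time, and this $\mp 2\Delta v$ together with the corresponding $\mp 2tS$ from $-t^2S$ is absorbed into the cross terms of the completed square with $\pm g/2$. (A sanity check: the $L_+$ square must vanish on an expanding gradient soliton, where $t\Ric + \Hess f = -\tfrac{g}{2}$ with $v = tf$ gives $t\cS + \Hess v + g/2 = 0$; your square instead vanishes when $t\cS - \Hess v = g/2$, which is not the expander equation. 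Also, in the special case $v \equiv 0$ and $S$ nonconstant, your formula would produce an extra $+4tS$ compared to the true left-hand side.) There is likewise no reason to flip the argument of $\cD$ to $-\nabla v/t$; since $\cD$ has a linear part, $\cD(-\nabla v/t) \neq \cD(\nabla v/t)$ in general, and carrying your proposed formula forward would disagree with the paper's own dimensional Bochner statement (Proposition \ref{prop:updated-dim-Bochner-equiv}, item~3 with $T_+=0$). The rest of the outline is unaffected, but you should redo the Bochner computation before trusting any downstream sign choices in the gradient estimate, the Hopf--Lax Hamiltonian, or the $\xi_a$ reparametrization.
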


\begin{rmk}
The entropy convexity statement \eqref{eq:ECL+} was introduced in \cite[Prop.~20]{Lot09} in the context of Ricci flows, where it was used to obtain a version of reduced volume monotonicity adapted to $L_+$. 
\end{rmk}

\section{Monotonicity of \texorpdfstring{$\cF$}{F} and \texorpdfstring{$\cW$}{W}}\label{s:FandW}

As a palate cleanser for the reader, we now provide a short application for the gradient estimates \eqref{eq:GEL0} and \eqref{eq:GEL-}. We first recall the $\cF$ functional and its scale-invariant sibling $\cW$ defined on a manifold $M$, given a smooth metric $g$ and function $S$ on $M$, a smooth measure $\mu = \rho \,dV$, and a (backwards) time $\tau$:

\begin{equation}
\cF(\mu,g,S) := \int_M (|\nabla f|^2 + S)\,d\mu \,,\qquad \cW(\mu,g,S,\tau) := \int_M [\tau(|\nabla f|^2 + S) + f - n]\,d\mu\,,
\end{equation}
where $f$ is defined via the reverse Gaussian formula $\rho = (4\pi\tau)^{-n/2}e^{-f}$. Recall that $\cF$, known as Fisher information in the static setting, is (up to sign) the derivative of the Boltzmann entropy along the adjoint heat flow. We will mainly be interested in the case when $g = g_\tau$ is time dependent, $S = \partial_\tau dV_g/dV_g$, and $\mu = \hat{P}_{\tau_1,\tau}[\mu_{\tau_1}]$. The main result is the following:

\begin{thm}\label{thm:F-W-monotonicity}
Suppose that $(M,g_\tau)_{\tau \in I}$ is a smooth family of Riemannian manifolds, $\tau_1 \in I$ is any time, and $\mu_{\tau_1} =\rho_{\tau_1\,dV_{\tau_1}} \in \cP(M)$ with $\rho_{\tau_1} \in C^\infty(M;(0,\infty))$ an initial measure with smooth positive density.

\begin{enumerate}
\item If $g$ satisfies the $L_0$ gradient estimate \eqref{eq:GEL0}, then 
\begin{equation}
[\tau_1,\infty)\cap I\ni\tau\qquad\mapsto \qquad\cF(\hat{P}_{\tau_1,\tau}[\mu_{\tau_1}], g_\tau, S_\tau)\,\qquad \text{is non-increasing. }
\end{equation}
\item If $g$ satisfies the $L_{-}$ gradient estimate \eqref{eq:GEL-} and $I\subseteq (0,\infty)$, then 
\begin{equation}
[\tau_1,\infty) \cap I\ni\tau \qquad\mapsto \qquad \cW(\hat{P}_{\tau_1,\tau}[\mu_{\tau_1}], g_\tau,S_\tau,\tau) \,\qquad \text{is non-increasing. }
\end{equation}
\end{enumerate}
\end{thm}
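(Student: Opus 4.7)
My plan is to exploit the gradient estimates \eqref{eq:GEL0} and \eqref{eq:GEL-} together with the forward/adjoint duality $\int P_{\sigma,\tau}\phi\,d\mu_\sigma = \int \phi\,d\mu_\tau$, in the same spirit as the Kuwada-type arguments of Section~\ref{ss:GEWCdualityL0}. Throughout I write $\mu_\eta = \rho_\eta\,dV_\eta$ and $v_\eta := -\log\rho_\eta$; since $\rho_{\tau_1}$ is smooth and positive, the strong maximum principle for the adjoint heat equation keeps $\rho_\eta$ smooth and strictly positive, so $v_\eta$ is smooth. The key algebraic tool used in both parts is the $\mu_\eta$-integration-by-parts identity $\int \Delta\phi\,d\mu_\eta = \int \langle \nabla\phi,\nabla v_\eta\rangle\,d\mu_\eta$ on the closed manifold $M$; in particular, taking $\phi = v_\eta$ gives $\int |\nabla v_\eta|^2\,d\mu_\eta = \int \Delta v_\eta\,d\mu_\eta$, and hence $\int (|\nabla v_\eta|^2 - 2\Delta v_\eta - S_\eta)\,d\mu_\eta = -\cF(\mu_\eta)$.

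For (1), I fix $\sigma \leq \tau$ in $[\tau_1,\infty)\cap I$ and apply \eqref{eq:GEL0} with terminal data $v_\tau$ at time $\tau$. Integrating against $\mu_\sigma$ and using duality converts the right-hand side to $\int(|\nabla v_\tau|^2 - 2\Delta v_\tau - S_\tau)\,d\mu_\tau = -\cF(\mu_\tau)$. On the left, writing $\psi_\sigma := P_{\sigma,\tau}v_\tau$, the same IBP identity together with the elementary estimate $|\nabla\psi_\sigma - \nabla v_\sigma|^2 \geq 0$ gives $\int(|\nabla\psi_\sigma|^2 - 2\Delta\psi_\sigma - S_\sigma)\,d\mu_\sigma = \int |\nabla\psi_\sigma - \nabla v_\sigma|^2\,d\mu_\sigma - \cF(\mu_\sigma) \geq -\cF(\mu_\sigma)$, and chaining produces $\cF(\mu_\tau) \leq \cF(\mu_\sigma)$.

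For (2), the same template works with \eqref{eq:GEL-} in place of \eqref{eq:GEL0} and the parabolically rescaled terminal test function $v = \tau v_\tau$. Writing $\tilde G^-_\eta(v) := |\nabla v|^2 - 2\eta\Delta v - \eta^2 S_\eta + \frac{n}{2}\eta$, a direct computation yields $\int \tilde G^-_\tau(\tau v_\tau)\,d\mu_\tau = -\tau^2\cF(\mu_\tau) + \frac{n}{2}\tau$, while on the other side, after duality, IBP, and the square completion $|\tau\nabla\psi_\sigma - \sigma\nabla v_\sigma|^2 \geq 0$, the corresponding LHS is bounded below by $-\sigma^2\cF(\mu_\sigma) + \frac{n}{2}\sigma$. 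Hence $\tau \mapsto \tau^2\cF(\mu_\tau) - \frac{n}{2}\tau$ is non-increasing. To transfer this to $\cW$, I will use the standard identity $\partial_\tau \cE(\mu_\tau) = -\cF(\mu_\tau)$ along the backwards-time adjoint heat flow (obtained by differentiating $\int \rho\log\rho\,dV_\eta$ under the integral and substituting $\partial_\eta\rho = \Delta\rho - S\rho$, $\partial_\eta dV_\eta = S\,dV_\eta$); combined with the definition $\cW(\mu_\tau,\tau) = \tau\cF(\mu_\tau) - \cE(\mu_\tau) - \frac{n}{2}\log(4\pi\tau) - n$ this yields the pointwise relation $\tau\,\partial_\tau\cW = \partial_\tau(\tau^2\cF - \frac{n}{2}\tau) \leq 0$, as desired.

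The only non-mechanical choice in the whole argument is $v = \tau v_\tau$ in (2), which is forced by the parabolic scaling of \eqref{eq:GEL-} and the requirement that the mixed $\sigma\tau$ cross term pair into the perfect square $|\tau\nabla\psi_\sigma - \sigma\nabla v_\sigma|^2$; everything else is a direct adaptation of the $\cF$ argument with parabolic weights inserted.
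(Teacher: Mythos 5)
Your proof is correct and essentially identical to the paper's: both parts rewrite $\cF$ (resp.\ $\tau^2\cF - \tfrac{n}{2}\tau$) using integration by parts against the smooth positive density, convert to an integral against the initial measure via forward/adjoint duality, invoke the gradient estimate, and finish with a square completion. The only differences are cosmetic sign conventions (you use $v = -\log\rho$; the paper uses $\log\rho$, which is the same after the sign-flip $v\mapsto -v$ noted in the text), and your explicit sketch of the identity $\partial_\tau\cE = -\cF$ where the paper simply cites Topping.
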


\begin{rmk}
One may view this argument as a version of \cite[Sec.~1.2]{Top09}, but without needing to pass through the intermediate \eqref{eq:WCL0}, \eqref{eq:WCL-} statements.
\end{rmk}

\begin{rmk}
We observe that $S$ need not be equal to $\partial_\tau\ln dV_\tau$ for the first conclusion to hold. It is enough to assume that \eqref{eq:GEL0} holds for the particular choice of function $S : I\times M \to \R{}$. Meanwhile the second conclusion uses $S \geq \partial_\tau dV_\tau/dV_\tau$ to ensure that $\tau \partial_\tau \cW \leq \partial_\tau (\tau^2 \cF - \tau n/2)$.)
\end{rmk}

\begin{proof}
\begin{enumerate}
\item Let us write the smooth measures $\hat{P}_{\tau_1,\tau}[\mu_{\tau_1}] =: \rho_\tau \,dV_\tau$. We then compute by using integration by parts and duality of the forward and adjoint heat propagators
\begin{align}
\cF(\hat{P}_{\tau_1,\tau}[\mu_{\tau_1}], g_\tau, S_\tau) &= \int_M \left(|\nabla \ln\rho_\tau|^2 + S_\tau\right)\,d\hat{P}_{\tau_1,\tau}[\mu_{\tau_1}] \\
&= \int_M \left(-|\nabla \ln\rho_\tau|^2 + S_\tau\right)\,d\hat{P}_{\tau_1,\tau}[\mu_{\tau_1}] + \int_M 2|\nabla \ln\rho_\tau|^2\,\rho_\tau dV_\tau\\
&= \int_M \left(-|\nabla \ln\rho_\tau|^2 + S_\tau\right)\,d\hat{P}_{\tau_1,\tau}[\mu_{\tau_1}] + \int_M 2\La\nabla \ln\rho_\tau,\nabla \rho_\tau\Ra\,dV_\tau\\
&= \int_M P_{\tau_1,\tau}\left[-2\Delta\ln\rho_\tau-|\nabla \ln\rho_\tau|^2 + S_\tau\right]\,d\mu_{\tau_1} \\
\end{align}
Using now \eqref{eq:GEL0}, again integration by parts, and then Young's inequality, we can continue with 
\begin{align}
\cdots&\leq \int_M \left[-2\Delta P_{\tau_1,\tau}[\ln \rho_\tau] - |\nabla P_{\tau_1,\tau}[\ln \rho_\tau]|^2 + S_{\tau_1}\right]\,d\mu_{\tau_1} \\
&=\int_M \left[ - |\nabla P_{\tau_1,\tau}[\ln \rho_\tau]|^2 + S_{\tau_1}\right]\,d\mu_{\tau_1} + 2\int_M \La \nabla P_{\tau_1,\tau}[\ln \rho_{\tau}], \nabla \rho_{\tau_1}\Ra\,dV_{\tau_1} \\
&= \int_M \left[ - |\nabla P_{\tau_1,\tau}[\ln \rho_\tau]|^2 + S_{\tau_1}\right]\,d\mu_{\tau_1} + 2\int_M \La \nabla P_{\tau_1,\tau}[\ln \rho_{\tau}], \nabla \ln\rho_{\tau_1}\Ra\,d\mu_{\tau_1} \\
&\leq \int_M \left[  |\nabla \ln \rho_{\tau_1}|^2 + S_{\tau_1}\right]\,d\mu_{\tau_1} = \cF(\mu_{\tau_1},g_{\tau_1}, S_{\tau_1})\,. \\
\end{align}
\item The second item proceeds similarly, so we do not highlight every step:
\begin{equation}
\tau^2\cF(\hat{P}_{\tau_1,\tau}[\mu_{\tau_1}], g_\tau, S_\tau) - \frac{n}{2}\tau = \int_M P_{\tau_1,\tau}\left[-2\tau\Delta (\tau\ln \rho_\tau) - |\nabla (\tau\ln \rho_{\tau})|^2 + \tau^2S_\tau - \frac{n}{2}\tau\right]\,d\mu_{\tau_1}
\end{equation}
We are set up to apply \eqref{eq:GEL-} to $\tau\ln\rho_\tau$, and we then proceed as before:
\begin{align}
\cdots &\leq \int_M \left[-2\tau_1\Delta P_{\tau_1,\tau}[\tau\ln \rho_\tau] - |\nabla P_{\tau_1,\tau}[\tau\ln \rho_\tau]|^2 + \tau_1^2S_{\tau_1} - \frac{n}{2}\tau_1\right]\,d\mu_{\tau_1} \\
&= \int_M \left[ - |\nabla P_{\tau_1,\tau}[\tau\ln \rho_\tau]|^2 + \tau_1^2S_{\tau_1} - \frac{n}{2}\tau_1\right]\,d\mu_{\tau_1} + 2\int_M \left\La \nabla P_{\tau_1,\tau}[\tau\ln \rho_{\tau}], \nabla (\tau_1\ln\rho_{\tau_1})\right\Ra\,d\mu_{\tau_1} \\
&\leq \int_M \left[  |\nabla (\tau_1\ln \rho_{\tau_1})|^2 + \tau_1^2S_{\tau_1} - \frac{n}{2}\tau_1\right]\,d\mu_{\tau_1} = \tau_1^2\cF(\mu_{\tau_1},g_{\tau_1}, S_{\tau_1}) - \frac{n}{2}\tau_1\,.
\end{align}
\end{enumerate}
We conclude that $\tau \mapsto \tau^2\cF(\hat{P}_{\tau_1,\tau}[\mu_{\tau_1}], g_\tau,S_\tau) -(n/2)\tau$ is non-increasing. To see that $\tau \mapsto \cW(\hat{P}_{\tau_1,\tau}[\mu_{\tau_1}], g_\tau,S_\tau,\tau)$ is also non-increasing, we use the just-concluded monotonicity and note (or rather Topping notes, see \cite[Sec.~1.2]{Top09}) that for any sufficiently smooth family of Riemannian manifolds, that need not (sub)solve any equation, 
\begin{equation}
\partial_\tau \cW(\hat{P}_{\tau_1,\tau}[\mu_{\tau_1}], g_\tau,S_\tau,\tau) = \frac{1}{\tau}\partial_\tau\left(\tau^2\cF(\hat{P}_{\tau_1,\tau}[\mu_{\tau_1}], g_\tau, S_\tau) -\frac{n}{2}\tau\right)\,.
\end{equation}
\end{proof}

\begin{rmk}
    With the same proof as in the $L_-$ case, we also have that under the $L_+$ gradient estimate \eqref{eq:GEL+} we recover the montonicity of the $\mathcal W_+$ entropy, which was first considered in \cite{Feldman-Ilmanen-Ni}.
\end{rmk}

\section{Dimensional inequalities for the \texorpdfstring{$L_0$}{L0} distance}\label{s:dimL0}
In this section we prove the following theorem, which characterizes flows satisfying $\cD\geq0$ and with an upper bound on the dimension. It is formulated in terms of backwards time, but this is simply a matter of preference.

\begin{thm}\label{thm:dimL0charact}
Given $(M,g_\tau)_{\tau \in I}$ a smooth, closed, time-dependent Riemannian manifold of arbitrary dimension, a real number $N \in [0,\infty)$, and $t \in \tilde I$ a choice of forward time variable, the following are equivalent:

\begin{enumerate}
\item $\cD \geq 0$ and $\dim M \leq N$;
\item Any one of conditions 2.-6. of both Theorem \ref{thm:L-charact} and Theorem \ref{thm:L+charact} holds for the shifted flows $(M,g_{\tau - T_0})_{\tau \in (I + T_0) \cap (0,\infty)}$ resp. $(M,g_{t - T_0})_{t \in (\tilde I + T_0) \cap (0,\infty)}$ for all $T_0 \in \R{}$, with dimensional parameter $n := N$; 
\item Bochner inequality: for every $v:M\to\mathbb{\R{}}$ smooth, for every $[\sigma,\tau] \subseteq I$,
\begin{align} \label{eq:BochnerdimL0}
    (-\partial_\sigma - \Delta) \left(|\nabla P_{\sigma,\tau}v|_\sigma^2 + 2\Delta_\sigma P_{\sigma,\tau}v  -S_\sigma\right) \leq -\frac{2}{N}(S_\sigma - \Delta_\sigma P_{\sigma,\tau}v)^2\,;\tag{B\({}_{L_0,N}\)}
\end{align}
\item Gradient estimate: for every $v: M \to \R{}$ smooth, for every $[\sigma,\tau] \subseteq I$,
\begin{align} 
    |\nabla P_{\sigma,\tau}v|_\sigma^2 + 2\Delta_\sigma P_{\sigma,\tau}v - S_\sigma \leq P_{\sigma,\tau}(|\nabla v|_\tau^2 + 2\Delta_\tau v - S_\tau) - \frac{2}{N}\int_\sigma^\tau (|P_{\sigma,\eta}S_\eta - P_{\sigma,\eta}\Delta_\eta P_{\eta,\tau}v|^2)\,d\eta\,;\\\label{eq:GEdimL0}\tag{GE\({}_{L_0,N}\)}
\end{align}
\item Wasserstein contraction: for any probability measures $\mu,\nu \in \cP(M)$, for all times $\sigma,\tau,S,T \in I$ with $\sigma < \tau$, $S < T$, $\tau < T$, and $\sigma< S$, \noeqref{eq:WCdimL0}
\begin{align} \label{eq:WCdimL0}
    W_{\tilde L_0^{S,T}}(\hat{P}_{\sigma,S}\mu,\hat{P}_{\tau,T}\nu) \leq W_{\tilde L_0^{\sigma,\tau}}(\mu,\nu) + \frac{N}{4}\ln\left(\frac{T - \tau}{S - \sigma}\right)\big((T - S) - (\tau - \sigma)\big)\,, \tag{WC\({}_{L_0,N}\)}
\end{align}
where $W_{\tilde L_0^{\sigma,\tau}}$ is the Wasserstein distance with respect to the cost induced by $(\tau - \sigma)\mathcal{L}_0$. 
\end{enumerate}
\end{thm}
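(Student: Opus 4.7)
My plan is to establish the chain $(1) \Leftrightarrow (3) \Leftrightarrow (4) \Leftrightarrow (5)$ together with the separate characterization $(1) \Leftrightarrow (2)$, with the dimensional parameter $N$ entering via the trace inequality $|A|^2 \geq (\tr A)^2/n \geq (\tr A)^2/N$ applied pointwise to symmetric $2$-tensors on a manifold of dimension $n = \dim M \leq N$.

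For $(1) \Leftrightarrow (3)$, I would begin from the backward-time analog of the Bochner identity of Proposition \ref{prop:DcondiffBochner},
\begin{equation}
(-\partial_\sigma - \Delta_\sigma)(|\nabla v|^2 + 2\Delta v - S_\sigma) = -2|\cS_\sigma - \Hess v|^2 - \cD(-\nabla v)\,,
\end{equation}
for any heat flow $v = P_{\sigma,\tau}u$. Applying the trace inequality to $A := \cS_\sigma - \Hess v$ converts the perfect square to $-\tfrac{2}{N}(S_\sigma - \Delta v)^2$ minus a nonnegative remainder, yielding $(3)$. For the converse I would test the resulting pointwise inequality by choosing $v$ so that $(\Hess v, \nabla v)$ take prescribed values at $p$: setting $\Hess v(p) = \cS(p)$ and $\nabla v(p) = X$ arbitrary isolates $\cD(X) \geq 0$, while setting $\nabla v(p) = 0$ and $\Hess v(p) = \cS(p) - (\lambda/n)g(p)$ reduces $(3)$ to $2\lambda^2(1/n - 1/N) \geq -\cD(0)$ for all $\lambda \in \R{}$, forcing $n \leq N$. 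The equivalence $(3) \Leftrightarrow (4)$ proceeds as in Proposition \ref{prop:L0gradestimate}: with $\phi_\eta := |\nabla P_{\eta,\tau}v|^2 + 2\Delta_\eta P_{\eta,\tau}v - S_\eta$, duality of the heat and adjoint heat flows gives $\partial_\eta P_{\sigma,\eta}\phi_\eta = -P_{\sigma,\eta}\big((-\partial_\eta - \Delta_\eta)\phi_\eta\big) \geq \tfrac{2}{N} P_{\sigma,\eta}\big((S_\eta - \Delta_\eta P_{\eta,\tau}v)^2\big)$ by $(3)$; integrating in $\eta \in [\sigma, \tau]$ and applying Jensen's inequality $P_{\sigma,\eta}(A^2) \geq (P_{\sigma,\eta}A)^2$ yields $(4)$. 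The reverse follows by differentiating $(4)$ at $\sigma = \tau$, where equality holds, then propagating via the semigroup property $P_{\sigma',\tau}v = P_{\sigma',\sigma}(P_{\sigma,\tau}v)$.

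The equivalence $(4) \Leftrightarrow (5)$ is the most delicate, and I would proceed by Kuwada-style duality following Propositions \ref{prop:GEtoWCL0} and \ref{prop:WL-contraction}. Given a test function $\phi$ for the $\tilde L_0^{\sigma,\tau}$-dual problem, the goal is to show the dominance $P_{\tau,T}Q^{S,T}\phi(y) - P_{\sigma,S}\phi(x) \leq \tilde L_0^{\sigma,\tau}(x,y) + \tfrac{N}{4}\ln(\tfrac{T-\tau}{S-\sigma})\big((T-S) - (\tau-\sigma)\big)$ for all $x, y \in M$, which together with Kantorovich duality yields $(5)$. This dominance would follow from a preservation-under-heat-flow of a nonstandard Hamilton--Jacobi inequality adapted to the cost $\tilde L_0^{S,T}$, with the integral dimensional error $\tfrac{2}{N}\int_\sigma^\tau P_{\sigma,\eta}(\cdots)^2\,d\eta$ in $(4)$ accumulating into the $\ln$-correction (consistent with the static $\mathsf{CD}(0,N)$ asymptotic $\tfrac{N}{4}\ln(\tfrac{T-\tau}{S-\sigma})\cdot((T-\tau)-(S-\sigma)) \sim \tfrac{N\delta^2}{4\eps}$ as $\eps = S-\sigma$, $\delta = (T-\tau)-(S-\sigma) \downarrow 0$). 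The converse $(5) \Rightarrow (4)$ would use a dynamic lift of a test curve joining $(\sigma, x)$ to $(\tau, y)$ via Lemma \ref{lem:dynamiclifting}, as in Proposition \ref{prop:L0-contraction-to-GE}, followed by differentiating at $S = \sigma$, $T = \tau$.

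Finally, $(1) \Leftrightarrow (2)$ is handled by time translation. The forward direction is immediate since $\cD \geq 0$ and $\dim M \leq N$ are invariant under $g_\tau \mapsto g_{\tau - T_0}$, so Theorems \ref{thm:L-charact}, \ref{thm:L+charact} apply to each shifted flow with parameter $n \leq N$, and monotonicity in the dimensional parameter (each inequality weakens when $n$ is replaced by a larger $N$) gives $(2)$. For the converse, the Bochner identity of Proposition \ref{prop:L-Bochner} applied with parameter $N$ to the shifted flow yields, at a spacetime point $(\tau_0, x_0)$ of the original flow with $T_0 > -\tau_0$ and after choosing $\Hess v(x_0)$ to annihilate the perfect-square term, $(\tau_0 + T_0)^2 \cD(X) \geq (n - N)/2$ for all $X \in T_{x_0}M$. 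Sending $T_0 \to +\infty$ forces $\cD(X) \geq 0$; sending $T_0 \to -\tau_0^+$ along shifts where the flow remains nonempty forces $n \leq N$, since otherwise $(n - N)/(2(\tau_0 + T_0)^2)$ diverges. The main obstacle lies in $(4) \Rightarrow (5)$: the cost $\tilde L_0^{S,T}$ depends explicitly on the endpoints $[S,T]$, making the associated Hopf--Lax semigroup nonstandard, and the $\ln$-correction matching requires careful bookkeeping of the integral dimensional bound along the heat flow.
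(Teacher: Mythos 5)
Your overall architecture aligns with the paper's in most respects: the equivalence $(1)\Leftrightarrow(3)$ via the trace inequality $|A|^2 \geq (\tr A)^2/N$ and the pointwise test-function argument is exactly Section \ref{ss:nL0Bochner}; $(3)\Leftrightarrow(4)$ via Jensen and differentiation at $\sigma = \tau$ is also the paper's argument; and your $(1)\Leftrightarrow(2)$ via sending $T_0\to +\infty$ and $T_0\to -\tau_0^+$ in the shifted $L_-$ Bochner is a legitimate and arguably cleaner variant of the case analysis in Proposition \ref{prop:updated-dim-Bochner-equiv} (which instead picks $T_\pm$ based on the sign of $\Delta v + S$). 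Similarly your $(5)\Rightarrow(4)$ plan via dynamic lifts is workable, though the paper instead differentiates at multiplicatively related times to land on the shifted $L_\pm$ gradient estimates (Proposition \ref{prop:WCdimL0-implies-shifted-GE}).

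The genuine gap is $(4)\Rightarrow(5)$, which you correctly flag as the ``main obstacle'' but do not resolve. The issue is structural: the dimensional error in \eqref{eq:GEdimL0} is the quantity $-\tfrac{2}{N}\int_\sigma^\tau |P_{\sigma,\eta}S_\eta - P_{\sigma,\eta}\Delta_\eta P_{\eta,\tau}v|^2\,d\eta$, which depends on the test function $v$, whereas the error term $\tfrac{N}{4}\ln\left(\frac{T-\tau}{S-\sigma}\right)\big((T-S)-(\tau-\sigma)\big)$ in \eqref{eq:WCdimL0} is universal (independent of $\mu,\nu$ and the Kantorovich potential). Running the Kuwada duality with \eqref{eq:GEdimL0} directly leaves you with a potential-dependent accumulated error, and it is not at all clear how to convert it into the universal $\ln$-error without additional input. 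This is precisely why the paper avoids $(4)\Rightarrow(5)$ and instead proves $(2)\Rightarrow(5)$: Proposition \ref{prop:WL0dim-contraction} uses Corollary \ref{cor:L_GradEst_param} (``gradient estimate with parameter''), which packages the one-parameter family of shifted $L_\pm$ gradient estimates into the form
\begin{equation}
(|\nabla P_{\sigma,\tau} w|^2 - 2\lambda\sigma\Delta P_{\sigma,\tau} w - (\lambda\sigma)^2 S_\sigma) - P_{\sigma,\tau}(|\nabla w|^2 - 2\tau\Delta w - \tau^2 S_\tau) \leq \frac{n}{2}\frac{(\tau-\lambda\sigma)^2}{\tau - \sigma}\,,
\end{equation}
where the error now depends on the \emph{free parameter} $\lambda$, not on $w$. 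Choosing $\lambda$ at each time $\eta$ along the geodesic to match the affine reparametrization $\xi$ is what produces the universal $\ln$-error after integrating $\int_\sigma^\tau\frac{\xi'-1}{\xi(\eta)-\eta}\,d\eta$. In effect, the degree of freedom you need is the shift $T_0$, not the completed-square dimensional term; as the paper notes in Section \ref{ss:GEWCdualitydimL0}, it would be ``natural'' to dualize \eqref{eq:GEdimL0} directly, but the route through $(2)$ is both more convenient and (via Proposition \ref{prop:updated-dim-Bochner-equiv}) logically equivalent. If you want to complete your $(4)\Rightarrow(5)$ directly, you would have to recover this optimization-over-shifts from \eqref{eq:GEdimL0} pre-duality; otherwise, just replace $(4)\Rightarrow(5)$ in your chain with $(2)\Rightarrow(5)$.
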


In view of Theorems \ref{thm:L0charact} and \ref{thm:L-charact}, there is a conspicuous absence of entropy convexity and $\mathrm{EVI}$-type conditions in Theorem \ref{thm:dimL0charact}. We partially fill this void with Theorem \ref{thm:dimL0EC/EVI}, which at least shows that the corresponding characterizations of Theorem \ref{thm:L-charact} have analogues phrased only in terms of the $L_0$ distance. However the question remains unanswered:

\begin{qu}
    What are (reasonable) time-shifting invariant entropy convexity and $\mathrm{EVI}$ statements that are equivalent to $\cD \geq 0$ and $\dim M \leq N$? Possibly one should make use of the R\'enyi entropy.
\end{qu}

\begin{thm}\label{thm:dimL0EC/EVI}
Given $(M^n,g_\tau)_{\tau\in I}$ smooth time-dependent Riemannian manifold of dimension $\dim M = n$ and $I \subseteq (0,\infty)$, the following are equivalent:
\begin{enumerate}
\item $L_-$ characterizations: Any of the (equivalent) conditions of Theorem \ref{thm:L-charact}.
\item Dimensional $L_0$ entropy convexity: for every $\mu_\sigma,\mu_\tau \in \cP(M)$, $[\sigma,\tau]\subseteq I$, there is a $W_{L_0^{\sigma,\tau}}$ geodesic $(\mu_\eta)_{\eta\in[\sigma,\tau]}$ (equivalently, for all such curves) such that \noeqref{eq:ECdimL0}
\begin{align}
    \eta \mapsto \cE(\nu_\eta) - \cE(\nu_{\sigma}) + W_{L_0^{\sigma,\eta}}(\nu_{\sigma},\nu_\eta) + \frac{n}{8}(\ln(\eta/\sigma))^2 &- \int_{\sigma}^\eta \frac{1}{\tilde\eta}W_{L_0^{\sigma,\tilde\eta}}(\nu_{\sigma},\nu_{\tilde\eta})\,d\tilde\eta\\
    &\text{is convex in the variable $\ln\eta$;}\label{eq:ECdimL0}\tag{EC\({}_{L_0,n}\)}
\end{align}

\item Dimensional $L_0$ $\mathrm{EVI}$: for every $\mu,\nu \in \cP(M)$ and times $[\sigma,\tau] \subseteq I$, the heat flow $\hat P_{\sigma,\tau}\mu$ satisfies the EVIs: \noeqref{eq:EVIdimL0}
\begin{align}
 \eta\cdot\partial_{\eta^+}^+ W_{L_0^{\eta,\tau}}(\hat{P}_{\sigma,\eta}[\nu], \mu) &\leq \frac{1}{\ln(\tau/\eta)} \left[\cE(\mu) -\cE(\hat{P}_{ \sigma,\eta}[\nu]) +W_{L_0^{\eta,\tau}}(\hat{P}_{\sigma,\eta}[\nu], d\mu) -\int_{\eta}^{\tau} \frac{1}{\tilde\tau}  W_{L_0^{\eta,\tilde\tau}}(\hat{P}_{\sigma,\eta}[\nu],\tilde{\nu}_{\tilde\tau})\,d\tilde\tau\right]\\
&\qquad + \frac{n\ln(\tau/\eta)}{8}\,,\qquad \forall\eta \in [\sigma,\tau)\,,\\
\varsigma\cdot \partial_{\varsigma^+}^+ W_{L_0^{\sigma,\varsigma}}(\nu,\hat{P}_{\tau,\varsigma}[\mu]) &\leq \frac{1}{\ln(\varsigma/\sigma)} \left[\cE(\nu) - \cE(\hat{P}_{\tau,\varsigma}[\mu]) - W_{L_0^{\sigma,\varsigma}}(\nu,\hat{P}_{\tau,\varsigma}[\mu]) -\int_{\sigma}^{\varsigma} \frac{1}{\tilde\tau}  W_{L_0^{\tilde{\tau},\varsigma}}(\tilde{\mu}_{\tilde\tau}, \hat{P}_{\tau,\varsigma}[\mu])\,d\tilde\tau\right]\\
&\qquad + \frac{n\ln(\varsigma/\sigma)}{8}\,,\qquad\forall\varsigma \in [\tau,\sup I)\,,\label{eq:EVIdimL0}\tag{EVI\({}_{L_0,n}\)}\\
\end{align}
where $(\tilde{\nu}_{\tau})_{\tau \in [\eta,\tau]}$ is any $W_{L_0^{\eta,\tau}}$-geodesic with endpoints $\hat{P}_{\sigma,\eta}[\nu]$ and $\mu$, while $(\tilde{\mu}_{\tau})_{\tau \in [\sigma,\varsigma]}$ is any $W_{L_0^{\sigma,\varsigma}}$-geodesic with endpoints $\nu$ and $\hat P_{\tau,\varsigma}[\mu]$.
\end{enumerate}
\end{thm}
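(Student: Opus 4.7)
The proof should establish the cycle $(1) \Rightarrow (3) \Rightarrow (2) \Rightarrow (1)$, parallel to the structure already established for the non-dimensional $L_0$ and $L_-$ cases in Subsections \ref{ss:entropyEVIL0} and \ref{ss:entropyEVIL-}.

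For $(1) \Rightarrow (3)$: the assumed $L_-$ conditions imply $\cD \geq 0$ by Theorem \ref{thm:L-charact}, and together with $\dim M = n$ this yields the dimensional $L_0$ gradient estimate \eqref{eq:GEdimL0} via Theorem \ref{thm:dimL0charact} with $N = n$. I would then follow the action-entropy strategy of Lemma \ref{lem:L0-action-entropy-est} and Proposition \ref{prop:L0GE-to-EVI}, but with two essential modifications. First, replace the affine reparametrization $\xi_h : [s-h,t] \to [s,t]$ of Lemma \ref{lem:L0-action-entropy-est} by a multiplicative one $\xi_\alpha : [\sigma/\alpha,\tau] \to [\sigma,\tau]$ with $\ln \xi_\alpha$ affine in $\ln \eta$; this ensures that $\partial_\alpha|_{\alpha=1^+} \xi_\alpha(\eta)$ is proportional to $\eta \ln(\tau/\eta)/\ln(\tau/\sigma)$, producing the $\ln(\tau/\eta)$ prefactors in \eqref{eq:EVIdimL0}. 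Second, apply the sharper dimensional estimate \eqref{eq:GEdimL0} rather than \eqref{eq:GEL0} to the Kantorovich potential; the extra correction $-\frac{2}{n}\int |P_{\sigma,\eta}S_\eta - P_{\sigma,\eta}\Delta_\eta P_{\eta,\tau}v|^2\,d\eta$, after integration by parts against the $W_{L_0}$-geodesic Kantorovich potentials of Corollary \ref{cor:dynamic-potentials}, supplies both the explicit $n\ln(\tau/\eta)/8$ term and the nonlocal correction $\int_\eta^\tau \tilde\tau^{-1} W_{L_0^{\eta,\tilde\tau}}(\hat P_{\sigma,\eta}[\nu], \tilde\nu_{\tilde\tau})\,d\tilde\tau$ appearing in \eqref{eq:EVIdimL0}.

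For $(3) \Rightarrow (2)$: pick a $W_{L_0^{\sigma,\tau}}$-geodesic $(\mu_\eta)$, and at an intermediate time $\bar\eta \in (\sigma,\tau)$ apply the two \eqref{eq:EVIdimL0}'s to the pairs $(\mu_\sigma,\mu_{\bar\eta})$ and $(\mu_{\bar\eta},\mu_\tau)$, exactly as in Propositions \ref{prop:L0-entropy-cvx} and \ref{prop:L-minus-entropy-cvx}. The sum of Dini derivatives on the right-hand side is $\geq 0$ by the $W_{L_0}$ triangle inequality, saturated along the geodesic. Weighting by $\ln(\bar\eta/\sigma)/\ln(\tau/\sigma)$ and $\ln(\tau/\bar\eta)/\ln(\tau/\sigma)$ (rather than affine weights) and collecting terms produces precisely the $(\ln\eta)$-convexity of \eqref{eq:ECdimL0}, the EVI integral corrections assembling into the $\int \tilde\eta^{-1} W_{L_0^{\sigma,\tilde\eta}}$ correction.

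For $(2) \Rightarrow (1)$: I would use \eqref{eq:ECdimL0} to recover the dimensional Wasserstein contraction \eqref{eq:WCdimL0}, following the template of Propositions \ref{prop:L0-cvxty-implies-contr/EVI} and \ref{prop:L-minus-convexity-implies-contraction}. After flowing both endpoints by the heat equation for small time to regularize, estimate the one-sided derivative at the early endpoint of the convex quantity via the semiconcavity-based estimates of Lemma \ref{lem:Top-1-sided-est} applied to a $W_{L_0}$-geodesic; then a multiplicative time rescaling (together with Corollary \ref{cor:WC-with-time-translations} if needed) converts this into \eqref{eq:WCdimL0}. Theorem \ref{thm:dimL0charact} then returns $\cD \geq 0$, so by Theorem \ref{thm:L-charact} the $L_-$ conditions of (1) hold. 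The main obstacle throughout will be the careful bookkeeping in $(1) \Rightarrow (3)$: the nonlocal integral correction in \eqref{eq:EVIdimL0} is a nonstandard feature of these dimensional $\mathrm{EVI}$s, reflecting a dimensional deficit carried along $W_{L_0}$ geodesics, and its emergence with exactly the correct coefficients from the combination of the log-affine reparametrization, heat-flow duality, and the deficit term in \eqref{eq:GEdimL0} will be delicate. A secondary difficulty in $(2) \Rightarrow (1)$ is that the nonlocal $\int \tilde\eta^{-1} W_{L_0^{\sigma,\tilde\eta}}$ correction in \eqref{eq:ECdimL0} is geodesic-dependent, so one must verify that the convex-quantity argument still extracts enough local infinitesimal information to pin down \eqref{eq:WCdimL0} in the form required by Theorem \ref{thm:dimL0charact}.
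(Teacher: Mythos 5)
Your overall strategy---the cycle $(1) \Rightarrow (3) \Rightarrow (2) \Rightarrow (1)$, the log-affine reparametrization $\xi_\alpha$, the triangle-inequality saturation argument for $(3) \Rightarrow (2)$, and the passage through Wasserstein contraction for $(2) \Rightarrow (1)$---matches the structure of the paper (Propositions \ref{prop:dim-L0-convexity-implies-contraction}, \ref{prop:dimL0GE-to-EVI}, and \ref{prop:dim-L0-entropy-cvx} in Subsection \ref{ss:nL0EC/EVI}). The descriptions of $(3) \Rightarrow (2)$ and $(2) \Rightarrow (1)$ are essentially correct; in particular the paper does derive only the multiplicatively related special case \eqref{eq:WCdimL0-mult-times} from \eqref{eq:ECdimL0}, which by the argument of Proposition \ref{prop:WCdimL0-implies-shifted-GE} already suffices to recover \eqref{eq:GEL-}.

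However, your account of $(1) \Rightarrow (3)$ contains a genuine gap. The paper does \emph{not} invoke the dimensional $L_0$ gradient estimate \eqref{eq:GEdimL0} in Lemma \ref{lem:dim-L0-action-entropy-est}. Instead it applies the $L_-$-type gradient estimate \eqref{eq:GEL-} with initial data $\eta f_\eta$ between the multiplicatively close times $\xi_\alpha(\eta) < \eta$, where $f_\eta = \phi_\eta + \lambda \ln(\rho_{\eta,\alpha})/(1-\lambda)$. The $\eta$-prefactors in this auxiliary function are calibrated precisely to the $\sigma,\tau$ coefficients appearing in \eqref{eq:GEL-}; substituting \eqref{eq:GEdimL0} would break this scaling. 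More importantly, your attribution of the two dimensional/nonlocal terms to the deficit in \eqref{eq:GEdimL0} misidentifies their origin. The coefficient $\frac{n}{8}(\ln(\tau/\sigma))^2$ arises from the additive error $+\frac{n}{2}(\tau-\sigma)$ on the right-hand side of \eqref{eq:GEL-}, applied between $\xi_\alpha(\eta)$ and $\eta$ and then integrated against $\eta^{-2}\,d\eta$; the deficit in \eqref{eq:GEdimL0} has the opposite sign (it strengthens the estimate rather than being an error one must pay) and so cannot fulfill this role directly. The nonlocal integral $\int_\eta^\tau \tilde\tau^{-1}W_{L_0^{\eta,\tilde\tau}}(\cdot,\cdot)\,d\tilde\tau$ does not come from the gradient estimate at all: it appears when integrating by parts the action term $\int \frac{\xi_\alpha(\eta)}{\eta}(\dot\nu_{\xi_\alpha(\eta)})_{L_0}\,\xi_\alpha'\,d\eta$, where the non-constant weight $\xi_\alpha(\eta)/\eta$ is created by the log-affine reparametrization and the $1/\eta$ density coming from the $\ln\eta$-derivative. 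Your proposal as written would not produce these terms, and the sign of the \eqref{eq:GEdimL0} deficit actively works against the bookkeeping you describe.
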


\begin{rmk}
A similar addendum could be made to the $L_+$ characterizations of Theorem \ref{thm:L+charact}. However, since it is not even clear how useful the characterizations of Theorem \ref{thm:dimL0EC/EVI} are themselves--see Remark \ref{rmk:no-L0-reduced-vol}--we have not written it out.
\end{rmk}

For the remainder of the section, we assume that we have fixed $(M,g_\tau)_{\tau \in I}$ a smooth, closed, time-dependent Riemannian manifold (of arbitrary dimension, unless an assumption on the dimension is explicitly stated) and a choice of forward time variable $t: = T - \tau \in T - I =: \tilde I$.

\subsection{Dimensional \texorpdfstring{$L_0$}{L0} Bochner and dimensional \texorpdfstring{$L_0$}{L0} gradient estimate}\label{ss:nL0Bochner}
The following is a consequence of the Bochner identity of Proposition \ref{prop:DcondiffBochner}.
\begin{prop}
     The condition $\left\{\text{$\mathcal{D}\geq 0$ and $\dim M\leq N$}\right\}$ is equivalent to \noeqref{eq:BochnerdimL0-id-to-ineq}
    \begin{align}
        -(\partial_\tau + \Delta)(|\nabla v|^2 + 2\Delta v - S) \leq -\frac{2}{N}(S-\Delta v)^2 \,. \label{eq:BochnerdimL0-id-to-ineq}\tag{B\({}_{L_0,N}\)}
    \end{align}
    for any time $\tau\in I$, $v$ smooth solution of the heat flow at $\tau$.
\end{prop}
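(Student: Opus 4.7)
The plan is to mirror the proof of Proposition~\ref{prop:DcondiffBochner}, with the dimension bound entering through the trace inequality $|A|^2 \geq (\tr A)^2/\dim M$ for symmetric $2$-tensors $A$. The starting point is the backward-time form of the Bochner identity of Proposition~\ref{prop:DcondiffBochner}: for any $v$ solving the heat equation at time $\tau$,
\[
-(\partial_\tau + \Delta)\bigl(|\nabla v|^2 + 2\Delta v - S\bigr) = -2|\Hess v - \mathcal{S}|^2 - \mathcal{D}(-\nabla v).
\]
One obtains this either by rewriting Proposition~\ref{prop:DcondiffBochner} using $\partial_t - \Delta = -(\partial_\tau + \Delta)$ and noting that the conventions $\partial_t g = -2\mathcal{S}$ and $\partial_\tau g = 2\mathcal{S}$ leave $\mathcal{S}$, $S$, and hence $\mathcal{D}$ invariant, or by re-deriving it directly from the three evolution identities for $|\nabla v|^2$, $\Delta v$, and $S$ recorded at the start of the proof of Proposition~\ref{prop:L-Bochner}.

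For the forward direction, assume $\mathcal{D}\geq 0$ and $n := \dim M \leq N$. Applied to $A := \Hess v - \mathcal{S}$, whose trace is $\Delta v - S$, the trace inequality gives
\[
-2|\Hess v - \mathcal{S}|^2 \leq -\tfrac{2}{n}(S-\Delta v)^2 \leq -\tfrac{2}{N}(S-\Delta v)^2,
\]
and adding $-\mathcal{D}(-\nabla v) \leq 0$ on the right-hand side of the identity yields~\eqref{eq:BochnerdimL0-id-to-ineq}.

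For the converse, fix $(\tau, p)\in I\times M$, a vector $V \in T_pM$, and an arbitrary symmetric bilinear form $A$ on $T_pM$. As in the proof of Proposition~\ref{prop:DcondiffBochner}, use a quadratic polynomial in normal coordinates centered at $p$ as initial data to produce a smooth heat flow $v$ with $-\nabla v|_p = V$ and $\Hess v|_p = \mathcal{S}_p + A$. Evaluating the identity together with the hypothesis~\eqref{eq:BochnerdimL0-id-to-ineq} at $(\tau, p)$ then reduces to the scalar inequality
\[
\mathcal{D}(V) + 2|A|^2 \geq \tfrac{2}{N}(\tr A)^2.
\]
Setting $A = 0$ gives $\mathcal{D}(V) \geq 0$ for arbitrary $(\tau,p,V)$, while setting $V = 0$, $A = \lambda\cdot \mathrm{Id}_{T_pM}$, and letting $\lambda \to \infty$ after dividing by $\lambda^2$ yields $2n \geq 2n^2/N$, i.e., $\dim M \leq N$.

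The only mild obstacle is carefully tracking sign conventions in transferring the Bochner identity to backward time; beyond this, the argument packages Proposition~\ref{prop:DcondiffBochner} with a classical trace inequality, with the curvature and dimension halves of the hypothesis decoupled by the distinct scaling behavior of $\mathcal{D}(V) + 2|A|^2$ and $\tfrac{2}{N}(\tr A)^2$ under $A \mapsto \lambda A$ with $V$ fixed.
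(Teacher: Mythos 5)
Your proposal is correct and takes essentially the same approach as the paper: both use the Bochner identity of Proposition~\ref{prop:DcondiffBochner} together with the matrix Cauchy--Schwarz (trace) inequality for the forward direction, and for the converse both prescribe $\nabla v$ and $\Hess v$ at a point via a quadratic in normal coordinates, then decouple the $\mathcal{D}\geq 0$ and $\dim M\leq N$ conclusions by the scaling $A\mapsto\lambda A$ (the paper goes directly to $A=\lambda g_\sigma$ rather than first recording the inequality for general $A$, but this is purely stylistic).
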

\begin{proof}
    The forward direction follows directly from \ref{prop:DcondiffBochner} and the Cauchy-Schwarz inequality for matrices. Vice versa, assume the above Bochner inequality holds, then using the Bochner identity from Proposition \ref{prop:DcondiffBochner} we get
    \begin{align}
        2|\mathcal{S}-\Hess v|^2 + \cD(-\nabla v) \geq \frac{2}{N}|S-\Delta v|^2.
    \end{align}
    Fix $(\sigma,p,V)\in I\times TM$, choose $v$ with $\nabla v(\sigma,p)=-V$, $(\Hess v -\cS)(\sigma,p) =\lambda g_\sigma$, then at $(\sigma,p)$ we have
    \begin{align}
        2(\dim M)\lambda^2 + \cD(V) \geq \frac{2(\dim M)^2}{N}\lambda^2.
    \end{align}
    For $\lambda=0$ we recover $\cD\geq0$, while for $\lambda\to\infty$ we get $(\dim M)/N\leq 1$.
\end{proof}

\begin{prop}
    The dimensional Bochner inequality for any $\tau \in I$, any smooth solutions $v$ to the heat equation at $\tau$
    \begin{align}
        -(\partial_\tau + \Delta)(|\nabla v|^2 + 2\Delta v - S) \leq -\frac{2}{N}(S-\Delta v)^2 \label{eq:BEL0N2} \tag{B\({}_{L_0,N}\)}
    \end{align}
    is equivalent to the dimensional gradient estimate
    \begin{align} 
    |\nabla P_{\sigma,\tau}v|_\sigma^2 + 2\Delta_\sigma P_{\sigma,\tau}v - S_\sigma \leq P_{\sigma,\tau}(|\nabla v|_\tau^2 + 2\Delta_\tau v - S_\tau) - \frac{2}{N}\int_\sigma^\tau (|P_{\sigma,\eta}S_\eta - P_{\sigma,\eta}\Delta_\eta P_{\eta,\tau}v|^2)\,d\eta\, \\ \label{eq:GEL0N2}\tag{GE\({}_{L_0,N}\)}
\end{align}
    for any smooth $v:M\to\R{}$.
\end{prop}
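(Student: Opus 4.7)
The plan is to adapt the proof of Proposition \ref{prop:L0gradestimate} almost verbatim; the only new ingredient in the dimensional setting is an application of Jensen's inequality to pass the pointwise bound $\frac{2}{N}(S_\eta - \Delta u_\eta)^2$ through the heat semigroup.

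For the forward direction ($\Longrightarrow$), I would fix $[\sigma,\tau] \subseteq I$ and a smooth $v$, and set $u_\eta := P_{\eta,\tau}v$ for $\eta \in [\sigma,\tau]$, so that $u_\eta$ is a heat flow in the backward-time variable ($\partial_\eta u_\eta + \Delta_\eta u_\eta = 0$). With $\phi_\eta := |\nabla u_\eta|_\eta^2 + 2\Delta_\eta u_\eta - S_\eta$, applying the assumed Bochner \eqref{eq:BEL0N2} to the heat flow $u_\eta$ at each $\eta$ gives $(\partial_\eta + \Delta_\eta)\phi_\eta \geq \frac{2}{N}(S_\eta - \Delta_\eta u_\eta)^2$. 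Combined with the identity $\partial_\eta P_{\sigma,\eta}\phi_\eta = P_{\sigma,\eta}\bigl((\partial_\eta + \Delta_\eta)\phi_\eta\bigr)$ (the analog of \eqref{eq:derivativeofsemigroup} in backward time) and Jensen's inequality $P_{\sigma,\eta}[f^2] \geq (P_{\sigma,\eta}f)^2$, one gets $\partial_\eta P_{\sigma,\eta}\phi_\eta \geq \frac{2}{N}\bigl|P_{\sigma,\eta}(S_\eta - \Delta_\eta P_{\eta,\tau}v)\bigr|^2$. Integrating from $\sigma$ to $\tau$ and recognizing $\phi_\tau = |\nabla v|_\tau^2 + 2\Delta_\tau v - S_\tau$ on the endpoint yields \eqref{eq:GEL0N2} directly.

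For the converse ($\Longleftarrow$), I would mirror the reverse implication of Proposition \ref{prop:L0gradestimate}: given a heat flow $\tilde v(\eta,\cdot)$ defined near time $\sigma$, apply the gradient estimate \eqref{eq:GEL0N2} on the interval $[\sigma,\sigma+\epsilon]$ with final-time data $v := \tilde v(\sigma+\epsilon,\cdot)$. Because $\tilde v$ is a heat flow, $P_{\sigma,\sigma+\epsilon}\tilde v(\sigma+\epsilon,\cdot) = \tilde v(\sigma,\cdot)$, so the LHS of \eqref{eq:GEL0N2} is independent of $\epsilon$, and both sides agree at $\epsilon = 0$. Taking $\partial_\epsilon|_{\epsilon=0^+}$ of the RHS — the semigroup term contributes $(\partial_\eta + \Delta_\eta)\phi_\eta|_{\eta=\sigma}$ and the integral contributes $-\tfrac{2}{N}(S_\sigma - \Delta \tilde v(\sigma,\cdot))^2$ — then forces $(\partial_\eta + \Delta_\eta)\phi_\eta|_{\eta=\sigma} \geq \tfrac{2}{N}(S_\sigma - \Delta \tilde v(\sigma,\cdot))^2$, which is precisely \eqref{eq:BEL0N2}.

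There is no serious obstacle beyond bookkeeping: the strategy and even the notation are those of Proposition \ref{prop:L0gradestimate}, and Jensen's inequality applied to $f\mapsto f^2$ under the positivity- and mass-preserving semigroup $P_{\sigma,\eta}$ is the sole mechanism producing the integral error term in \eqref{eq:GEL0N2}. The case of Lipschitz $v$, with $P_{\sigma,\tau}\Delta_\tau v$ interpreted as the stated heat-kernel pairing, should follow by $W^{1,2}$-approximation by smooth functions together with the smoothness of the heat kernel, exactly as in Proposition \ref{prop:L0gradestimate}.
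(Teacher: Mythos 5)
Your proof is correct and takes essentially the same route as the paper: for the forward direction, propagate the Bochner inequality through the semigroup, integrate, and apply Jensen's inequality under the (mass-preserving, positivity-preserving) heat kernel to replace $P_{\sigma,\eta}[(S_\eta - \Delta_\eta P_{\eta,\tau}v)^2]$ by $(P_{\sigma,\eta}[S_\eta - \Delta_\eta P_{\eta,\tau}v])^2$; for the converse, differentiate the gradient estimate at the diagonal time. The only cosmetic difference is in the converse: the paper fixes the endpoint carrying the data $v$ and differentiates at the coincidence of the two times, while you fix the lower time $\sigma$ and vary the upper time $\tau = \sigma + \epsilon$ using a heat flow $\tilde v$ to keep the left-hand side $\epsilon$-independent — both bookkeeping choices lead to the identical first-order condition.
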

\begin{proof}
    $\implies$) Following the proof of Prop \ref{prop:L0gradestimate}, $\phi_\eta := |\nabla P_{\eta,\tau}v|_\eta^2 + 2\Delta_\eta P_{\eta,\tau}v - S_\eta$, we get
    \begin{align}
        \partial_\eta P_{\sigma,\eta}(\phi_\eta) = P_{\sigma,\eta}((\partial_\eta+\Delta)\phi_\eta) \geq \frac{2}{N}P_{\sigma,\eta}((S_\eta-\Delta P_{\eta,\tau}v)^2).
    \end{align}
    Integrating in $\eta\in[\sigma,\tau]$, and then using Jensen's inequality (with respect to the heat kernel measure)
    \begin{align} 
    P_{\sigma,\tau}(|\nabla v|_\tau^2 + 2\Delta_\tau v - S_\tau) 
    &\geq |\nabla P_{\sigma,\tau}v|_\sigma^2 + 2\Delta_\sigma P_{\sigma,\tau}v - S_\sigma + \frac{2}{N}\int_\sigma^\tau P_{\sigma,\eta}((S_\eta-\Delta P_{\eta,\tau}v)^2)\,d\eta \\
    &\geq |\nabla P_{\sigma,\tau}v|_\sigma^2 + 2\Delta_\sigma P_{\sigma,\tau}v - S_\sigma + \frac{2}{N}\int_\sigma^\tau (|P_{\sigma,\eta}S_\eta - P_{\sigma,\eta}\Delta_\eta P_{\eta,\tau}v|^2)\,d\eta \,.
    \end{align}

    $\impliedby$) Again, differentiating \eqref{eq:GEL0N2} at $\tau=\sigma$ we get back \eqref{eq:BEL0N2}.
\end{proof}

\subsection{Equivalence of dimensional Bochner inequalities}\label{ss:dimBochnerandL_}

We now show that \eqref{eq:BochnerdimL0-equiv-of-Bochners} is equivalent to the validity of \eqref{eq:BochnerL-} and \eqref{eq:BochnerL+} for all shifted flows $(M,g_{\tau - T_0})_{\tau \in (I + T_0) \cap (0,\infty)}$ resp. $(M,g_{t - T_0})_{t \in (\tilde I + T_0) \cap (0,\infty)}$. This is a consequence of the algebraic properties of these inequalities only, showing that all distinct dimensionally sensitive Bochner inequalities we might consider are actually equivalent, without any additional geometric/analytic assumptions. It also says that, if we consider all shifts, the $L_-$ Bochner inequality improves to a ``dimensionally improved'' $L_-$ $N$-Bochner inequality.

\begin{prop}\label{prop:updated-dim-Bochner-equiv}
    The following are equivalent:

\begin{enumerate}
    \item \label{item:dim-Boch-1} For any time $\tau_0 \in I$ and smooth solution to the heat equation $v$ at $\tau_0$, the $L_0$ $N$-Bochner inequality holds:
    \begin{equation}
        \left.(-\ptau - \Delta)\right|_{\tau = \tau_0}\Big(|\nabla v|^2 - 2\Delta v - S\Big) \leq -\frac{2}{N}(S + \Delta v)^2. \label{eq:BochnerdimL0-equiv-of-Bochners}\tag{B\({}_{L_0,N}\)}
        \end{equation}
    \item \label{item:dim-Boch-2} For any time $\tau_0 = T - t_0 \in I$, smooth solution to the heat equation $v$ at $\tau_0$, and shift $-T_- < \tau_0$, we have the $L_-$ Bochner inequality
    \begin{equation}
    \left.(-\ptau - \Delta)\right|_{\tau = \tau_0}\left(|\nabla v|^2 - 2(T_-+\tau)\Delta v - (T_-+\tau)^2 S + \frac{N}{2}(T_-+\tau)\right) \leq 0\,,
    \end{equation}
    and for any shift $T_+> - t_0$, the $L_+$ Bochner inequality
    \begin{equation}
    \left.(\partial_t - \Delta)\right|_{t = t_0} \left(|\nabla v|^2 - 2(T_+ + t)\Delta v - (T_+ +t)^2 S - \frac{N}{2}(T_+ + t)\right) \leq 0\,.
    \end{equation}
    \item \label{item:dim-Boch-3} For any time $\tau_0 = T - t_0 \in I$, smooth solution to the heat equation $v$ at $\tau_0$, and shift $-T_- < \tau_0$, we have the $L_-$ $N$-Bochner inequality
    \begin{equation}
    \left.(-\ptau - \Delta)\right|_{\tau = \tau_0}\left(|\nabla v|^2 - 2(T_-+\tau)\Delta v - (T_-+\tau)^2 S + \frac{N}{2}(T_-+\tau)\right) \leq -\frac{2}{N} \left( (T_-+\tau)S + \Delta v - \frac{N}{2} \right)^2\,,
    \end{equation}
    and for any shift $T_+ >  - t_0$, the $L_+$ $N$-Bochner inequality
    \begin{equation}
    \left.(\partial_t - \Delta)\right|_{t = t_0} \left(|\nabla v|^2 - 2(T_++t)\Delta v - (T_++t)^2 S - \frac{N}{2}(T_++t)\right) \leq -\frac{2}{N} \left( (T_++t)S + \Delta v + \frac{N}{2} \right)^2\,.
    \end{equation}
\end{enumerate}
\end{prop}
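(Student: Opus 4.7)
The argument hinges on extending the Bochner identity in Proposition \ref{prop:L-Bochner} to include an arbitrary dimensional parameter $N \in \R{}$ in place of the true dimension $n = \dim M$. A direct calculation (identical to the proof of Proposition \ref{prop:L-Bochner} but tracking the extra term $\partial_\tau(\tfrac{N}{2}a) = \tfrac{N}{2}$) gives, for any parameter $a > 0$ and any smooth solution $v$ of the heat equation at backward time $\tau_0$,
\begin{equation}\label{eq:N-L-identity-plan}
-(\partial_\tau + \Delta)\Bigl(|\nabla v|^2 - 2a\Delta v - a^2 S + \tfrac{N}{2}a\Bigr) = -2\Bigl|a\cS - \tfrac{g}{2} + \Hess v\Bigr|^2 - a^2 \cD(\nabla v/a) - \tfrac{N-n}{2},
\end{equation}
together with an analogous forward-time identity obtained by replacing $(-\partial_\tau - \Delta)$ with $(\partial_t - \Delta)$, $+\tfrac{N}{2}a$ with $-\tfrac{N}{2}b$, and $|a\cS - \tfrac{g}{2} + \Hess v|^2$ with $|b\cS + \tfrac{g}{2} + \Hess v|^2$. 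The plan is to derive all three equivalences from \eqref{eq:N-L-identity-plan} and its mirror.

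The implication $(3)\Rightarrow(2)$ is immediate, since the right-hand side of each inequality in $(3)$ is non-positive. For $(2)\Rightarrow(1)$, I substitute $a=T_-+\tau_0$ into \eqref{eq:N-L-identity-plan}, so that the $L_-$ inequality of item $(2)$ reads
\[
2|a\cS - \tfrac{g}{2} + \Hess v|^2 + a^2\cD(\nabla v/a) + \tfrac{N-n}{2} \geq 0.
\]
As in the proof of Proposition \ref{prop:DcondiffBochner}, one chooses smooth initial data for $v$ so that $\nabla v = aV$ and $\Hess v = \tfrac{g}{2}-a\cS$ at the arbitrary point $(\tau_0,p)$, killing the Frobenius norm term and reducing the assumption to $a^2\cD_{\tau_0,p}(V) + \tfrac{N-n}{2} \geq 0$. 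As $T_-$ ranges over $(-\tau_0,\infty)$, the parameter $a$ ranges over $(0,\infty)$; letting $a\downarrow 0$ forces $n\leq N$, after which $a\to\infty$ forces $\cD_{\tau_0,p}(V)\geq 0$ for all $V \in T_pM$. By the proposition in Subsection \ref{ss:nL0Bochner}, these two conditions are equivalent to item $(1)$.

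For $(1)\Rightarrow(3)$, which is the only direction requiring genuine new computation, assume $\cD\geq 0$ and $\dim M \leq N$. From \eqref{eq:N-L-identity-plan} we then get
\[
-(\partial_\tau+\Delta)\Bigl(|\nabla v|^2 - 2a\Delta v - a^2 S + \tfrac{N}{2}a\Bigr) \leq -2|A|^2 - \tfrac{N-n}{2}, \qquad A := a\cS - \tfrac{g}{2} + \Hess v.
\]
Setting $T := aS + \Delta v - \tfrac{N}{2}$ and $\alpha := N-n \geq 0$, so that $\tr_g A = T + \tfrac{\alpha}{2}$, the Cauchy-Schwarz estimate $|A|^2 \geq (\tr A)^2 / n$ combined with the algebraic identity
\[
\frac{2(T + \alpha/2)^2}{n} + \frac{\alpha}{2} - \frac{2T^2}{N} = \frac{\alpha(2T+N)^2}{2nN} \geq 0
\]
(verified by clearing denominators and recognizing a perfect square) upgrades the above to the $L_-$ half of $(3)$. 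The $L_+$ half follows in the same way from the mirror identity and the analogous relation with $(2T-N)^2 \geq 0$. The main obstacle in the whole argument is guessing this algebraic identity; once it is in hand, equality occurs precisely when $aS+\Delta v = -\tfrac{N}{2}$ (for $L_-$) or $+\tfrac{N}{2}$ (for $L_+$), mirroring the fact that these dimensional Bochner inequalities are sharp on soliton-type solutions.
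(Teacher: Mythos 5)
Your proof is correct, but it takes a genuinely different route from the paper. The paper's argument is purely algebraic and the surrounding remark highlights this explicitly: both $(1)\Rightarrow(3)$ and $(2)\Rightarrow(1)$ are scale-freezing manipulations that re-express a shifted quantity as an $L_0$-type Bochner quantity applied to $v/(T_-+\tau_0)$ (resp.\ $v/(T_++t_0)$), plus explicit linear terms; in particular the paper never touches the $\cD$ tensor or the topological dimension $n$, so the equivalence holds ``for algebraic reasons only.'' By contrast, you route through $\cD$ at both ends. Your $(2)\Rightarrow(1)$ kills the Frobenius square by choosing $\Hess v$ pointwise as in Proposition~\ref{prop:DcondiffBochner}, extracts $\{\cD\geq0,\,\dim M\leq N\}$ from the $a\downarrow 0$ and $a\to\infty$ limits of $a^2\cD(V)+\tfrac{N-n}{2}\geq 0$, and then cites the characterization of Subsection~\ref{ss:nL0Bochner} (one should also observe that item $(1)$ here and the display tagged $(B_{L_0,N})$ in Subsection~\ref{ss:nL0Bochner} differ only by $v\mapsto -v$). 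Your $(1)\Rightarrow(3)$ similarly first converts $(1)$ into $\{\cD\geq0,\,n\leq N\}$ and then applies Cauchy--Schwarz $|A|^2\geq(\tr_g A)^2/n$ plus the algebraic identity $\tfrac{2(T+\alpha/2)^2}{n}+\tfrac{\alpha}{2}-\tfrac{2T^2}{N}=\tfrac{\alpha(2T+N)^2}{2nN}$, which I verified. What your route buys: you need only the $L_-$ half of item $(2)$ to recover $(1)$, with no case analysis on the sign of $\Delta v+S$, whereas the paper's scaling argument for $(2)\Rightarrow(1)$ produces an error term $\tfrac{2}{N}\big(\Delta v+S-\tfrac{N}{2(T_-+\tau_0)}\big)^2$ that can only be killed when $\Delta v+S>0$, forcing a three-way case split using both the $L_-$ and $L_+$ hypotheses. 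What it costs: your argument is tied to the smooth Riemannian setting through the Bochner identity and the Frobenius Cauchy--Schwarz (explicitly using $n=\dim M$), so it loses the algebraic robustness the paper is at pains to exhibit.
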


\begin{rmk}
In Proposition \ref{prop:updated-dim-Bochner-equiv}, $T_-,T_+$ are not required to satisfy any additional constraints with respect to $I$ resp. $\tilde I$. They correspond to a choice of time origin $\tau = -T_-$ resp. $t = -T_+$ which serves as the center for parabolic rescaling, but there is no need for the family $(M,g_\tau)$ to actually be defined there. 

In the manipulations below, one can also obtain the inequalities (\ref{item:dim-Boch-2},\ref{item:dim-Boch-3}) from \eqref{item:dim-Boch-1} when $t + T_+<0$ (resp. $\tau + T_-<0$), but theses cases are not needed to recover \eqref{item:dim-Boch-1}. Note also that $t + T_+ > 0$ (resp. $\tau + T_->0$) correspond to comparison with expanding (resp. shrinking) solitons within their intervals of definition and hence are the more geometrically relevant cases. Relatedly, $\cL_+$, $\cL_-$ are only well-defined in the cases of positive (shifted, forward resp. backwards) times.
\end{rmk}

\begin{proof}
\fbox{(\ref{item:dim-Boch-1}) $\implies$ (\ref{item:dim-Boch-3})}: We prove the $L_-$ $N$-Bochner inequality, the $L_+$ is analogous. For any $-T_- < \tau_0\in I$ we have

\begin{align}
    &(-\ptau-\Delta)|_{\tau=\tau_0} \left(|\nabla v|^2 - 2(T_-+\tau)\Delta v - (T_-+\tau)^2 S + \frac{N}{2}(T_-+\tau)\right) \\
    &= (-\ptau-\Delta)|_{\tau=\tau_0} \left(|\nabla v|^2 - 2(T_-+\tau_0)\Delta v - (T_-+\tau_0)^2 S \right) + \left(2\Delta v_{\tau_0} + 2(T_-+\tau_0) S_{\tau_0}\right) - \frac{N}{2} \\
    &= (-\ptau-\Delta)|_{\tau=\tau_0} \left(\left|\nabla \frac{v}{T_-+\tau_0}\right|^2 - 2\Delta \frac{v}{T_-+\tau_0} - S \right)(T_-+\tau_0)^2 + \left(2\Delta v_{\tau_0} + 2(T_-+\tau_0) S_{\tau_0}\right) - \frac{N}{2} \\
    &\leq -\frac{2}{N}\left( S_{\tau_0}+\Delta \frac{v_{\tau_0}}{T_-+\tau_0} \right)^2(T_-+\tau_0)^2 + 2\left(\Delta v_{\tau_0} + (T_-+\tau_0) S_{\tau_0}\right) - \frac{N}{2} \\ 
    &= -\frac{2}{N}\left( (T_-+\tau_0)S_{\tau_0}+\Delta v_{\tau_0} -\frac{N}{2} \right)^2
\end{align}
where we used the $L_0$ $N$-Bochner inequality applied to $\frac{v}{T_-+\tau_0}$. Since $-T_- < \tau_0$ were arbitrary we get it for any $-T_- < \tau_0\in I$.

\fbox{(\ref{item:dim-Boch-2}) $\implies$ (\ref{item:dim-Boch-1})}: Now, using the shifted $L_-$ Bochner:

\begin{align}
    &(-\ptau - \Delta)|_{\tau = \tau_0}\left(|\nabla v|^2 - 2\Delta v - S\right) \\
    &= \frac{1}{(T_-+\tau_0)^2}(-\ptau - \Delta)|_{\tau = \tau_0}\left(|\nabla v(T_-+\tau_0)|^2 - 2(T_-+\tau_0)\Delta \big((T_-+\tau_0)v\big) - (T_-+\tau_0)^2S\right) \\
    &= \frac{1}{(T_-+\tau_0)^2}(-\ptau - \Delta)|_{\tau = \tau_0}\left(|\nabla v(T_-+\tau_0)|^2 - 2(T_-+\tau)\Delta \big((T_-+\tau_0)v\big) - (T_-+\tau)^2S +\frac{N}{2}(T_-+\tau)\right) \\
    &\qquad -\frac{1}{(T_-+\tau_0)^2} \left( 2\Delta \big((T_-+\tau_0)v_{\tau_0}\big) + 2(T_-+\tau_0)S_{\tau_0} \right) + \frac{N}{2(T_-+\tau_0)^2} \\
    &\leq -\frac{2}{T_-+\tau_0} \left( \Delta v_{\tau_0} + S_{\tau_0} \right) + \frac{N}{2(T_-+\tau_0)^2} \\
    &= -\frac{2}{N} \left( \Delta v_{\tau_0} + S_{\tau_0}\right)^2 + \frac{2}{N}\left( \Delta v_{\tau_0} + S_{\tau_0} - \frac{N}{2(T_-+\tau_0)} \right)^2.
\end{align}
We get the $L_0$ $N$-Bochner inequality but with an additional error given by the (nonnegative) second addendum in the last line. In the case $\Delta v_{\tau_0} + S_{\tau_0}>0$, we can choose $T_- > -\tau_0$ such that the term vanishes and we recover the estimate.

Analogously, using the shifted $L_+$ Bochner inequality we get
\begin{align}
    (-\partial_\tau - \Delta)|_{\tau = \tau_0}\left(|\nabla v|^2 - 2\Delta v - S\right)&=(\partial_t - \Delta)|_{t = t_0}\left(|\nabla v|^2 - 2\Delta v - S\right) \\
    &\leq -\frac{2}{N} \left( \Delta v_{t_0} + S_{t_0}\right)^2 + \frac{2}{N}\left( \Delta v_{t_0} + S_{t_0} + \frac{N}{2(T_++t_0)} \right)^2.
\end{align}
Now we are able to get rid of the extra term when $\Delta v_{\tau_0} + S_{\tau_0} = \Delta v_{t_0} + S_{t_0}<0$. Finally, for the case $\Delta v_{\tau_0} + S_{\tau_0}=0$ we can use either of the two estimates and send $T_{\pm}\to\infty$. 
\end{proof}

\subsection{Duality of shifted gradient estimates and \texorpdfstring{$L_0$}{L0}-Wasserstein contraction for unrelated times}\label{ss:GEWCdualitydimL0}

Recall that \eqref{eq:WCL0} relates pairs of times $s < t$ and $s + h < t + h$ for $h > 0$, and was shown to be equivalent to \eqref{eq:GEL0} in Subsection \ref{ss:GEWCdualityL0}. In this subsection we again consider a Wasserstein contraction statement with respect to the $L_0$ cost, but now for pairs of times $(\sigma,\tau)$, $(S,T)$ that need not be additively related. While it would be natural to consider \eqref{eq:GEdimL0} as the gradient estimate dual to \eqref{eq:WCdimL0-GE-to-WC}, it will turn out to be more convenient to use (shifted) \eqref{eq:GEL-} and \eqref{eq:GEL+}. In view of Proposition \ref{prop:updated-dim-Bochner-equiv}, this choice has no logical impact. Let us give a name to the affine map relating the pairs of times: 

\begin{equation}
\xi:[\sigma,\tau] \to [S,T]\,,\qquad \xi(\eta) = \frac{\tau - \eta}{\tau - \sigma}S + \frac{\eta - \sigma}{\tau - \sigma}T\,.
\end{equation}

Here we will work with the normalized Lagrangian $\tilde{L}_0$ from \eqref{eq:L0action}. Computing the Hamiltonian, by Proposition \ref{prop:HLsemigroup}, for any Lipschitz $\phi: M \to \R{}$ we have
\begin{align}
    \partial_\eta Q^{S,\xi(\eta)}\phi 
    &= -\xi'(\eta) \widetilde{H}^{S,\xi(\eta)}(d Q^{S,\xi(\eta)}\phi,Q^{S,\xi(\eta)}\phi,\xi(\eta)) \\
    &= -\xi'\cdot\left(\frac{1}{2(T - S)}|\nabla Q^{S,\xi(\eta)} \phi|^2 - \frac{T - S}{2}S_{\xi(\eta)} \right)\,.
\end{align}

\begin{prop}\label{prop:WL0dim-contraction}
Suppose that the gradient estimates \eqref{eq:GEL-} and \eqref{eq:GEL+} with dimensional parameter $n := N$ hold for all shifts $(M,g_{\tau- T_0})_{\tau \in (I + T_0)\cap(0,\infty)}$ resp. $(M,g_{t- T_0})_{t \in (\tilde I + T_0)\cap(0,\infty)}$. Then for any times $\sigma,\tau,S,T \in I$ with $\sigma < \tau$, $S < T$, $\sigma < S$, and $\tau < T$, the following (dimensional) $W_{L_0}$ contraction along the adjoint heat flow for $\mu,\nu \in \cP(M)$ holds
    \begin{align}\label{eq:WCdimL0-GE-to-WC}\tag{WC\({}_{L_0,N}\)}
         W_{\tilde L_0^{S,T}}(\hat{P}_{\sigma,S}\mu,\hat{P}_{\tau,T}\nu) \leq W_{\tilde L_0^{\sigma,\tau}}(\mu,\nu) + \frac{N}{4}\ln\left(\frac{T - \tau}{S - \sigma}\right)\big((T - S) - (\tau - \sigma)\big)\,.
    \end{align}
\end{prop}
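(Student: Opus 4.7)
The plan is to adapt the argument of Proposition~\ref{prop:WL-contraction}, leveraging a unified form of the gradient estimate built from shifted versions of both \eqref{eq:GEL-} and \eqref{eq:GEL+}. By linearity of the heat semigroup on measures and the standard averaging argument (cf.\ \cite[Lem.~3.3]{kuwadaduality}), it suffices to prove the inequality when $\mu = \delta_x$, $\nu = \delta_y$. The edge case $T - S = \tau - \sigma$ reduces to \eqref{eq:WCL0} with $h = S - \sigma$ (and the stated error term vanishes), so we may henceforth assume $T - S \neq \tau - \sigma$.

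Given $x, y \in M$, let $\gamma:[\sigma, \tau] \to M$ be an $L_0^{\sigma,\tau}$-geodesic with $\gamma_\sigma = x$, $\gamma_\tau = y$, and let $\xi:[\sigma,\tau]\to [S,T]$ denote the affine map $\xi(\eta) := \frac{\tau - \eta}{\tau - \sigma}S + \frac{\eta - \sigma}{\tau-\sigma}T$, with constant derivative $\xi' = (T-S)/(\tau-\sigma)$. For Lipschitz $\phi:M\to \mathbb{R}$, set $F(\eta) := P_{\eta,\xi(\eta)}[Q^{S,\xi(\eta)}\phi](\gamma_\eta)$, where $Q^{S,r}$ is the Hopf--Lax semigroup associated with the normalized cost $\tilde L_0^{S, r}$, solving the Hamilton--Jacobi equation $\partial_r Q = -|\nabla Q|^2/[2(T-S)] + (T-S)S_r/2$. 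By Kantorovich duality and the fundamental theorem of calculus,
\begin{equation}
P_{\tau,T}Q^{S,T}\phi(y) - P_{\sigma,S}\phi(x) = \int_\sigma^\tau F'(\eta)\,d\eta\,.
\end{equation}
A direct computation using the evolution equations for $P = P_{\eta, \xi(\eta)}$ in both its time arguments, combined with the HJ equation for $Q = Q^{S,\xi(\eta)}\phi$, gives
\begin{equation}
F'(\eta) = -\Delta PQ + \xi' P\Delta Q - \frac{\xi'}{2(T-S)}P|\nabla Q|^2 + \frac{\xi'(T-S)}{2}PS_{\xi(\eta)} + \langle \nabla PQ, \dot\gamma_\eta\rangle\,.
\end{equation}

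The main analytic input is a unified form of the gradient estimate: by scaling the data $v \to cv$ in a shifted version of \eqref{eq:GEL-} (when the time origin is pushed to the past of $I$) or \eqref{eq:GEL+} (when to its future), for every $\alpha, \beta > 0$ and $[\sigma', \tau'] \subseteq I$ one has
\begin{equation}
|\nabla P_{\sigma',\tau'}v|^2 - 2\alpha \Delta P_{\sigma',\tau'}v - \alpha^2 S_{\sigma'} \leq P_{\sigma',\tau'}\bigl(|\nabla v|^2 - 2\beta\Delta v - \beta^2 S_{\tau'}\bigr) + \frac{N(\beta - \alpha)^2}{2(\tau' - \sigma')}\,.
\end{equation}
Indeed, the shifted $L_-$ realizes $\beta > \alpha > 0$ while the shifted $L_+$ realizes $\alpha > \beta > 0$, and the rescaling factor $c = (\tau' - \sigma')/|\beta - \alpha|$ exhausts all such positive pairs. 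Apply this with $v = Q$, $(\sigma', \tau') = (\eta, \xi(\eta))$, $\alpha = \tau - \sigma$, $\beta = T - S$, then divide by $2(\tau-\sigma)$. Combining the result with Young's inequality $\langle \nabla PQ, \dot\gamma_\eta\rangle \leq |\nabla PQ|^2/[2(\tau-\sigma)] + (\tau-\sigma)|\dot\gamma_\eta|^2/2$ and using $(T-S)/(\tau-\sigma) = \xi'$, the $P\Delta Q$, $P|\nabla Q|^2$, and $PS_{\xi(\eta)}$ terms cancel pairwise, yielding
\begin{equation}
F'(\eta) \leq \frac{\tau - \sigma}{2}\bigl(|\dot\gamma_\eta|^2 + S_\eta(\gamma_\eta)\bigr) + \frac{N\bigl[(T-S) - (\tau - \sigma)\bigr]^2}{4(\tau - \sigma)\bigl(\xi(\eta) - \eta\bigr)}\,.
\end{equation}

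Integrating in $\eta \in [\sigma, \tau]$, the first term contributes $(\tau - \sigma)\mathcal{L}_0(\gamma) = \tilde L_0^{\sigma, \tau}(x, y)$ by the $L_0$-geodesic property of $\gamma$. For the second, $\eta \mapsto \xi(\eta) - \eta$ is affine with boundary values $S - \sigma$ and $T - \tau$, so $\int_\sigma^\tau d\eta/[\xi(\eta) - \eta] = (\tau - \sigma)\ln[(T-\tau)/(S-\sigma)]/[(T-S) - (\tau-\sigma)]$, and the error term simplifies to $(N/4) \ln\bigl[(T-\tau)/(S-\sigma)\bigr]\bigl[(T-S) - (\tau-\sigma)\bigr]$. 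Taking the supremum over $\phi$ gives the desired inequality for delta measures. The principal obstacle is the identification of the unified gradient estimate with two free positive parameters; once this is established, the fine-tuned choice $(\alpha, \beta) = (\tau - \sigma, T - S)$ together with the Young's parameter $\tau - \sigma$ produces the cancellation that eliminates the $P\Delta Q$, $P|\nabla Q|^2$, and $PS_{\xi(\eta)}$ terms simultaneously.
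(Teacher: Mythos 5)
Your argument is correct and, in substance, matches the paper's proof: both reduce to delta measures, integrate along the affine map $\xi$ coupling the heat-flow time with the cost time, apply Young's inequality with parameter $\tau - \sigma$, and close the computation with a one-parameter family of gradient estimates built from the shifted $L_\pm$ conditions. The packaging differs mildly: you phrase the key input as a ``unified gradient estimate'' with two constant positive parameters $(\alpha,\beta) = (\tau - \sigma, T - S)$ independent of $\eta$, which is precisely Corollary \ref{cor:L_GradEst_param} after one further renormalization of the forward coefficient (the corollary as stated fixes $\beta = \tau'$); the paper applies the corollary at its stated normalization $\beta = \xi(\eta)$ and absorbs the resulting $\eta$-dependence into an $\eta$-dependent rescaling of the Kantorovich potential $\xi(\eta)Q^{S,\xi(\eta)}\phi/(T-S)$. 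The two bookkeepings produce identical error integrands, so this is a presentational rather than mathematical difference; your constant choice is arguably a bit cleaner.

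One small gap: in the edge case $T - S = \tau - \sigma$ you invoke \eqref{eq:WCL0}, but you do not justify that it is available under the stated hypotheses (shifted $L_\pm$ gradient estimates). The paper handles this by passing through Proposition \ref{prop:updated-dim-Bochner-equiv} to extract the $L_0$ Bochner (and hence \eqref{eq:GEL0} via Proposition \ref{prop:L0gradestimate}), then invoking Proposition \ref{prop:GEtoWCL0}. Alternatively, you could note directly that your unified gradient estimate admits the limiting case $\alpha = \beta$ (equivalently $c \to \infty$, the time-origin pushed to $\pm\infty$), which recovers exactly \eqref{eq:GEL0}; from there \eqref{eq:WCL0} follows as before.
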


\begin{rmk}
    In the case of $S=\alpha\sigma$, $T=\alpha\tau$, $\alpha\geq1$, the statement says that
    \begin{align}\label{eq:WCdimL0-mult}
        \alpha\mapsto W_{\tilde L_0^{\alpha \sigma,\alpha \tau}}(\hat P_{\sigma,\alpha\sigma}\mu, \hat{P}_{\tau,\alpha\tau}\nu) - \frac{N}{4}\ln\left(\frac{\alpha\tau}{\alpha\sigma}\right)(\alpha\tau - \alpha \sigma) \ \ \ \text{is monotone non-increasing,}
    \end{align}
    which is reminiscent of \eqref{eq:WCL-}. Moreover, if we send $\sigma,\tau\to \infty$ keeping $\tau-\sigma$ and $S - \sigma = (\alpha - 1) \cdot \sigma$ fixed, the dimensional term vanishes. This may be thought of as \eqref{eq:WCL0} being a limiting version of \eqref{eq:WCdimL0-mult} when the (backward) time-origin gets sent to $\pm\infty$.
\end{rmk}

\begin{proof}
We will divide the argument based on a trichotomy for the four times under consideration. If $\tau - \sigma = T - S$, then we are in the case of additively related times, and one first obtains \eqref{eq:GEL0} by applying Proposition \ref{prop:updated-dim-Bochner-equiv} to the assumed gradient estimates, then recalls Proposition \ref{prop:GEtoWCL0}. We now consider the case $\tau - \sigma < T - S$, i.e. $\xi' = (T - S)/(\tau - \sigma) > 1$. By the assumed shifted \eqref{eq:GEL-} gradient estimates and the fact that $0 < (\xi')^{-1}(\xi(\eta)/\eta) < \xi(\eta) /\eta$, we may apply Corollary \ref{cor:L_GradEst_param} with parameter $\lambda := (\xi')^{-1}(\xi(\eta)/\eta)$ to obtain

\begin{align}\label{eq:etaGE}
&(|\nabla P_{\eta,\xi(\eta)}[v]|^2 - 2(\xi(\eta)/\xi')\Delta P_{\eta,\xi(\eta)}[v] - (\xi(\eta)/\xi')^2S_{\eta}) \\
&- (P_{\eta,\xi(\eta)}[|\nabla v|^2] - 2\xi(\eta)P_{\eta,\xi(\eta)}[\Delta v] - \xi^2(\eta)P_{\eta,\xi(\eta)}[S_{\xi(\eta)}
]) \leq \frac{N}{2}\frac{\xi^2(\eta)(1 - (\xi')^{-1})^2}{\xi(\eta) - \eta}\,.
\end{align}

As usual, it is enough to prove the statement for delta measures. Let $\gamma:[\sigma,\tau]\to M$ be the $L_0^{\sigma,\tau}$-geodesic from $x$ to $y$. In the spirit of the proof of Proposition \ref{prop:WL-contraction} we have
{\allowdisplaybreaks
\begin{align}
    W_{\tilde L_0^{S,T}}(\hat P_{\sigma,S}\delta_x, \hat P_{\tau,T}\delta_y) 
    &= \sup_\phi P_{\tau,T}Q^{S,T}\phi(\gamma_{\tau})-P_{\sigma,S}\phi(\gamma_{\sigma}) \\
    &=\sup_\phi \int_{\sigma}^{\tau}\partial_\eta P_{\eta,\xi(\eta)}Q^{S,\xi(\eta)}\phi(\gamma_\eta) d\eta \\
    &= \sup_\phi \int_{\sigma} ^{\tau} -\Delta P_{\eta,\xi(\eta)}Q^{S,\xi(\eta)} \phi(\gamma_\eta) + \xi' P_{\eta,\xi(\eta)}\Delta Q^{S,\xi(\eta)}\phi(\gamma_\eta) \\
    & \hspace{1cm}+ P_{\eta,\xi(\eta)}(\partial_\eta Q^{S,\xi(\eta)}\phi)(\gamma_\eta) + \left\langle \nabla P_{\eta,\xi(\eta)}Q^{S,\xi(\eta)}\phi(\gamma_\eta),\dot\gamma_\eta
    \right\rangle d\eta \\
    &\leq \sup_\phi \int_{\sigma} ^{\tau} -\Delta P_{\eta,\xi(\eta)}Q^{S,\xi(\eta)} \phi(\gamma_\eta) + \xi' P_{\eta,\xi(\eta)}\Delta Q^{S,\xi(\eta)}\phi(\gamma_\eta) \\
    & \hspace{1cm}+ \xi'P_{\eta,\xi(\eta)}\left(-\frac{1}{2(T - S)}|\nabla Q^{S,\xi(\eta)}\phi(\gamma_\eta)|^2 + \frac{T - S}{2}S_{\xi(\eta)}\right) \\
    & \hspace{1cm} + \frac{1}{2(\tau - \sigma)}|\nabla P_{\eta,\xi(\eta)}Q^{S,\xi(\eta)}\phi(\gamma_\eta)|^2 + \frac{\tau - \sigma}{2}|\dot\gamma_\eta|^2 d\eta \\
    &=\sup_\phi \int_{\sigma} ^{\tau} \frac{(T - S)\xi'}{\xi^2(\eta)}\Bigg[-(\xi(\eta)/\xi')\Delta P_{\eta,\xi(\eta)}\left(\frac{\xi(\eta)Q^{S,\xi(\eta)} \phi(\gamma_\eta)}{T - S}\right) \\
    & \hspace{1cm}+  \xi(\eta) P_{\eta,\xi(\eta)}\Delta \left(\frac{\xi(\eta)Q^{S,\xi( \eta)}\phi(\gamma_\eta)}{T-S}\right) \\
    & \hspace{1cm}+ P_{\eta,\xi(\eta)}\left(-\frac{1}{2}\left|\frac{\nabla \xi(\eta)Q^{S,\xi(\eta)}\phi(\gamma_\eta)}{T -S}\right|^2 + \frac{\xi^2(\eta)}{2}S_{\xi(\eta)}\right) \\
    & \hspace{1cm} + \frac{1}{2}\left|\frac{\nabla\xi(\eta) P_{\eta,\alpha \eta}Q^{S,\xi(\eta)}\phi(\gamma_\eta)}{T - S}\right|^2 \Bigg] + \frac{\tau - \sigma}{2}|\dot\gamma_\eta|^2 d\eta \\
    & \overset{\eqref{eq:etaGE}}{\leq} \sup_\phi \int_{\sigma}^{\tau}  \frac{(T - S)\xi'}{\xi^2(\eta)} \Bigg[ \frac{(\xi(\eta)/\xi')^2}{2}S_\eta + \frac{N}{4}\frac{\xi^2(\eta)(1 - (\xi')^{-1})^2}{\xi(\eta) - \eta} \Bigg] + \frac{\tau - \sigma}{2}|\dot\gamma_\eta|^2 d\eta \\
    &= (\tau - \sigma) \int_{\sigma}^{\tau} \frac{1}{2}|\dot\gamma_\eta|^2 + \frac{1}{2}S_\eta + (\tau -\sigma) \frac{N}{4}\frac{(\xi' - 1)^2}{\xi(\eta) - \eta} \,d\eta  \\
    & = \tilde{L}_-^{\sigma,\tau}(x,y) +  \frac{N}{4}((T - S) - (\tau - \sigma))\int_{\sigma}^{\tau}\frac{\xi' - 1}{\xi(\eta) - \eta}\,d\eta.
    \end{align}
}

Note that we occasionally made use of the identity $\xi' \equiv (T - S)/(\tau - \sigma)$ in the above computations. It now remains to rewrite the dimensional error term on the RHS of the above inequality in the desired form. Indeed, since $\eta\mapsto\xi(\eta) - \eta$ is affine and $\xi(\sigma) = S$, $\xi(\tau) = T$, we have

\begin{equation}
\int_{\sigma}^{\tau}\frac{\xi' - 1}{\xi(\eta) - \eta}\,d\eta =  \ln\left(\frac{\xi(\tau) - \tau}{\xi(\sigma) - \sigma}\right) = \ln\left(\frac{T - \tau}{S - \sigma}\right)\,.
\end{equation}

This gives \eqref{eq:WCdimL0-GE-to-WC} in the case where $T - S > \tau - \sigma$. The third case, where the strict inequality is flipped, follows from an analogous argument, but using instead an appropriately shifted \eqref{eq:GEL+}.
\end{proof}

We now turn to the reverse implication: that the dimensional Wasserstein contraction \eqref{eq:WCdimL0-GE-to-WC} implies any of the (equivalent by Proposition \ref{prop:updated-dim-Bochner-equiv}) shifted gradient estimates \eqref{eq:GEL-} and \eqref{eq:GEL+}. This essentially follows the pattern of the non-dimensional version (Proposition \ref{prop:L0-contraction-to-GE}), but using now \eqref{eq:WCdimL0-GE-to-WC} for multiplicatively related times.

\begin{prop}\label{prop:WCdimL0-implies-shifted-GE}
    Suppose that for every $\mu,\nu \in \cP(M)$ and times $\sigma,\tau,S,T \in I$ with $\sigma < \tau$, $S < T$, $\sigma < S$, and $\tau < T$, the Wasserstein contraction \eqref{eq:WCdimL0-GE-to-WC} holds. Then we have the shifted $L_-$-type gradient estimate for times $[\sigma,\tau] \subseteq I$ with $-T_-< \sigma$,
        \begin{align}
            |\nabla P_{\sigma,\tau}v|^2_\sigma-2(\sigma + T_-)\Delta_\sigma P_{\sigma,\tau}v-(\sigma + T_-)^2S_{\sigma} \leq P_{\sigma,\tau}\left(|\nabla v|^2_\tau-2(\tau + T_-)\Delta_\tau v - (\tau + T_-)^2S_{\tau}\right) + \frac{N}{2}(\tau-\sigma)\,,
        \end{align}
    and the shifted $L_+$-type gradient estimate for times $[s,t] \in \tilde I$ with $- T_+ < s$,
        \begin{align}
            |\nabla P_{t,s}v|^2_t -2(t + T_+)\Delta_t P_{t,s}v - (t + T_+)^2S_t \leq P_{t,s}\left(|\nabla v|^2_s-2(s + T_+)\Delta_s v-(s + T_+)^2 S_s\right) + \frac{N}{2}(t-s)\,,
        \end{align}
        for any Lipschitz function $v: M \to \R{}$.
\end{prop}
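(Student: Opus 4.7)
The proof will mirror the Kuwada-style duality arguments already developed for the non-dimensional $L_0$ case (Proposition \ref{prop:L0-contraction-to-GE}) and the $L_-$ case of Subsection \ref{ss:GEWCdualityL-}, adapted so that the affine reparametrization of time encodes the shift $T_-$ and so that the dimensional error term in \eqref{eq:WCdimL0-GE-to-WC} reproduces the $\frac{N}{2}(\tau-\sigma)$ term in the shifted $L_-$ GE. I will treat the $L_-$ estimate in detail; the $L_+$ case will follow by an analogous argument with the time direction reversed (equivalently, passing to forward time and selecting the regime $T-S<\tau-\sigma$ in \eqref{eq:WCdimL0-GE-to-WC}).

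Fix $x,y\in M$, times $\sigma<\tau\in I$ with $\sigma+T_->0$, and a parameter $\alpha>1$, and define the affine map $\xi(\varsigma):=\alpha\varsigma+(\alpha-1)T_-$, so that $\xi(\varsigma)+T_-=\alpha(\varsigma+T_-)$ and $\xi'\equiv\alpha$. Given a smooth curve $\bar\gamma:[\sigma,\tau]\to M$ from $x$ to $y$, consider the diagonal curve of measures $\mu_{\xi(\varsigma)}:=\hat P_{\varsigma,\xi(\varsigma)}\delta_{\bar\gamma_\varsigma}$, and apply Lemma \ref{lem:dynamiclifting} to obtain a dynamic plan $\eta$ on $C([\xi(\sigma),\xi(\tau)]\to M)$ realizing the $L_0$-action speed of $(\mu_r)_r$. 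The first key step is to differentiate \eqref{eq:WCdimL0-GE-to-WC} at the diagonal by applying it to the four times $\sigma\rightsquigarrow\varsigma,\ \tau\rightsquigarrow\varsigma+\epsilon,\ S\rightsquigarrow\xi(\varsigma),\ T\rightsquigarrow\xi(\varsigma+\epsilon)$ with delta measures, dividing by $\epsilon^2$, and sending $\epsilon\to 0$. The dimensional correction expands as
\begin{equation}
\ln\!\left(\tfrac{\xi(\varsigma+\epsilon)-(\varsigma+\epsilon)}{\xi(\varsigma)-\varsigma}\right)\big((\xi(\varsigma+\epsilon)-\xi(\varsigma))-\epsilon\big)=\tfrac{(\alpha-1)\,\epsilon^{2}}{\varsigma+T_-}+o(\epsilon^{2})\,,
\end{equation}
yielding the pointwise bound $\alpha\cdot(\dot\mu_{\xi(\varsigma)})_{L_0^{\xi(\varsigma)}}\leq \tfrac12(|\dot{\bar\gamma}_\varsigma|^2+S_\varsigma(\bar\gamma_\varsigma))+\tfrac{N(\alpha-1)}{4(\varsigma+T_-)}$.

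Next, rewrite $P_{\tau,\xi(\tau)}v(y)-P_{\sigma,\xi(\sigma)}v(x)=\int v\,d\mu_{\xi(\tau)}-\int v\,d\mu_{\xi(\sigma)}$ via the dynamic plan $\eta$ and Young's inequality (parameter $\lambda>0$), almost verbatim as in \eqref{eq:L--Lisini-application}, obtaining
\begin{align}
P_{\tau,\xi(\tau)}v(y)-P_{\sigma,\xi(\sigma)}v(x)\leq\int_\sigma^\tau&\Big[\tfrac{\alpha}{2\lambda}P_{\varsigma,\xi(\varsigma)}|\nabla v|^2_{\xi(\varsigma)}(\bar\gamma_\varsigma)-\tfrac{\lambda\alpha}{2}P_{\varsigma,\xi(\varsigma)}S_{\xi(\varsigma)}(\bar\gamma_\varsigma)\Big]\,d\varsigma\\
&+\lambda\alpha\int_\sigma^\tau(\dot\mu_{\xi(\varsigma)})_{L_0^{\xi(\varsigma)}}\,d\varsigma\,,
\end{align}
and insert the pointwise bound from the previous step. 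Dividing by $(\tau-\sigma)$, sending $\tau\downarrow\sigma$ at fixed $\alpha>1$, choosing $\dot{\bar\gamma}_\sigma=\nabla P_{\sigma,\xi(\sigma)}v(x)$ to make the LHS $-\Delta P v+\alpha P\Delta v+|\nabla Pv|^2$, and taking $\lambda=\alpha$, yields (with $\tau_*:=\xi(\sigma)$ and $\alpha=(\tau_*+T_-)/(\sigma+T_-)$)
\begin{equation}
-\Delta P_{\sigma,\tau_*}v+\alpha P_{\sigma,\tau_*}\Delta v+\tfrac12|\nabla P_{\sigma,\tau_*}v|^2\leq\tfrac12 P_{\sigma,\tau_*}|\nabla v|^2-\tfrac{\alpha^2}{2}P_{\sigma,\tau_*}S_{\tau_*}+\tfrac12 S_\sigma+\tfrac{N(\alpha-1)}{4(\sigma+T_-)}\,.
\end{equation}
Multiplying by $2(\sigma+T_-)^2$, using $\alpha(\sigma+T_-)=\tau_*+T_-$ and $(\tau_*+T_-)(\alpha-1)(\sigma+T_-)=(\tau_*+T_-)(\tau_*-\sigma)$, then relabeling $(\sigma+T_-)v\rightsquigarrow v$, reproduces the shifted $L_-$ gradient estimate. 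Since $\alpha>1$ can be chosen freely in $(1,\sup(I+T_-)/(\sigma+T_-))$, this covers all $\tau\in(\sigma,\sup I]$.

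The main technical hurdle will be Step 2, specifically ensuring the $\epsilon^{-2}$ expansion of the logarithmic factor in \eqref{eq:WCdimL0-GE-to-WC} yields exactly the dimensional coefficient $N/[2(\varsigma+T_-)]$ needed to recover the $\frac{N}{2}(\tau-\sigma)$ term in the GE after the Young-inequality rescaling; this is an algebraic check but is the only place where the specific form of the error in \eqref{eq:WCdimL0-GE-to-WC} is essential. The rest of the argument is a mild bookkeeping exercise over the analogous unshifted $L_-$ derivation. For the $L_+$ case, one runs the same construction with a parameter $\alpha<1$ (equivalently, reparametrizes in forward time with shift $T_+$), noting that then $T-S<\tau-\sigma$ in \eqref{eq:WCdimL0-GE-to-WC} is automatic and the sign conventions in the integration-by-parts/Young step flip consistently to yield the shifted $L_+$ gradient estimate.
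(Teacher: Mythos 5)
Your argument mirrors the paper's own proof essentially step for step: a Kuwada-duality differentiation of \eqref{eq:WCdimL0-GE-to-WC} along the diagonal (i.e. taking $\lim_{\epsilon\to 0}\epsilon^{-2}$ with multiplicatively related quadruples), fed into the dynamic-plan/Young's-inequality computation of \eqref{eq:L--Lisini-application}, followed by the infinitesimal limit $\tau\downarrow\sigma$, the choice $\dot{\bar\gamma}_\sigma=\nabla P_{\sigma,\xi(\sigma)}v$, and a final rescaling of the test function. The only structural difference is cosmetic: the paper first exploits translation invariance of the Wasserstein contraction to reduce WLOG to $T_-=0$ and then works with $\xi(\varsigma)=\alpha\varsigma$, whereas you carry the shift through via the affine map $\xi(\varsigma)=\alpha\varsigma+(\alpha-1)T_-$; these are equivalent, and the treatment of the $L_+$ case by the $\xi'<1$ regime matches the paper's (equally brief) remark. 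One typo to flag: the differentiated contraction should read $\alpha^2\cdot(\dot\mu_{\xi(\varsigma)})_{L_0^{\xi(\varsigma)}}\leq\tfrac12(|\dot{\bar\gamma}_\varsigma|^2+S_\varsigma(\bar\gamma_\varsigma))+\tfrac{N(\alpha-1)}{4(\varsigma+T_-)}$, with $\alpha^2$ rather than $\alpha$, since the normalization factor $(T-S)/(\tau-\sigma)=\alpha$ in $\tilde L_0$ appears in addition to the reparametrization factor in the metric-derivative limit. As written, the $\alpha$-bound would not produce your (correct) displayed intermediate inequality upon substitution into the $\lambda=\alpha$ Young estimate; with the corrected exponent the bookkeeping closes exactly as in the paper.
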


\begin{proof}
    We show that the shifted $L_-$-type gradient estimates follow from the hypothesis; the shifted $L_+$-type gradient estimates follow in the same way (up to minor changes in notation and a few flipped signs). 
    
    We first notice that the assumed $W_{\tilde{L}_0}$ contraction is invariant under shifting the family $(M,g_{\tau - T_-})_{\tau \in I +T_-}$, so there is no loss in generality in assuming that $T_- = 0$. For any given $x,y \in M$, $[\bar{\sigma}, \bar{\tau}] \subseteq I$, $\alpha > 1$, let $\bar\gamma:[\bar{\sigma},\bar{\tau}]\to M$ be a curve from $x$ to $y$, and define $[\bar{\sigma},\bar{\tau}]\ni \varsigma \mapsto \hat{P}_{\varsigma,\alpha \varsigma}\delta_{\bar\gamma_\varsigma} =: \mu_\varsigma$ which is a curve of probability measures from $\hat{P}_{\bar{\sigma},\alpha \bar{\sigma}}\delta_{\bar\gamma_{\bar{\sigma}}}$ to $\hat{P}_{\bar{\tau},\alpha\bar{\tau}}\delta_{\bar\gamma_{\bar{\tau}}}$. 
    Using Lemma \ref{lem:dynamiclifting} we can find $\eta \in \mathcal{P}(C([\alpha\bar{\sigma},\alpha\bar{\tau}]\to M))$ with $(e_\varsigma)_\#\eta = \mu_\varsigma$ and
    \begin{align}\label{eq:liftingwithsameKE-again}
        \frac{1}{2}\int_{C([\alpha\bar{\sigma},\alpha\bar{\tau}]\to M)} |\dot{\gamma}_\varsigma|_{\varsigma}^2 + S_{\varsigma}(\gamma_\varsigma) d\eta(\gamma) = (\dot{\mu}_\varsigma)_{L_0^{\varsigma}}.
    \end{align}
    Hence
    {\allowdisplaybreaks
    \begin{align}
        P_{\bar{\tau}, \alpha\bar{\tau}}f(y)-P_{\bar{\sigma}, \alpha \bar{\sigma}}f(x) &= 
        \int f d\hat{P}_{\bar{\tau},\varsigma\bar{\tau}}\delta_{\bar\gamma_{\bar{\tau}}} - \int f d\hat{P}_{\bar{\sigma},\alpha\bar{\sigma}}\delta_{\bar\gamma_{\bar{\sigma}}} \\
        &= \int_{C([\alpha\bar{\sigma},\alpha\bar{\tau}]\to M)} \int_{\alpha\bar{\sigma}}^{\alpha\bar{\tau}} \langle df(\gamma_\varsigma), \dot{\gamma}_\varsigma\rangle d\varsigma d\eta(\gamma) \\
        & \leq \int_{C([\alpha\bar{\sigma},\alpha\bar{\tau}]\to M)} \int_{\alpha\bar{\sigma}}^{\alpha\bar{\tau}} \left(\frac{1}{2\alpha} |\nabla f(\gamma_\varsigma)|_{\varsigma}^2 + \frac{\alpha}{2}|\dot{\gamma}_\varsigma|_{\varsigma}^2\right) d\varsigma d\eta(\gamma) \\
        &= \frac{1}{2\alpha} \int_{C} \int_{\alpha\bar{\sigma}}^{\alpha\bar{\tau}} |\nabla f(\gamma_\varsigma)|_{\varsigma}^2 d\varsigma d\eta(\gamma) + \frac{\alpha}{2}\int_C\int_{\alpha\bar{\sigma}}^{\alpha\bar{\tau}}|\dot{\gamma}_\varsigma|_{\varsigma}^2 d\varsigma d\eta(\gamma) \\
        &= \frac{1}{2\alpha} \int_{\alpha\bar{\sigma}}^{\alpha\bar{\tau}} \int_M |\nabla f|_{\varsigma}^2 d\mu_\varsigma d\varsigma + \frac{\alpha}{2}\int_C\int_{\alpha\bar{\sigma}}^{\alpha\bar{\tau}}\left(|\dot{\gamma}_\varsigma|_{\varsigma}^2 + S_{\varsigma}(\gamma_\varsigma)\right) d\varsigma d\eta(\gamma) \\
        & \ \ \ \ - \frac{\alpha}{2}\int_{\alpha\bar{\sigma}}^{\alpha\bar{\tau}}\int_M S_{\varsigma}d\mu_\varsigma d\varsigma \\
        &= \frac{1}{2}\int_{\bar{\sigma}}^{\bar{\tau}} P_{\varsigma,\alpha\varsigma}[|\nabla f|_{\alpha\varsigma}^2](\bar\gamma_\varsigma) d\varsigma + \alpha^2\int_{\bar{\sigma}}^{\bar{\tau}}(\dot{\mu}_{\alpha\varsigma})_{L_0^{\alpha\varsigma}} d\varsigma \\
        & \ \ \ \ - \frac{\alpha^2}{2}\int_{\bar{\sigma}}^{\bar{\tau}}P_{\varsigma,\alpha\varsigma}[S_{\alpha\varsigma}](\bar\gamma_\varsigma)d\varsigma.
    \end{align}}
    We used Young's inequality, repeatedly the fact that $\mu_\varsigma =(e_\varsigma)_\#\eta$, and \eqref{eq:liftingwithsameKE-again} in the last equality.

    Now, taking $\lim_{\vartheta \to \varsigma}\frac{1}{(\vartheta - \varsigma)^2}$ of \eqref{eq:WCdimL0-GE-to-WC} for times $\varsigma, \vartheta,\alpha\varsigma, \alpha\vartheta$ gives
    \begin{align}
        \alpha^2(\dot{\mu}_{\alpha\varsigma})_{L_0^{\alpha\varsigma}} \leq (\dot{\delta}_{\bar\gamma_{\varsigma}})_{L_0^{\varsigma}} + \frac{N}{4\varsigma}(\alpha - 1) = \frac{1}{2}\left(|\dot{\bar\gamma}_{\varsigma}|_{\varsigma}^2 + S_{\varsigma}(\bar\gamma_{\varsigma}) \right)+ \frac{N}{4\varsigma}(\alpha - 1)\,.
    \end{align}
    Substituting in the previous estimate,
    \begin{align}
        P_{\bar{\tau},\alpha\bar{\tau}}f(y)-P_{\bar{\sigma},\alpha\bar{\sigma}}f(x)
        &\leq \frac{1}{2}\int_{\bar{\sigma}}^{\bar{\tau}} P_{\varsigma,\alpha\varsigma}(|\nabla f|_{\alpha\varsigma}^2)(\bar\gamma_\varsigma) d\varsigma + \frac{1}{2}\int_{\bar{\sigma}}^{\bar{\tau}}\left(|\dot{\bar\gamma}_\varsigma|_{\varsigma}^2 + S_{\varsigma}(\bar\gamma_\varsigma)\right) d\varsigma \\
        & \ \ \ \ - \frac{\alpha^2}{2}\int_{\bar{\sigma}}^{\bar{\tau}}P_{\varsigma,\alpha\varsigma}S_{\alpha\varsigma}(\bar\gamma_\varsigma) d\varsigma + \frac{N}{4}(\alpha - 1)\int_{\bar{\sigma}}^{\bar{\tau}}\frac{1}{\varsigma}\,d\varsigma\,.
    \end{align}
    In the estimate we have the freedom to choose $y$ and $\bar\gamma$ accordingly, and hence we pick $\bar\gamma$ to have $\dot{\bar\gamma}_{\bar{\sigma}}=\nabla P_{\bar{\sigma},\alpha\bar{\sigma}} [f](x)$. We now divide everything by $(\bar{\tau} - \bar{\sigma})$ and take the limit as $\bar{\tau}\downarrow \bar{\sigma}$. The left-hand side becomes 
    \begin{align}
        \partial_\varsigma|_{\varsigma=\bar{\sigma}} P_{\varsigma,\alpha\varsigma}[f](\bar\gamma_\varsigma) &= -\Delta P_{\bar{\sigma},\alpha\bar{\sigma}}[f](x) + \alpha P_{\bar{\sigma},\alpha\bar{\sigma}}[\Delta f](x) + \langle \nabla P_{\bar{\sigma},\alpha\bar{\sigma}}f(x),\dot{\bar\gamma}(\bar{\sigma}) \rangle \\
        &= -\Delta P_{\bar{\sigma},\alpha\bar{\sigma}}[f](x) + \alpha P_{\bar{\sigma},\alpha\bar{\sigma}}[\Delta f](x) + |\nabla P_{\bar{\sigma},\alpha \bar{\sigma}} f|^2_{\bar{\sigma}}(x)\,.
    \end{align}
    And comparing this with the right-hand side gives
    \begin{align}
        &\phantom{\leq}-\Delta P_{\bar{\sigma},\alpha\bar{\sigma}}[f](x) + \alpha P_{\bar{\sigma},\alpha\bar{\sigma}}[\Delta f](x) + |\nabla P_{\bar{\sigma},\alpha \bar{\sigma}} f|^2_{\bar{\sigma}}(x) \\ &\leq 
        \frac{1}{2} P_{\bar{\sigma},\alpha\bar{\sigma}}[|\nabla f|_{\alpha\bar{\sigma}}^2](x) + \frac{1}{2}\left(|\nabla P_{\bar{\sigma},\alpha\bar{\sigma}} [f]|_{\bar{\sigma}}^2(x) + S_{\bar{\sigma}}(x)\right) - \frac{\alpha^2}{2}P_{\bar{\sigma},\alpha\bar{\sigma}}[S_{\alpha\bar{\sigma}}](x)  + \frac{N(\alpha - 1)}{4\bar{\sigma}}\,.
    \end{align}
    We conclude
    \begin{equation}
        -\Delta P_{\bar{\sigma},\alpha\bar{\sigma}}[f] +\alpha P_{\bar{\sigma},\alpha\bar{\sigma}}[\Delta f] + \frac{1}{2} |\nabla P_{\bar{\sigma},\alpha\bar{\sigma}} [f]|^2_{\bar{\sigma}} \leq 
        \frac{1}{2} P_{\bar{\sigma},\alpha\bar{\sigma}}[|\nabla f|_{\alpha\bar{\sigma}}^2] + \frac{1}{2}S_{\bar{\sigma}} - \frac{\alpha^2}{2}P_{\bar{\sigma},\alpha\bar{\sigma}}[S_{\alpha\bar{\sigma}}]+ \frac{N(\alpha - 1)}{4\bar{\sigma}}\,.
    \end{equation}

    Picking now $\bar{\sigma} := \sigma$, $f := v/\sigma$, $\alpha := \tau/\sigma$, and multiplying the above line by $2\sigma^2$, we obtain the desired inequality.
\end{proof}

\subsection{Entropy \texorpdfstring{$\ln\tau$}{lnt}-convexity and \texorpdfstring{$\mathrm{EVI}$}{EVI}}\label{ss:nL0EC/EVI}

This subsection houses the proof of Theorem \ref{thm:dimL0EC/EVI}, showing that \eqref{eq:ECdimL0-EC-to-WC} and \eqref{eq:EVIdimL0-GE-to-EVI} are equivalent to \eqref{eq:GEL--GE-to-dimL0EVI}. Many features of this subsection have already been encountered in the $L_0$ and $L_\pm$ cases, so outside of Lemma and Proposition statements we provide details only when they differ from these previous cases. 

\begin{prop}\label{prop:dim-L0-convexity-implies-contraction}
Assume that $\dim M = n$ and $I \subseteq (0,\infty)$, and pick times $[\sigma,\tau] \subseteq I$. Suppose that for any measures $\mu_{\sigma},\,\mu_{\tau}\in \cP(M)$, there exists a $W_{L_0}$ geodesic $(\mu_{\eta})_{\eta \in [\sigma,\tau]}$ between $\mu_{\sigma},\mu_{\tau}$ such that

\begin{equation}\label{eq:ECdimL0-EC-to-WC}\tag{EC\({}_{L_0,n}\)}
\cE(\mu_\eta) - \cE(\mu_{\sigma}) + W_{L_0^{\sigma,\eta}}(\mu_{\sigma},\mu_\eta) + \frac{n}{8}(\ln(\eta/\sigma))^2 - \int_{\sigma}^\eta \frac{1}{\tilde\eta}W_{L_0^{\sigma,\tilde\eta}}(\mu_{\sigma},\mu_{\tilde\eta})\,d\tilde\eta
\end{equation}

is convex in the variable $\ln\eta$. Then for any $\mu_{\sigma}, \mu_{\tau} \in \cP(M)$ and $1 < \alpha < \sup I/\tau$, it holds that

\begin{align}\label{eq:WCdimL0-mult-times}
 \alpha W_{L_0^{\alpha\sigma, \alpha\tau}}(\hat{P}_{\alpha\sigma,\sigma}[\mu_{\sigma}], \hat{P}_{\alpha\tau,\tau}[\mu_{\tau}]) - \frac{n}{4}\alpha\ln\left(\frac{\tau}{\sigma}\right) \leq W_{L_0^{\sigma,\tau}}(\mu_{\sigma}, \mu_{\tau}) - \frac{n}{4}\ln\left(\frac{\tau}{\sigma}\right)\,.
\end{align}
\end{prop}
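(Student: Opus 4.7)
The plan is to adapt the strategy of Proposition \ref{prop:L0-cvxty-implies-contr/EVI} to the multiplicative rescaling setting. First, I would smooth the endpoint measures $\mu_\sigma,\mu_\tau$ by adjoint heat flow for a small time $\epsilon>0$ so that they acquire smooth, strictly positive densities (needed for integration by parts), then apply the hypothesis \eqref{eq:ECdimL0-EC-to-WC} to the smoothed pair, obtaining a $W_{L_0^{\sigma,\tau}}$-geodesic $(\mu_\eta)_{\eta\in[\sigma,\tau]}$ with Kantorovich potentials $(\phi_\eta)$ provided by Corollary \ref{cor:dynamic-potentials}, which solve the $L_0$ Hamilton--Jacobi equation.

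Convexity of the functional $G$ from \eqref{eq:ECdimL0-EC-to-WC} in $\ln\eta$ is equivalent to the monotonicity of $\eta G'(\eta)$, with the integral correction $-\int_\sigma^\eta\tilde\eta^{-1}W_{L_0^{\sigma,\tilde\eta}}\,d\tilde\eta$ precisely absorbing the $W_{L_0^{\sigma,\eta}}$ contribution arising from the $\eta\partial_\eta$ of that term. I would evaluate $\eta G'(\eta)|_\sigma^\tau\geq 0$ using Lemma \ref{lem:Top-1-sided-est} (with the sign on $S$ adapted to backward time, so that $\cE'(\mu_\eta)=\int\langle\nabla\ln\rho_\eta,\nabla\phi_\eta\rangle d\mu_\eta-\int S_\eta d\mu_\eta$) together with Proposition \ref{prop:Lisini-for-geodesics} for $(\dot\mu_\eta)_{L_0^\eta}$. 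The pivotal computation is to verify, via integration by parts and the $L_0$ Hamilton--Jacobi equation, the clean identity
\[-\partial_\alpha\big|_{\alpha=1}\int \phi_{\alpha\eta}\,d\hat P_{\eta,\alpha\eta}\mu_\eta \;=\; \eta\,\cE'(\mu_\eta) + \eta\,(\dot\mu_\eta)_{L_0^\eta}.\]
With this identity, the convexity inequality rearranges exactly into $H(1)+H'(1) \leq \frac{n}{4}\ln(\tau/\sigma)$, where $H(\alpha):=\int\phi_{\alpha\tau}\,d\hat P_{\tau,\alpha\tau}\mu_\tau-\int\phi_{\alpha\sigma}\,d\hat P_{\sigma,\alpha\sigma}\mu_\sigma$.

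By the Hopf--Lax semigroup property $\phi_{\alpha\tau}=Q^{\alpha\sigma,\alpha\tau}\phi_{\alpha\sigma}$, the pair $(\phi_{\alpha\sigma},\phi_{\alpha\tau})$ is admissible for the dual problem of $W(\alpha):=W_{L_0^{\alpha\sigma,\alpha\tau}}(\hat P_{\sigma,\alpha\sigma}\mu_\sigma,\hat P_{\tau,\alpha\tau}\mu_\tau)$, giving $H(\alpha)\leq W(\alpha)$ with equality at $\alpha=1$. To promote the endpoint bound to monotonicity of $K(\alpha):=\alpha W(\alpha)-\tfrac{n\alpha}{4}\ln(\tau/\sigma)$ on $[1,\sup I/\tau)$, I would run the same argument at every intermediate baseline $\alpha_0 \in [1,\alpha]$ --- feeding the pair $(\hat P_{\sigma,\alpha_0\sigma}\mu_\sigma,\hat P_{\tau,\alpha_0\tau}\mu_\tau)$ into the hypothesis at times $(\alpha_0\sigma,\alpha_0\tau)$ --- to obtain a one-sided Dini-derivative bound at each $\alpha_0$, then integrate (compare Remark \ref{rmk:EVItoWC}). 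The $\epsilon$-smoothing is removed at the end via weak convergence of optimal plans and stability of optimal transport for Lipschitz costs, invoking \cite[Thm.~5.20]{Vil09}.

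The hardest part will be the sign bookkeeping leading to the clean identity displayed above. The backward-time convention flips the sign of $S$ in $\cE'$ relative to Lemma \ref{lem:Top-1-sided-est} as stated, and the factor of $\eta$ introduced by multiplicative (as opposed to additive) rescaling produces several $\int S\,d\mu$ and $\int\phi\Delta\rho\,dV$ cross-terms that must cancel exactly. This cancellation essentially forces the particular form of \eqref{eq:ECdimL0-EC-to-WC} --- in particular the integral correction term and the $\tfrac{n}{8}(\ln(\eta/\sigma))^2$ quadratic error --- without which the argument would not close, and upgrading the pointwise derivative bound at $\alpha=1$ to a global monotonicity of $K$ crucially uses that the hypothesis holds at every intermediate time pair.
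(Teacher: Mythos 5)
Your proposal matches the paper's proof essentially step by step: smooth the endpoints by running the adjoint heat flow for a small time $\eps$; differentiate the convex functional in $\ln\eta$ at the two endpoints; feed in the one-sided bounds from Lemma \ref{lem:Top-1-sided-est} and the metric-speed formula from Proposition \ref{prop:Lisini-for-geodesics}; rewrite the result as $-\partial_\alpha\big|_{\alpha=1^+}\alpha H(\alpha)$ using the Hamilton--Jacobi equation and the adjoint-heat equation; compare $H(\alpha)$ to $W(\alpha)$ via Kantorovich duality with equality at $\alpha=1$; and iterate at every intermediate baseline to pass from a one-sided derivative bound at $\alpha=1$ to global monotonicity, removing the $\eps$-smoothing via stability of optimal transport (\cite[Thm.~5.20]{Vil09}). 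Your observation that the integral correction $-\int_\sigma^\eta\tilde\eta^{-1}W_{L_0^{\sigma,\tilde\eta}}\,d\tilde\eta$ is what makes the $\eta\partial_\eta$-derivative of \eqref{eq:ECdimL0-EC-to-WC} come out cleanly as $[\eta\cE'+\eta\partial_\eta\int\phi_\eta\,d\mu_\eta]|_\sigma^\tau - W_{L_0^{\sigma,\tau}}$ is exactly the paper's opening rearrangement, and your explicit formula for $-\partial_\alpha\big|_{\alpha=1}\int\phi_{\alpha\eta}\,d\hat P_{\eta,\alpha\eta}\mu_\eta$ agrees with the paper's display after the integration by parts.

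One minor caveat: you call the relation $-\partial_\alpha\big|_{\alpha=1}\int\phi_{\alpha\eta}\,d\hat P_{\eta,\alpha\eta}\mu_\eta = \eta\cE'(\mu_\eta)+\eta(\dot\mu_\eta)_{L_0^\eta}$ a ``clean identity,'' but the $\cE'$ contributions at $\eta=\sigma,\tau$ are only one-sided Dini derivatives controlled by Lemma \ref{lem:Top-1-sided-est} (the inequalities there cannot in general be upgraded to equalities, as noted in the paper's remark following the lemma). The argument still closes because the two endpoint inequalities point in opposite directions, exactly matching the sign you need after taking the difference $|_\sigma^\tau$ — you in fact invoke Lemma \ref{lem:Top-1-sided-est} correctly earlier in the paragraph, so this is a wording slip rather than a gap. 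Otherwise the proposal is faithful to the paper, including the (terse) ``run the argument at every $\alpha_0$ and integrate'' step, which the paper itself compresses into ``as in Proposition \ref{prop:L0-cvxty-implies-contr/EVI}, modulo the technical details therein.''
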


\begin{rmk}
The above Wasserstein contraction estimate \eqref{eq:WCdimL0-mult-times}, which is \eqref{eq:WCdimL0-GE-to-WC} in the special case of multiplicatively related times, implies \eqref{eq:GEL--GE-to-dimL0EVI}. This is precisely the content of the proof of Proposition \ref{prop:WCdimL0-implies-shifted-GE}.
\end{rmk}

\begin{proof}
The technical details--in particular the smoothing of the measures by flowing for some small time $\eps > 0$--are identical to those of the proof of Proposition \ref{prop:L0-cvxty-implies-contr/EVI}. We therefore restrict our attention to the formal computations, which all hold rigorously for endpoint measures that are smooth with full support.

Writing $d/d(\ln\eta) = \eta d/d\eta$ and letting $(\phi_\sigma,\phi_\tau)$ be Kantorovich potentials with respect to $L_0$ from $\mu_\sigma$ to $\mu_\tau$, by the convexity assumption we have
\begin{align}
-\frac{n}{4}\ln(\tau/\sigma)&\leq \left. \eta\frac{d}{d\eta}\left[\cE(\mu_\eta) + \int_M \phi_\eta \,d\mu_\eta\right]\right|_{\sigma^+}^{\tau^{-}} - W_{L_0^{\sigma,\tau}}(\mu_{\sigma}, \mu_{\tau}) \\
&= \left. \eta\frac{d}{d\eta}\left[\cE(\mu_\eta) + \int_M \phi_\eta \,d\mu_\eta\right]\right|_{\sigma^+}^{\tau^{-}} - \int_M\phi_{\tau}\,d\mu_{\tau} + \int_M \phi_{\sigma}\,d\mu_{\sigma}\,\,.
\end{align}

Note that the existence of these one-sided derivatives is provided by Lemma \ref{lem:Top-1-sided-est} and Proposition \ref{prop:Lisini-for-geodesics}, which as in Proposition \ref{prop:L0-cvxty-implies-contr/EVI} yields the further inequality (with $\mu_\eta=\rho_\eta dV_\eta$ for $\eta = \sigma,\tau$)
\begin{align}
\cdots &\leq  \left[-\eta\int_M \phi_\eta\Delta\rho_\eta \,dV_\eta -\int_{M}\phi_\eta\,d\mu_{\eta} - \eta\int_M \left(-\frac{|\nabla \phi_\eta|^2}{2} + \frac{S_\eta}{2}\right)\, d\mu_\eta\right]_{\sigma}^{\tau}\,.
\end{align}

Since $\phi_\eta$ (extended past $\tau$ using the Hopf-Lax semigroup) is Lipschitz and solves the Hamilton-Jacobi equation \eqref{eq:L0-HJ} a.e. in $M$ for $\eta = \sigma,\tau$, one computes

\begin{equation}
\cdots =-\left.\frac{d}{d\alpha}\right|_{\alpha = 1^+} \alpha\cdot\left[\int_{M} \phi_{\alpha\tau} d\hat{P}_{\alpha\tau,\tau}[\mu_{\tau}] - \int_{M} \phi_{\alpha\sigma} d\hat{P}_{\alpha\sigma,\sigma}[\mu_{\sigma}]\right]\,.
\end{equation}

Now, as in Proposition \ref{prop:L0-cvxty-implies-contr/EVI}, and modulo the technical details therein, this shows that 

\begin{equation}
\left.\partial^+\right|_{\alpha = 1^+} \left[\alpha\cdot W_{L_0^{\alpha\sigma, \alpha\tau}}(\hat{P}_{\alpha\sigma,\sigma}[\mu_{\sigma}], \hat{P}_{\alpha\tau,\tau}[\mu_{\tau}]) - \frac{n}{4}\ln(\tau/\sigma)\alpha\right] \leq 0\,,
\end{equation}
and thus that the quantity $\alpha\cdot W_{L_0^{\alpha\sigma, \alpha\tau}}(\hat{P}_{\alpha\sigma,\sigma}[\mu_{\sigma}], \hat{P}_{\alpha\tau,\tau}[\mu_{\tau}]) - (n/4)\alpha\ln(\tau/\sigma)$ is non-increasing in $\alpha > 1$.
\end{proof}

To pass now from gradient estimates \eqref{eq:GEL--GE-to-dimL0EVI} to the entropy convexity statement \eqref{eq:ECdimL0-EC-to-WC}, we also proceed as in the $L_0$ and $L_{-}$ cases.

\begin{lem}\label{lem:dim-L0-action-entropy-est}
Assume that $\dim M = n$ and $I\subseteq (0,\infty)$, and let $(\nu_\eta)_{\eta \in [\sigma,\tau]} = (\rho_\eta dV_\eta)_{\eta \in [\sigma,\tau]}$, $\rho \in C^\infty\big([\sigma,\tau]\times M; (0,\infty)\big)$, $[\sigma,\tau] \subseteq I$, be a curve of smooth measures with positive densities. For $\sup I/\tau,\tau/\sigma > \alpha > 1$, write $\lambda_I := \ln (\alpha)/\ln(\alpha\tau/\sigma)$, $\lambda_{II} := \ln(\alpha)/\ln\big(\tau/(\alpha\sigma)\big)$. Then if the $L_-$-type gradient estimate \eqref{eq:GEL--GE-to-dimL0EVI} holds, we have the action-entropy estimates
\begin{align}
\lambda_I\cE(\hat{P}_{\tau, \alpha\tau}[\nu_{\tau}]) + W_{L_0^{\sigma,\alpha\tau}}(\nu_{\sigma},\hat{P}_{\tau, \alpha\tau}[\nu_{\tau}]) &\leq \lambda_I\cE(\nu_{\sigma}) + (1 - \lambda_I) \int_{\sigma}^{\tau}(\dot{\nu}_{\tilde\eta})_{L_0^{\tilde\eta}}\,d\tilde\eta \\
&\qquad -\lambda_I\int_{\sigma}^{\tau}\frac{1}{\eta}\left(\int_{\eta}^{\tau}(\dot{\nu}_{\tilde\eta})_{L_0^{\tilde\eta}}\,d\tilde\eta\right)\,d\eta\\
&\qquad +\lambda_I\frac{n}{8}\big(\ln(\tau/\sigma)\big)^2  + O((\ln\alpha)^2)\,,\label{eq:dim-L0-preEVI-I}
\end{align}
and
\begin{align}
-\lambda_{II} \cE(\nu_{\tau}) + W_{L_0^{\alpha\sigma,\tau}}(\hat{P}_{\sigma,\alpha\sigma}[\nu_{\sigma}],\nu_{\tau}) &\leq -\lambda_{II} \cE(\hat{P}_{\sigma,\alpha\sigma}[\nu_{\sigma}]) + (1 + \lambda_{II})\int_{\sigma}^{\tau}(\dot{\nu}_{\tilde\eta})_{L_0^{\tilde\eta}}\,d\tilde\eta \\
&\qquad - \lambda_{II}\int_{\sigma}^{\tau}\frac{1}{\eta}\left(\int_{\sigma}^{\eta}(\dot{\nu}_{\tilde\eta})_{L_0^{\tilde \eta}}\,d\tilde\eta\right)\,d\eta \\
&\qquad -\lambda_{II}\frac{n}{8}\big(\ln(\tau/\sigma)\big)^2 + O((\ln\alpha)^2)\,. \label{eq:dim-L0-preEVI-II}
\end{align}
The constant implicit in the $O((\ln\alpha)^2)$ error term depends on quantities that are independent of the curve $(\nu_\eta)_{\eta \in [\sigma,\tau]}$ (namely $\text{diam}_{L_0}((M,g_\eta)_{\eta \in [\sigma,\tau]}) $, $-\inf_{(\eta,x)\in I \times M} \min(0, S_\eta(x))$, $n$, $\tau$, $\sigma$), and on a uniform bound on the actions $\sup_{[\tilde\sigma,\tilde{\tau}]\subseteq [\sigma,\tau]} \left|\int_{\tilde\sigma}^{\tilde\tau}(\dot\nu_{\eta})_{L_0^\eta}\,d\eta\right|$.
\end{lem}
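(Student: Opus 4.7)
The plan is to adapt the strategy of Lemmas \ref{lem:L0-action-entropy-est} and \ref{lem:L-minus-action-entropy-est} to a \emph{logarithmic} diagonal reparametrization, natural to both the $L_0$ cost and the $L_-$-type gradient estimate \eqref{eq:GEL-} which we shall invoke via condition 1 of the theorem. For the first inequality \eqref{eq:dim-L0-preEVI-I}, I would introduce the bijective, $\ln$-linear map $\xi_\alpha: [\sigma,\alpha\tau] \to [\sigma,\tau]$ defined by $\ln \xi_\alpha(\eta) := (1-\lambda_I)\ln\eta + \lambda_I\ln\sigma$, yielding $\xi_\alpha(\sigma) = \sigma$, $\xi_\alpha(\alpha\tau) = \tau$, and $\xi_\alpha'(\eta) = (1-\lambda_I)\xi_\alpha(\eta)/\eta$. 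A pleasant feature is that the analog $\kappa := 1 - \eta\xi_\alpha'(\eta)/\xi_\alpha(\eta)$ from Lemma \ref{lem:L-minus-action-entropy-est} equals $\lambda_I$ identically and is thus constant in $\eta$, simplifying the bookkeeping relative to the $L_-$ case. I would then define the diagonal curve of smooth measures $\nu_{\eta,\alpha}:= \hat P_{\xi_\alpha(\eta),\eta}[\nu_{\xi_\alpha(\eta)}]$ for $\eta\in[\sigma,\alpha\tau]$ interpolating $\nu_\sigma$ and $\hat P_{\tau,\alpha\tau}[\nu_\tau]$, and let $(\phi_\eta)_{\eta \in [\sigma,\alpha\tau]}$ denote Kantorovich potentials for these endpoints with respect to the $L_0^{\sigma,\alpha\tau}$ cost provided by Corollary \ref{cor:dynamic-potentials}, solving $\partial_\eta\phi_\eta = -|\nabla\phi_\eta|^2/2 + S_\eta/2$ in the usual sense.

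Next I would evaluate $\partial_\eta \cE(\nu_{\eta,\alpha})$ and $\partial_\eta \int_M \phi_\eta\,d\nu_{\eta,\alpha}$ via duality of the heat flow, integration by parts, and the HJ equation, mirroring Lemmas \ref{lem:L0-action-entropy-est} and \ref{lem:L-minus-action-entropy-est} but for the new $\xi_\alpha$. Forming the linear combination $\eta\partial_\eta\int \phi_\eta \,d\nu_{\eta,\alpha} + \frac{\lambda_I}{1 - \lambda_I}\,\eta\partial_\eta\cE(\nu_{\eta,\alpha})$ and applying Young's inequality to the cross gradient term should produce a perfect square of the form $\frac{1}{2}|\nabla(\eta\phi_\eta + \frac{\lambda_I}{1-\lambda_I}\eta\ln\rho_{\eta,\alpha})|^2$. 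The crucial step will be to apply \eqref{eq:GEL-} at times $(\xi_\alpha(\eta),\eta)$ to the rescaled test function $\eta f_\eta := \eta\phi_\eta + \frac{\lambda_I}{1-\lambda_I}\eta\ln\rho_{\eta,\alpha}$; the extra $\eta$ prefactor is chosen precisely so that the coefficients of the Laplacian and scalar curvature terms in \eqref{eq:GEL-} (which scale with the times themselves) match those appearing in the combined derivative above, and the $L_-$ estimate then contributes a dimensional error of $\frac{n}{2}(\eta-\xi_\alpha(\eta))$ at each $\eta$.

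After integrating over $\eta \in [\sigma,\alpha\tau]$, the LHS should yield, up to an overall factor of $\lambda_I/(1-\lambda_I) = \lambda_I + O(\lambda_I^2)$ on the entropy contribution, the entropy boundary difference together with the $W_{L_0^{\sigma,\alpha\tau}}(\nu_\sigma, \hat P_{\tau,\alpha\tau}[\nu_\tau])$ term, via the duality pairing $\int \phi_{\alpha\tau} d\nu_{\alpha\tau,\alpha} - \int \phi_\sigma d\nu_{\sigma,\alpha}$. On the RHS, the action integral $\int_\sigma^{\alpha\tau}(\dot\nu_{\xi_\alpha(\eta)})_{L_0^{\xi_\alpha(\eta)}}\xi_\alpha'(\eta)\,d\eta$, after the substitution $\tilde\eta = \xi_\alpha(\eta)$ followed by integration by parts, should produce the leading $(1-\lambda_I)\int_\sigma^\tau(\dot\nu_{\tilde\eta})_{L_0^{\tilde\eta}}d\tilde\eta$ term together with the lower-order correction $-\lambda_I\int_\sigma^\tau \frac{1}{\eta}\bigl(\int_\eta^\tau (\dot\nu_{\tilde\eta})_{L_0^{\tilde\eta}}\,d\tilde\eta\bigr)\,d\eta$. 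Finally, the dimensional error $\int_\sigma^{\alpha\tau}(1-\lambda_I)\frac{n}{2}\frac{\eta-\xi_\alpha(\eta)}{\eta^2}\,d\eta$, after expanding $\eta - \xi_\alpha(\eta) = \eta\bigl(1 - (\sigma/\eta)^{\lambda_I}\bigr)$ in $\lambda_I$, should yield $\lambda_I\,(n/8)(\ln(\tau/\sigma))^2 + O(\lambda_I^2)$, matching the dimensional term in \eqref{eq:dim-L0-preEVI-I}.

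The main technical obstacle will be the bookkeeping of all the higher-order-in-$\lambda_I$ errors and verifying that they collect into a single $O((\ln\alpha)^2)$ remainder with implicit constants of the dependence prescribed in the statement; one must also check that the perfect-square pairing with \eqref{eq:GEL-} goes through with the correct coefficient on the cross term (which the $L_0$ case of Lemma \ref{lem:L0-action-entropy-est} handles by a Cauchy-Schwarz slack that will need to be tracked here too). The second inequality \eqref{eq:dim-L0-preEVI-II} should follow in parallel by working on $[\alpha\sigma,\tau]$ with the reparametrization $\tilde\xi_\alpha: [\alpha\sigma,\tau]\to[\sigma,\tau]$, $\ln\tilde\xi_\alpha(\eta) = (1+\tilde\lambda_{II})\ln\eta + \tilde b$, with endpoint conditions $\tilde\xi_\alpha(\alpha\sigma) = \sigma$ and $\tilde\xi_\alpha(\tau) = \tau$.
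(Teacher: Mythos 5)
Your plan matches the paper's proof essentially step-for-step: the $\ln$-linear reparametrization $\xi_\alpha$, the diagonal curve $\nu_{\eta,\alpha}$, the combination of derivatives of $\cE$ and $\int \phi_\eta\,d\nu_{\eta,\alpha}$ with coefficient $\lambda_I/(1-\lambda_I)$, the application of \eqref{eq:GEL-} to $\eta f_\eta$ at times $(\xi_\alpha(\eta),\eta)$, the substitution-plus-integration-by-parts that turns the action integral into the leading $(1-\lambda_I)\int(\dot\nu)\,d\tilde\eta$ term plus the nested $\lambda_I$-weighted integral, and the explicit evaluation of the $O(\ln\alpha)$ dimensional coefficient. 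You also correctly observe the bonus simplification that the $\kappa$ of the $L_-$ case is now the constant $\lambda_I$, which the paper uses implicitly. The one place you would find a slip on working through the details is the scaling: the paper multiplies both derivatives by $\eta^2$, not $\eta$ (so it considers $\eta^2\partial_\eta\big(\int\phi_\eta\,d\nu_{\eta,\alpha} + \tfrac{\lambda_I}{1-\lambda_I}\cE(\nu_{\eta,\alpha})\big)$), because the gradient estimate with test function $\eta f_\eta$ produces $\eta^2|\nabla f_\eta|^2$ and $\eta^2\Delta f_\eta$ terms; the paper then divides by $\eta^2$ before integrating so the LHS becomes a genuine boundary difference. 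Correspondingly the nested action integral arrives with weight $\xi_\alpha(\eta)/\eta^2$ rather than the stated $1/\eta$, though these agree up to $O(\ln\alpha)^2$ corrections which are absorbed into the error term anyway. These are sketch-level rather than conceptual issues, and your outline would carry you to the paper's proof once you track the exponents carefully.
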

\begin{proof}
We proceed as in the proof of Lemma \ref{lem:L-minus-action-entropy-est}, but with even fewer details when the two proofs align. In this case, we consider the $\ln$-linear, bijective map of intervals $\xi_\alpha: [\sigma,\alpha\tau] \to [\sigma,\tau]$ given by

\begin{equation}
\xi_\alpha:\begin{cases}\xi_\alpha(\sigma) = \sigma\,,\,\xi_\alpha(\alpha\tau) = \tau\,,& \\ \ln(\xi_\alpha(\eta)) = (1 - \lambda) \cdot\ln(\eta) + b&\end{cases}\, \implies \lambda = \frac{\ln \alpha}{\ln(\alpha\tau/\sigma)}\,,\qquad b= \frac{\ln\sigma\cdot \ln\alpha}{\ln(\alpha\tau/\sigma)}\,.
\end{equation}

The two properties of $\xi_\alpha$ that will interest us are its values at the boundary of the interval, along with the ODE $\xi_\alpha'(\eta) = (1 - \lambda)\cdot \xi_\alpha(\eta)/\eta$ it solves. As in Lemma \ref{lem:L0-action-entropy-est}, we now define the ``diagonal'' curve of measures $\rho_{\eta,\alpha}\,dV_\eta := \nu_{\eta,\alpha} := \hat{P}_{\xi_{\alpha}(\eta),\eta}[\nu_{\xi_\alpha(\eta)}]$ for $\eta \in [\sigma,\alpha\tau]$, along with a family of Kantorovich potentials $(\phi_\eta)_{\eta \in [\sigma,\alpha\tau]}: M \to \R{}$ from Corollary \ref{cor:dynamic-potentials}, which are Lipschitz and solve the Hamilton-Jacobi equation \eqref{eq:L0-HJ} for every $\eta \in [\sigma,\alpha\tau]$ and a.e. $x \in M$. We compute

\begin{align}
\eta^{2}\partial_\eta \cE(\nu_{\eta,\alpha}) &= (1 - \lambda)\int_M P_{\xi_\alpha(\eta),\eta}[\eta\Delta_\eta \big(\eta\ln \rho_{\eta,\alpha}\big) - \frac{\lambda}{1 - \lambda} |\nabla\big(\eta\ln \rho_{\eta,\alpha}\big)|^2] \,d\nu_{\xi_\alpha(\eta)} \\
&\qquad - (1 - \lambda) \int_M \Big(\xi_\alpha(\eta)\Delta_{\xi_\alpha(\eta)}P_{\xi_\alpha(\eta),\eta}[\eta\ln \rho_{\eta,\alpha}]\Big)\,d\nu_{\xi_\alpha(\eta)} - \eta^{2}\int_M S_\eta\,d\nu_{\eta,\alpha}\\
&\qquad + (1 - \lambda)\int_M \xi_\alpha(\eta) P_{\xi_\alpha(\eta),\eta}[\eta\big(1 + \ln \rho_{\eta,\alpha}\big)]\left.\partial_\upsilon\right|_{\upsilon = \xi_\alpha(\eta)} \,d\nu_\upsilon\,,
\end{align}

and

\begin{align}
\eta^{2}\partial_\eta \int_M \phi_\eta \,d\nu_{\eta,\alpha} &= (1 - \lambda)\int_M P_{\xi_\alpha(\eta),\eta}[\eta\Delta_\eta (\eta\phi_\eta) - \frac{\lambda}{1 - \lambda}\La\nabla (\eta\phi_\eta), \nabla (\eta\ln \rho_{\eta,\alpha})\Ra +\frac{\eta^{2}}{2} S_{\eta} -\frac{1}{2}|\nabla (\eta\phi_\eta)|^2]\,d\nu_{\xi_\alpha(\eta)} \\
&\qquad - (1 - \lambda)\int_M\left(\xi_\alpha(\eta)\Delta_{\xi_\alpha(\eta)} P_{ \xi_\alpha(\eta),\eta}[\eta\phi_\eta] \right)\,d\nu_{\xi_\alpha(\eta)} + \lambda\int_M \frac{\eta^2}{2}S_\eta - \frac{1}{2}|\eta\nabla \phi_\eta|^2\,d\nu_{\eta,\alpha} \\
&\qquad + (1 - \lambda) \int_M \xi_\alpha(\eta)P_{\xi_\alpha(\eta),\eta}[\eta\phi_\eta]\left.\partial_\upsilon\right|_{\upsilon = \xi_\alpha(\eta)}\,d\nu_\upsilon\,.
\end{align}

We now verify the remaining steps from the proof of Lemma \ref{lem:L0-action-entropy-est}. In particular, we consider the linear combination $\eta^2\partial_\eta(\int_M \phi_\eta\,d\nu_{\eta,\alpha} + [\lambda/(1 -\lambda)]\cE(\nu_{\eta,\alpha}))$ of the LHS's above and attempt to estimate the terms of the resulting RHS. As in the dimensionless case, we define the auxiliary quantity $f_\eta: = \phi_\eta + \lambda\ln(\rho_{\eta,\alpha})/(1 - \lambda)$. 

First, similarly to the proof of Lemma \ref{lem:L0-action-entropy-est} we estimate

\begin{align}
&\phantom{\leq}(1 - \lambda)\int_M \xi_\alpha(\eta)P_{\xi_\alpha(\eta),\eta}[\eta f_\eta]\,\left.\partial_\upsilon\right|_{\upsilon = \xi_\alpha(\eta)}d\nu_\upsilon \\
&\leq \frac{1 - \lambda}{2}\int_M \Big(|\nabla P_{\xi_\alpha(\eta),\eta}[\eta f_\eta]|^2 - \xi_\alpha^2(\eta) S_{\xi_\alpha(\eta)}\Big)\,d\nu_{\xi_\alpha(\eta)} + \xi_\alpha(\eta)\eta\xi_\alpha'(\eta)(\dot{\nu}_{\xi_\alpha(\eta)})_{L_0^{\xi_\alpha(\eta)}}\,. 
\end{align}

Second, we apply the gradient estimate \eqref{eq:GEL--GE-to-dimL0EVI} with initial data $\eta f_\eta$ between the times $\xi_\alpha(\eta) < \eta$ to bound

\begin{align}
&(1 - \lambda)\int_M P_{\xi_\alpha(\eta),\eta}[\eta\Delta_\eta (\eta f_\eta) - \frac{1}{2}|\nabla (\eta f_\eta)|^2 + \frac{\eta^2}{2}S_\eta]\,d\nu_{\xi_\alpha(\eta)} \\
&\qquad - (1 - \lambda)\int_M\left(\xi_\alpha(\eta)\Delta_{\xi_\alpha(\eta)} P_{\xi_\alpha(\eta),\eta}[\eta f_\eta] -\frac{1}{2} |\nabla P_{\xi_\alpha(\eta),\eta}[\eta f_\eta]|^2 + \frac{\xi_\alpha^2(\eta)}{2}S_{\xi_\alpha(\eta)} \right)\,d\nu_{\xi_\alpha(\eta)} \\
&\leq \frac{n}{4}(1 - \lambda)( \eta - \xi_\alpha(\eta))\,.
\end{align}

We now assemble these two estimates as in Lemma \ref{lem:L0-action-entropy-est}, divide by $\eta^2$, and then integrate between $\sigma$ and $\alpha\tau$ to obtain

\begin{align}
(1 + \lambda)\int_M \phi_\eta \,d\nu_{\eta,\alpha} + \left.\frac{\lambda}{1 - \lambda}\cE(\nu_{\eta,\alpha}) \right|_{\sigma}^{\alpha\tau} &\leq \int_{\sigma}^{\alpha\tau}\frac{\xi_\alpha(\eta)}{\eta}(\dot{\nu}_{\xi_\alpha(\eta)})_{L_0^{\xi_\alpha(\eta)}}\xi_\alpha'(\eta)\,d\eta \\
&\qquad - \frac{\lambda^2}{1 - \lambda}\int_{\sigma}^{\alpha\tau}\int_M S_\eta \,d\nu_{\eta,\alpha}d\eta \\
&\qquad +\frac{n}{4}(1 - \lambda)\int_{\sigma}^{\alpha\tau}\left(\frac{1}{\eta} - \frac{\xi_\alpha(\eta)}{\eta^2}\right)\,d\eta \,.
\end{align}

Up to $O((\ln \alpha)^2)$ error, it remains to compute the first and third integrals of the RHS above. For the first, we integrate by parts to make the action of $\nu_\eta$ appear:

\begin{align}
\int_{\xi_\alpha(\sigma)}^{\xi_\alpha(\alpha\tau)}\frac{\bar\eta}{\xi_\alpha^{\circ - 1}(\bar\eta)}(\dot{\nu}_{\bar\eta})_{L_0^{\bar \eta}}\,d\bar \eta &= \frac{\xi_\alpha(\alpha\tau)}{\alpha\tau} \int_{\xi_\alpha(\sigma)}^{\xi_\alpha(\alpha\tau)}(\dot{\nu}_{\tilde\eta})_{L_0^{\tilde\eta}}\,d\tilde\eta+\frac{\lambda}{1 - \lambda}\int_{\xi_\alpha(\sigma)}^{\xi_\alpha(\alpha \tau)}\frac{1}{\xi_\alpha^{\circ-1}(\bar\eta)}\left(\int_{\xi_\alpha(\sigma)}^{\bar\eta}(\dot{\nu}_{\tilde\eta})_{L_0^{\tilde\eta}}\,d\tilde\eta\right)\,d\bar\eta \\
&= \frac{1}{\alpha} \int_{\sigma}^{\tau}(\dot{\nu}_{\tilde\eta})_{L_0^{\tilde\eta}}\,d\tilde\eta+\lambda\int_{\sigma}^{\alpha\tau}\frac{\xi_\alpha(\eta)}{\eta^2}\left(\int_{\xi_\alpha(\sigma)}^{\xi_\alpha(\eta)}(\dot{\nu}_{\tilde\eta})_{L_0^{\tilde\eta}}\,d\tilde\eta\right)\,d\eta\,.\\
&=  \int_{\sigma}^{\tau}(\dot{\nu}_{\tilde\eta})_{L_0^{\tilde\eta}}\,d\tilde\eta +\frac{1 - \alpha}{\alpha\ln(\tau/\sigma)}\int_{\sigma}^{\tau}\frac{1}{\eta}\left(\int_{\sigma}^{\tau}(\dot\nu_{\tilde\eta})_{L_0^{\tilde\eta}}\,d\tilde\eta\right) d\eta \\
&\qquad +\lambda\int_{\sigma} ^{\alpha\tau}\frac{\xi_\alpha(\eta)}{\eta^2}\left(\int_{\xi_\alpha(\sigma)}^{\xi_\alpha(\eta)}(\dot{\nu}_{\tilde\eta})_{L_0^{\tilde\eta}}\,d\tilde\eta\right)\,d\eta\,.\\
\end{align}
The final two terms of the RHS above can be combined, up to allowable error, to form the desired weighted action integral:
\begin{align}
&\phantom{=}\frac{1 - \alpha}{\alpha\ln(\tau/\sigma)}\int_{\sigma}^{\tau}\frac{1}{\eta}\left(\int_{\sigma}^{\tau}(\dot\nu_{\tilde\eta})_{L_0^{\tilde\eta}}\,d\tilde\eta\right) d\eta +\lambda\int_{\sigma}^{\alpha\tau}\frac{\xi_\alpha(\eta)}{\eta^2}\left(\int_{\xi_\alpha(\sigma)}^{\xi_\alpha(\eta)}(\dot{\nu}_{\tilde\eta})_{L_0^{\tilde\eta}}\,d\tilde\eta\right)\,d\eta \\
&= \frac{\ln \alpha}{\ln(\tau/\sigma)}\int_{\sigma}^{\tau}\frac{1}{\eta}\left(-\int_{\sigma}^{\tau}(\dot\nu_{\tilde\eta})_{L_0^{\tilde\eta}}\,d\tilde\eta + \int_{\sigma}^{\eta}(\dot\nu_{\tilde\eta})_{L_0^{\tilde\eta}}\,d\tilde\eta\right)\,d\eta+ O((\ln\alpha)^2) \\
&= -\frac{\ln \alpha}{\ln(\tau/\sigma)}\int_{\sigma}^{\tau}\frac{1}{\eta}\left(\int_{\eta}^{\tau}(\dot\nu_{\tilde\eta})_{L_0^{\tilde\eta}}\,d\tilde\eta\right)\,d\eta+ O((\ln\alpha)^2)\,.
\end{align}

Finally, we compute the $O(\ln\alpha)$ dimensional integral, whose $\ln\alpha$ coefficient is

\begin{align}
\lim_{\alpha \to 1^+}\frac{1}{\ln\alpha}\left(\frac{n}{4}(1 - \lambda)\int_{\sigma}^{\alpha\tau}\left(\frac{1}{\eta} - \frac{\xi_\alpha(\eta)}{\eta^2}\right)\,d\eta\right) &= \lim_{\alpha \to 1^+}\frac{n}{4}(1 - \lambda)\int_{\sigma}^{\tau}\frac{\alpha - 1}{\eta^2\ln\alpha}\frac{\eta - \xi_\alpha(\eta)}{\alpha - 1}\,d\eta \\
&= \frac{n}{4}\int_{\sigma}^{\tau}\frac{-\left.\partial_\alpha\right|_{\alpha = 1^+}\xi_\alpha(\eta)}{\eta^2}\,d\eta \\
&= \frac{n}{8}\ln(\tau/\sigma)\,,
\end{align}
since
\begin{equation}
\left.\partial_\alpha\right|_{\alpha = 1^+} \xi_\alpha(\eta) = \left.\partial_\alpha\right|_{\alpha = 1^+}\exp\left(\frac{\ln(\tau/\sigma)}{\ln(\alpha\tau/\sigma)}\ln \eta + \frac{\ln\sigma\cdot \ln \alpha}{\ln(\alpha\tau/\sigma)}\right) = \frac{\eta\ln(\sigma/\eta)}{\ln(\tau/\sigma)}\,.
\end{equation}

After multiplying through by $(1 - \lambda)$, this gives \eqref{eq:dim-L0-preEVI-I}. \eqref{eq:dim-L0-preEVI-II} follows from running the same argument (but with fewer steps for the action term) with $\nu_\eta^\alpha := \hat{P}_{\tilde{\xi}_\alpha(\eta),\eta}[\nu_{\tilde{\xi}_\alpha(\eta)}]$ for the range of times $[\alpha\sigma,\tau]$, where $\tilde{\xi}_\alpha: [\alpha\sigma,\tau] \to [\sigma,\tau]$ solves

\begin{equation}
\tilde{\xi}_\alpha(\tau): \begin{cases}\tilde{\xi}_\alpha(\alpha\sigma) = \sigma\,,\,\tilde{\xi}_\alpha(\tau) = \tau\,,&\\
\ln(\tilde{\xi}_\alpha(\eta)) = (1 + \tilde{\lambda})\cdot \ln\eta + \tilde{b}\end{cases}\,\implies \tilde{\lambda} = \frac{\ln \alpha}{\ln(\tau/(\alpha\sigma))}\,,\qquad \tilde b = -\frac{\ln\tau\cdot\ln \alpha}{\ln(\tau/(\alpha\sigma))}\,.
\end{equation}
\end{proof}

As with the dimensionless $L_0$ and $L_{\pm}$ cases, we can take a normalized sum of inequalities \eqref{eq:dim-L0-preEVI-I} and \eqref{eq:dim-L0-preEVI-II} applied along a $W_{L_0}$ geodesic, then send $\alpha \to 1^+$.

\begin{prop}\label{prop:dimL0GE-to-EVI}
Assume that $\dim M = n$, $I \subseteq (0,\infty)$, and that for any Lipschitz $v$, any $[\sigma,\tau] \subseteq I$ the gradient estimate

\begin{equation}
 |\nabla P_{\sigma,\tau}v|_\sigma^2-2\sigma \Delta_\sigma P_{\sigma,\tau}v-\sigma^2S_{\sigma} \leq P_{\sigma,\tau}\left(|\nabla v|_\tau ^2 - 2\tau\Delta_\tau v - \tau ^2S_{\tau}\right) + \frac{n}{2}(\tau-\sigma)\,.\label{eq:GEL--GE-to-dimL0EVI}\tag{GE\({}_{L_-}\)}
\end{equation}
holds. Then, for any Borel probability measures $\mu,\nu \in \cP(M)$ and times $[\sigma,\tau] \subseteq I$, the backward heat flow $\hat P$ satisfies the following $\mathrm{EVI}$s

\begin{align}
 \eta\cdot\partial_{\eta^+}^+ W_{L_0^{\eta,\tau}}(\hat{P}_{\sigma,\eta}[\nu], \mu) &\leq \frac{1}{\ln(\tau/\eta)} \left[\cE(\mu) -\cE(\hat{P}_{ \sigma,\eta}[\nu]) +W_{L_0^{\eta,\tau}}(\hat{P}_{\sigma,\eta}[\nu], d\mu) -\int_{\eta}^{\tau} \frac{1}{\tilde\tau}  W_{L_0^{\eta,\tilde\tau}}(\hat{P}_{\sigma,\eta}[\nu],\tilde{\nu}_{\tilde\tau})\,d\tilde\tau\right]\\
&\qquad + \frac{n\ln(\tau/\eta)}{8}\,,\qquad \forall\eta \in [\sigma,\tau)\,.\label{eq:EVIdimL0(1)}\\
\varsigma\cdot \partial_{\varsigma^+}^+ W_{L_0^{\sigma,\varsigma}}(\nu,\hat{P}_{\tau,\varsigma}[\mu]) &\leq \frac{1}{\ln(\varsigma/\sigma)} \left[\cE(\nu) - \cE(\hat{P}_{\tau,\varsigma}[\mu]) - W_{L_0^{\sigma,\varsigma}}(\nu,\hat{P}_{\tau,\varsigma}[\mu]) -\int_{\sigma}^{\varsigma} \frac{1}{\tilde\tau}  W_{L_0^{\tilde{\tau},\varsigma}}(\tilde{\mu}_{\tilde\tau}, \hat{P}_{\tau,\varsigma}[\mu])\,d\tilde\tau\right]\\
&\qquad + \frac{n\ln(\varsigma/\sigma)}{8}\,,\qquad\forall\varsigma \in [\tau,\sup I)\,.\label{eq:EVIdimL0-GE-to-EVI}\tag{EVI\({}_{L_0,n}\)}
\end{align}

Here, $(\tilde{\nu}_{\tau})_{\tau \in [\eta,\tau]}$ is any $W_{L_0^{\eta,\tau}}$-geodesic with endpoints $\hat{P}_{\sigma,\eta}[\nu]$ and $\mu$, while $(\tilde{\mu}_{\tau})_{\tau \in [\sigma,\varsigma]}$ is any $W_{L_0^{\sigma,\varsigma}}$-geodesic with endpoints $\nu$ and $\hat P_{\tau,\varsigma}[\mu]$.
\end{prop}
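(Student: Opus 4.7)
The plan is to follow the pattern established in Propositions \ref{prop:L0GE-to-EVI} and \ref{prop:L-GE-to-EVI}, with Lemma \ref{lem:dim-L0-action-entropy-est} playing the role that Lemmas \ref{lem:L0-action-entropy-est} and \ref{lem:L-minus-action-entropy-est} played there. I sketch the first of the two \eqref{eq:EVIdimL0-GE-to-EVI}s; the second follows by the analogous argument, using \eqref{eq:dim-L0-preEVI-I} in place of \eqref{eq:dim-L0-preEVI-II} and the $\sqrt{\cdot}$-affine rescaling $\tilde\xi_\alpha$ in place of $\xi_\alpha$.

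First, for fixed $\eta \in [\sigma,\tau)$, select a $W_{L_0^{\eta,\tau}}$-geodesic $(\tilde{\nu}_r)_{r \in [\eta,\tau]}$ connecting $\hat P_{\sigma,\eta}[\nu]$ to $\mu$ (existing by \cite[Cor.~7.22]{Vil09}). This geodesic need not consist of measures with smooth positive densities, so apply Lemma \ref{lem:wass-geodesic-smoothing} to obtain, for each small $\epsilon > 0$, a regularized curve $(\tilde\nu_r^\epsilon)_{r\in[\eta - \epsilon,\tau - \epsilon]}$ of smooth positive densities. Lemma \ref{lem:dim-L0-action-entropy-est} may now be applied to $(\tilde\nu_r^\epsilon)_r$ with multiplicative parameter $\alpha_h := (\eta + h)/\eta > 1$ for small $h > 0$, yielding the action-entropy estimate \eqref{eq:dim-L0-preEVI-II}. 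Note that the term $\hat P_{\sigma_{lem},\alpha_{lem}\sigma_{lem}}[\nu_{\sigma_{lem}}]$ appearing on the RHS of the lemma becomes, after composing with the original adjoint heat propagator and sending $\epsilon \to 0$, exactly the perturbed measure $\hat P_{\sigma,\eta + h}[\nu]$ that appears in the EVI.

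Next, pass to the limit $\epsilon \to 0^+$: Lemma \ref{lem:wass-geodesic-smoothing} ensures that the entropy and action terms converge to those of the original geodesic, while \cite[Thm.~5.20]{Vil09} gives the convergence of the $W_{L_0}$-Wasserstein distances as in the proofs of Propositions \ref{prop:L0GE-to-EVI} and \ref{prop:L-GE-to-EVI}. After this limit, the geodesic property of $(\tilde\nu_r)_{r \in [\eta,\tau]}$ identifies each action integral $\int_{\eta}^{\bar r}(\dot{\tilde\nu}_{\tilde r})_{L_0^{\tilde r}}\,d\tilde r$ with the Wasserstein distance $W_{L_0^{\eta,\bar r}}(\hat P_{\sigma,\eta}[\nu],\tilde\nu_{\bar r})$, so the bracket on the RHS of \eqref{eq:dim-L0-preEVI-II} turns into precisely the quantity appearing in \eqref{eq:EVIdimL0-GE-to-EVI}.

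Finally, divide through by $h$ and take $\limsup_{h \downarrow 0}$. The prefactor $\lambda_{II} = \ln((\eta + h)/\eta)/\ln(\tau/(\eta + h))$ satisfies $\lim_{h \downarrow 0}\lambda_{II}/h = 1/(\eta\ln(\tau/\eta))$, so after multiplying by $\eta$ the whole RHS acquires the expected factor $1/\ln(\tau/\eta)$, the dimensional remainder produces the $(n\ln(\tau/\eta)/8)$ term, and the $O((\ln\alpha_h)^2) = O(h^2)$ error vanishes. As in the $L_0$ and $L_-$ cases, the main (and essentially only) technical obstacle is verifying the $\epsilon \to 0^+$ convergence of each term; no new ideas beyond those already deployed earlier in the paper are required.
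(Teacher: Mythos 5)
Your proposal follows the paper's intended approach (the paper omits this proof, noting only that it parallels Propositions \ref{prop:L0GE-to-EVI} and \ref{prop:L-GE-to-EVI}), and the main steps — selecting a $W_{L_0}$ geodesic, smoothing via Lemma \ref{lem:wass-geodesic-smoothing}, applying Lemma \ref{lem:dim-L0-action-entropy-est} with $\alpha_h = (\eta+h)/\eta$, passing $\epsilon\to 0$ and then $h\downarrow 0$ using $\lim_{h\downarrow 0}\lambda_{II}/h = 1/(\eta\ln(\tau/\eta))$ — are correct and constitute the proof. Two bookkeeping items should be fixed when writing this out: with backward time and the $L_0$ cost, Lemma \ref{lem:wass-geodesic-smoothing} produces a regularized curve on $[\eta+\epsilon,\tau+\epsilon]$ (the adjoint heat flow regularizes going \emph{forward} in $\tau$, i.e.\ $\tau^\epsilon = \tau+\epsilon$ when $m=0$), not $[\eta-\epsilon,\tau-\epsilon]$; and the reparametrizations in Lemma \ref{lem:dim-L0-action-entropy-est} are $\ln$-affine rather than $\sqrt{\cdot}$-affine (the latter belongs to the $L_-$ Lemma \ref{lem:L-minus-action-entropy-est}), with $\xi_\alpha$ paired with \eqref{eq:dim-L0-preEVI-I} and $\tilde\xi_\alpha$ with \eqref{eq:dim-L0-preEVI-II}, the reverse of the pairing you stated.
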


With Lemma \ref{lem:dim-L0-action-entropy-est}, the proof of Proposition \ref{prop:dimL0GE-to-EVI} proceeds very similarly to the $L_0$ case of Proposition \ref{prop:L0GE-to-EVI} and the $L_-$ case of Proposition \ref{prop:L-GE-to-EVI}, so we omit it. Similarly, the same method used to prove Propositions \ref{prop:L0-entropy-cvx} and \ref{prop:L-minus-entropy-cvx} in the $L_0$ resp. $L_-$ cases gives a corresponding dimensionally sensitive $L_0$ version.

\begin{prop}\label{prop:dim-L0-entropy-cvx}
Assume that $ \dim M = n$, $I \subseteq (0,\infty)$, and suppose that for any Borel probability measures $\mu,\nu \in \cP(M)$ and times $[\sigma,\tau] \subseteq I$, the backward heat flow $\hat P$ satisfies the $\mathrm{EVI}$s \eqref{eq:EVIdimL0-GE-to-EVI}. Then for times $ \sigma \leq \bar\eta \leq \tau$ with $[\sigma,\tau] \subseteq I$ and a $W_{L_0}$ geodesic $(\mu_{\eta})_{\eta \in [\sigma,\tau]}\subseteq \cP(M)$, the following entropy convexity statement holds:

\begin{align}\label{eq:dim-L0-entropy-convexity}
&\frac{\ln(\tau/\bar\eta)}{\ln(\tau/\sigma)}(\cE(\mu_{\sigma}) + (n/8)(\ln\sigma)^2) + \frac{ \ln(\bar\eta/\sigma)}{\ln(\tau/\sigma)}(\cE(\mu_{\tau}) + (n/8)(\ln\tau)^2) - (\cE(\mu_{\bar\eta }) + (n/8)(\ln\bar\eta)^2) \\
&\qquad - \frac{\ln(\tau/\bar\eta)}{\ln(\tau/\sigma)}W_{L_0^{\sigma,\bar\eta}}(\mu_{\sigma}, \mu_{\bar\eta}) + \frac{ \ln(\bar\eta/\sigma)}{\ln(\tau/\sigma)}W_{L_0^{\bar\eta,\tau}}(\mu_{\bar\eta}, \mu_{\tau}) \\
&\geq \frac{\ln (\bar\eta/\sigma)}{\ln(\tau/\sigma)}\int_{\bar\eta}^{\tau}\frac{W_{L_0^{\sigma,\eta}}(\mu_{\sigma},\mu_{\eta})}{\eta} \,d\eta +\frac{\ln (\tau/\bar\eta)}{\ln(\tau/\sigma)}\int_{\sigma}^{\bar\eta}\frac{W_{L_0^{\sigma,\eta}}(\mu_{\sigma},\mu_{\eta})}{\eta} \,d\eta\,,
\end{align}
and we recover \eqref{eq:ECdimL0}.
\end{prop}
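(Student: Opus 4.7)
The strategy mirrors Propositions \ref{prop:L0-entropy-cvx} and \ref{prop:L-minus-entropy-cvx}. Given a $W_{L_0}$-geodesic $(\mu_\eta)_{\eta\in[\sigma,\tau]}$ joining the endpoints and an intermediate time $\bar\eta\in[\sigma,\tau]$, I would apply the two \eqref{eq:EVIdimL0-GE-to-EVI}s at $\bar\eta$: the first (forward) one with EVI-parameters $(\sigma_{\mathrm{EVI}},\tau_{\mathrm{EVI}},\nu,\mu)=(\bar\eta,\tau,\mu_{\bar\eta},\mu_\tau)$ evaluated at flow-time $\eta=\bar\eta^+$, and the second (backward) one with $(\sigma_{\mathrm{EVI}},\tau_{\mathrm{EVI}},\nu,\mu)=(\sigma,\bar\eta,\mu_\sigma,\mu_{\bar\eta})$ at flow-time $\varsigma=\bar\eta^+$. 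Since $(\mu_\eta)$ is itself a $W_{L_0}$-geodesic, the auxiliary Wasserstein geodesics $\tilde\nu_{\tilde\tau}$ and $\tilde\mu_{\tilde\tau}$ appearing in the integral corrections may both be chosen to coincide with $(\mu_{\tilde\tau})$ on the respective subintervals.

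I would then take the normalized sum of these EVIs with common weight $\alpha=\beta=\ln(\tau/\bar\eta)\ln(\bar\eta/\sigma)/\ln(\tau/\sigma)$. By subadditivity of the $\partial_{\cdot^+}^+$ Dini derivative and the $W_{L_0}$ triangle inequality \eqref{eq:triangle-inequality} applied to the triple $(\mu_\sigma,\hat P_{\bar\eta,\eta}[\mu_{\bar\eta}],\mu_\tau)$---using also the geodesic splitting $W_{L_0^{\sigma,\bar\eta}}(\mu_\sigma,\mu_{\bar\eta})+W_{L_0^{\bar\eta,\tau}}(\mu_{\bar\eta},\mu_\tau)=W_{L_0^{\sigma,\tau}}(\mu_\sigma,\mu_\tau)$ at the base point $\eta=\bar\eta$---the weighted sum of LHSs is bounded below by $0$. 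On the RHS, the entropy terms combine into the stated convex combination $\tfrac{\ln(\tau/\bar\eta)}{\ln(\tau/\sigma)}\cE(\mu_\sigma)+\tfrac{\ln(\bar\eta/\sigma)}{\ln(\tau/\sigma)}\cE(\mu_\tau)-\cE(\mu_{\bar\eta})$, the main $W_{L_0^{\bar\eta,\tau}}$ and $W_{L_0^{\sigma,\bar\eta}}$ terms appear with the correct weights, and the dimensional contribution $\alpha\cdot\tfrac{n\ln(\tau/\bar\eta)}{8}+\beta\cdot\tfrac{n\ln(\bar\eta/\sigma)}{8}$ simplifies to $\tfrac{n}{8}\ln(\tau/\bar\eta)\ln(\bar\eta/\sigma)$, which a short algebraic check shows to equal $\tfrac{n}{8}\bigl[\tfrac{\ln(\tau/\bar\eta)}{\ln(\tau/\sigma)}(\ln\sigma)^2+\tfrac{\ln(\bar\eta/\sigma)}{\ln(\tau/\sigma)}(\ln\tau)^2-(\ln\bar\eta)^2\bigr]$.

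The most delicate step is handling the integral correction terms from the EVIs. Applying the geodesic identity $W_{L_0^{\bar\eta,\tilde\tau}}(\mu_{\bar\eta},\mu_{\tilde\tau})=W_{L_0^{\sigma,\tilde\tau}}(\mu_\sigma,\mu_{\tilde\tau})-W_{L_0^{\sigma,\bar\eta}}(\mu_\sigma,\mu_{\bar\eta})$ for $\tilde\tau\in[\bar\eta,\tau]$ (and its symmetric counterpart $W_{L_0^{\tilde\tau,\bar\eta}}(\mu_{\tilde\tau},\mu_{\bar\eta})=W_{L_0^{\sigma,\bar\eta}}(\mu_\sigma,\mu_{\bar\eta})-W_{L_0^{\sigma,\tilde\tau}}(\mu_\sigma,\mu_{\tilde\tau})$ for $\tilde\tau\in[\sigma,\bar\eta]$) rewrites each EVI integral as an integral of $W_{L_0^{\sigma,\tilde\tau}}(\mu_\sigma,\mu_{\tilde\tau})/\tilde\tau$ plus a constant-in-$\tilde\tau$ term proportional to $W_{L_0^{\sigma,\bar\eta}}(\mu_\sigma,\mu_{\bar\eta})$. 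The equality $\alpha=\beta$ is crucial here: it forces the extra $W_{L_0^{\sigma,\bar\eta}}$ contributions arising from the two EVIs to cancel exactly, leaving precisely the combination of $\int_\sigma^{\bar\eta}$ and $\int_{\bar\eta}^\tau$ integrals against $W_{L_0^{\sigma,\eta}}(\mu_\sigma,\mu_\eta)/\eta$ appearing in \eqref{eq:dim-L0-entropy-convexity}. Rearranging gives the stated inequality.

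The equivalence with \eqref{eq:ECdimL0} is then a purely algebraic verification: the stated inequality is the mid-point convexity, in the variable $\ln\eta$, of the function $F(\eta):=\cE(\mu_\eta)-\cE(\mu_\sigma)+W_{L_0^{\sigma,\eta}}(\mu_\sigma,\mu_\eta)+\tfrac{n}{8}\ln^2(\eta/\sigma)-\int_\sigma^\eta W_{L_0^{\sigma,\tilde\eta}}(\mu_\sigma,\mu_{\tilde\eta})\tilde\eta^{-1}\,d\tilde\eta$ evaluated at the three points $\sigma<\bar\eta<\tau$, once one applies the geodesic splitting to decompose the isolated $\lambda W_{L_0^{\sigma,\tau}}$ term that the midpoint inequality produces. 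The principal technical obstacle throughout is simply the meticulous bookkeeping of signs and coefficients under the repeated application of the geodesic splitting identity; the geometric content of the argument is precisely that of the $L_0$ and $L_-$ cases.
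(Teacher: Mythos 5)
Your proof follows the paper's argument essentially verbatim: apply the two \eqref{eq:EVIdimL0-GE-to-EVI}s to the pairs $(\mu_{\bar\eta},\mu_\tau)$ and $(\mu_\sigma,\mu_{\bar\eta})$ at the initial flow time $\bar\eta$, choosing the auxiliary geodesics to be the restrictions of $(\mu_\eta)$, lower-bound the sum of Dini derivatives by $0$ via the $W_{L_0}$ triangle inequality together with the geodesic splitting at $\bar\eta$, multiply by $\ln(\tau/\bar\eta)\ln(\bar\eta/\sigma)/\ln(\tau/\sigma)$, and consolidate the dimensional terms with the identity $(\ln\tau - \ln\bar\eta)(\ln\bar\eta - \ln\sigma)(\ln\tau - \ln\sigma) = \ln^2\tau\,\ln(\bar\eta/\sigma) + \ln^2\sigma\,\ln(\tau/\bar\eta) - \ln^2\bar\eta\,\ln(\tau/\sigma)$. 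This is the same structure as the paper's proof.

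One caveat on your claim that the $W_{L_0^{\sigma,\bar\eta}}$ cancellation leaves ``precisely the combination appearing in \eqref{eq:dim-L0-entropy-convexity}.'' Carrying out the substitutions $W_{L_0^{\bar\eta,\eta}}(\mu_{\bar\eta},\mu_\eta) = W_{L_0^{\sigma,\eta}}(\mu_\sigma,\mu_\eta) - W_{L_0^{\sigma,\bar\eta}}(\mu_\sigma,\mu_{\bar\eta})$ and $W_{L_0^{\eta,\bar\eta}}(\mu_\eta,\mu_{\bar\eta}) = W_{L_0^{\sigma,\bar\eta}}(\mu_\sigma,\mu_{\bar\eta}) - W_{L_0^{\sigma,\eta}}(\mu_\sigma,\mu_\eta)$ carefully, the $W_{L_0^{\sigma,\bar\eta}}$ terms indeed cancel, but what remains on the right-hand side is
\begin{equation}
\frac{\ln(\bar\eta/\sigma)}{\ln(\tau/\sigma)}\int_{\bar\eta}^\tau\frac{W_{L_0^{\sigma,\eta}}(\mu_\sigma,\mu_\eta)}{\eta}\,d\eta \;-\; \frac{\ln(\tau/\bar\eta)}{\ln(\tau/\sigma)}\int_\sigma^{\bar\eta}\frac{W_{L_0^{\sigma,\eta}}(\mu_\sigma,\mu_\eta)}{\eta}\,d\eta\,,
\end{equation}
with a \emph{minus} sign on the second integral, not the $+$ displayed in \eqref{eq:dim-L0-entropy-convexity}. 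The minus sign is the correct one: it is exactly what rewrites the inequality as three-point $(\ln\eta)$-convexity of $F(\eta) := \cE(\mu_\eta) - \cE(\mu_\sigma) + W_{L_0^{\sigma,\eta}}(\mu_\sigma,\mu_\eta) + \tfrac{n}{8}\ln^2\eta - \int_\sigma^\eta W_{L_0^{\sigma,\tilde\eta}}(\mu_\sigma,\mu_{\tilde\eta})\,\tilde\eta^{-1}\,d\tilde\eta$, and hence to recover \eqref{eq:ECdimL0}. So the discrepancy is a sign typo in the displayed statement of the Proposition rather than a flaw in your strategy; but as written your assertion that the output matches \eqref{eq:dim-L0-entropy-convexity} would not survive a careful sign check.
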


\begin{rmk}\label{rmk:no-L0-reduced-vol}
    This convexity statement seems to be new even for Ricci flows. One might hope that by following the strategy of \cite[Cor.~8]{Lot09}, Theorem \ref{prop:dim-L0-entropy-cvx} would give rise to a new version of reduced volume monotonicity but now for the $L_0$ cost. However, the cited argument does not work here, as the convex quantity in Theorem \ref{thm:dimL0EC/EVI} does not tend to a constant as $\ln \eta \to -\infty$. This can be seen by noticing that all terms apart from $(n/8)(\ln\bar\eta)^2$ are $O(\ln \eta)$ as $\eta \to 0$. This consideration also shows that one cannot obtain the desired convergence as $\eta\to 0$ by simply adding an $\ln\eta$-convex function of $\eta$.
\end{rmk}

\begin{proof}
    Given a geodesic $(\mu_\eta)_{\eta\in[\sigma,\tau]}$, applying both \eqref{eq:EVIdimL0-GE-to-EVI}s and using the triangle inequality as in the proof of Proposition \ref{prop:L0-entropy-cvx} gives
    \begin{align}
        &\frac{1}{\ln(\tau/\bar\eta)}\left[ \cE(\mu_\tau)-\cE(\mu_{\bar\eta})+W_{L_0^{\bar\eta,\tau}}(\mu_{\bar\eta},\mu_\tau) - \int_{\bar\eta}^\tau \frac{1}{\eta}W_{L_0^{\bar\eta,\eta}}(\mu_{\bar\eta},\mu_\eta)d\eta \right] +\frac{n}{8}\ln(\tau/\bar\eta) \\
        &+ \frac{1}{\ln(\bar\eta/\sigma)}\left[  \cE(\mu_\sigma)-\cE(\mu_{\bar\eta})-W_{L_0^{\sigma,\bar\eta}}(\mu_\sigma,\mu_{\bar\eta}) - \int_\sigma^{\bar\eta} \frac{1}{\eta}W_{L_0^{\eta,\bar\eta}}(\mu_\eta,\mu_{\bar\eta})d\eta \right] + \frac{n}{8}\ln(\bar{\eta}/\sigma) \geq 0.
    \end{align}
    Multiplying by $\frac{\ln(\tau/\bar\eta)\ln(\bar\eta/\sigma)}{\ln(\tau/\sigma)}>0$, and using the relation
    \begin{align}
        &\frac{\ln(\tau/\bar\eta)\ln(\bar\eta/\sigma)}{\ln(\tau/\sigma)}\frac{n}{8}\left(\ln(\tau/\bar\eta)+\ln(\bar\eta/\sigma)\right) = \frac{n}{8\ln(\tau/\sigma)}\left( \ln^2\tau\ln(\bar\eta/\sigma)+\ln^2\sigma\ln(\tau/\bar\eta)-\ln^2\bar\eta\ln(\tau/\sigma) \right) \,,
    \end{align}
    yields
    \begin{align}
        &\frac{\ln(\tau/\bar\eta)}{\ln(\tau/\sigma)}(\cE(\mu_\sigma)+\frac{n}{8}\ln^2(\sigma)) + \frac{\ln(\bar\eta/\sigma)}{\ln(\tau/\sigma)}(\cE(\mu_\tau)+\frac{n}{8}\ln^2(\tau)+W_{L_0^{\sigma,\tau}}(\mu_\sigma,\mu_\tau)) - (\cE(\mu_{\bar\eta})+\frac{n}{8}\ln^2(\bar\eta)+W_{L_0^{\sigma,\bar\eta}}(\mu_\sigma,\mu_{\bar\eta})) \\
        &\geq \frac{\ln(\bar\eta/\sigma)}{\ln(\tau/\sigma)}\int_{\bar\eta} ^\tau \frac{1}{\eta}W_{L_0^{\bar\eta,\eta}}(\mu_{\bar\eta},\mu_\eta)d\eta + \frac{\ln(\tau/\bar\eta)}{\ln(\tau/\sigma)} \int_\sigma^{\bar\eta} \frac{1}{\eta}W_{L_0^{\eta,\bar\eta}}(\mu_\eta,\mu_{\bar\eta})d\eta
    \end{align}
    which is the claimed inequality. Now we note that for any $\sigma_0 \leq \sigma$ within the maximal interval of definition for $\mu_\eta$, we can rewrite the RHS as

\begin{align}
    &\phantom{=} \frac{\ln(\bar\eta/\sigma)}{\ln(\tau/\sigma)}\int_{\bar\eta}^{\tau}\frac{1}{\eta}W_{L_0^{\bar\eta,\eta}}(\mu_{\bar\eta},\mu_\eta)\,d\eta + \frac{\ln(\tau/\bar\eta)}{\ln(\tau/\sigma)}\int_{\sigma}^{\bar\eta}\frac{1}{\eta}W_{L_0^{\eta,\bar\eta}}(\mu_\eta,\mu_{\bar\eta})\,d\eta +(1 - 1)\frac{\ln(\bar\eta/\sigma)\ln(\tau/\bar\eta)}{\ln(\tau/\sigma)}W_{L_0^{\sigma_0,\bar\eta}}(\mu_{\sigma_0},\mu_{\bar\eta}) \\
    &= \frac{\ln(\bar\eta/\sigma)}{\ln(\tau/\sigma)}\int_{\bar\eta}^{\tau}\frac{1}{\eta}W_{L_0^{\sigma_0,\eta}}(\mu_{\sigma_0},\mu_\eta)\,d\eta - \frac{\ln(\tau/\bar\eta)}{\ln(\tau/\sigma)}\int_{\sigma}^{\bar\eta}\frac{1}{\eta}W_{L_0^{\sigma_0,\eta}}(\mu_{\sigma_0},\mu_{\eta})\,d\eta \\
    &\qquad+ \left(\frac{\ln(\bar\eta/\sigma)}{\ln(\tau/\sigma)} + \frac{\ln(\tau/\bar\eta)}{\ln(\tau/\sigma)} - 1\right)\int_{\sigma_0}^{\bar\eta}\frac{1}{\eta}W_{L_0^{\sigma_0,\eta}}(\mu_{\sigma_0},\mu_\eta)\,d\eta \\
    &= \frac{\ln(\bar\eta/\sigma)}{\ln(\tau/\sigma)}\int_{\sigma_0}^{\tau}\frac{1}{\eta}W_{L_0^{\sigma_0,\eta}}(\mu_{\sigma_0},\mu_\eta)\,d\eta + \frac{\ln(\tau/\bar\eta)}{\ln(\tau/\sigma)}\int_{\sigma_0}^\sigma \frac{1}{\eta}W_{L_0^{\sigma_0,\eta}}(\mu_{\sigma_0},\mu_\eta)\,d\eta - \int_{\sigma_0}^{\bar\eta}\frac{1}{\eta}W_{L_0^{\sigma_0,\eta}}(\mu_{\sigma_0},\mu_\eta)\,d\eta\,.
\end{align}

Applying the resulting convex combination inequality for any triple of times in $[\sigma,\tau]$ with $\sigma_0 := \sigma$ shows that the function $\cE(\nu_\eta) - \cE(\nu_{\sigma}) + W_{L_0^{\sigma,\eta}}(\nu_{\sigma},\nu_\eta) + \frac{n}{8}(\ln\eta)^2 - \int_{\sigma}^\eta \frac{1}{\tilde\eta}W_{L_0^{\sigma,\tilde\eta}}(\nu_{\sigma},\nu_{\tilde\eta})\,d\tilde\eta$ is convex in $\ln\eta$, and thus so is the function in \eqref{eq:ECdimL0}, as they differ just by the term $-\frac{n}{4}\ln\eta\ln\sigma + \frac{n}{8}\ln^2\sigma$, which is $(\ln\eta)$-affine.

\end{proof}

\section{Extensions to smooth metric flows with a weight}\label{s:weight}

A natural generalization of a smooth metric flow $(M^m,g_t)_{t \in I}$ (i.e. a time-dependent Riemannian manifold) is a smooth metric-{\it measure} flow, where one additionally includes the data of a time-dependent weight $e^{-U_t}$ for a smooth family of smooth functions $U_t : M \to \R{}$. In this Section we write down a (``dimensionless'') $\cD_{U,\infty}$ condition adapted to this new setting, along with a ``dimensional'' $\cD_{U,N}$ condition. The latter includes a parameter $N \in [m,\infty]$, which may be thought of as the effective dimension of $(M^m,g_t, e^{-U_t}\,dV_t)_{t \in I}$--see Remark \ref{rmk:weighted-warped-prod-equiv} for a possible precise interpretation of this last statement. In terms of $(\cD \geq 0)$-type conditions, the quantities $\cD_{U,\infty}$ and $\cD_{U,N}$ are the time-dependent analogs of the Bakry-\'Emery-Ricci tensors $\Ric_{U} = \Ric_{U,\infty}$ resp. $\Ric_{U,N}$ in the static setting (see \cite[pg.~395]{Vil09}). We note here that a comparable extension to smooth metric-measure spaces was treated by Lott \cite[Sec.~8]{Lot09} in the setting of solutions to the Ricci flow.

In the remainder of the section, we assume that we have in hand a smooth time-dependent weighted Riemannian manifold $(M^m,g_t,\mathfrak{m}_t = e^{-U_t}\,dV_t)_{t \in I}$ with topological dimension $\dim M = m$. We also assume that we have picked a $T \in \R{}$ and made the corresponding choice of backwards time variable $\tau := T - t$ for $\tau \in T - I =: \tilde I$.

\subsection{The dimensionless case \texorpdfstring{$\cD_{U,\infty} \geq 0$}{DU0}.}\label{subsec:infinity-weighted}

As before, we write $\partial_tg_t=-2\mathcal{S}_t$, but now introduce $\partial_t \mathfrak{m}_t=-S_U\mathfrak{m}_t$, i.e. $S_U=\Tr_g{\mathcal{S}}+\partial_tU$. Let $\Delta_U=\Delta_g-\langle\nabla U,\nabla\cdot\rangle_g$ be the weighted Laplacian, $P_{t,s}^U$ the semigroup induced by it, with $ (P_{t,s}^U)^*$ its adjoint and $\hat{P}_{t,s}^U$ the backward semigroup for measures (see Subsection \ref{subsec:t-dep-mflds} for details in the unweighted case). In particular,
\begin{align}
    \partial_tP_{t,s}^Uv_t=\Delta_{U} P_{t,s}^Uv_s\,, \\
    \partial_sP_{t,s}^Uv_t = -P_{t,s}\Delta_{U}v_s\,.
\end{align}

As in the unweighted case, $\cD_{U,\infty}$ appears as the ambiguously signed term in Bochner formula for the (forward) heat equation associated to $\Delta_U$:

\begin{prop}\label{prop:weighted-BE-equiv}
    For any time $t \in I$ and smooth solution to the weighted heat flow $v_t=P_{t,s}^U v_s$ at $t$ we have the Bochner formula
    \begin{align}\label{eq:BochnerWeighted}
        (\partial_t-\Delta_U)(|\nabla v|^2 - 2\Delta_Uv - S_U)=-2|\Hess v+\mathcal{S}|^2  - \cD_{U,\infty}(\nabla v)\,,
    \end{align}
    where
    \begin{align}
        \cD_{U,\infty}(X)&:=2(\Ric_U-\mathcal{S})(X,X)+2\left\langle 2\div_U\mathcal{S} - \nabla S_U,X \right\rangle + \partial_t S_U- \Delta_U S_U-2|\mathcal{S}|^2 \,,\\
        \Ric_U&=\Ric+\Hess U\,,\qquad \div_U T :=\div T - T(\nabla U)\,.
    \end{align}
\end{prop}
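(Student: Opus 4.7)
The plan is to directly compute the three summands of the LHS separately and then group terms, mirroring the argument of Proposition \ref{prop:DcondiffBochner} but accounting for the weighted operator $\Delta_U$ and the time-dependence of $U$. I will not attempt to first guess a ``curvature'' tensor; instead it will drop out of the computation as the ambiguously-signed remainder after completing the square $-2|\Hess v+\cS|^2$.

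First I would record the static weighted Bochner identity
\begin{equation}
\tfrac{1}{2}\Delta_U|\nabla v|^2 = |\Hess v|^2 + \langle\nabla\Delta_U v,\nabla v\rangle + \Ric_U(\nabla v,\nabla v)\,,
\end{equation}
which follows from the usual Bochner formula together with $-\langle\nabla U,\nabla\tfrac12|\nabla v|^2\rangle = -\langle\Hess v(\nabla U),\nabla v\rangle = \Hess U(\nabla v,\nabla v) - \langle\nabla\langle\nabla U,\nabla v\rangle,\nabla v\rangle$ (using that $\Hess$ is symmetric). Then, using $\partial_t g^{ij} = 2\cS^{ij}$ and $\partial_t v = \Delta_U v$ along the heat flow, I compute the first piece
\begin{equation}
(\partial_t-\Delta_U)|\nabla v|^2 = 2\cS(\nabla v,\nabla v) + 2\langle\nabla\Delta_U v,\nabla v\rangle - 2|\Hess v|^2 - 2\langle\nabla\Delta_U v,\nabla v\rangle - 2\Ric_U(\nabla v,\nabla v)\,,
\end{equation}
so that the $\nabla\Delta_U v$ terms cancel.

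Next I would compute the commutator $[\partial_t,\Delta_U]$ acting on a general smooth function. The unweighted commutator is standard: $[\partial_t,\Delta] v = 2\langle\cS,\Hess v\rangle + \langle 2\,\div\cS-\nabla\,\tr\cS,\nabla v\rangle$. The extra term from $-\langle\nabla U,\nabla\cdot\rangle$ contributes $2\cS(\nabla U,\nabla v) + \langle\nabla\partial_t U,\nabla v\rangle$ (from differentiating $g^{ij}\partial_i U\partial_j v$ in $t$). Combining and using the identities $2\div_U\cS = 2\div\cS - 2\cS(\nabla U,\cdot)$ and $\nabla S_U = \nabla\,\tr\cS + \nabla\partial_t U$ one obtains the clean formula
\begin{equation}
[\partial_t,\Delta_U]v = 2\langle\cS,\Hess v\rangle + \langle 2\,\div_U\cS-\nabla S_U,\nabla v\rangle\,,
\end{equation}
which gives $(\partial_t-\Delta_U)\Delta_U v = [\partial_t,\Delta_U]v$ along the heat flow. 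The third piece $(\partial_t-\Delta_U)S_U$ is left as is.

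Finally I would assemble the three pieces and verify the identity by expanding $-2|\Hess v+\cS|^2 = -2|\Hess v|^2 -4\langle\cS,\Hess v\rangle - 2|\cS|^2$ and $-\cD_{U,\infty}(\nabla v)$ using the given definition. The $-2|\cS|^2$ cancels with the $-2|\cS|^2$ inside $-\cD_{U,\infty}$, the Hess-cross terms cancel the $-4\langle\cS,\Hess v\rangle$ coming from $[\partial_t,\Delta_U]\Delta_U v$, and the remaining terms line up. There is no real obstacle here; the only delicate point is bookkeeping the $U$-dependent corrections in $[\partial_t,\Delta_U]$ and confirming that they package exactly into $\div_U$ and $S_U$. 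Once the identity is proved, the equivalence of $\cD_{U,\infty}\geq 0$ with the inequality form \eqref{eq:BochnerWeighted} reduces to the same pointwise construction as in Proposition \ref{prop:DcondiffBochner}: at a fixed $(t,p,V)\in I\times TM$ choose initial datum $v_t$ for the heat flow with $\nabla v_t(p) = -V$ and $\Hess v_t(p) = -\cS_t(p)$ (possible via a quadratic in normal coordinates), which kills the square and isolates $\cD_{U,\infty}(V)$.
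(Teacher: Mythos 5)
Your computation is correct and follows the approach the paper (implicitly) intends: mirror the proof of Proposition~\ref{prop:DcondiffBochner} with $\Delta$, $S$, $\Ric$, $\div$ replaced by their $U$-weighted counterparts, using the weighted Bochner identity for $\Delta_U|\nabla v|^2$ and the commutator $[\partial_t,\Delta_U]v = 2\langle\cS,\Hess v\rangle + \langle 2\div_U\cS - \nabla S_U,\nabla v\rangle$, then completing the square. One small imprecision worth flagging: the extra terms $2\cS(\nabla U,\nabla v)+\langle\nabla\partial_t U,\nabla v\rangle$ you compute from differentiating $g^{ij}\partial_iU\partial_jv$ enter $[\partial_t,\Delta_U]$ with a \emph{minus} sign (since they come from the $-\langle\nabla U,\nabla\cdot\rangle$ piece), which is exactly what the identities $2\div_U\cS = 2\div\cS-2\cS(\nabla U)$ and $\nabla S_U=\nabla\tr\cS+\nabla\partial_tU$ absorb; your final commutator formula is correct, so this is only a matter of phrasing.
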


\begin{rmk} Let us take a moment to record some basic considerations of the condition $\cD_{U,\infty} \geq 0$:
\begin{itemize}
    \item If the space is static, we recover the notion of nonnegative weighted Ricci curvature, i.e. $\Ric_{U} \geq 0$.
    \item If $d\mu_t \equiv d\mu_0$, then $S_U\equiv0$ and choosing $X=0$, the $\cD_{U,\infty}$ condition implies $\mathcal{S}=0$, hence the space must be static.
    \item If $U_t\equiv U_0$, one can replace $S_U=\Tr\mathcal{S}$ in the $\cD_{U,\infty}$ condition.
    \item If $g_t\equiv g_0$, it becomes
    \begin{align}
        \Ric_U(X,X)-\nabla_X\partial_tU + \frac{1}{2}(\partial_t - \Delta_U)\partial_tU \geq 0\,.
    \end{align}
    In particular, it must have nonnegative weighted Ricci curvature at every time and $\partial_t U$ must be a supersolution of the weighted heat flow. 
    \item Consider a new flow defined by $g_t =: (\Phi_t^{V})^*\tilde{g}_t$, where $\Phi_t^V$ is the time-dependent family of diffeomorphisms generated by $V := -\nabla U_t$. The diffeomorphism invariance of $\Ric$ allows us to rewrite the homogeneity $2$ term of $\cD_{U,\infty}$ as $\Ric_U - \cS = (\Phi_t^V)^*\big(\Ric_{\tilde{g}_t} - \tilde{\cS}\big)$, with the functorial notation $\tilde{\cS} = -\frac{1}{2}\partial_t \tilde{g}_t$. We note however that this absorption trick does not hold for the homogeneity $0$ and $1$ terms, therefore the $\cD_{U,\infty}$ condition for $g$ need not imply the (unweighted) $\cD$ condition for $\tilde{g}$. This is in contrast with the weighted Ricci flow setting--see \cite[Sec.~8]{Lot09}
\end{itemize}
\end{rmk}

\begin{ex}\label{ex:wRF}
    The weighted Ricci flow $(M,g_t,e^{-U_t}dV_{g_t})$ considered by Lott in \cite[Sec.~8]{Lot09}:
    \begin{align}
        \begin{cases}
            \partial_tg_t=-2(\Ric+\Hess U_t) \\
            \partial_t U = \Delta U-|\nabla U|^2
        \end{cases}
    \end{align}
    satisfies $\cD_{U,\infty}\equiv 0$. 
\end{ex}
\begin{ex}\label{ex:gRF}    
    In fact, the previous one is a specific case (setting $H\equiv 0$) of the generalized Ricci flow gauge-changed with a dilation flow which we now recall, see \cite{Streets23, KopferStreetsOTGRF}. Consider ($(M, g_t, H_t, U_t)_{t \in I}$, where $H_t = \sum_{k = 1}^n H_{t,k} \in \Gamma(\bigwedge^*T^*M)$ is a (possibly inhomogeneous) linear combination of $k$-forms, solving
    \begin{align}
        \begin{cases}
            \partial_t g = -2\Ric_U + \frac{1}{2}H^2 \\
            \partial_t H = \Delta_{d} H - d(\iota_{\nabla U} H) \\
            \partial_tU = \Delta U - |\nabla U|^2 + \frac{1}{4}\sum_{k = 1}^n \frac{k - 1}{k}|H_k|^2\,.
        \end{cases}
    \end{align}
    Some computations lead to
    \begin{align}
        \cD_{U,\infty}(X) &= 2(\Ric_U - \cS)(X,X) + 2\La 2\div_U \cS - \nabla S_U,X\Ra + (\partial_t - \Delta_U)S_U - 2|\cS|^2 \\
        &= \frac{1}{2}H^2(X,X) + \La d^*H + \iota_{\nabla U}H, \iota_X H\Ra + \frac{1}{2} |d^*H + \iota_{\nabla U}H|^2 \\
        &= \frac{1}{2}|\iota_{X + \nabla U}H + d^*H|^2\,.
    \end{align}
    Therefore $(M,g_t,e^{-U_t}\,dV_{g_t})_{t\in I}$ satisfies the $\cD_{U,\infty}$ condition, and the properties from Theorem \ref{thm:DUinfty} hold ((2), (4), (5) already appeared in \cite{Streets23, KopferStreetsOTGRF}).
\end{ex}

Also as in the unweighted case, the $\cD_{U,\infty} \geq 0$ condition admits a familiar list of equivalent formulations. 

\begin{thm}\label{thm:DUinfty}
    For $(M,g_t,\mathfrak{m}_{t} = e^{-U_t}\,dV_t)_{t \in I}$ a smooth family of weighted Riemannian manifolds, the following conditions are equivalent:
    \begin{enumerate}
        \item $\cD_{U,\infty}\geq0$;    
        \item Bochner inequality: for every $v: M\to \R{}$ smooth, every $[s,t] \subseteq I$,
        \begin{align}
        (\partial_t-\Delta_{U,t})(|\nabla P_{t,s}^U v|^2_t - 2\Delta_{U,t} P_{t,s}^U v -S_{U,t}) \leq 0\,;
        \end{align}    
        \item Gradient estimate: For every $v: M \to \R{}$ smooth, every $[s, t] \subseteq I$,
        \begin{align}
               |\nabla P_{t,s}^Uv|_t^2 - 2\Delta_{U,t} P_{t,s}^Uv - S_{U,t}\leq P_{t,s}^U( |\nabla v|^2_s - 2\Delta_{U,s} v - S_{U,s})\,;
        \end{align}
        \item Wasserstein contraction: for every probability measure $\mu,\nu \in \cP(M)$, times $[s,t] \subseteq I$, and flow time $0 < h < \sup I - t$,
        \[W_{L_{U,0}^{s + h,t + h}}(\hat{P}_{s+h,s}^U\mu,\hat{P}_{t+h,t}^U\nu) \leq W_{L_{U,0}^{s,t}}(\mu,\nu).\]
        where $L_{U,0}^{a,b}$ is the cost induced by the Lagrangian $L_{U,0}(v,x,t):=(|v|_{g_t}^2+S_{U,t})(x)$ for $(v,x) \in TM$;
         \item Entropy convexity: for every $\mu_s,\mu_t\in P(M)$, $[s,t] \subseteq I$, there is a $W_{L_{U,0}^{s,t}}$-geodesic $(\mu_r)_{r\in[s,t]}$ (equivalently, for all such curves) s.t.
        \begin{align}
            r\mapsto \mathcal{E}(\mu_r\mid\mathfrak{m}_r) - \mathcal{E} (\mu_s\mid \mathfrak{m}_s) - W_{L_{U,0}^{s,r}}(\mu_s,\mu_r) \ \ \text{is convex,}
        \end{align}
        where $\cE(\mu\mid \mathfrak{m})$ is the relative entropy, i.e. $\cE(\mu\mid \mathfrak{m}) := \int_M \ln(d\mu/d\mathfrak{m})\,d\mu$ if $\mu \ll \mathfrak{m}$ and $+\infty$ otherwise;
        \item $\mathrm{EVI}$ characterization: for every $\mu,\nu \in \cP(M)$ and times $[s,t]\subseteq I$, the heat flow $\hat P_{t,s}^U\mu$ satisfies the $\mathrm{EVI}$s:
        \begin{align}
            -\partial_{a^-}^- W_{L_{U,0}^{s,a}}(\mu,\hat{P}_{t,a}^U\nu) & \leq \frac{1}{a-s}\left(\cE(\mu\mid\mathfrak m_s) - \cE(\hat{P}_{t,a}^U\nu\mid \mathfrak m_a) + W_{L_{U,0}^{s,a}}(\mu, \hat{P}_{t,a}^U\nu)\right) \,, \qquad \forall a \in (s, t]\,.\\
            -\partial_{b^-}^-W_{L_{U,0}^{b,t}}( \hat{P}_{s,b}^U\mu, \nu) & \leq \frac{1}{t - b}\left(\cE(\nu\mid\mathfrak m_t) - \cE(\hat{P}_{s,b}^U\mu \mid \mathfrak m_b) - W_{L_{U,0}^{b, t}}( \hat{P}_{s,b}^U\mu, \nu)\right) \,, \qquad\forall b \in (\inf I,s]\,.
        \end{align}
    \end{enumerate}
\end{thm}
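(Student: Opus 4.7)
The plan is to mirror Theorem~\ref{thm:L0charact} step by step, replacing throughout $\Delta$ by $\Delta_U$, $dV_t$ by $\mathfrak{m}_t$, the semigroups $(P_{t,s}, \hat P_{t,s})$ by $(P^U_{t,s}, \hat P^U_{t,s})$, the scalar $S_t$ by $S_{U,t}$, the Lagrangian $L_0$ by $L_{U,0}$, and the Boltzmann entropy $\cE(\cdot)$ by the relative entropy $\cE(\cdot \mid \mathfrak{m}_t)$. The equivalence $\cD_{U,\infty}\geq 0 \Leftrightarrow \textnormal{(Bochner)}$ is immediate from the Bochner identity of Proposition~\ref{prop:weighted-BE-equiv}: at $(t_0,p)$ with a chosen $X \in T_pM$, pick $v$ quadratic near $p$ with $\nabla v(p) = -X$ and $\Hess v(p) = -\cS_{t_0,p}$ to kill the squared error and isolate $-\cD_{U,\infty}(X)$, exactly as in Proposition~\ref{prop:DcondiffBochner}. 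The equivalence $\textnormal{(Bochner)} \Leftrightarrow \textnormal{(GE)}$ follows by applying the semigroup-integration argument of Proposition~\ref{prop:L0gradestimate} to $\phi_r := |\nabla P^U_{r,s}v|^2 - 2\Delta_{U,r} P^U_{r,s}v - S_{U,r}$.

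For $\textnormal{(GE)} \Leftrightarrow \textnormal{(WC)}$, I plan to reuse the Kuwada-style duality of Subsection~\ref{ss:GEWCdualityL0} after two observations. First, $L_{U,0}$ is centered fiber-quadratic with the same velocity-dependent part as $L_0$, so its Hopf--Lax semigroup $Q^{s,t}$ satisfies $\partial_t Q^{s,t}\phi = -\tfrac{1}{2}|\nabla Q^{s,t}\phi|^2 + \tfrac{1}{2}S_{U,t}$ and the dynamic lifting of Lemma~\ref{lem:dynamiclifting} applies verbatim. Second, the weighted integration-by-parts identity $\int_M f\,\Delta_U g\,d\mathfrak{m}_t = -\int_M \La\nabla f,\nabla g\Ra_t\,d\mathfrak{m}_t$ and the duality $\int_M P^U_{t,s}\phi\,d\mu = \int_M \phi\,d\hat P^U_{t,s}\mu$ substitute for their unweighted counterparts in the Hamilton--Jacobi preservation argument of Lemma~\ref{lem:heat-flow-preserves-HJ} and in the velocity-of-$\mu$ lifting of Proposition~\ref{prop:L0-contraction-to-GE}. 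With these substitutions the proofs go through line by line.

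For the $\textnormal{(EC)}$ and $\textnormal{(EVI)}$ equivalences the same strategy applies to Subsection~\ref{ss:entropyEVIL0}. Writing $\mu_r = \tilde\rho_r\,\mathfrak{m}_r$, the identities $\partial_r\mathfrak{m}_r = -S_{U,r}\mathfrak{m}_r$ and $\partial_r\tilde\rho_r = -\Delta_{U,r}\tilde\rho_r + S_{U,r}\tilde\rho_r$ along the adjoint weighted heat flow yield
\begin{equation}
\partial_r \cE(\mu_r\mid\mathfrak{m}_r) = -\int_M \Delta_{U,r}(\ln\tilde\rho_r)\,d\mu_r + \int_M S_{U,r}\,d\mu_r\,,
\end{equation}
which plays precisely the structural role of $\partial_r \cE(\mu_r)$ in Lemma~\ref{lem:L0-action-entropy-est}. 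The one-sided derivative estimate of Lemma~\ref{lem:Top-1-sided-est} for the Kantorovich potentials also adapts, since the semiconcavity bounds it relies on depend only on fiber-quadraticity of the cost. The main obstacle I anticipate is purely bookkeeping: one must verify that the $S_{U,r}$ terms produced by differentiating $\mathfrak{m}_r$, the extra drift $-\La\nabla U,\nabla\cdot\Ra$ hidden in $\Delta_U$, and the $\Hess U$ absorbed into $\Ric_U$ inside $\cD_{U,\infty}$ all reassemble (without leaving uncancelled boundary contributions) into the weighted $\textnormal{(GE)}$ after the perfect-square identity between $\nabla\phi_r$ and $\nabla\ln\tilde\rho_r$ is applied. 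Beyond this accounting no genuinely new analytic difficulty enters, and Propositions~\ref{prop:L0-cvxty-implies-contr/EVI},~\ref{prop:L0GE-to-EVI},~\ref{prop:L0-entropy-cvx} then transfer directly to yield the remaining implications.
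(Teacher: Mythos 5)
Your proposal is correct and takes the same route the paper intends: the paper in fact omits the proof, stating only that with Proposition \ref{prop:weighted-BE-equiv} in hand it is, up to $U$ sub/superscripts, identical to the unweighted $L_0$ version Theorem \ref{thm:L0charact}. The bookkeeping you flag—weighted integration by parts, the weighted adjoint heat equation $\partial_r\tilde\rho_r = -\Delta_{U,r}\tilde\rho_r + S_{U,r}\tilde\rho_r$ for the relative density, the relative-entropy derivative formula, and the identical fiberwise quadratic velocity dependence of $L_{U,0}$ and $L_0$—is exactly what makes the substitution go through, and it all checks out.
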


With Proposition \ref{prop:weighted-BE-equiv} in hand, the proof of the above is, up to $U$ sub/superscripts, identical to that of the unweighted $L_0$ version Theorem \ref{thm:L0charact}. We therefore omit it.

\subsection{The dimensional case \texorpdfstring{$\cD_{U,N} \geq 0$}{DUN0}.}\label{subsec:N-weighted}

Here consider a version of Subsection \ref{subsec:infinity-weighted} that incorporates a dimensional parameter $N \in [m, \infty]$ (recall $m:= \dim M$). We first show the equivalence of a dimensional variant of the $\cD_{U,\infty}$ condition with a Bochner inequality for the (forward) $\Delta_U$ heat equation with dimensional improvement, which will be the analogue of \eqref{eq:BochnerdimL0-equiv-of-Bochners}, but now for weighted spaces. 

\begin{prop}\label{prop:weighted-n-BE-equiv}
The following conditions are equivalent:
\begin{enumerate}
\item For any $X \in T_pM$, it holds
\begin{equation}
\cD_{U,N}(X) := \cD_{U,\infty}(X) - \frac{2}{N - m}(\La\nabla U ,X\Ra - \partial_t U)^2 \geq 0\,.
\end{equation}
\item For any times $[s,t] \subseteq I$, one has the Bochner inequality
\begin{equation}
(\partial_t - \Delta_U)(|\nabla P_{t,s}^U v|^2 - 2\Delta_U P_{t,s}^U v - S_U) \leq - \frac{2}{N} (S_U + \Delta_U P_{t,s}^U v)^2\,.
\end{equation}
\end{enumerate}
\end{prop}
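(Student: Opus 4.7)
My plan is to reduce the dimensional weighted Bochner inequality to the dimensionless one (established in Proposition \ref{prop:weighted-BE-equiv}) via a two-parameter Cauchy--Schwarz splitting, in the spirit of the proof of Proposition \ref{prop:updated-dim-Bochner-equiv}. The starting point is the identity from Proposition \ref{prop:weighted-BE-equiv}, rewritten here for a heat flow solution $v$:
\begin{equation}
(\partial_t - \Delta_U)(|\nabla v|^2 - 2\Delta_U v - S_U) = -2|\Hess v + \mathcal{S}|^2 - \cD_{U,\infty}(\nabla v)\,.
\end{equation}
The key algebraic observation is that the term appearing in the dimensional improvement splits naturally as
\begin{equation}
S_U + \Delta_U v = \tr_g(\Hess v + \mathcal{S}) + \big(\partial_t U - \La \nabla U, \nabla v\Ra\big)\,,
\end{equation}
since $S_U = \tr_g \mathcal{S} + \partial_t U$ and $\Delta_U v = \Delta v - \La \nabla U, \nabla v\Ra$. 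Denote the two addenda by $a := \tr_g(\Hess v + \mathcal{S})$ and $b := \partial_t U - \La \nabla U, \nabla v\Ra$.

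For the implication (1) $\Rightarrow$ (2), I would combine the matrix Cauchy--Schwarz $|\Hess v + \mathcal{S}|^2 \geq a^2/m$ with the weighted real Cauchy--Schwarz $(a+b)^2 \leq (N/m)a^2 + (N/(N-m))b^2$ (valid for $m < N$; the cases $N = m$ and $N = \infty$ follow either by inspection or by a limiting argument). Multiplying through by $2/N$ yields
\begin{equation}
\frac{2}{N}(S_U + \Delta_U v)^2 \leq \frac{2}{m}a^2 + \frac{2}{N-m}b^2 \leq 2|\Hess v + \mathcal{S}|^2 + \cD_{U,\infty}(\nabla v)\,,
\end{equation}
where in the last step I used the assumption $\cD_{U,N}(\nabla v) \geq 0$, noting that $b^2 = (\La \nabla U, \nabla v\Ra - \partial_t U)^2$. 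Comparing with the Bochner identity gives the dimensional Bochner inequality.

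For the converse (2) $\Rightarrow$ (1), I would fix $(t_0, p) \in I\times M$, $X \in T_pM$, and a free parameter $\lambda \in \R{}$, then choose smooth initial data $v$ (prescribed in local coordinates as in the proof of Proposition \ref{prop:DcondiffBochner}) such that at $(t_0,p)$ one has $\nabla v = X$ and $\Hess v + \mathcal{S} = \lambda g$, so that $a = m\lambda$ and $|\Hess v + \mathcal{S}|^2 = m\lambda^2$ at that point. Evaluating the assumed Bochner inequality together with the Bochner identity at $(t_0,p)$ produces
\begin{equation}
\cD_{U,\infty}(X) + 2m\lambda^2 \geq \frac{2}{N}(m\lambda + b)^2\,, \qquad \forall\lambda \in \R{}\,.
\end{equation}
Optimizing the right-hand side minus the left-hand side over $\lambda$ (the extremum occurs at $\lambda = b/(N-m)$) gives exactly $\cD_{U,\infty}(X) \geq 2b^2/(N-m)$, i.e. $\cD_{U,N}(X) \geq 0$. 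The main subtleties are just checking the Cauchy--Schwarz coefficients, handling the boundary case $N = m$ (where the only consistent possibility is $b \equiv 0$, recovered by sending $\lambda \to \pm\infty$), and confirming that prescribing $\nabla v$ and $\Hess v$ at a point is possible as in the unweighted setting.
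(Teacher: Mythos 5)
Your proof is correct and takes essentially the same approach as the paper: the forward direction uses exactly the paper's Peter--Paul inequality (your weighted Cauchy--Schwarz $(a+b)^2 \leq (N/m)a^2 + (N/(N-m))b^2$ is the same inequality regrouped), and in the backward direction your optimal $\lambda = b/(N-m)$ coincides with the paper's explicit prescription $(\Hess v)_p + \cS_p = \frac{\partial_t U - \La\nabla U, X\Ra}{N-m}g$. Introducing the free parameter $\lambda$ and optimizing is a small pedagogical improvement that explains where the paper's Hessian choice comes from, but the underlying argument is identical.
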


\begin{proof}
$(\underline{\implies})$: Letting $v_t=P_{t,s}^U v$, by Proposition \ref{prop:weighted-BE-equiv} and the assumption, we have the Bochner formula, then inequality
\begin{align}
(\partial_t - \Delta_U)(|\nabla v_t|^2 - 2\Delta_U  v_t - S_U) &= -2|\Hess v_t + \cS|^2 - \cD_{U,\infty}(\nabla v_t)\\
&\leq -2|\Hess v_t + \cS|^2  - \frac{2}{N - m}(\La\nabla U , \nabla v_t\Ra - \partial_t U)^2\,.
\end{align}
We estimate the desired RHS applying the Peter-Paul inequality
\begin{align}
\frac{2}{N}(S_U + \Delta_U v_t)^2 &= \frac{2}{N}(\Tr \cS + \partial_t U + \Delta v_t - \La \nabla U,\nabla v_t\Ra)^2\\
&= \frac{2}{N}\left[(\Tr \cS + \Delta v_t)^2 - 2(\Tr \cS + \Delta v_t)(\La \nabla U,\nabla v_t\Ra - \partial_t U) + (\La \nabla U,\nabla v_t\Ra - \partial_t U)^2\right] \\
&\leq \frac{2}{N}\left[(\Tr \cS + \Delta v_t)^2 + \frac{N - m}{m}(\Tr \cS + \Delta v_t)^2 + \frac{m}{N - m}(\La \nabla U,\nabla v_t\Ra - \partial_t U)^2 + (\La \nabla U,\nabla v_t\Ra - \partial_t U)^2\right] \\
&= \frac{2}{N}\left[\frac{N}{m}(\Tr \cS + \Delta v_t)^2 + \frac{N}{N - m}(\La \nabla U,\nabla v_t\Ra - \partial_t U)^2\right] \\
&= \frac{2}{m}(\Tr \cS + \Delta v_t)^2 + \frac{2}{N - m}(\La \nabla U,\nabla v_t\Ra - \partial_t U)^2 \\
&\leq 2|\cS + \Hess v_t|^2 + \frac{2}{N - m}(\La \nabla U,\nabla v_t\Ra - \partial_t U)^2
\end{align}
This, appended to the previous intermediate Bochner inequality, gives the desired result. \vspace{0.5cm}

$(\underline{\impliedby})$: For the point $(p,t) \in M\times I$ and a tangent vector $X \in T_pM$ of interest, select a function $v: M \to \R{}$ such that 
\begin{align}
(\Hess v)_p +\cS_p = \frac{ \partial_t U(p)-\La (\nabla U)_p, X\Ra}{N - m}g\,, \qquad(\nabla u)_p = X \,.
\end{align}
The assumption then implies

\begin{align}
-(\cD_{U,\infty})_p(\nabla v) - 2|\Hess v + \cS|^2(p) &=  \left.(\partial_t - \Delta_U)\right|_{t = s}(|\nabla P_{t,s}^Uv|^2 - 2\Delta_U P_{t,s}v - S_U)(p) \\
&\leq - \frac{2}{N} (S_U + \Delta_U v)^2(p) \\
&= -\frac{2}{N}(\Tr(\cS +  \Hess v) + \partial_t U - \La\nabla U,\nabla v\Ra )^2(p)\,.
\end{align}
Now rearranging and then remembering our choice of $v$, we obtain
\begin{align}
-(\cD_{U,\infty})_p(X) &\leq  2|\Hess v + \cS|^2(p) -\frac{2}{N}(\Tr(\cS + \Hess v) + \partial_t U - \La\nabla U,\nabla v\Ra )^2(p) \\
&= \left(2\frac{m}{(N - m)^2} - \frac{2}{N}\left(\frac{m}{N -m} + 1\right)^2\right)\left(\La (\nabla U)_p, X\Ra - \partial_t U(p)\right)^2 \\
&= -\frac{2}{N - m}(\La (\nabla U)_p, X\Ra - \partial_t U(p))^2\,.
\end{align}
\end{proof}

\begin{rmk}\label{rmk:weighted-warped-prod-equiv}
In the static setting the $(n+k)$ weighted Ricci tensor can be interpreted in terms of a warped product with a given $k$ manifold \cite{LottBakryEmery}, and a similar approach was used to define the $N$-weighted Ricci flow in \cite{Lot09}. Here we study $\cD_{U,N}$ from this point of view. Fix a compact time-independent $k$-dimensional ``fiber'' manifold $(F^{k}, h)$, and consider the time-dependent warped product
\begin{equation}
(P, \tilde{g}_t) := (M,g_t) \times_{f_t} (F,h) = (M\times F, g_t + f_t^2\cdot h)\,,\qquad f_t := e^{-\frac{U_t}{k}}\,.  
\end{equation}
One can see that there is a bijection between $F$-invariant solutions to the heat equation on $(P,\tilde{g}_t)_{t \in I}$ and solutions to the weighted heat flow $(\partial_t - \Delta_U) u = 0$ on $(M, g_t, d\mu_t)_{t \in I}$, and in particular
    \begin{equation}
        \cD_{\tilde{g}}(X) =\cD_{g,U,m + k}(X) \qquad \text{for $X\in T_pM$}
    \end{equation} 
i.e. for horizontal tangent vectors $X \in T_pM \leq T_p M + T_q F \cong T_{(p,q)}P$. Indeed the $N$-dimensional $L_0$ Bochner inequality of Proposition \ref{prop:updated-dim-Bochner-equiv} for $F$-invariant functions on $(P,\tilde{g}_t)$ corresponds to the $(U,N)$-Bochner inequality of Proposition \ref{prop:weighted-n-BE-equiv} for $(M, g_t, d\mu_t)$.

Moreover, we have 
    \begin{equation}
        \cD_{\tilde{g}}(Z) \geq 0 \ \ \ \text{for all $Z \in T_{(p,q)}P$} \ \ \ \iff \ \ \ \begin{cases}
            \cD_{\tilde{g}}(X)=\cD_{g,U,m + k}(X)\geq0 \ \ \ &\text{for all $X\in T_pM$} \\
            \cD_{\tilde{g}}(Y) \geq 0 &\text{for all $Y\in T_qF$} \,.
        \end{cases}
    \end{equation} 
This follows from the fact that $\Ric_{\tilde{g}} - \cS_{\tilde{g}}$ splits orthogonally in the fiber and base directions and that $2\div_{\tilde g}(\cS_{\tilde g}) - \nabla^{\tilde g} S_{\tilde g}$ is everywhere tangent to $M$ (in fact this is true for both terms individually). Finally we compute

\begin{align}
\cD_{\tilde{g}}(Y) &= 2(\Ric_{\tilde{g}} - \cS_{\tilde{g}})(Y, Y) + (\partial_t S_{\tilde g} - \Delta_{\tilde g} S_{\tilde g} - 2|\cS_{\tilde g}|^2)\\
&= 2\left(\frac{\Ric_h(Y,Y)}{e^{-2U/k}} - \frac{1}{k}\left((\partial_t - \Delta_g) U + |\nabla^g U|^2\right)|Y|^2_{\tilde{g}}\right) \\
&\qquad + \left((\partial_t - \Delta_U)(\Tr_g\cS_g + \partial_t U) - 2|\cS_g|_g^2 - \frac{2}{k}|\partial_t U|^2\right)\,, \qquad Y\in T_qF\,.
\end{align}
Since there is no homogeneity $1$ term, $\cD_{\tilde{g}}(Y) \geq 0$ is seen to be equivalent to the non-negativity of the final two lines above. The non-negativity of the second term is already required for the $\cD_{\tilde{g}}$ condition in the horizontal direction, while the non-negativity of the first term requires $U$ to be a subsolution to a semilinear parabolic PDE on $(M,g_t)_{t \in I}$. The latter is guaranteed for example if $\Ric_h \geq \lambda > 0$, by replacing $U$ with $U + C$ for some constant $C = C(\lambda,k, \|U\|_{C^{2,1}}) \gg 1$.
\end{rmk}

\begin{ex}\label{ex:NwRF}
    The $N$-weighted Ricci flow $(M,g_t,e^{-U_t}dV_{g_t})$ considered by Lott in \cite[Sec.~8]{Lot09}:
    \begin{align}
        \begin{cases}
            \partial_tg_t=-2(\Ric+\Hess U_t - \frac{1}{N-n}dU\otimes dU) \\
            \partial_t U = \Delta U-|\nabla U|^2 
        \end{cases}
    \end{align}
    satisfies $\cD_{U,N}\equiv 0$.
\end{ex}

The Bochner inequality of Proposition \ref{prop:weighted-n-BE-equiv} is the same (up to $U$ super/subscript notation) as the dimensional $L_0$ Bochner inequality \eqref{eq:BochnerdimL0}. Correspondingly, all statements equivalent to \eqref{eq:BochnerdimL0} by Theorem \ref{thm:dimL0charact} have weighted analogs that are equivalent to the conditions in Proposition \ref{prop:weighted-n-BE-equiv}. We write down this list of equivalent statements but omit the proof since it may be obtained directly from the proof of Theorem \ref{thm:dimL0charact}, which essentially spans multiple sections of this paper, by simply adding $U$ super/subscripts where appropriate. We also simply reference the relevant unweighted conditions and indicate the required modifications instead of rewriting all the conditions with the weighted quantities substituted, since the resulting statement would span multiple pages. 

\begin{thm}
Let $(M,g_t,\mathfrak m_t = e^{-U_t}\,dV_t)_{t\in I}$ be a smooth time-dependent weighted Riemannian manifold, and $N \in [\dim M, \infty]$ a real number. Fix a backward time variable $\tau := T - t \in T - I =: \tilde I$. Then the following conditions are equivalent.

\begin{enumerate}
\item $\cD_{U,N} \geq 0$.
\item Bochner inequality: The Bochner inequality $\eqref{eq:BochnerdimL0}$ holds, or equivalently the Bochner inequalities $\eqref{eq:BochnerL-}$ and $\eqref{eq:BochnerL+}$ hold on $(M,g_{\tau - T_0})_{\tau \in (\tilde I + T_0) \cap (0,\infty)}$ resp. $(M,g_{t - T_0})_{t \in (I + T_0) \cap (0,\infty)}$ for all shifts $T_0 \in \R{}$. 
\item Gradient estimate: The gradient estimate \eqref{eq:GEdimL0} holds, or equivalently the gradient estimates \eqref{eq:GEL-} and \eqref{eq:GEL+} hold on $(M,g_{\tau - T_0})_{\tau \in (\tilde I + T_0) \cap (0,\infty)}$ resp. $(M,g_{t - T_0})_{t \in (I + T_0) \cap (0,\infty)}$ for all shifts $T_0 \in \R{}$. 
\item Wasserstein contraction:  The Wasserstein contraction \eqref{eq:WCL0} holds, or equivalently the Wasserstein contractions \eqref{eq:WCL-} and \eqref{eq:WCL+} hold on $(M,g_{\tau - T_0})_{\tau \in (\tilde I + T_0) \cap (0,\infty)}$ resp. $(M,g_{t - T_0})_{t \in (I + T_0) \cap (0,\infty)}$ for all shifts $T_0 \in \R{}$. 
\item Entropy Convexity: The convexity of entropies \eqref{eq:ECL-} and \eqref{eq:ECL+} hold on $(M,g_{\tau - T_0})_{\tau \in (\tilde I + T_0) \cap (0,\infty)}$ resp. $(M,g_{t - T_0})_{t \in (I + T_0) \cap (0,\infty)}$ for all shifts $T_0 \in \R{}$. 
\item $\mathrm{EVI}$ characterization: The (pairs of) $\mathrm{EVI}$s \eqref{eq:EVIL-} and \eqref{eq:EVIL+} hold on $(M,g_{\tau - T_0})_{\tau \in (\tilde I + T_0) \cap (0,\infty)}$ resp. $(M,g_{t - T_0})_{t \in (I + T_0) \cap (0,\infty)}$ for all shifts $T_0 \in \R{}$.
\end{enumerate}

In all conditions referenced above, one must interpret $n := N$, $S := S_U$, $\Delta := \Delta_U$, $P := P^U$, $\hat P := \hat P^U$, $W_{L_0} := W_{L_{U,0}}$, $\cE_\tau(\mu) := \cE(\mu\mid \mathfrak{m}_\tau)$, $W_{L_-} := W_{L_{U,-}}$ with $L_{U,-}$ the cost induced by the Lagrangian $L_{U,-}(v,x,\tau) := \sqrt{\tau}(|v|^2_{g_\tau} + S_{U,\tau})(x)$, and $W_{L_+} := W_{L_{U,+}}$ with $L_{U,+}$ the cost induced by the Lagrangian $L_{U,+}(v,x,t) := \sqrt{t}(|v|^2_{g_t} + S_{U,t})(x)$.
\end{thm}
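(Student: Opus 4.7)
The plan is to transpose the proof of Theorem \ref{thm:dimL0charact} to the weighted setting via the systematic substitution $\Delta \to \Delta_U$, $S \to S_U$, $P_{t,s}\to P^U_{t,s}$, $dV_t \to \mathfrak{m}_t$, $\cE(\cdot) \to \cE(\cdot\mid\mathfrak{m}_\cdot)$, and $L_{0/\pm} \to L_{U,0/\pm}$. The starting equivalence $\cD_{U,N}\geq 0 \iff$ weighted $L_0$ $N$-Bochner is already furnished by Proposition \ref{prop:weighted-n-BE-equiv}, so I would begin from there. First I would repeat the algebraic manipulations of $(-\partial_\tau - \Delta)$ on quadratic expressions in $v$ used in the proof of Proposition \ref{prop:updated-dim-Bochner-equiv}; these are insensitive to whether $\Delta$ is weighted, so they yield the equivalence of the weighted $L_0$ $N$-Bochner with the shifted weighted $L_-$, $L_+$ $N$-Bochners for all $T_0 \in \R{}$, which handles condition (2).

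Next I would deduce the Bochner $\iff$ GE equivalence for each of $L_0$, shifted $L_-$, shifted $L_+$ via the semigroup identity $\partial_r P^U_{t,r}[\phi_r] = P^U_{t,r}[(\partial_r - \Delta_U)\phi_r]$, mirroring Propositions \ref{prop:L0gradestimate} and \ref{prop:L-GradEst}. For the GE $\iff$ WC equivalence, the Kuwada-type argument of Propositions \ref{prop:GEtoWCL0}, \ref{prop:WL-contraction}, \ref{prop:WL0dim-contraction} would need two sanity checks in the weighted setting: the Hopf--Lax semigroup for the weighted Lagrangian $L_{U,0}$ solves the Hamilton--Jacobi equation with Hamiltonian built from $|\nabla\phi|^2$ and $S_U$ (automatic from the Legendre-transform structure, as the weight plays no role in the fiberwise duality), and the Lisini-type dynamical lifting of Lemma \ref{lem:dynamiclifting} continues to hold for the weighted conjugate heat flow of measures $\hat{P}^U\mu$, with velocity $v_t = \nabla\log \tilde\rho_t$ where $\tilde\rho_t := d\mu_t/d\mathfrak{m}_t$ solves $\partial_t\tilde\rho_t = -\Delta_U\tilde\rho_t + S_U\tilde\rho_t$.

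For the EC and $\mathrm{EVI}$ implications I would follow the approach of Subsections \ref{ss:entropyEVIL0}, \ref{ss:entropyEVIL-}, \ref{ss:nL0EC/EVI}. The core computation is the action-entropy estimate (Lemmas \ref{lem:L0-action-entropy-est}, \ref{lem:L-minus-action-entropy-est}, \ref{lem:dim-L0-action-entropy-est}), whose driving pointwise identity is a perfect-square completion that is insensitive to the weight. The crucial bookkeeping step is the expansion of $\partial_r \cE(\nu_{r,h}\mid\mathfrak{m}_r)$: writing $\nu_{r,h} = \tilde\rho_{r,h}\mathfrak{m}_r$ and using $\partial_r\mathfrak{m}_r = -S_U\mathfrak{m}_r$ together with the weighted conjugate heat equation for $\tilde\rho$, one finds
\begin{equation}
\partial_t\cE(\mu_t\mid\mathfrak{m}_t) = \int_M |\nabla\log\tilde\rho_t|^2\,d\mu_t + \int_M S_U\,d\mu_t\,,
\end{equation}
which is the weighted analog of the Fisher-plus-scalar identity used in the unweighted proof. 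The hard part will be carefully threading the weighted integration-by-parts identities through the perfect-square bookkeeping, relying on self-adjointness of $\Delta_U$ in $L^2(\mathfrak{m}_t)$, and confirming that the Kantorovich potentials for $L_{U,0/\pm}$ retain the regularity afforded by Corollary \ref{cor:dynamic-potentials} (the weighted Lagrangians remain smooth and fiber-quadratic, so this is automatic).

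As a sanity check, when $N - \dim M =: k$ is a positive integer I would also verify consistency via Remark \ref{rmk:weighted-warped-prod-equiv}: choosing a compact $(F^k,h)$ with $\Ric_h \geq \lambda > 0$ sufficiently large, forming the warped product $(P,\tilde g_t) = (M \times F, g_t + e^{-2U_t/k}h)$, and applying the unweighted Theorem \ref{thm:dimL0charact} on $(P,\tilde g_t)$, each weighted statement emerges as the $F$-invariant case of the corresponding unweighted one, using the identity $\cD_{\tilde g}(X) = \cD_{g,U,\dim P}(X)$ on horizontal tangent vectors. This shortcut however requires integer codimension and an auxiliary fiber geometry, so I would still favor the direct substitution approach for the main proof because it covers all $N \in [\dim M,\infty]$ uniformly.
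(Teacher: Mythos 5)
Your proposal is correct and follows the same approach as the paper, which explicitly states that the proof is obtained from that of Theorem \ref{thm:dimL0charact} by systematic substitution ($\Delta \to \Delta_U$, $S\to S_U$, $P \to P^U$, $dV_t \to \mathfrak m_t$, $\cE\to\cE(\cdot\mid\mathfrak m_\cdot)$, $L_{0/\pm}\to L_{U,0/\pm}$), with the base equivalence supplied by Proposition \ref{prop:weighted-n-BE-equiv}. You correctly identify the two non-trivial sanity checks that justify this substitution — the weighted entropy-derivative identity $\partial_t\cE(\mu_t\mid\mathfrak m_t) = \int|\nabla\ln\tilde\rho_t|^2\,d\mu_t + \int S_U\,d\mu_t$ and the validity of the dynamic-plan machinery for the weighted Lagrangians — and the rest of your outline tracks the paper's logical chain through Propositions \ref{prop:updated-dim-Bochner-equiv}, \ref{prop:WL0dim-contraction}, and the entropy/EVI subsections.
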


\begin{appendices}
\section{Properties of \texorpdfstring{$L^{s,t}$}{Lst}}\label{Appe:Lagrangian}

In this appendix, we always assume that we are working on a smooth, closed manifold $M$ equipped with a time dependent Lagrangian $L: TM \times [s,t]\to \R{}$ satisfying the assumptions of Section \ref{subsec:LagrangianProperties}.

\begin{lem}\label{lem:Eofgeod}
    Given $(x,s)$, $(y,t)$ there always exists (at least) a $C^1$ length minimizing curve $\gamma$ with $\cL(\gamma)=L^{s,t}(x,y)$.
    \end{lem}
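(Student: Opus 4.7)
The plan is to carry out the direct method of the calculus of variations, then appeal to a Tonelli-type regularity argument. First I would take a minimizing sequence $\gamma_n:[s,t]\to M$ of absolutely continuous curves from $x$ to $y$ with $\mathcal{L}(\gamma_n) \to L^{s,t}(x,y)$. The key input is the uniform superlinearity assumption $L(v,p,r)\geq\theta(|v|)-c_0$, which, together with the uniform bound on $\mathcal{L}(\gamma_n)$, gives that the family $\{|\dot{\gamma}_n|\}$ is equi-integrable on $[s,t]$ (de la Vallée Poussin via $\theta$). Combined with the bound on $\int_s^t |\dot{\gamma}_n|\,dr$ this yields equicontinuity of $\{\gamma_n\}$, so by Arzelà--Ascoli (using compactness of $M$) a subsequence converges uniformly to a curve $\gamma\in C^0([s,t],M)$ with $\gamma(s)=x$, $\gamma(t)=y$.

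Next I would upgrade this to a weak statement on the derivatives. Working in finitely many coordinate charts (or, more cleanly, embedding $M$ isometrically into some $\R{N}$ via Nash and using that the $g_r$-norm is comparable to the Euclidean one on each chart), Dunford--Pettis promotes equi-integrability to weak $L^1$ compactness, and by a diagonal argument one can arrange $\dot{\gamma}_n \weak \dot{\gamma}$ in $L^1([s,t];\R{N})$ along a subsequence, with $\gamma$ absolutely continuous. Then lower semicontinuity of $\mathcal{L}$ under this convergence follows from the convexity of $L$ in $v$ (Ioffe's theorem on integrands of the form $L(v,p,r)$ continuous in $(p,r)$ and convex in $v$; Mazur's lemma gives the same conclusion by hand). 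Thus
\begin{equation}
\mathcal{L}(\gamma) \leq \liminf_n \mathcal{L}(\gamma_n) = L^{s,t}(x,y),
\end{equation}
so $\gamma$ is a minimizer in the class of AC curves; since AC competitors can be approximated by smooth ones, this equals the infimum of $\mathcal{L}$ over smooth curves.

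For the $C^1$-regularity of the minimizer I would invoke the Tonelli-type regularity theorem \cite[Thm.~6.2.5]{CanSin04}. The strict convexity of $L$ in $v$, the $C^1$ hypothesis on $L$, and the superlinearity assumption are precisely what is needed. Concretely, the Legendre transform $v\mapsto d_vL(v,p,r)$ is a homeomorphism $TM\to T^*M$ (fiberwise), a minimizer satisfies the Euler--Lagrange equation in the Du~Bois-Reymond form, and this, coupled with the inversion of the Legendre transform, forces $\dot{\gamma}$ to be continuous on the full interval $[s,t]$. The delicate point here, which is the main obstacle, is ruling out the classical Lavrentiev-type phenomenon where a minimizing AC curve could a priori have a non-continuous velocity on a negligible set; this is exactly what Tonelli's regularity closes off under our hypotheses.

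The only genuine manifold-specific issue is that ``weak $L^1$ convergence of $\dot{\gamma}_n$'' is not intrinsic to $TM$; I would either use the Nash embedding as above, or localize via charts and patch with a partition of unity for the uniform limit $\gamma$ (possible because $\gamma$ lives in a compact subset of finitely many charts by uniform convergence). Neither trick interacts with the convexity or superlinearity hypotheses, so the lower semicontinuity and regularity steps go through unchanged.
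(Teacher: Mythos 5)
Your proof is correct and follows essentially the same route as the paper: the direct method of the calculus of variations for existence (the paper simply cites \cite[Thm.~6.1.2]{CanSin04} where you spell out the equi-integrability/Arzel\`a--Ascoli/Dunford--Pettis/Ioffe steps), followed by the Tonelli-type regularity theorem \cite[Thm.~6.2.5]{CanSin04} for $C^1$ regularity, which is exactly the reference the paper invokes.
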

\begin{proof}
    The existence of a minimizer can be proved by the direct method, see \cite[Thm.~6.1.2]{CanSin04}. A refined argument implies that $\dot\gamma_t$ is bounded and that $\gamma_t\in C^1$, \cite[Thm.~6.2.5]{CanSin04}.
\end{proof}

\begin{rmk}
    Note that in the case $L=L_m$, $m=0,-,+$, $L_m\in C^\infty(TM\times I)$, $\nabla_{v,v}L_m>0$, hence by \cite[Thm.~6.2.8]{CanSin04}, any minimizer is in fact smooth.
\end{rmk}

\begin{lem}\label{lem:costfromlagrangian}
    As $t\to s$, $L^{s,t}(x,\cdot)$ converges to $+\infty$ uniformly on any $B_r(x)^c$, $r>0$, and $L^{s,t}(x,x)$ converges to $0$.
\end{lem}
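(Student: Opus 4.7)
The plan is to treat the two assertions separately, exploiting only the superlinearity bound $L(v,p,r) \geq \theta(|v|) - c_0$ together with the compactness of $M$ and continuity of $L$.

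For the convergence $L^{s,t}(x,x) \to 0$, I would get matching upper and lower bounds. The upper bound comes from testing against the constant curve $\gamma \equiv x$: by continuity of $L$ and compactness, $\sup_{x\in M,r\in[s_0,t_0]} |L(0,x,r)| \leq C$ for some fixed large interval $[s_0,t_0] \supseteq [s,t]$, so $L^{s,t}(x,x) \leq \int_s^t L(0,x,r)\,dr \leq C(t - s)$. The lower bound is uniform: since $\theta: \R^+ \to \R$ is continuous (at least bounded below on any compact subinterval) and $\theta(\lambda)/\lambda \to \infty$, one has $\inf_{\lambda \geq 0}\theta(\lambda) > -\infty$, so every curve $\gamma$ satisfies $\cL(\gamma) \geq (\inf \theta - c_0)(t-s)$. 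Both bounds go to $0$ as $t \to s$.

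For the blow-up $L^{s,t}(x,y) \to +\infty$ uniformly on $\{y : d(x,y) \geq \rho\}$, the key observation is that any curve $\gamma: [s,t] \to M$ connecting $x$ to $y$ must have length $\geq \rho$, that is $\int_s^t |\dot\gamma_r|\,dr \geq \rho$. Given a (large) target $M > 0$, use the superlinearity of $\theta$ to pick $\lambda_0 = \lambda_0(M)$ such that $\theta(\lambda) \geq M\lambda$ for $\lambda \geq \lambda_0$. Split $[s,t]$ into $A = \{r : |\dot\gamma_r| \geq \lambda_0\}$ and its complement. On $A$ we use $L \geq M|\dot\gamma_r| - c_0$; on $A^c$, the superlinearity bound together with the local boundedness of $\theta$ on $[0,\lambda_0]$ furnishes a uniform lower bound $L \geq -C_1(\lambda_0)$. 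Integrating yields
\begin{align}
\cL(\gamma) \;\geq\; M\int_A |\dot\gamma_r|\,dr - c_0(t-s) - C_1(t-s) \;\geq\; M\rho - (M\lambda_0 + c_0 + C_1)(t-s),
\end{align}
where the last step adds back the contribution of $A^c$ to the length integral (bounded by $\lambda_0(t-s)$). Taking the infimum over admissible $\gamma$ and then $\liminf_{t\to s}$ gives $\liminf L^{s,t}(x,y) \geq M\rho$ uniformly in $y \in B_\rho(x)^c$; since $M$ was arbitrary, this is the claimed uniform divergence to $+\infty$.

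I do not expect any serious obstacle: the estimates are transparent once one exploits the split $\{|\dot\gamma| \geq \lambda_0\} \sqcup \{|\dot\gamma| < \lambda_0\}$, which is the standard device for turning superlinearity into quantitative length-versus-action comparisons. The only mild subtlety is ensuring that $\theta$ is bounded below on $[0,\lambda_0]$ so that $L$ has a uniform lower bound on curves of moderate speed; this follows from the continuity assumptions on $L$ combined with the compactness of $M$ and $[s_0,t_0]$, replacing $\theta$ by $\min(\theta, 0)$ if necessary without affecting the hypotheses.
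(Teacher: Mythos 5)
Your argument is correct, and for the divergence on $B_r(x)^c$ it takes a genuinely different route from the paper. The paper replaces $\theta(\cdot)-c_0$ by a convex superlinear lower envelope $\tilde\theta$, reparametrizes a minimizer to constant speed, and applies Jensen's inequality, so that the blow-up reduces to a single scalar estimate $\int_0^1\tilde\theta(|\dot\eta_u|/(t-s))(t-s)\,du$ with $|\dot\eta_u|\geq r$; you instead split the time interval by a superlinearity threshold $\lambda_0(M)$, apply $\theta(\lambda)\geq M\lambda$ on $\{|\dot\gamma|\geq\lambda_0\}$ and a compactness lower bound on the complement, and estimate the length contribution of the slow set by $\lambda_0(t-s)$. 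Both are standard devices for converting superlinearity into a length-versus-action comparison; your version avoids the convex-envelope construction and Jensen's inequality at the cost of a slightly longer bookkeeping step, and the uniformity in $y$ is just as transparent since the final bound $M\rho - (M\lambda_0 + c_0 + C_1)(t-s)$ depends only on $\rho = d(x,y)$. The treatment of $L^{s,t}(x,x)\to 0$ matches the paper's. One parenthetical slip: replacing $\theta$ by $\min(\theta,0)$ would \emph{destroy} superlinearity (it is eventually identically $0$), so that substitution cannot be used to normalize; however, the correct justification you also give—that $L$ is bounded below on $\{|v|\leq\lambda_0\}\times M\times[s,b]$ by continuity of $L$ and compactness—is exactly what is needed, so the argument is unaffected.
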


\begin{proof}
    Fix $r>0$, $d(x,y)>r$, let $\gamma$ be the minimizer for $L^{s,t}(x,y)$. By a change of variables $r(u)=u(t-s)+s$ and for $\tilde\gamma_u=\gamma_{r(u)}$, we have
    \begin{align}
        L^{s,t}(x,y) &= \int_s^tL(\dot\gamma_r,\gamma_r,r)dr \\
        &=\int_0^1 L(\dot\gamma_{r(u)},\gamma_{r(u)},r(u))(t-s)du \\
        &= \int_0^1 L(\dot{\tilde \gamma}_u/(t-s),\tilde\gamma_u,r(u))(t-s)du \\
        &\geq \int_0^1 \tilde\theta(|\dot{\tilde \gamma}_u|/(t-s))(t-s)du \\
        &\geq \int_0^1 \tilde\theta(|\dot{\eta}_u|/(t-s))(t-s)du \,.
    \end{align}
    In the first inequality we are used that there exists $\tilde\theta$ convex and superlinear, with $L(v,x,t)\geq \theta(|v|)-c_0\geq \tilde\theta(|v|)$. Convexity is used in the line below to apply Jensen's inequality, and $\eta$ is the constant speed reparametrization of $\tilde\gamma$. By superlinearity of $\tilde\theta$, the integral diverges as $(t-s)\to0$.
    
    For the case of $x=y$, we can choose the constant curve $\gamma\equiv x$ as competitor, and using continuity of $L$ in the second two variables
    \begin{align}\label{eq:Lxxupperbound}
        L^{s,t}(x,x)\leq \int_s^t L(0,x,r)dr \leq \int_s^t C \to 0.
    \end{align}
    On the other hand, superlinearity also implies a lower bound on $L$
    \begin{align}
        L^{s,t}(x,x)=\inf_\gamma \int_s^t L(\dot\gamma_r,\gamma_r,r)dr \geq \inf_\gamma \int_s^t -C \to 0.
    \end{align}
\end{proof}

\begin{lem}\label{lem:cost-is-Lip}
    The induced cost $(t,x,y)\mapsto L^{s,t}(x,y)$ is locally Lipschitz in the space-time $(s,b]\times M\times M$ with respect to the background distance $d=d_g$ and the Euclidean distance in time. More precisely, for any compact interval $[s+\delta,b]$, there is $C>0$ such that for any $s+\delta \leq t_1\leq t_2\leq b$, $x,y_1,y_2\in M$, 
\begin{align}
    |L^{s,t_2}(x,y_2)-L^{s,t_1}(x,y_1)| \leq C(|t_2-t_1| + d(y_1,y_2)) \end{align}
\end{lem}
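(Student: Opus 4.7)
The plan is to bound $|L^{s,t_2}(x,y_2) - L^{s,t_1}(x,y_1)|$ by first bounding separately the change in $y$ at fixed $t$ and the change in $t$ at fixed $y$, then combining via the triangle inequality. In each case the key mechanism is the same: take an $\mathcal L$-minimizer (which exists by Lemma \ref{lem:Eofgeod}) realizing one of the two cost values, modify it on a short terminal segment of length $\sim d(y_1,y_2)$ or $\sim |t_2-t_1|$ so that it connects the desired endpoints in the desired time, and compare actions. The whole argument will give a constant $C$ depending only on $\delta$, $b$, and on $L$ and the background metric $g$.

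I would first collect two preliminary estimates that get used throughout. First, a uniform two-sided bound $|L^{s,t}(x,y)| \leq B$ for all $x,y\in M$ and $t\in[s+\delta,b]$: the upper bound comes from testing with a constant-speed $g_s$-geodesic from $x$ to $y$ on $[s,t]$, whose velocity is controlled by $\mathrm{diam}(M,g_s)/\delta$ so that $L(\dot\gamma,\gamma,r)$ is uniformly bounded by the continuity of $L$ on a compact set; the lower bound follows from $L\geq -c_0$. Second, a uniform speed bound $|\dot\gamma|_g\leq \bar V$ for every $\mathcal L$-minimizer $\gamma:[s,t]\to M$ with $t\in[s+\delta,b]$, which is a standard consequence of the regularity result \cite[Thm.~6.2.5]{CanSin04} combined with the action bound $\mathcal L(\gamma)\leq B$ and the uniform superlinearity of $L$.

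For the Lipschitz dependence in $y$ at fixed $t\in[s+\delta,b]$, set $\rho:=d(y_1,y_2)$. If $\rho\geq \delta/2$, the two-sided bound gives the result with a large constant. Otherwise let $\gamma:[s,t]\to M$ be a minimizer from $x$ to $y_1$, and define $\tilde\gamma$ to agree with $\gamma$ on $[s,t-\rho]$ and to be a constant-speed $g$-geodesic from $\gamma_{t-\rho}$ to $y_2$ on $[t-\rho,t]$. Since $d(\gamma_{t-\rho},y_2)\leq d(\gamma_{t-\rho},y_1)+\rho \leq (\bar V+1)\rho$, the terminal piece has speed $\leq \bar V+1$ and $L(\dot{\tilde\gamma},\tilde\gamma,r)$ is bounded on it by some $K=K(\bar V+1)$; the removed piece of $\gamma$ contributes at least $-c_0\rho$. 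Hence $L^{s,t}(x,y_2)\leq \mathcal L(\tilde\gamma)\leq L^{s,t}(x,y_1)+(K+c_0)\rho$, and exchanging $y_1,y_2$ closes the inequality.

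The time-Lipschitz estimate at fixed $y$ splits into two bounds. Concatenating a minimizer from $x$ to $y$ on $[s,t_1]$ with the constant curve $r\mapsto y$ on $[t_1,t_2]$ yields immediately $L^{s,t_2}(x,y)\leq L^{s,t_1}(x,y)+C(t_2-t_1)$, since $L(0,y,r)$ is bounded on $M\times[s,b]$. For the reverse, take a minimizer $\gamma:[s,t_2]\to M$ from $x$ to $y$, assume (WLOG) $h:=t_2-t_1\leq \delta/2$, and define $\tilde\gamma:[s,t_1]\to M$ to equal $\gamma$ on $[s,t_2-2h]$ and a constant-speed $g$-geodesic from $\gamma_{t_2-2h}$ to $y=\gamma_{t_2}$ on $[t_2-2h,t_1]$; by the speed bound on $\gamma$, this geodesic has speed $\leq 2\bar V$, and the same bookkeeping as above gives $L^{s,t_1}(x,y)\leq L^{s,t_2}(x,y)+C(t_2-t_1)$. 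The main obstacle is making the uniform speed bound on minimizers precise and checking that it only depends on the universal parameters $\delta, b, L, g$; once this is in place the remainder is elementary.
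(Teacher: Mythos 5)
Your proof is correct and reaches the same conclusion, but it takes a genuinely different route for two of the three pieces. For the backward time estimate $L^{s,t_1}(x,y)\leq L^{s,t_2}(x,y)+C|t_2-t_1|$, the paper reparametrizes the $L^{s,t_2}$-minimizer affinely in time onto $[s,t_1]$ and estimates the resulting action via a first-order Taylor expansion, invoking the hypotheses $|d_xL|+|\partial_tL|\leq C\theta(|v|)$ (and implicitly a bound on $d_vL$) to control the error; you instead truncate the minimizer at $t_2-2h$ and splice in a short $g$-geodesic of speed $\lesssim \bar V$, which avoids differentiating $L$ altogether. For the space estimate, the paper appends a geodesic segment \emph{after} time $t$ (lengthening the time interval to $t+d$) or reparametrizes a minimizer on $[s,t-d]$, and then \emph{calls back to the time-Lipschitz estimate} to return to the original time slice; you truncate and insert a geodesic on $[t-\rho,t]$ directly, which keeps the time interval fixed and decouples the space estimate from the time estimate, making the logical structure cleaner. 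Both approaches rest on the same two inputs — the uniform action bound $|L^{s,t}(x,y)|\leq B$ and the uniform speed bound $|\dot\gamma|\leq \bar V$ for minimizers with $t\in[s+\delta,b]$ (the latter cited by the paper too, via Lemma \ref{lem:Eofgeod} and \cite[Thm.~6.2.5]{CanSin04}) — so the asserted uniformity of $\bar V$ over $x,y,t$ is a shared dependency, not a gap peculiar to your argument. The splice-in-geodesic technique you use for space is also essentially what the paper uses there; the main difference is the handling of boundary time cases and the backward time bound. Your version trades the Taylor expansion for a second truncation argument and has the virtue of requiring only $L\geq -c_0$ and continuity of $L$ on compact sets beyond the speed bound.
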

\begin{proof}
    Note that $|L^{s,t_2}(x,y_2)-L^{s,t_1}(x,y_1)| \leq |L^{s,t_2}(x,y_2)-L^{s,t_1}(x,y_2)| + |L^{s,t_1}(x,y_2)-L^{s,t_1}(x,y_1)|$, hence it is enough to prove the local Lipschitzianity for space and time separately, uniformly.

1) We show $L^{s,t_2}(x,y) \leq L^{s,t_1}(x,y) + C|t_s-t_1|$. Let $\gamma$ be a minimizer for $L^{s,t_1}(x,y)$, consider 
\begin{align}
    \eta(r)=\begin{cases}
    \gamma_r \ \ &s\leq r \leq t_1 \\
    y \ \ & t_1\leq r\leq t_2 \,.
\end{cases}
\end{align}
Then
\begin{align}
    L^{s,t_2}(x,y) &\leq \int_s^{t_2} L(\dot\eta,\eta,r)dr \\
    &= \int_s^{t_1} L(\dot\gamma,\gamma,r)dr + \int_{t_1}^{t_2}L(0,y,r) dr \\
    &\leq L^{s,t_1(x,y)} + C|t_2-t_1|.
\end{align}

2) We show $L^{s,t_1}(x,y) \leq L^{s,t_2}(x,y) + C|t_2-t_1|$. Let $\gamma$ minimize $L^{s,t_2}(x,y)$, and consider $\eta(r)=\gamma_{\theta(r)}=\gamma(s+(r-s)\frac{(t_2-s)}{(t_1-s)})$. Then, setting $\frac{t_2-s}{t_1-s}=1+\epsilon$
\begin{align}
    L^{s,t_1}(x,y) &\leq \int_s^{t_1}L(\dot\eta,\eta,r)dr \\
    &= \int_s^{t_2} L\left((1+\epsilon)\dot\gamma _{\theta} ,\gamma_\theta, s+(\theta-s)\frac{1}{1+\epsilon}\right)\frac{1}{1+\epsilon}d\theta \\
    &\leq \int_s^{t_2} L\left(\dot\gamma _{\theta},\gamma_\theta, \theta \right)+ |d_v L(\dot\gamma _{\theta},\gamma_\theta, \theta)| |\dot\gamma_\theta|\epsilon + |\partial_r L(\dot\gamma _{\theta},\gamma_\theta, \theta)| \epsilon(\theta-s) \\
    &\qquad +|L(\dot\gamma _{\theta},\gamma_\theta, \theta)|\epsilon + o(\epsilon) \\
    &\leq L^{s,t_2}(x,y) + C\epsilon \\
    &\leq L^{s,t_2}(x,y) + \frac{C}{t_1-s}(t_2-t_1).
\end{align}
Here we used that $|\dot\gamma_\theta|$ is bounded, see \ref{lem:Eofgeod}.

3) We show $L^{s,t}(x,y_1) \leq L^{s,t}(x,y_2) + Cd(y_1,y_2)$. Let $\gamma$ be a minimizer for $L^{s,t}(x,y_2)$, set $d=d(y_1,y_2)$ and assume $d \leq (b - s)/4$. First consider the case $t \leq (s + b)/2$. Then define $\eta:[s,t + d]\to M$ by 
\begin{align}
    \eta(r)= \begin{cases}
        \gamma_r \ \ \ &s\leq r \leq t \\
        \gamma^d_{r - t} & t\leq r\leq t + d
    \end{cases}
\end{align}
where $\gamma^d:[0, d]\to M$ is a geodesic for the background distance $d$ from $y_2$ to $y_1$ of constant unit speed. Then
\begin{align}
    L^{s,t+d}(x,y_1)\leq L^{s,t}(x,y_2)+\int_t^{t+ d}L(\dot\gamma,\gamma,r)dr \leq L^{s,t}(x,y_2) + C d.
\end{align}
Moreover, by step 2) $L^{s,t}(x,y_1) \leq L^{s,t+ d}(x,y_1) + C d\leq L^{s,t}(x,y_2) + 2Cd$ and we conclude. In the case $t \geq (s + b)/2$, passing from $y_1$ to $y_2$ should happen before time $t$; let $\tilde\gamma$ be instead a minimizer for $L^{s,t -d}(x,y_2)$ (note that $t - d \geq s + (b - s)/4$ is uniformly far from $s$) and define $\tilde\eta: [s,t] \to M$ by

\begin{align}
    \tilde\eta(r) = 
        \begin{cases}
            \tilde\gamma_r \ \ \ &s \leq r \leq t - d\,, \\
            \gamma^d_{r + d - t} &t - d\leq r \leq t\,,
        \end{cases}
\end{align}

with $\gamma^d$ as before. Then, similarly to the previous case, we combine the estimate $L^{s,t - d}(x,y_2) \leq L^{s,t}(x,y_2) + Cd$ from time-Lipschitzianity with the estimate $L^{s,t}(x,y_1) \leq L^{s,t - d}(x,y_2) + Cd$ from testing the definition of $L$ with $\tilde\eta$, to obtain $L^{s,t}(x,y_1) \leq L^{s,t}(x,y_2) + 2Cd$. If $d \geq (b - s)/4$, then a Lipschitz estimate amounts to a uniform bound on $|L^{s,t}(x,z)|$, which follows from compactness and the assumptions on $L$. 

\end{proof}

\begin{rmk}
    Analogous regularity holds when varying the initial space-time point $(x,s)$, but it would not hold for the $L_\pm$ case at $s=0$ (see also Remark \ref{rmk:LmaregoodLagr}). Nevertheless, we need only regularity in $(t,y)$ in the study of the (forward) Hopf--Lax semigroup.
\end{rmk}

\section{Regularity of the Hamilton-Jacobi equation}\label{Appe:HJ}
We first prove a statement that holds for general Lagrangians satisfying the assumptions of Section \ref{subsec:LagrangianProperties}., adapting the approach of Evans \cite[Ch.~3,~Thm.~6]{EvansPDE}.
\begin{prop}\label{prop:HLsemigroup}
    For any initial $\phi\in \mathrm{Lip}(M)$, the function $Q^{s,t}\phi(x):[s,b]\times M\to \R{}$ is Lipschitz in space-time. Moreover, $Q^{s,t}\phi$ solves the Hamilton-Jacobi equation at all differentiability points of $(t,x)\mapsto Q^{s,t}\phi(x)$
    \begin{align}
        \begin{cases}
            &\partial_tQ^{s,t}\phi(x) + H(d Q^{s,t}\phi(x),x,t) = 0 \,,\\
            &Q^{s,t}\phi \to \phi\qquad  \text{uniformly},
        \end{cases}
    \end{align}
    where $H:T^*M\times [a,b]\to\mathbb{R}$ is the Hamiltonian. 
    
    Moreover, for all points $(t,x)$ at which $Q^{s,t}\phi(x)$ is differentiable in $x \in M$, it is automatically also differentiable from the left in $t \in (s,b]$ and has Dini derivatives satisfying the Hamilton-Jacobi inequalities 

    \begin{equation}
        \partial_{t^\pm}^+Q^{s,t}\phi(x) \leq -H(dQ^{s,t}\phi(x),x,t) \leq \partial_{t^-}^-Q^{s,t}\phi(x)\,.
    \end{equation}
\end{prop}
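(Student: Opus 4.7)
\emph{Plan.} The plan is to prove the four assertions in stages: (i) space-time Lipschitz regularity of $Q^{s,t}\phi$; (ii) the uniform convergence $Q^{s,t}\phi \to \phi$ as $t \downarrow s$; (iii) the Hamilton--Jacobi identity at joint differentiability points of $(t,x)\mapsto Q^{s,t}\phi(x)$; and (iv) the one-sided Dini inequalities at points of mere spatial differentiability. All steps rely on the semigroup identity
\begin{equation}
Q^{s,t}\phi(x) = \inf_{y\in M}\{Q^{s,r}\phi(y) + L^{r,t}(y,x)\}\,,\qquad s \leq r \leq t\,,
\end{equation}
which is immediate from the definition, together with the regularity of $L^{s,t}$ from Appendix \ref{Appe:Lagrangian}.

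For (i), spatial Lipschitzness follows from Lemma \ref{lem:cost-is-Lip} by the usual ``infimum of a uniformly Lipschitz family is Lipschitz'' argument. For time, the semigroup with $y = x$ gives the one-sided estimate $Q^{s,t+h}\phi(x) - Q^{s,t}\phi(x) \leq L^{t,t+h}(x,x) = O(h)$; for the matching lower bound, write $Q^{s,t+h}\phi(x) - Q^{s,t}\phi(x) = \inf_y\{[Q^{s,t}\phi(y) - Q^{s,t}\phi(x)] + L^{t,t+h}(y,x)\}$ and combine $K$-Lipschitzness of $Q^{s,t}\phi$ with the superlinear lower bound on $L^{t,t+h}(y,x)$ built into the assumptions on $L$: this forces any near-minimizer $y$ to satisfy $d(y,x) = O(h)$ and yields a two-sided time-Lipschitz estimate. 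Item (ii) is a direct consequence of Lemma \ref{lem:costfromlagrangian}, since the infimum in the definition of $Q^{s,t}\phi(x)$ concentrates at $y = x$ as $t \downarrow s$.

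For (iii), fix a joint differentiability point $(t,x)$ with spatial gradient $p := dQ^{s,t}\phi(x)$. The upper bound is obtained by testing: for any $v \in T_xM$, setting $y_h := \exp_x(-hv)$ and using the semigroup together with a smooth path from $y_h$ to $x$ of velocity tending to $v$ gives
\begin{equation}
Q^{s,t+h}\phi(x) \leq Q^{s,t}\phi(y_h) + L^{t,t+h}(y_h,x) = Q^{s,t}\phi(x) - h\langle p, v\rangle + hL(v,x,t) + o(h)\,,
\end{equation}
where the equality uses spatial differentiability at $(t,x)$ and the smoothness of $L$ near $(v,x,t)$. Dividing by $h$, sending $h \downarrow 0$, and optimizing over $v$ yields $\partial_t Q \leq -\sup_v\{\langle p, v\rangle - L(v,x,t)\} = -H(p,x,t)$ by Legendre duality. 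For the matching lower bound, Lemma \ref{lem:Eofgeod} produces a minimizing curve $\gamma:[s,t]\to M$ with $\gamma(t) = x$; setting $v_0 := \dot\gamma(t)$ and restricting $\gamma$ to $[s,t-h]$ gives $Q^{s,t}\phi(x) \geq Q^{s,t-h}\phi(\gamma(t-h)) + \int_{t-h}^t L(\dot\gamma,\gamma,r)\,dr$, which upon expansion via joint differentiability at $(t,x)$ yields $\partial_t Q \geq L(v_0,x,t) - \langle p, v_0\rangle \geq -H(p,x,t)$.

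For (iv), the inequality $\partial_{t^+}^+Q \leq -H$ is exactly the upper-bound argument of (iii), which used only spatial differentiability at $(t,x)$. The remaining two inequalities $\partial_{t^-}^+Q \leq -H$ and $\partial_{t^-}^- Q \geq -H$ similarly adapt the test-curve and minimizer arguments of (iii), but now applied to the backward semigroup $Q^{s,t}\phi(x) = \inf_y\{Q^{s,t-h}\phi(y) + L^{t-h,t}(y,x)\}$; combined with the easy inequality $\partial_{t^-}^- \leq \partial_{t^-}^+$, they imply left-differentiability in $t$. The main obstacle is that these backward-time arguments want to expand $Q^{s,t-h}\phi$ at a point $z$ near $x$ to first order in $z - x$ with gradient $p$, even though differentiability is only known at $(t,x)$ and the uniform Lipschitz control $|Q^{s,t-h}\phi(z) - Q^{s,t}\phi(z)| = O(h)$ is too coarse to transfer the first-order information from time $t$ to time $t-h$. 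The standard resolution is to exploit the semiconcavity of $Q^{s,t-h}\phi$ in $x$---a structural feature of the infimal-convolution Hopf--Lax semigroup, cf.\ \cite[Ch.~6]{CanSin04}---which provides a refined one-sided expansion at $x$ with supergradient $p_{t-h}$, together with the continuity $p_{t-h} \to p$ as $h \to 0$; the correct sign of each Dini inequality then allows the associated $O(h)$-error to be absorbed.
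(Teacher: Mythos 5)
Your proposal follows the same overall architecture as the paper's proof (space-time Lipschitzness via the regularity of $L^{s,t}$ from Appendix \ref{Appe:Lagrangian}; uniform convergence via Lemma \ref{lem:costfromlagrangian}; the $(\leq)$ Dini inequality by testing short paths in the semigroup identity; the $(\geq)$ Dini inequality by restricting a minimizing curve), but you have noticed something the paper's write-up does not spell out. In the paper's argument for the backward-time Dini estimates, the step ``Dividing by $h$ and as $h\to0$, we get $\partial_{t^-}^+u_t(x)+du_t(x)(v)\leq\cdots$'' silently requires expanding $u_{t-h}$ near $x$ to first order with gradient $p:=du_t(x)$, even though differentiability is only assumed at the single time $t$. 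Concretely, when one decomposes $u_t(x)-u_{t-h}(\gamma_{t-h})$ into a pure time increment at $x$ plus a spatial increment of $u_{t-h}$ near $x$, the latter is controlled only up to $O(h)$ by Lipschitzianity, which is not enough to isolate the time Dini derivative. Your diagnosis is correct, and so is your proposed remedy: the Hopf--Lax formula produces uniformly semiconcave $u_\tau$ for $\tau>s$, and the upper semicontinuity of the superdifferential (if $q_h\in D^+u_{t-h}(y_h)$ with $y_h\to x$, $h\to0$, then $q_h\to p=\{D^+u_t(x)\}$) supplies precisely the one-sided first-order expansion needed to absorb the $O(h)$ error with the correct sign. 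This is consistent with the paper's own use of semiconcavity in the subsequent Proposition \ref{prop:fiber-quadratic-HJ}.

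Two smaller remarks. First, your Lipschitz argument works but is slightly more roundabout than the paper's: the paper simply re-tests the minimizing competitor $z$ for $Q^{s,t_1}\phi(x)$ at $(y,t_2)$ and invokes the space-time Lipschitz bound on $L^{s,\cdot}(z,\cdot)$ from Lemma \ref{lem:cost-is-Lip}, avoiding the need to track near-minimizers. Second, semiconcavity is only strictly necessary for the lower Dini bound $\partial_{t^-}^-Q\geq -H$: this is the direction that is forced to run along a minimizing geodesic terminating at $x$ at time $t$, so the spatial increment of $u_{t-h}$ is evaluated away from $x$. For the two upper bounds $\partial_{t^+}^+Q\leq -H$ and $\partial_{t^-}^+Q\leq -H$ one can sidestep semiconcavity entirely by placing $x$ at the time slice at which one does \emph{not} want a difference quotient: testing $Q^{s,t}\phi(z)\leq Q^{s,t-h}\phi(x)+L^{t-h,t}(x,z)$ with $z\to x$ and expanding $Q^{s,t}\phi$ at $x$ (where differentiability is known) directly yields $\partial_{t^-}^+Q^{s,t}\phi(x)\leq L(v,x,t)-\langle p,v\rangle$ for every $v$, hence $\leq -H(p,x,t)$ after optimizing. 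You already employ this device (in the form $y_h=\exp_x(-hv)$) for the $\partial_{t^+}^+$ bound, so it is worth recognizing that it also closes the $\partial_{t^-}^+$ case without appealing to semiconcavity.
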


\begin{proof}
    Consider $t_1,t_2> s$, $x,y\in M$, and let $z\in M$ be such that $Q^{s,t_1}\phi(x)=L^{s,t_1}(z,x)+\phi(z)$ (it exists by compactness of $M$ and continuity of $L^{s,t_1}(\cdot,x)$). Then
    \begin{align}
        Q^{s,t_2}\phi(y) - Q^{s,t_1}\phi(x) \leq L^{s,t_2}(z,y)-L^{s,t_1}(z,x) \leq C(|t_2-t_1|+d(x,y)), 
    \end{align}
    where the last inequality follows from Lipschitzianity of $L$ in space-time (by Lemma \ref{lem:cost-is-Lip}). 

    This shows Lipschitzianity for $t_1,t_2>s$ (which in fact holds for any continuous $\phi$ given), we now consider the case $t_1=s$ (and rename $t_2=t$), with the convention $Q^{s,s}\phi=\phi$. By
    \begin{align}
        |Q^{s,t}\phi(x)-\phi(y)| \leq |Q^{s,t}\phi(x)-\phi(x)|+|\phi(x)-\phi(y)|
    \end{align}
    and using $\phi\in\Lip$ (say $\Lip\phi \leq C$), we only need to bound the first term on the LHS. On the one hand
    \begin{align}
        Q^{s,t}\phi(x)-\phi(x) \leq L^{s,t}(x,x)\leq C(t-s)
    \end{align}
    using Equation \eqref{eq:Lxxupperbound}. On the other hand, let $z$ be optimal in the definition of $Q^{s,t}\phi(x)$, let $\gamma$ be a geodesic (wrt the fixed metric $g$) from $z$ to $x$ parametrized on $[s,t]$, then
    \begin{align}
        Q^{s,t}\phi(x)-\phi(x) &= \phi(z)+L^{s,t}(z,x) -\phi(x) \\
        &\geq -C d(z,x) + L^{s,t}(z,x) \\
        &\geq \left[ \int_s^t -C|\dot\gamma|_g + L(\dot\gamma,\gamma,r) dr \right] \\
        &= -\left[ \int_s^t g( \dot\gamma, C\frac{\dot \gamma}{|\dot\gamma|_g}) - L(\dot\gamma,\gamma,r) dr \right] \\
        &\geq - \int_s^t H(C\frac{\dot \gamma}{|\dot\gamma|_g}, \gamma, r) dr \\
        &\geq -C(t-s)
    \end{align}
    since $H$ is continuous and hence bounded on $SM\times [s,t]$. This in particular implies $Q^{s,t}\phi\to \phi$ as $t\to s$.

    We now show that if $x$ is a differentiability point of $x\mapsto u_t(x) := Q^{s,t}\phi(x)$, then Hamilton-Jacobi inequalities for the Dini derivatives are satisfied. The Hamilton-Jacobi {\it equality} (for the two-sided or left-sided derivative, depending on whether $Q^{s,t}\phi(x)$ is differentiable at $(t,x)$ or not) will then hold by combining the upper bound on $\partial_{t^\pm}^+u_t(x)$ and lower bound on $\partial_{t^-}^-u_t(x)$.

    $(\leq)$ For any $v\in T_xM$, consider for some $h>0$ a smooth curve $\gamma:[t - h,t]\to M$ with $\gamma_t = x$, $\dot\gamma_t=v$. Since $Q^{s,t}\phi=Q^{t - h,t}u_{t - h}$,
    \begin{align}
       u_t(x) - u_{t-h}(\gamma_{t-h}) &= \inf_y \left\{ L^{t - h,t}(y,x) + u_{t - h}(y) \right\} - u_{t - h}(\gamma_{t - h}) \\
       &\leq L^{t - h,t}(\gamma_{t-h},x).
    \end{align}
    Dividing by $h$ and as $h\to 0$, we get
    \begin{align}
        \partial_{t^-}^+ u_t(x) + du_t(x) (v)  \leq \lim_{h\downarrow 0}\frac{1}{h}L^{t - h,t}(\gamma_{t-h}, x) \leq \lim_{h\downarrow 0}\frac{1}{h}\mathcal L(\gamma) = L(v,x,t).
    \end{align}
    Hence
    \begin{align}
        \partial_{t^-}^+ u_t(x) + \sup_v \left\{ du_t(x) (v) - L(v,x,t) \right\} \leq 0
    \end{align}
    and the second addendum is exactly $L^*(du_t(x),x,t)=H(du_t(x),x,t)$. The inequality $\partial^+_{t^+}u_t \leq -H(du_t(x),x,t)$ follows similarly, but instead considering $\tilde\gamma:[t,t + h]\to M$ with $\tilde\gamma_t = x$, $\dot{\tilde\gamma}_t = v$.

    $(\geq)$ Let $z\in M$ s.t. $u_t(x)= L^{s,t}(z,x)+u_s(z)$. Let $\gamma:[s,t]\to M$ be the minimizing geodesic from $z$ to $x$. Then
    \begin{align}
        u_t(x)-u_{t-h}(\gamma_{t-h}) \geq L^{s,t}(z,x) - L^{s,t-h}(z,\gamma_{t-h}).
    \end{align}
    Divide by $h$ and send $h\downarrow 0$ to obtain
    \begin{align}
        \partial_{t^-}^- u_t(x) + du_t(x) (\dot\gamma_t) \geq L\left(\dot\gamma_t,x,t\right).
    \end{align}
    Hence
    \begin{align}
        0 &\leq \partial_{t^-}^- u_t(x) + \left( du_t(x) (\dot\gamma_t) - L\left(\dot\gamma_t,x,t\right) \right) \\
        &\leq \partial_{t^-}^- u_t(x) + \sup_v\left( du_t(x) (v) - L(v,x,t) \right) \\
        &= \partial_{t^-}^- u_t(x) +  H(du_t(x),x,t).
    \end{align}
\end{proof}

\begin{rmk}
    Following the computations of \cite[Sec.~7.4]{rf1}, one can see that the cost induced by $L$ itself satisfies the HJ equality. For the $L_-$ case, this equation can be found in \cite[pg.~15]{perelman2002entropy}.
\end{rmk}

In the case of centered, quadratic dependence of the Lagrangian in the velocity, one can obtain without too much fuss an a.e. equation for {\it every} time-slice. The following holds in much larger generality and is probably well-known to viscosity solution practitioners, but it was difficult to find a precise reference so we state and prove what we need. It will (only) be used for the implications of type $(\mathrm{EC}) \implies (\mathrm{WC})$.

\begin{prop}\label{prop:fiber-quadratic-HJ}
Given the hypotheses of Proposition \ref{prop:HLsemigroup}, assume further that $ L(\cdot,p,t) = A_{p,t}(\cdot, \cdot) + C_{p,t}$ is $C^2$ in all variables with $A_{p,t} \in T_p^*M\cdot T_p^*M$ and $C_{p,t}\in \R{}$ for each $(t,p) \in [s,b]\times M$. Suppose also that $\phi$ is semiconcave or semiconvex with quadratic modulus (see \cite[Def.~10.10]{Vil09}). Then for all points $(t,x) \in [s,b]\times M$ at which $Q^{s,t}\phi(x)$ is differentiable in $x \in M$, it is automatically also differentiable in $t$ and solves the Hamilton-Jacobi equation. In particular, the following holds (with derivatives taken from one side at the endpoints):

\begin{equation}
\partial_tQ^{s,t}\phi(x) + H(dQ^{s,t}(x),x,t) = 0\,,\qquad \forall t \in [s,b]\,,\text{ a.e. }x\in M\,.
\end{equation}
\end{prop}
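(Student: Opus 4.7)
Given Proposition \ref{prop:HLsemigroup}, at any point $(t_0,x_0) \in [s,b] \times M$ at which $u := Q^{s,\cdot}\phi(\cdot)$ is differentiable in $x$ at $x_0$, with $p := d_x u(t_0,x_0)$, we already have three of the four one-sided Dini estimates:
\[
\partial^+_{t^+}u \leq -H(p,x_0,t_0),\quad \partial^+_{t^-}u \leq -H(p,x_0,t_0),\quad \partial^-_{t^-}u \geq -H(p,x_0,t_0).
\]
The second and third combine to give two-sided left-differentiability of $t \mapsto u(t,x_0)$ at $t_0$ with value $-H$, so the only missing ingredient is the matching right lower bound $\partial^-_{t^+} u \geq -H(p,x_0,t_0)$. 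The plan is to extract this via a short-time analysis of the Hopf-Lax semigroup, using the centered quadratic and $C^2$ structure of $L$ to Taylor-expand the cost. This will yield pointwise differentiability in $t$ at every point of spatial differentiability; the a.e.-in-$x$ conclusion for every fixed $t \in [s,b]$ then follows from the Lipschitz regularity in $x$ of $Q^{s,t}\phi$ (Prop.~\ref{prop:HLsemigroup}) and Rademacher's theorem. The semiconcavity/semiconvexity of $\phi$ will propagate to analogous regularity of $Q^{s,t}\phi$ and makes the structure of minimizers in the short-time problem transparent.

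First I would establish the quantitative bound $|y^*_h - x_0| = O(h)$ for any minimizer $y^*_h$ in the semigroup representation
\[
Q^{s,t_0 + h}\phi(x_0) = \inf_{y \in M}\bigl\{Q^{s,t_0}\phi(y) + L^{t_0,t_0+h}(y,x_0)\bigr\}.
\]
Existence follows from compactness of $M$; the bound combines three inputs available from Section \ref{subsec:LagrangianProperties} and Appendix \ref{Appe:Lagrangian}: the quadratic lower bound $L^{t_0,t_0+h}(y,x_0) \geq c\,d(y,x_0)^2/h - Ch$ (from positive-definiteness of the fiber form $A$ and Cauchy-Schwarz), the Lipschitz estimate $|Q^{s,t_0}\phi(y) - Q^{s,t_0}\phi(x_0)| \leq K\, d(y,x_0)$, and the already established upper bound $Q^{s,t_0+h}\phi(x_0) - Q^{s,t_0}\phi(x_0) \leq Ch$. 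Solving the resulting quadratic inequality in $d(y^*_h,x_0)$ yields the claimed $O(h)$ estimate. Writing $v_h := (x_0 - y^*_h)/h = O(1)$, I would then Taylor expand, using the $C^2$ centered-quadratic structure of $L$,
\[
L^{t_0,t_0+h}(y^*_h,x_0) = h\, L(v_h, x_0, t_0) + O(h^2),
\]
the upper bound coming from testing against the straight-line curve in a normal chart centered at $x_0$, and the matching lower bound from fiber-strict convexity combined with the Euler-Lagrange characterization of minimizers.

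Combining these ingredients with the spatial differentiability $Q^{s,t_0}\phi(y^*_h) - Q^{s,t_0}\phi(x_0) = \langle p, y^*_h - x_0\rangle + o(|y^*_h - x_0|) = -h\langle p, v_h\rangle + o(h)$, I obtain
\begin{align}
Q^{s,t_0+h}\phi(x_0) - Q^{s,t_0}\phi(x_0) &= h\bigl[L(v_h, x_0, t_0) - \langle p, v_h\rangle\bigr] + o(h) \\
&\geq h\inf_v\bigl[L(v,x_0,t_0) - \langle p, v\rangle\bigr] + o(h) = -h\, H(p,x_0,t_0) + o(h).
\end{align}
Dividing by $h$ and taking $\liminf_{h \downarrow 0}$ gives $\partial^-_{t^+} u(t_0,x_0) \geq -H(p,x_0,t_0)$, closing the remaining gap and yielding the Hamilton-Jacobi equation at $(t_0,x_0)$ in the classical sense. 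The same argument, applied via the backwards semigroup identity $Q^{s,t_0}\phi = Q^{t_0 - h, t_0}[Q^{s,t_0 - h}\phi]$ and invoking the propagated semiconcavity/semiconvexity of $Q^{s,t_0-h}\phi$ to control $Q^{s,t_0-h}\phi(x_0) - Q^{s,t_0-h}\phi(z^*_h)$ at the corresponding minimizer $z^*_h \to x_0$, handles the left time derivative at $t_0 > s$ uniformly in the choice of differentiability point.

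The main obstacle I expect is the uniform short-time Taylor expansion $L^{t_0,t_0+h}(y, x_0) = hL((x_0 - y)/h, x_0, t_0) + O(h^2)$ for $|y - x_0| = O(h)$, which is geometric rather than deep: it requires fixing a chart, quantifying how well the minimizing curve is approximated by the straight line (via the $C^2$ Euler-Lagrange flow and strict fiber-convexity), and absorbing the variation of $A_{p,r}$, $C_{p,r}$ in $(p,r)$ into the $O(h^2)$ error. A secondary, more technical, obstacle is the backward-in-$t$ direction, where the differentiability-in-$x$ information at $t_0$ needs to be transferred to $t_0 - h$; here the semiconcave/semiconvex hypothesis on $\phi$ is essential, as it propagates one-sided quadratic control to $Q^{s,t_0-h}\phi$, allowing the symmetric analysis to go through.
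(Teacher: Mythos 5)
Your proposal is correct and takes a genuinely different, and in some respects cleaner, route than the paper's. The paper fixes $x$ and instead advances the \emph{terminal} point along an $L^{t,t+h}$-geodesic $\gamma$ emanating from $x$ with $\dot\gamma_t = \nabla^{g_t} u_t(x)$; it then needs Alexandrov (second-order) differentiability of $u_t$ at $x$ -- available a.e.\ thanks to the semiconcavity of $u_t$ and of $\phi$ -- together with a bespoke choice of reference metric $g_t := 2A_{x,t}$ and a $C^2$-convergence statement $h\cdot L^{t,t+h}(\cdot,z)\to \frac12 d^2_{g_t}(\cdot,z)$, to conclude that $L^{t,t+h}(\cdot,\gamma_{t+h}) + u_t$ has a \emph{unique} local (hence global) minimum at $x$, whence the Hopf--Lax infimum can be evaluated exactly. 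Your argument instead fixes the terminal point $x_0$, lets the minimizer $y^*_h$ float, establishes $d(y^*_h,x_0)=O(h)$ via the quadratic lower bound on the Lagrangian combined with the Lipschitz estimates of Proposition \ref{prop:HLsemigroup}, Taylor-expands the cost to get $L^{t_0,t_0+h}(y^*_h,x_0)=hL(v_h,x_0,t_0)+O(h^2)$ (the lower bound here can be obtained simply by Jensen applied to the frozen quadratic Lagrangian $L(\cdot,x_0,t_0)$ in a normal chart, once one knows $|\gamma-x_0|=O(h)$ and $\int|\dot\gamma|^2=O(h)$ -- no need for the Euler--Lagrange characterization you invoke), and closes with Legendre duality. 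This only requires first-order differentiability of $u_{t_0}$ at $x_0$, so it actually proves the statement at \emph{every} spatial differentiability point rather than only at Alexandrov points, better matching what the proposition claims. It also makes essentially no use of the semiconcavity/semiconvexity hypothesis on $\phi$ nor of the full $C^2$ regularity of the Lagrangian in $(p,t)$ -- Lipschitz dependence of $A_{p,t},C_{p,t}$ would suffice for your expansion. That said, the paper's structural argument yields strictly more information (local uniqueness and stability of the Hopf--Lax minimizer near $x$), which may be of independent use.

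One remark: your backward-in-$t$ discussion is unnecessary -- Proposition \ref{prop:HLsemigroup} already gives the two-sided left Dini sandwich $\partial_{t^-}^+ \leq -H \leq \partial_{t^-}^-$ at every spatial differentiability point, which forces the left one-sided derivative to exist and equal $-H$. The only gap is the right liminf, which is what your forward argument supplies. Had the backward direction actually been needed, your sketch would have had a small hole: passing from differentiability of $u_{t_0}$ at $x_0$ to differentiability of $u_{t_0-h}$ at the minimizer $z^*_h$ requires an extra argument, since the latter is not a priori a differentiability point.
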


\begin{proof}
In view of Proposition \ref{prop:HLsemigroup}, it remains to provide the lower bound $\partial_{t^+}^- Q^{s,t}(x) \geq -H(dQ^{s,t}\phi(x), x,t)$ for each $t \in [s,b)$, a.e. $x \in M$. The potential difficulty is in controlling the minimizers in the definition of the Hopf-Lax semigroup for $Q^{t, t+h}(Q^{s,t}\phi)(y_h)$ with $y_h \to x$ as $h \downarrow 0$, but the additional assumptions will provide local uniqueness of the minimizer assuming that $y_h$ approaches $x$ along a geodesic and $Q^{s,t}\phi(x)$ is regular enough at $x$.

Writing $u_t:= Q^{s,t}\phi: M \to \R{}$, by \cite[Thm.~6.4.3]{CanSin04} we have that $u_t$ is semiconcave (with quadratic modulus) for $t \in (s,b]$. Using this along with the assumed semiconcavity or semiconvexity of $u_s = \phi$, we have that for any given $t\in [s,b]$, $u_t$ is differentiable and admits a Hessian in the sense of Alexandrov at almost every $x\in M$ (see \cite[Thm.~14.1]{Vil09}) and we fix such a point. In particular, we have a symmetric linear operator $S: T_xM\to T_xM$ with 

\begin{equation}
u_t(\exp^{g_t}_x(v)) = u_t(x) + \La v,\nabla^{g_t}u_t(x)\Ra_{g_t} + \frac{\La S(v), v\Ra_{g_t}}{2} + o(|v|^2_{g_t}) \qquad \text{as }v\to 0\,.
\end{equation}

This would hold true for any fixed smooth metric $g_t$ near $x$, but the most accurate choice in this setting is $(g_t)_z := 2A_{z,t}$, with $A$ as in the hypothesis of the Proposition. On the other hand, we have 

\begin{align}
h\cdot L^{t,t + h}(y,z) &= \inf_{\stackrel{\gamma: [t,t+h]\to M}{\gamma_t = y,\,\gamma_{t + h} = z}} h\int_t^{t+h}L(\dot\gamma_s, \gamma_s, s)\,ds \\
&= \inf_{\gamma} h^2\int_0^1 L(\dot\gamma_{t + hs},\gamma_{t + hs}, t+ hs)\,ds \\
&= \inf_{\stackrel{\tilde{\gamma}:[0,1]\to M}{ \tilde\gamma_0 =y,\,\tilde\gamma_1 = z}}  \int_0^1 h^2L(\dot{\tilde{\gamma}}(s)/h, \tilde{\gamma}(s),t + hs)\,ds = \inf_{\tilde \gamma}  \int_0^1 L^h(\dot{\tilde{\gamma}}(s), \tilde{\gamma}(s),s)\,ds\,,
\end{align}

where we have written $L^h(v,p,s) := h^2L(v/h, p, t + hs)$. Note that by the additional assumption on the form of $L$, $L^h$ $C^2$-converges on $TM \times [0,1]$ to $L^0(v,p,s) := (1/2)\cdot |v|^2_{g_t(p)}$, so we also have $C^2$ convergence of the cost $h\cdot L^{t,t + h}(\cdot,z) \to (1/2)\cdot d^2_{g_t}(\cdot,z)$ on $B^{g_t}_{\mathrm{injrad}_{g_t}/2}(z)$. This last statement follows in the case under consideration because for $C^2$-converging Lagrangians $L(x,v,t)$ that additionally have $C^1$ convergence of $(\nabla_{v,v}^2L)$ and uniform convexity $(\nabla_{v,v}^2L) \geq \lambda > 0$, the geodesic equation involves only $C^1$-converging quantities and the Jacobi field equation involves only uniformly bounded quantities (in any fixed local coordinate system). This allows us to transfer the convexity of $d^2_{g_t}(\cdot, z)$ near $z$ to the $O(h^{-1})$ convexity of $L^{t,t + h}(\cdot, z)$,
\begin{align}
L^{t,t + h}(\exp^{g_t}_y(v),z) &- L^{t,t + h}(y,z)  - \La v,(\nabla^{g_t}L^{t,t +h}(\cdot, z))_y \Ra_{g_t} \\
&= \frac{\big((\nabla^2 d^2_{g_t}(\cdot, z))_y/2 + \Psi(h)\big)(v,v) + o(|v|_{g_t}^2)}{2h}\geq \frac{|v|_{g_t}^2}{4h}\,,
\end{align}
for any $z,y \in M$, $v \in T_yM$, whenever $h$, $\,d_{g_t}(y,z)$, $|v|_{g_t}$ are small enough. We now let $\gamma:[t,t + h] \to M$ be a (uniquely minimizing for $h$ small enough, by adapting \cite[pg.~2631]{KleLot08} to quadratic Lagrangians) $L^{t,t+h}$-geodesic with $\gamma_t = x$, $\dot\gamma_t = \nabla^{g_t} u_t(x)$. By \cite[Thm.~10.15]{Vil09}, and then the specific form of the Lagrangians under consideration, we also have $(\nabla^{g_t}L^{t,t +h}(\cdot, \gamma_{t + h}))_{\gamma_t} = -(\nabla_v^{g_{t}} L)(\gamma_t,\dot\gamma_t, t) = -\dot\gamma_t$. In particular, $(\nabla^{g_t}L^{t,t +h}(\cdot, \gamma_{t + h}))_x = -\nabla^{g_t}u_t(x)$.

Finally we can conclude by noting that for $h$, $d_{g_t}(x,\gamma_{t + h})$, $|v|_{g_t}$, small enough and $v \neq 0$, the second-order expansions for $u_t$ and $L^{t,t + h}(\cdot, \gamma_{t + h})$ near $x$ give
\begin{equation}
\big(L^{t,t+h}(\cdot,\gamma_{t + h}) + u_t\big)(\exp_x^{g_t}(v)) - \big(L^{t,t + h}(x,\gamma_{t + h}) + u_t(x)\big)\geq \frac{\La (S + I/2h)(v),v \Ra_{g_t}}{2} + o(|v_t|^2_{g_t}) > |v|_{g_t}^2 > 0\,.
\end{equation}
In particular, $L^{t,t+h}(\cdot,\gamma_{t + h}) + u_t$ achieves a unique minimum in a neighborhood of $x$, which is at $x$. By Lemma \ref{lem:costfromlagrangian} and boundedness of $u_t$, we conclude that the minimizer in the definition of Hopf-Lax lies in this neighborhood for $h$ small enough, and thus

\begin{equation}
u_{t + h}(\gamma_{t + h}) -  u_t(x) = \inf_{y \in M} \{L^{t,t+h}(y,\gamma_{t + h}) + u_t(y)\} - u_t(x) = L^{t,t + h}(x,\gamma_{t + h})\,.
\end{equation}
Taking now $\liminf_{h\downarrow 0}\frac{1}{h}$ of the RHS, we obtain

\begin{equation}
\partial_{t^+}^-u_t(x) + du_t(x)(\dot{\gamma}_t) = L(\dot{\gamma}_t,x,t) \qquad\implies \qquad \partial_{t^+}^-u_t(x) = L(\dot{\gamma}_t,x,t) - du_t(x)(\dot{\gamma}_t) = -H(du_t(x), x,t)\,,
\end{equation}
where for the last equality, we used the choice $\dot\gamma_t := \nabla^{g_t}u_t(x)$ and that the optimizer in the fiberwise Legendre transform of $L(\cdot,x,t)$ is given by the $g_t$-dual covector. 
\end{proof}

\section{Dynamic transport plans}\label{Appe:Dynplans}
\begin{proof}[Proof of Proposition \ref{prop:Lisini-for-geodesics}]
    For the first claim and the existence of $\eta$ optimal, see \cite[Thm.~7.21]{Vil09}. Moreover we can compute the speed of the curve for $r\in[s,t]$:
    \begin{align}\label{eq:energyplan2}
        (\dot{\mu}_r)_{L^r} &= \lim_{h\to 0}\frac{W_{L^{r,r+h}}(\mu_r,\mu_{r+h})}{h} \\
        &\overset{\cite{Vil09}}{=} \lim_{h\to0} \frac{1}{h} \int_C\int_r^{r+h} L(\dot\gamma_u,\gamma_u,u)du d\eta \\
        &= \int_CL(\dot\gamma_r,\gamma_r,r) d\eta.
    \end{align}

Finally, we want to relate the velocity $\dot\gamma_r$ to $d\phi_r$. Because $\pi=(e_r,e_t)_\#\eta$ is an optimal plan between $\mu_r$ and $\mu_t$, it holds (\cite[Thm.~5.10-(ii)-(c)]{Vil09}) that
\begin{align}
    \phi_{t}(y)-\phi_{r}(x)= L^{r,t}(x,y) \ \ \ \text{for $\pi$-a.e. $(x,y)$.}
\end{align}
On the other hand
\begin{align}
    \phi_t(y)-\phi_r(x) \leq L^{r,t}(x',y)+ \phi_r(x') - \phi_r(x)
\end{align}
hence (temporarily working with a support metric $g$) in a neighborhood of $0 \in T_xM$
\begin{align}
    \phi_r(\exp_x(v))-\phi_r(x) \geq L^{r,t}(x,y)-L^{r,t}(\exp_x(v),y) \geq \langle p,v \rangle + o(|v|)
\end{align}
if $p\in T_x^*M$ is a superdifferential of $L^{r,t}(\cdot,y)$ at $x$. By \cite[Prop.~10.15-(i)]{Vil09}, we can take $p=-d_v L(x,\dot\gamma_r,r)$, where $\gamma$ is an $L^{r,t}$ geodesic from $x$ to $y$. But then if $x$ is a differentiability point of $\phi_r$ ($\mu_r$-a.e. because $\mu_s$ is assumed absolutely continuous, and hence $\mu_r$ is as well by \cite[Thm.~8.7]{Vil09}) we get
\begin{align}
    d\phi_r(x)=-d_v L(x,\dot\gamma_r,r)
\end{align}
for $\pi$-a.e. $(x,y)$.
Because $L$ is strictly convex and superlinear the Legendre transform is a homeomorphism, $d_v L(x,\dot\gamma_r,r)=(\dot\gamma_r)^*$ and hence $\dot\gamma_r=-(d\phi_r)^*$. Since the latter identity holds on the $\eta$-full measure set
\begin{align}
    e_r^{-1}(\{x\in M\mid \phi_r \text{ differentiable at }x\}) \cap(e_r,e_t)^{-1}(\{(x,y)\in M\times M\mid \phi_t(y) - \phi_r(x) = L^{r,t}(x,y)\}) \subseteq C
\end{align}
we can conclude
\begin{align}
    (\dot{\mu}_r)_{L^r} = \int_C L(-(d\phi_r(\gamma_r))^*,\gamma_r,r)d\eta = \int_M L(-(d\phi_r(x))^*,x,r)d\mu_r\,.
\end{align}
\end{proof}

Now we prove Lemma \ref{lem:dynamiclifting}. In \cite{Lisini2007} the same conclusion holds for all a.c. curves in the Wasserstein space (induced by $d^p$): while we expect something similar might still hold for the Lagrangian case, we follow the more practical approach from \cite{Lott-somecalculations}.
\begin{proof}[Proof of Lemma \ref{lem:dynamiclifting}]
    Let $\mu_r=\rho_rdV_g$, $dV_g$ being the background volume measure, then we can find a unique (up to translation) $\phi_r$ such that its gradient (taken in the metric $2A_{x,r}$) solves the continuity equation:
    \begin{align}
        \partial_r\rho_r = -\div(\rho_r\nabla\phi_r)\,.
    \end{align}
    We then consider $\Phi_r$ to be the flow generated by $(\nabla\phi_r)_{r\in[s,t]}$, let $\Phi_{[s,t]}:M\mapsto C^1([s,t],M)$, $\Phi_{[s,t]}(x)=(\Phi_r(x))_{r\in[s,t]}$. We then define $\eta=(\Phi_{[s,t]})_\#\mu_s$, by construction we have $(e_r)_\#\eta=\mu_r$.

    $\leq)$ This direction holds in fact for any $\eta$ with $(e_r)_\#\eta=\mu_r$:
    \begin{align}
        (\dot\mu_r)_{L^r} &= \lim_{h\to0} \frac{W_{L^{r,r+h}}(\mu_r,\mu_{r+h})}{h} \\
        &\leq \lim_{h\to0} \frac{1}{h}\int_{M\times M} L^{r,r+h}(x,y) d(e_r,e_{r+h})_\#\eta \\
        &= \lim_{h\to0} \frac{1}{h} \int_{C} L^{r,r+h}(\gamma_r,\gamma_{r+h}) d\eta(\gamma) \\
        &\leq \lim_{h\to0} \frac{1}{h} \int_{C} \int_r^{r+h} L(\dot \gamma_u,\gamma_u,u) dud\eta(\gamma) \\
        &= \int_{C} L(\dot \gamma_r,\gamma_r,r) d\eta(\gamma).
    \end{align}

    $\geq)$ Fix $\mu_r$, $\mu_{r+h}$. Let $(\nu_u^h)_{u\in[r,r+h]}$ be a $W_{L^{r,r+h}}$ geodesic between them: $\nu_r^h=\mu_r$, $\nu_{r+h}^h=\mu_{r+h}$, and let $\theta^h\in\cP(C)$ be the lifting of $\nu^h$. Then for any $f\in C^1$:
    \begin{align}
        \int fd\mu_r- \int fd\mu_{r+h} &=  \int_C f(\gamma_{r+h})-f(\gamma_r) d\theta^h(\gamma) \\
        &= \int_C \int_r^{r+h} \langle df(\gamma_u),\dot\gamma_u\rangle du d\theta^h(\gamma) \\
        &\leq \int_C \int_r^{r+h} H(df(\gamma_u),\gamma_u,u) du d\theta^h + \int_C\int_r^{r+h}L(\dot\gamma_u,\gamma_u,u)dud\theta^h \\
        &= \int_r^{r+h}\int_M H(df(x),x,u) d\nu_u^h du + W_{L^{r,r+h}}(\mu_r,\mu_{r+h})
    \end{align}
    On the other hand, using the continuity equation
    \begin{align}
        \int fd\mu_r- \int fd\mu_{r+h} &= \int_r^{r+h} \frac{d}{du}\int fd\mu_u du \\
        &=\int_r^{r+h} \int \langle df,\nabla\phi_u\rangle d\mu_udu\,.
    \end{align}
    Hence 
    \begin{align}
        W_{L^{r,r+h}}(\mu_r,\mu_{r+h}) \geq \int_r^{r+h} \int \langle df,\nabla\phi_u\rangle d\mu_udu - \int_r^{r+h}\int_M H(df(x),x,u) d\nu_u^h du\,.
    \end{align}

Since $H$ is continuous in all inputs and $\nabla \phi$ is smooth, we have $H(df(\cdot), \cdot,u) \to H(df(\cdot), \cdot,r)$ and $\La df,\nabla \phi_u\Ra \to \La df,\nabla \phi_r\Ra \in C^0(M)$ uniformly as $u \to r$. We also have smooth (and thus also weak) convergence $\mu_u \weak \mu_r$, and uniformly weak convergence $\nu_{u_h}^h \weak \nu_r \in \cP(M)$ as $h\to0$ for any sequence $u_h\in[r,r+h]$. This last weak convergence holds because Proposition \ref{prop:W-implies-weak-conv} can be applied, as 
\begin{align}
\lim_{h \to 0}W_{L^{r,u_h}}(\nu_r^h,\nu_{u_h}^h) &= \lim_{h \to 0}W_{L^{r,r+h}}(\nu_r^h,\nu_{r+h}^h) - W_{L^{u_h,r+h}}(\nu_{u_h}^h,\nu_{r+h}^h) \\
&\leq \lim_{h\to0} \int_r^{r+h}\int_CL(\dot\gamma_u,\gamma_u,u)d\eta du - \int_{u_h}^{r+h}\int_CL(\dot\gamma_u,\gamma_u,u)d\theta^h du \\
&\leq \lim_{h\to0} \int_r^{r+h}\int_CL(\dot\gamma_u,\gamma_u,u)d\eta du + Ch\,.
\end{align}
For the first integral we used Proposition \ref{prop:Lisini-for-geodesics} along with the fact that $(e_{r})_\#\eta=\mu_r$, $(e_{r + h})_\#\eta=\mu_{r + h}$ (hence it is admissible), while for the second integral the bound on the Lagrangian $L(v,x,r)\geq -C$.

The pairings--of a uniformly converging sequence of functions with a weakly converging sequence of probability measures--themselves converge, i.e.
\begin{equation}
\lim_{u \to r} \int \langle df,\nabla\phi_u\rangle d\mu_u =  \int \langle df,\nabla\phi_r\rangle d\mu_r\,,\qquad 
\lim_{h \to 0}\int_M H(df(x),x,u_h) d\nu_{u_h} = \int_M H(df(x),x,r) d\nu_r \ \ \ \text{uniformly in $u_h$}.
\end{equation}

Taking $\lim_{h \to 0}\frac{1}{h}$ then gives
\begin{align}
        (\dot\mu_r)_{L^r} \geq \int_M \langle df(x),\nabla\phi_r(x)\rangle d\mu_r(x) - \int_M H(df(x),x,r) d\mu_r(x)\,.
\end{align}
Because $L$ is quadratic, $L(v,x,r)=A_{x,r}(v,v)+C_{x,r}$, the optimizer in the Legendre transform is just the dual through $2A_{x,r}$, hence choosing $f=\phi_r$ we conclude
\begin{align}
        (\dot\mu_r)_{L^r} &\geq  \int_M L(\nabla \phi_r,x,r) d\mu_r \\
        &=\int_C L(\dot\gamma_r,\gamma_r,r) d\eta \,.
\end{align}
\end{proof}

\section{Curve-smoothing lemma}

The following lemma will allow us to smooth out the densities of measures along $W_{L_{0/\pm}}$ geodesics in a way that is not too violent from the point of view of $L_{0/\pm}$ action or entropy. There is no significant difference between the arguments for the costs $L_{0}$, $L_-$, and $L_+$, so we state and prove the result for all three simultaneously.

\begin{lem}\label{lem:wass-geodesic-smoothing}
Let $(M,g_\tau)_{\tau \in I}$ be a family of smooth Riemannian manifolds parametrized by backwards time $\tau = T - t$, and $(\mu_\tau)_{\tau \in [\tau_1,\tau_2]}\subseteq \cP(M)$ a $W_{L_{0/\pm}^{\tau_1,\tau_2}}$ Wasserstein geodesic with a.c. endpoints $\mu_{\tau_i} \in \cP^{ac}(M,dV_{\tau_i})$, $i = 1,2$. Choose a cost $L_m$, $m = 0,+,-$. The times $[\tau_1 ,\tau_2] \subseteq I$ are required to satisfy $\tau_2 < \sup I$, and also $0 < \tau_1$ in the case $m = -$ and $0 < T - \tau_2$ in the case $m = +$. Assume that \eqref{eq:WCL0} ($m = 0$), \eqref{eq:WCL-} ($m =  -$), or \eqref{eq:WCL+} ($m = +$) hold, and for $\eps > 0$, define the $\eps$-related time operation for all $\tau \in [\tau_1,\tau_2]$

\begin{equation}
\tau^\eps := \begin{cases}
    \tau + \eps,&m = 0\,,\qquad\eps < \sup I - \tau_2 \\
    e^\eps\tau,& m = -\,,\qquad\eps < \ln (\sup I/\tau_2) \\
    T - e^{-\eps}(T - \tau),& m = +\,,\qquad \eps < \ln\big((T - \tau_2)/(T - \sup I)\big)
\end{cases}\,.
\end{equation}

Then there are curves $(\mu_\tau^\eps)_{\tau \in [\tau_1^\eps, \tau_2^\eps]} = (\rho_\tau^\eps\,dV_\tau)_{\tau \in [\tau_1^\eps, \tau_2^\eps]}$ such that:
\begin{itemize}
\item (regularity) We have the space-time regularity of densities $\rho^\eps \in C^\infty\big(M\times [\tau_1^\eps,\tau_2^\eps] ; (0,\infty)\big)$.
\item (action convergence) For any times $\tau_1 \leq \tilde\tau_1 \leq \tilde\tau_2 \leq \tau_2$ we have the bounded convergences
\begin{align}\label{eq:curve-conv}
\int_{\tilde\tau_1}^{\tilde\tau_2} (\dot{\mu}_{\sigma})_{L_{0/\pm}^\sigma}\,d\sigma &= \lim_{\eps \to 0} \int_{\tilde\tau_1^\eps}^{\tilde\tau_2^\eps} (\dot{\mu}_{\sigma}^\eps)_{L_{0/\pm}^\sigma}\,d\sigma\,,\\
\left|\int_{\tilde\tau_1^\eps}^{\tilde\tau_2^\eps} (\dot{\mu}_{\sigma}^\eps)_{L_{0/\pm}^\sigma}\,d\sigma\right| &\leq C = C(\tau_2,\,\tau_1,\,\mu_{\tau_1},\mu_{\tau_2},\, (M,g_\tau)_{\tau \in I})\,. 
\end{align}

(For the definition and existence of the metric derivative for geodesics, see Proposition \ref{prop:Lisini-for-geodesics}).
\item (entropy convergence) We have the convergence for each time (possibly to $+\infty$)

\begin{equation}\label{eq:entropy-conv}
\lim_{\eps \to 0} \cE(\mu_{\tau^\eps}^\eps) = \cE(\mu_\tau)\,,\qquad \forall \tau \in [\tau_1,\tau_2]\,.
\end{equation}
\end{itemize}
\end{lem}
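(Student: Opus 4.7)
My plan is to smooth the geodesic by applying the adjoint heat semigroup to each slice, for the amount of ``time'' dictated by the reparametrization $\tau \mapsto \tau^\eps$. Concretely, I would set
\[
\mu_{\tau^\eps}^\eps := \hat P_{\tau, \tau^\eps}\mu_\tau, \qquad \tau \in [\tau_1, \tau_2],
\]
so that the reparametrized curve $(\mu_\sigma^\eps)_{\sigma \in [\tau_1^\eps, \tau_2^\eps]}$ consists of smooth, positive-density measures by parabolic smoothing of $\hat P$ on the closed manifold $M$ together with the strong maximum principle. This specific choice of smoothing is dictated by the structure of the Wasserstein contractions \eqref{eq:WCL0}, \eqref{eq:WCL-}, \eqref{eq:WCL+}, which compare precisely such ``$\eps$-related'' pairs of time slices---additively for $m = 0$ and multiplicatively for $m = \pm$.

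The action estimates then follow from a two-sided argument. For the upper bound, I would apply the assumed Wasserstein contraction along a partition $\tilde\tau_1 = \sigma_0 < \cdots < \sigma_N = \tilde\tau_2$, obtaining $W_{L_{0/\pm}^{\sigma_i^\eps, \sigma_{i+1}^\eps}}(\mu_{\sigma_i^\eps}^\eps, \mu_{\sigma_{i+1}^\eps}^\eps) \leq W_{L_{0/\pm}^{\sigma_i, \sigma_{i+1}}}(\mu_{\sigma_i}, \mu_{\sigma_{i+1}}) + \Psi(\eps \mid \sigma_{i+1} - \sigma_i)$. Summing over the partition and then refining, the right-hand side converges to the action of the original geodesic (which equals its endpoint $W_{L_{0/\pm}}$ distance, by the geodesic property) while the left-hand side converges to the action of the smoothed curve, yielding the upper bound $\limsup_{\eps\to 0}\int_{\tilde\tau_1^\eps}^{\tilde\tau_2^\eps}(\dot\mu_\sigma^\eps)_{L_{0/\pm}^\sigma}\,d\sigma \leq \int_{\tilde\tau_1}^{\tilde\tau_2}(\dot\mu_\sigma)_{L_{0/\pm}^\sigma}\,d\sigma$ together with the required uniform bound. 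The matching $\liminf$ bound comes from the general inequality $\text{action} \geq W_{L_{0/\pm}}$ applied to the smoothed curve, combined with $\mu_{\tilde\tau_i^\eps}^\eps \weak \mu_{\tilde\tau_i}$ (weak continuity of $\hat P$ plus Proposition~\ref{prop:W-implies-weak-conv}) and \cite[Thm.~5.20]{Vil09}, which together force $W_{L_{0/\pm}^{\tilde\tau_1^\eps, \tilde\tau_2^\eps}}(\mu_{\tilde\tau_1^\eps}^\eps, \mu_{\tilde\tau_2^\eps}^\eps) \to W_{L_{0/\pm}^{\tilde\tau_1, \tilde\tau_2}}(\mu_{\tilde\tau_1}, \mu_{\tilde\tau_2})$.

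The hard part will be the entropy convergence \eqref{eq:entropy-conv}. The $\liminf$ half is standard: weak lower semicontinuity of relative entropy on compact $M$ gives $\cE(\mu_\tau) \leq \liminf_{\eps \to 0}\cE(\mu_{\tau^\eps}^\eps)$, which in particular already handles the case $\cE(\mu_\tau) = +\infty$. The obstacle is the $\limsup$ direction when $\cE(\mu_\tau) < \infty$. My approach is to compute $\partial_s \cE(\hat P_{\tau, s}\mu_\tau)$ directly, using the adjoint heat equation for the density, the volume evolution $\partial_s dV_s = \pm S_s \, dV_s$, and $\int_M \Delta u\, dV = 0$ on the closed manifold. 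Tracking signs through the backward/forward-time exchange, this produces a time-derivative of the form $\pm(\text{Fisher information}) \pm \int_M S\, d\mu$, in which the Fisher information term has the favorable sign and $S$ is uniformly bounded by smoothness of $g$ on the compact time range. Dropping Fisher and integrating yields $\cE(\mu_{\tau^\eps}^\eps) \leq \cE(\mu_\tau) + C\eps$, which together with the $\liminf$ estimate closes \eqref{eq:entropy-conv}. The multiplicative cases $m = \pm$ follow verbatim after an elementary change of time variable, since $S$ and the volume distortion remain uniformly controlled on the relevant time range.
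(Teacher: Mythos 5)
Your smoothing $\mu^\eps_{\tau^\eps} := \hat P_{\tau,\tau^\eps}\mu_\tau$ is exactly the one used in the paper, and the regularity and action-convergence parts of your argument are in the same spirit as the paper's. The one place you should be a bit careful in the action part is your claim that the partition sums $\sum_i W_{L^{\sigma_i^\eps,\sigma_{i+1}^\eps}}(\mu^\eps_{\sigma_i^\eps},\mu^\eps_{\sigma_{i+1}^\eps})$ converge to the action of the smoothed curve under refinement: since partition sums are a priori only a \emph{lower} bound for the action (by the triangle inequality $W_{L^{a,b}} \leq \int_a^b(\dot\mu^\eps)$), obtaining the desired upper bound on the action requires showing that this lower bound is attained in the limit, i.e. that $W_{L^{\sigma,\sigma+h}}(\mu^\eps_\sigma,\mu^\eps_{\sigma+h})/h$ converges to $(\dot\mu^\eps_\sigma)$ uniformly in $\sigma$. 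This is plausible because the smoothed densities are smooth and strictly positive, but it is an extra step; the paper sidesteps it by first deriving a pointwise metric-derivative estimate from the Wasserstein contraction (via Kantorovich potentials as dual test functions) and then integrating, which is cleaner.

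The entropy $\limsup$ argument, however, has a genuine gap. You propose to compute $\partial_\sigma\cE(\hat P_{\tau,\sigma}\mu_\tau) = -(\text{Fisher}) - \int_M S\,d\mu_\sigma \leq \|S\|_\infty$ for $\sigma>\tau$, drop the Fisher term, and ``integrate'' to conclude $\cE(\mu^\eps_{\tau^\eps}) \leq \cE(\mu_\tau) + C\eps$. The derivative bound is valid for $\sigma > \tau$, but applying the fundamental theorem of calculus down to the endpoint $\sigma = \tau$ is exactly what cannot be taken for granted: one can have a function $F$ on $(\tau,\tau^\eps]$ with $F' \leq C$ everywhere and nonetheless $\lim_{\sigma\downarrow\tau}F(\sigma) = +\infty$ while the prescribed value $F(\tau)$ is finite (e.g.\ $F(\sigma) = (\sigma-\tau)^{-1}$). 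Weak lower semicontinuity of entropy gives $\liminf_{\sigma\downarrow\tau}\cE(\hat P_{\tau,\sigma}\mu_\tau) \geq \cE(\mu_\tau)$, which is the wrong direction and does not close the argument. What is needed is an independent upper bound at the endpoint, not a derivative bound in the interior. The paper obtains this via Jensen's inequality applied to the normalized adjoint heat kernel $p^*_{\tau,\tau^\eps}(\cdot,y)/P^*_{\tau,\tau^\eps}[1](y)$, which yields directly
\[
\cE(\mu^\eps_{\tau^\eps}) \leq \cE(\mu_\tau) + \int_M P_{\tau,\tau^\eps}\big[\ln P^*_{\tau,\tau^\eps}[1]\big]\,d\mu_\tau\,,
\]
and the error term vanishes uniformly as $\eps\to 0$. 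This is in essence the data-processing inequality for relative entropy with a correction for the time-dependence of the reference volume; it is the step your ``drop Fisher and integrate'' cannot substitute for, because the energy-dissipation identity relied upon implicitly already presupposes the endpoint continuity you are trying to establish.
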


\begin{rmk}
The assumption that \eqref{eq:WCL0}, \eqref{eq:WCL-}, resp. \eqref{eq:WCL+} hold is not actually needed. It is in fact sufficient that $\cD(X) \geq -C(|X|^2 + 1) > -\infty$ for some $C \in \R{}$, which is automatic since $M$ is assumed smooth and closed. However, to carefully check this one would need to keep track of the corresponding error throughout the proof of $\cD \geq 0 \implies$ \eqref{eq:WCL0}, \eqref{eq:WCL-}, \eqref{eq:WCL+}. We therefore leave the details to the interested reader.
\end{rmk}

\begin{proof}
Let us begin by remarking that since $\mu_{\tau_1} = \rho_{\tau_1}\,dV_{\tau_1}$ and $\mu_{\tau_2} = \rho_{\tau_2}\,dV_{\tau_2}$ are assumed absolutely continuous, each measure along the geodesic $(\rho_\tau\,dV_\tau)_{\tau \in [\tau_1,\tau_2]}$ is as well by \cite[Thm.~8.7]{Vil09}.
We define $(\mu_{\tau^\eps}^\eps)_{\tau^\eps \in [\tau_1^\eps,\tau_2^\eps]}$ using the ``sliding window'' formula

\begin{equation}
\mu^\eps_{\tau^\eps} := \hat{P}_{\tau,\tau^\eps}[\mu_{\tau}]\,,\qquad\rho_{\tau^\eps}^\eps = P_{\tau,\tau^\eps}^*[\rho_{\tau}]\,,\qquad \forall \tau \in [\tau_1,\tau_2]\,. 
\end{equation}

Let us note that $\rho^\eps_{\tau^\eps}$ is nowhere $0$ for $\eps > 0$ by the strong maximum principle, and is a smooth function on $M$ for each $\tau^\eps \in [\tau_1^\eps,\tau_2^\eps]$. To see that it has the desired regularity in time, we can make use of Proposition \ref{prop:Lisini-for-geodesics} to write $\mu_\tau = (e_\tau)_\# \eta$ for some $\eta \in \cP(C([\tau_1,\tau_2],M))$ that is supported on $L_{0/\pm}$ geodesics. These curves are in particular $C^\infty$ in their parameter, so writing $p_{\sigma,\tau}^*(x,y)$ the adjoint heat kernel,

\begin{align}
\lim_{\eta \to 0}\frac{\rho_{(\tau + \eta)^\eps}^\eps(x) - \rho_{\tau^\eps}^\eps(x)}{\eta} &= \lim_{\eta \to 0} \frac{1}{\eta}\left(\int_M p_{\tau + \eta, (\tau + \eta)^\eps}^*(x,y)\,d\mu_{\tau + \eta}(y) - \int_M p_{\tau,\tau^\eps}^*(x,y)\,d\mu_{\tau}(y)\right) \\
&= \lim_{\eta \to 0} \frac{1}{\eta}\left(\int_C p_{\tau + \eta,(\tau + \eta)^\eps}^*(x,\gamma_{\tau + \eta})\,d\nu(\gamma) - \int_C p_{\tau,\tau^\eps}^*(x,\gamma_{\tau})\,d\nu(\gamma)\right) \\
&= \int_C \La d_yp_{\tau,\tau^\eps}^*(x,\gamma_{\tau}),\dot{\gamma}_{\tau}\Ra + \left.\partial_\eta\right|_{\eta = 0}(p^*_{\tau+ \eta,\tau^\eps } + p^*_{\tau,(\tau + \eta)^\eps})(x,\gamma_{\tau})\,d\nu(\gamma)\,,\qquad x \in M\,,
\end{align}
where we passed the limiting difference quotient into the integral by dominated (even bounded) convergence. The RHS is a smooth function of $x$ by similarly differentiating under the integral sign, and arbitrary derivatives in $x$ are continuous in time because $\nu$ is concentrated on uniformly $C^1$ curves. One may take arbitrarily many $\tau$ derivatives in this way, showing that $\rho^\eps_{\tau^\eps}(x)$ has all partial derivatives in the $x,\tau$ variables, which are moreover continuous. This shows that $\rho^\eps \in C^\infty\big([\tau_1^\eps,\tau_2^\eps]\times M;(0,\infty)\big)$ as desired. \vspace{0.5cm}

We now turn to the convergence of $W_{L_{0/\pm}}$ actions. The heat flow for Borel probability measures is weakly continuous up to the initial time, i.e. $\mu^\eps_{\tau_\eps} \weak \mu_\tau$ as $\eps \to 0$ for all $\tau \in [\tau_1,\tau_2]$. Applying this to the endpoints of any interval $[\tilde\tau_1, \tilde\tau_2] \subseteq [\tau_1,\tau_2]$ yields the convergence of transportation costs

\begin{equation}
W_{L_{0/\pm}^{\tilde\tau_1 ^\eps,\tilde\tau_2 ^\eps}}(\mu_{\tilde\tau_1^\eps}^\eps, \mu_{\tilde\tau_2^\eps}^\eps) \to W_{L_{0/\pm}^{\tilde\tau_1,\tilde\tau_2}}(\mu_{\tilde\tau_1},\mu_{\tilde\tau_2}) \qquad \text{as }\eps \to 0\,.
\end{equation}
as a result of \cite[Thm.~5.20]{Vil09}. Now let $(\phi_\tau^\eps)_{\tau \in [\tilde\tau_1^\eps,\tilde\tau_2^\eps]}$ be the Kantorovich potentials for the pair $(\mu_{\tilde\tau_1^\eps}^\eps, \mu_{\tilde\tau_2^\eps}^\eps)$ given by Corollary \ref{cor:dynamic-potentials}. We can use $\phi_\tau^\eps$ as a candidate for the dual problem w.r.t any other pair of measures, followed by the assumed \eqref{eq:WCL0}, \eqref{eq:WCL-}, or \eqref{eq:WCL+}, in
\begin{align}
\frac{W_{L_{0/\pm}^{\sigma_1^\eps,\sigma_2^\eps}}(\mu_{\sigma_1^\eps}^\eps,\mu_{\sigma_2^\eps}^\eps)}{\sigma_2^\eps - \sigma_1^\eps} &= \frac{1}{\sigma_2^\eps - \sigma_1^\eps}\left.\int_M \phi_{\sigma}^\eps \,d\mu_{\sigma}^\eps \right|_{ \sigma_1^\eps}^{\sigma_2^\eps} \\
&= \frac{1}{\sigma_2^\eps - \sigma_1^\eps}\left.\left(\int_M\phi_\sigma^\eps \,d\hat{P}_{\sigma,\sigma^\eps}\mu_{\sigma}\right)\right|_{ \sigma_1^\eps}^{\sigma_2^\eps} \\
&\leq \frac{W_{L_{0/\pm}^{\sigma_1^\eps, \sigma_2^\eps}}(\hat{P}_{\sigma_1,\sigma_1^\eps}\mu_{\sigma_1}, \hat{P}_{\sigma_2,\sigma_2^\eps}\mu_{\sigma_2})}{\sigma_2^\eps - \sigma_1^\eps} \\
&\leq \frac{(1 + \Psi(\eps))\cdot W_{L_{0/\pm}^{\sigma_1, \sigma_2}}(\mu_{\sigma_1}, \mu_{\sigma_2}) + \Psi(\eps)\cdot (\sigma_2 - \sigma_1)}{(1 + \Psi(\eps))\cdot (\sigma_2 - \sigma_1)}\,,\qquad \tilde\tau_1 ^\eps \leq \sigma_1^\eps < \sigma_2^\eps \leq \tilde\tau_2^\eps\,.
\end{align}
Now, we can use Propositions \ref{prop:Lisini-for-geodesics} and Lemma \ref{lem:dynamiclifting} to take the limit $\sigma_1^\eps, \sigma_2^\eps \to \sigma^\eps$ on both sides, obtaining $(\dot{\mu}_{\sigma ^\eps}^\eps)_{L_{0/\pm}^{\sigma^\eps}} \leq (1 + \Psi(\eps))\cdot (\dot{\mu}_{\sigma})_{L_{0/\pm}^{\sigma}} + \Psi(\eps)$ for any $\tilde\tau_1^\eps \leq \sigma^\eps \leq\tilde\tau_2^\eps$. Integrating in the variable $\sigma^\eps \in [\tilde\tau_1^\eps,\tilde\tau_2^\eps]$ gives
\begin{align}
W_{L_{0/\pm}^{\tilde\tau_1^\eps,\tilde\tau_2^\eps}}(\mu_{\tilde\tau_1^\eps}^\eps,\mu_{\tilde\tau_2^\eps}^\eps) \leq \int_{\tilde\tau_1^\eps}^{\tilde\tau_2^\eps}(\dot{\mu}_\sigma^\eps)_{L_{0/\pm}^{\sigma}}\,d\sigma &\leq \int_{\tilde\tau_1}^{\tilde\tau_2}\Big((1 + \Psi(\eps))\cdot(\dot{\mu}_{\sigma})_{L_{0/\pm}^{\sigma}}+ \Psi(\eps)\Big)(1 + \Psi(\eps))\,d\sigma  \\
&= (1 + \Psi(\eps))\cdot W_{L_{0/\pm}^{\tilde\tau_1,\tilde\tau_2}}(\mu_{\tilde\tau_1}, \mu_{\tilde\tau_2})+ \Psi(\eps)\,,
\end{align}
where we used the $W_{L_{0/\pm}}$ triangle inequality for the first inequality. After taking $\limsup_{\eps \to 0}$ of each expression in the above chain of inequalities, along with the previously established convergence of transportation costs applied to the LHS, implies that all quantities above converge, and to the same limit $\int_{\tilde\tau_1}^{\tilde\tau_2}(\dot\mu_\sigma)_{L_{0/\pm}^{\sigma}}\,d\sigma$. This gives \eqref{eq:curve-conv}. The same chain of inequalities, along with the uniform lower bound $L_{0/\pm}(v,x,\tau) \geq -C$ on the Lagrangians inducing the costs of $W_{L_{0/\pm}}$, gives uniform bounds on the action (with perhaps a larger $C$).
\begin{equation}
-C(\tau_2^\eps - \tau_1^\eps) \leq \int_{\tilde\tau_1 ^\eps}^{\tilde\tau_2^\eps} (\dot\mu_\sigma^\eps)_{L_{0/\pm}^\sigma}\,d\sigma \leq   C( W_{L_{0/\pm}^{\tau_1,\tau_2}}(\mu_{\tau_1},\mu_{\tau_2}) + 1 + \tau_2^\eps - \tau_1^\eps)\,.
\end{equation}

It remains to check the entropy convergence. Since $[0,\infty)\ni x\mapsto x\ln x$ is bounded from below, and $P^*_{\tau, \tau^\eps}[\rho_\tau] \to \rho_\tau$ pointwise a.e. as $\eps \to 0$ by the Lebesgue differentiation theorem, Fatou's Lemma implies that 
\begin{equation}
\cE(\mu_\tau) = \int_M \rho_\tau \ln \rho_\tau\, dV_\tau \leq \liminf_{\eps \to 0}\int_M \rho_{\tau^\eps}^\eps \ln (\rho_{\tau^\eps}^\eps)\,dV_{\tau^\eps} = \liminf_{\eps \to 0}\cE(\mu_{\tau^\eps}^\eps)\,,\qquad \forall \tau \in [\tau_1,\tau_2]\,.
\end{equation}
On the other hand, the convexity of $x \mapsto x\ln x$ along with Jensen's inequality applied in the probability space $(M_x, p^*_{\tau,\tau^\eps}(y,x)\,dV_\tau(x)/P_{\tau,\tau^\eps}^*[1](y))$ for each $y \in M$ yields
\begin{align}
\cE(\mu_{\tau^\eps}^\eps)&=\int_M(P^*_{\tau, \tau^\eps}[\rho_{\tau}])\ln(P^*_{\tau, \tau^\eps}[\rho_{\tau}])\,dV_{\tau^\eps} \\
&= \int_M (P^*_{\tau, \tau^\eps}[1])\left(\frac{P^*_{\tau, \tau^\eps}[\rho_{\tau}]}{P^*_{\tau, \tau^\eps}[1]}\right)\ln\left(\frac{P^*_{\tau, \tau^\eps}[\rho_{\tau}]}{P^*_{\tau, \tau^\eps}[1]}\right)\,dV_{\tau^\eps} + \int_M \ln (P^*_{\tau, \tau^\eps}[1])\,d\hat{P}_{\tau, \tau^\eps}[\mu_{\tau}] \\
&\leq \int_M (P^*_{\tau, \tau^\eps}[1])\frac{P^*_{\tau, \tau^\eps}[\rho_{\tau}\ln(\rho_{\tau })]}{P^*_{\tau, \tau^\eps}[1]}\,dV_{\tau^\eps} + \int_M \ln (P^*_{\tau, \tau^\eps}[1])\,d\hat{P}_{\tau, \tau^\eps}[\mu_{\tau}]\\
&=\int_M P^*_{\tau, \tau^\eps}[\rho_{\tau}\ln(\rho_{\tau })]\,dV_{\tau^\eps} + \int_M \ln (P^*_{\tau, \tau^\eps}[1])\,d\hat{P}_{\tau, \tau^\eps}[\mu_{\tau}] \\
&= \cE(\mu_{\tau}) + \int_M P_{\tau,\tau^\eps}[\ln (P_{\tau, \tau^\eps}^*[1])]\,d\mu_{\tau}\,,\qquad \forall \tau \in [\tau_1,\tau_2]\,. 
\end{align}
Note that we used that the adjoint heat flow preserves the integral over $M$ to obtain the first term in the final equality. Moreover, the integrand $P_{\tau,\tau^\eps}[\ln (P_{\tau, \tau^\eps}^*[1])]$ of the second term tends uniformly to $0$ as $\eps \to 0$. Taking $\limsup_{\eps\to 0}$ on both sides shows $\limsup_{\eps \to 0}\cE(\mu_{\tau^\eps}^\eps) \leq\cE(\mu_\tau)$. Combining the upper and lower bounds on $\cE(\mu_\tau)$ gives the desired convergence.
\end{proof}
\end{appendices}

\bibliographystyle{alpha}
\bibliography{Refs}
\end{document}